\newtheorem{df}{Definition}[section]
\newtheorem{thm}[df]{Theorem} 
\newtheorem{cor}[df]{Corollary} 
\newtheorem{prop}[df]{Proposition} 
\newtheorem{lemma}[df]{Lemma} 
\newtheorem{rem}[df]{Remark} 
\renewcommand{\phi}{\varphi} 
\newcommand{\e}{\varepsilon}
\newcommand{\bmu}{\mbox{\boldmath{$u$}}} 
\newcommand{\bmf}{\mbox{\boldmath{$f$}}}
\newcommand{\bmz}{\mbox{\boldmath{$z$}}} 
\newcommand\re{\textcolor{red}}
\begin{document} 

\title[Existence of traveling waves for surface diffusion]{Existence of non-convex traveling waves \\
for surface diffusion of curves \\with constant contact angles}

\author{Takashi Kagaya}
\address{Institute of Mathematics for Industry, Kyushu University}
\email{kagaya@imi.kyushu-u.ac.jp}

\author{Yoshihito Kohsaka}
\address{Graduate School of Maritime Sciences, Kobe University}
\email{kohsaka@maritime.kobe-u.ac.jp}

\thanks{The authors are partially supported by JSPS KAKENHI Grant Number (A) 18H03670.}

\maketitle

\begin{abstract}
The traveling waves for surface diffusion of plane curves are studied. 
We consider an evolving plane curve with two endpoints, which can move freely 
on the $x$-axis with generating constant contact angles. For the evolution of 
this plane curve governed by surface diffusion, we discuss the existence, the uniqueness 
and the convexity of traveling waves. The main results show that the uniqueness and 
the convexity can be lost in depending on the conditions of the contact angles, 
although the existence holds for any contact angles in the interval $(0,\pi/2)$. 
%
\end{abstract}

\section{Introduction} 
Let $\gamma(t)\subset\mathbb{R}^2$ be an evolving closed plane curve with respect to time $t$ 
governed by the geometric evolution law
\begin{equation} 
V=-\kappa_{ss}\,\,\ \mbox{on}\,\ \gamma(t),
\label{SD} 
\end{equation} 
where $V$ is the normal velocity of $\gamma(t)$, $\kappa$ is the curvature of $\gamma(t)$, and 
$s$ is the arc-length parameter along $\gamma(t)$. In our sign convention, 
the curvature $\kappa$ for a circle with an outer unit normal is negative. 
The surface diffusion equation \eqref{SD} was first derived by Mullins \cite{M} to model 
the development of surface grooves at the grain boundaries of a heated polycrystal.  

In this paper, we consider the following free boundary problem. 
Set $\Pi:=\{(x,y)\in\mathbb{R}^2\,|\,x\in\mathbb{R},\,y=0\}$ and 
let $\Gamma(t)$ be an evolving plane curve with two endpoints which are named 
$\partial\Gamma(t):=\{P_\pm(t)\}$. 
The motion of $\Gamma(t)$ is governed by 
\begin{equation} 
\left\{\begin{array}{l} 
V=-\kappa_{ss}\,\,\ \mbox{on}\,\ \Gamma(t), \\
P_\pm(t)\in\Pi, \\
\sphericalangle(\Gamma(t),\Pi)=\psi_\pm\,\,\ \mbox{at}\,\ P_\pm(t), \\
\kappa_s=0\,\,\ \mbox{at}\,\ P_\pm(t). 
\end{array}\right. 
\label{SDB} 
\end{equation} 
From the boundary condition $P_\pm(t) \in \Pi$, 
the endpoints $P_\pm(t)$ can be distinguished by the $x$-coordinate, namely, 
the endpoint with the smaller $x$-coordinate is $P_-(t)$ and the other is $P_+(t)$. 
The direction of the arc-length parameter $s$ is positive from $P_-(t)$ to $P_+(t)$ and 
$\psi_+\in(-\pi,\pi)$ (resp.\ $\psi_- \in (-\pi,\pi)$) is an interior constant angle 
which is measured clockwise (resp.\ counter-clockwise) from 
the $x$-axis at $P_+(t)$ (resp.\ $P_-(t)$). 

\begin{figure}[t]
\begin{center}
\scalebox{0.75}{\includegraphics{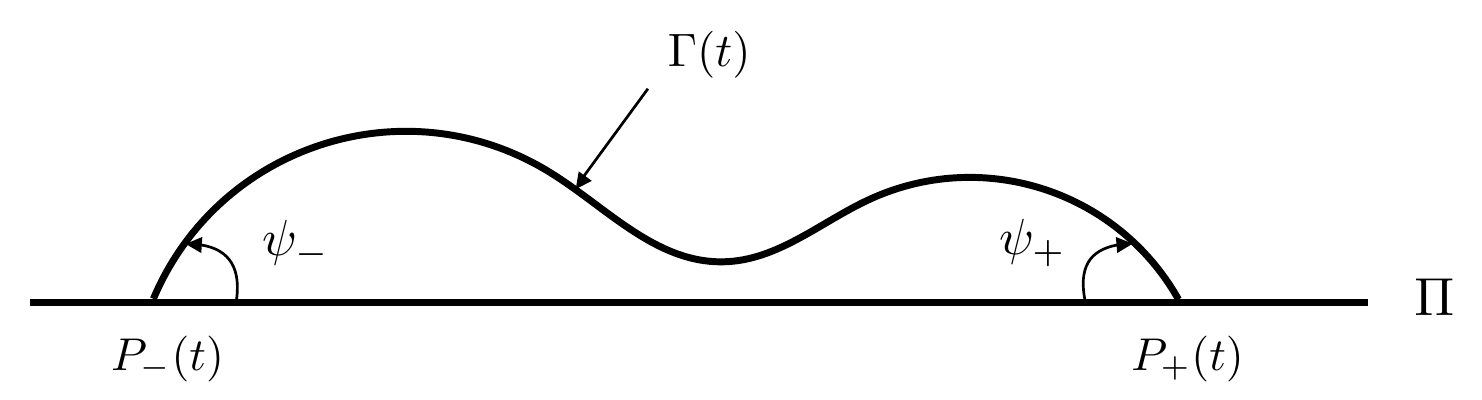}}
\end{center}
\caption{A plane curve satisfying the boundary conditions in \eqref{SDB}.}
\end{figure}

One of the our aims is to find traveling waves for \eqref{SDB} which are 
classical solutions formed by 
\begin{equation}\label{def-traveling} 
\mathcal{W}(t) = \mathcal{W}(0) + ct \vec{e}_1 \,\,\ \mbox{for}\,\  t > 0, 
\end{equation}
where $\vec{e}_1 = (1,0)$, $\mathcal{W}(0)$ is some smooth plane curve and 
$c \in \mathbb{R}$ is some constant.  
Note that $\mathcal{W}(0)$ and $c$ represent the profile and the speed of 
a traveling wave, respectively. 
Thus our aim can be restated to be to find the pairs of a profile curve $\mathcal{W}(0)$ 
and a wave speed $c \in \mathbb{R}$ such that 
$\mathcal{W}(t)$ defined as \eqref{def-traveling} is a solution of \eqref{SDB}.  
Furthermore, we prove the non-uniqueness of traveling waves by constructing those profile 
curves $\mathcal{W}(0)$ as the ``oscillating'' graphs when one of the contact angles is 
sufficiently small. 

In order to state the main results and the motivation in detail, we refer to known results 
on the convexity of closed plane curves, the evolution of which is governed by 
the area preserving curvature flow and the surface diffusion. 

For an evolving closed plane curve $\gamma(t)$, the area preserving curvature flow 
\begin{equation}\label{AP} 
V = \kappa - \dfrac{\int_{\gamma(t)} \kappa \; ds}{\int_{\gamma(t)} ds} \,\,\ \mbox{on}\,\ \gamma(t) 
\end{equation}
was introduced by Gage \cite{G} as the $L^2$-gradient flow of the length of the Jordan curves 
under the area-preserving variations.  
Thus, for the family of Jordan curves $\{\gamma(t)\}_{t \ge 0}$ govened by \eqref{AP}, 
the length of $\gamma(t)$ is non-increasing in time $t$, while the area  enclosed by 
$\gamma(t)$ is preserved. 
This variational structure indicates that if $\gamma(t)$ is a global solution in time 
of \eqref{AP}, it converges to a critical point of the length under the area constraint, 
that is, one of the circles. Indeed, he proved this fact for convex initial curves through 
the analysis of the preservation of the convexity. 
We also refer to Escher and Simonett \cite{ES}.
They proved the local stability of the circles (resp. spheres) without the assumption of the convexity of initial curves (resp. hypersurfaces).

For the surface diffusion of closed curves, Cahn and Taylor \cite{TC} showed that \eqref{SD} 
can be derived as the $H^{-1}$-gradient flow of the length of plane curves. 
Thus this geometric evolution equation has a variational structure similar 
to that of \eqref{AP}, that is, the length of the curves is non-increasing in time under 
the area constraint. 
Therefore we expect the stability of the circles for \eqref{SD}. Indeed, the local stability 
of the circles was proved by Elliott and Gracke \cite{EG}. On the other hand, Giga and Ito \cite{GI} 
proved a loss of convexity by constructing a simple and convex initial curve 
which loses its convexity during the evolution by \eqref{SD}, while the evolving curves stays simple. 

The difference associated to the preservation of the convexity between the solutions of 
\eqref{AP} and \eqref{SD} comes from the difference of the ype of the equations. 
The area preserving curvature flow \eqref{AP} is a second order parabolic equation with 
a non-local term. By a modified maximum principle, the negativity of 
the curvature of the solutions to \eqref{AP} at arbitrary time can be proved  if initial curve is 
convex. Thus we see that the convexity of the solutions is 
preserved for \eqref{AP}. 
On the other hand, the surface diffusion equation \eqref{SD} is a fourth order parabolic 
equation. Therefore, \eqref{SD} does not fulfill the maximum principle. 
This fact indicates the loss of convexity for \eqref{SD}. For more details, see \cite{GI}. 

Now, we introduce Simojo and the first author \cite{SK} that studied the area preserving 
curvature flow with boundary conditions similar to \eqref{SDB}:
\begin{equation}\label{APB}
\left\{\begin{array}{l} 
V=\kappa - \dfrac{\int_{\Gamma(t)} \kappa \; ds}{\int_{\Gamma(t)}ds}\,\,\ \mbox{on}\,\ \Gamma(t), \\
P_\pm(t) \in \Pi, \\
\sphericalangle(\Gamma(t),\Pi)=\psi_\pm\,\,\ \mbox{at}\,\ P_\pm(t).
\end{array}\right. 
\end{equation}
Here $\psi_\pm \in (0,\pi/2)$ and the initial curve $\Gamma(0)$ fulfills 
the following assumptions: 
\begin{itemize}
\item[(A1)] $\Gamma(0)$ is represented by a graph, 
\item[(A2)] $\Gamma(0)$ is convex, 
\item[(A3)] $\Gamma(0)$ satisfies the boundary conditions $P_\pm(0) \in \Pi$ and 
$\sphericalangle(\Gamma(0),\Pi)=\psi_\pm$ at $P_\pm(0)$. 
\end{itemize}
For this problem, they proved, in particular, the preservation of the convexity, 
the existence of a traveling wave and the local asymptotic stability 
at an exponential rate of it. Furthermore, they showed that 
its traveling wave is unique up to the translation and the scaling 
$(\lambda\mathcal{W}(0)+a \vec{e}_1, c/\lambda)$ for any $a \in \mathbb{R}$ and $\lambda >0$. 
In their proof of the existence of the traveling wave, they needed the convexity of the profile curve 
$\mathcal{W}(0)$ in advance besides the assumption that $\mathcal{W}(0)$ is represented by a graph. 
We remark that if we use the method in \cite{KK}, 
we can prove that an only convex traveling wave exists without assuming the convexity 
of $\mathcal{W}(0)$ in advance. Hence we obtain the uniqueness of a traveling wave 
for \eqref{AP} in the above sense 
under an only assumption that $\mathcal{W}(0)$ is represented by a graph. 

In this paper, we study the existence of traveling waves for \eqref{SDB} and compare the 
properties of the traveling waves for \eqref{SDB} with those for \eqref{APB}, in particular, 
the convexity and the uniqueness. 
Therefore, under the assumption $\psi_\pm \in (0,\pi/2)$, we find traveling waves 
for \eqref{SDB} such that the profile curve $\mathcal{W}(0)$ is represented by a graph.

\begin{thm}\label{thm:main}
\begin{list}{}{\leftmargin=0.2cm}
\item[(i)] Assume that $\psi_\pm \in (0,\pi/2)$ and $(\mathcal{W}(0),c)$ construct a traveling wave 
$\mathcal{W}(t)$ for \eqref{SDB} defined as \eqref{def-traveling} such that $\mathcal{W}(0)$ 
is represented by a graph. 
Then, the sign of the wave speed $c$ is determined by the contact angles $\psi_\pm$ as 
\begin{equation}\label{con-c-angle}
\psi_{-}
\left\{
\begin{array}{l}
> \\ = \\ <
\end{array}
\right\}
\psi_{+}
\quad
\iff
\quad
c
\left\{
\begin{array}{l}
> \\ = \\ <
\end{array}
\right\}
0. 
\end{equation}
\item[(ii)] For any contact angles $\psi_\pm \in (0,\pi/2)$, there exists at least one traveling wave 
$\mathcal{W}(t)$ for \eqref{SDB} defined as \eqref{def-traveling} such that the profile curve 
$\mathcal{W}(0)$ is represented by a graph. 
%
\item[(iii)] If $\psi_- = \psi_+ \in (0,\pi/2)$, the traveling wave is unique up to 
the translation and the scaling $(\lambda\mathcal{W}(0)+a \vec{e}_1, c/\lambda^3)$ for 
any $a \in \mathbb{R}$ and $\lambda >0$, where $\vec{e}=(1,0)$. 
Furthermore, its traveling wave is constructed by $(\mathcal{W}(0),c)$ such that 
$\mathcal{W}(0)$ is one of the arcs and $c=0$.  
\item[(iv)] Assume $\pi/2 > \psi_- > \psi_+ > 0$. 
Then, there exists a positive sequence $\{m_j\}_{j \in \mathbb{N}}$, which depends only on 
$\psi_-$, such that $\psi_- > m_1 > m_2 > \cdots$ and the following statement hold:  
if $\psi_+ \in [m_{j+1}, m_j)$,  there exist at least $2j - 1$ traveling waves 
$\mathcal{W}_k(t)\,(k = 1,2, \cdots, 2j-1)$ for \eqref{SDB} defined as \eqref{def-traveling}  
such that each profile curve $\mathcal{W}_k(0)$ is represented by a graph and its length 
is equal to $1$.  
Furthermore, for the representation 
\[ 
\mathcal{W}_k(0) = \{ (x,w_k(x))\,|\, l_k^- < x < l_k^+ \}, 
\]
the following statements on $(\mathcal{W}_k(0), c_k)$ hold: 
\begin{list}{}{}
\item[(a)] 
$0 < c_1 < c_2 < \cdots < c_{2j-1}$. 
\item[(b)] 
$w_k(x)$ is positive for $x > l_k^-$ sufficiently close to 
$l_k^-$, $(w_k)_x(l_j^-) = \tan\,\psi_-$ and $(w_k)_{xx}(l_k^-)$ is negative 
for $k = 1,2, \cdots, 2j-1$. 
\item[(c)] 
The sign of $(w_k)_x, (w_k)_{xx}$ and $w_k$ changes alternately 
in the interval $(l_k^-, l_k^+)$ from the left end point 
and the numbers of the sign changes of these functions are 
$k$ (resp.\ $k+1$), $k$ (resp.\ $k$) and $k-1$ (resp.\ $k$), respectively, if $k$ is odd (resp.\ even). 
\end{list}
For the case $\pi/2 > \psi_+ > \psi_- > 0$, the pairs of 
\[
\widetilde{\mathcal{W}}_k(0) = \{ (-x,y)\,|\,(x,y) \in \mathcal{W}_k(0) \}, \quad \tilde{c}_k = -c_k 
\]
construct traveling waves for \eqref{SDB} as in \eqref{def-traveling}, where 
$(\mathcal{W}_k(0), c_k)$ are obtained as above by exchanging the role of $\psi_-$ and $\psi_+$.  
\end{list}
\end{thm}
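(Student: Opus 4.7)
The plan is to reduce the traveling-wave problem to a third-order ODE by arc-length reformulation, to settle (i) and (iii) through a dissipation identity, and to attack (ii) and (iv) by a shooting analysis that exploits the spiral structure of the associated phase flow.

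\textbf{Reduction.} Parametrize $\mathcal{W}(0)$ by arc length $s \in [0,L]$, and let $\theta(s)$ be the angle that the unit tangent makes with the positive $x$-axis. With the outer unit normal $N = (-\sin\theta,\cos\theta)$ (consistent with $\kappa<0$ for a circle), one has $\kappa = \theta_s$, and the translation $\mathcal{W}(0) + ct\vec{e}_1$ has normal velocity $V = -c\sin\theta$, so that $V = -\kappa_{ss}$ reduces to
\[
\theta_{sss} = c\sin\theta \text{ on } (0,L),\ \theta(0)=\psi_-,\ \theta(L)=-\psi_+,\ \theta_{ss}(0)=\theta_{ss}(L)=0,
\]
supplemented by $y(L) := \int_0^L \sin\theta\, ds = 0$ (both endpoints on $\Pi$). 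Integrating once, using $\sin\theta = y_s$, gives the pointwise identity $\kappa_s = c\,y$, so $\kappa_s(L)=0$ is automatic from $y(L)=0$. Multiplying $\theta_{sss}=c\sin\theta$ by $\theta_s$ and integrating by parts, the boundary contributions vanish thanks to $\theta_{ss}(0)=\theta_{ss}(L)=0$, which yields the dissipation identity
\begin{equation*}
c(\cos\psi_+ - \cos\psi_-) = \int_0^L \kappa_s^2\, ds \ge 0. \tag{$\star$}
\end{equation*}

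\textbf{Parts (i) and (iii).} Since $\cos\psi_+ - \cos\psi_-$ has the sign of $\psi_- - \psi_+$ on $(0,\pi/2)^2$, and since the right-hand side of $(\star)$ is strictly positive unless $\kappa_s \equiv 0$, part (i) follows at once. When $\psi_- = \psi_+$, $(\star)$ forces $\kappa_s \equiv 0$, so $\mathcal{W}(0)$ is a circular arc; combined with $\kappa_s = c\,y$ and $y \not\equiv 0$, this gives $c=0$. The ODE is invariant under $s \mapsto \lambda s$, $c \mapsto c/\lambda^3$, which together with translation accounts for the uniqueness statement modulo the stated group.

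\textbf{Parts (ii) and (iv).} By the scaling just noted we may fix $|c|=1$ and, invoking the reflection $(x,y)\mapsto(-x,y)$ that sends $(\psi_-,\psi_+,c)$ to $(\psi_+,\psi_-,-c)$, focus on $\psi_- \ge \psi_+$ with $c \ge 0$. Solve the IVP $\theta_{sss}=c\sin\theta$ with $\theta(0)=\psi_-$, $\theta_{ss}(0)=0$, $\theta_s(0)=\alpha$, and regard $(\alpha, L)$ as unknowns determined by the two conditions $\theta(L)=-\psi_+$ and $y(L)=0$. The phase-space flow of $X = (\theta,\theta_s,\theta_{ss}) \in \mathbb{R}^3$ has a hyperbolic equilibrium at the origin whose linearization has characteristic roots the three cube roots of unity: one unstable real eigenvalue $\mu=1$ and a two-dimensional stable focus from $\mu=e^{\pm 2\pi i/3}$. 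Orbits therefore exhibit damped oscillations of $\theta$ about $0$, and the level $\{\theta=-\psi_+\}$ is hit at an increasing sequence of times $L_1(\alpha) < L_2(\alpha) < \cdots$. Setting $Y_k(\alpha) := y(L_k(\alpha))$, each traveling wave of index $k$ corresponds to a zero of $Y_k$. For (ii), existence of $\mathcal{W}_1$ is obtained by continuation from the arc at $\psi_+=\psi_-$ (where $Y_1$ has a transverse zero), coupling continuity in $\psi_+$ with the rigidity of $(\star)$ to follow the branch globally on $(0,\psi_-]$. For (iv), decreasing $\psi_+$ permits new zeros of $Y_2, Y_3, \ldots$ to emerge whenever the spiral can accommodate an additional half-turn before the next crossing; the thresholds $\{m_j\}$ are defined as the critical values of $\psi_+$ at which these zeros first appear. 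The ordering $c_1 < c_2 < \cdots$ then follows from $(\star)$ after renormalizing to $L=1$, since additional oscillations increase $\int\kappa_s^2\,ds$ and hence $c$. Finally, via $w_x = \tan\theta$ and $w_{xx} = \kappa/\cos^3\theta$, the sign-change counts for $w_k, (w_k)_x, (w_k)_{xx}$ in (iv)(c) reduce to counts of zeros of $y, \theta, \kappa$ along the $k$-th branch, all read off from the spiral geometry.

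\textbf{Main obstacle.} The delicate step is the quantitative analysis of (iv): establishing that exactly the announced number of branches $Y_k$ admit zeros, in the correct order on the intervals $[m_{j+1},m_j)$, and that the profiles realize the precise alternating sign pattern of $w_k, (w_k)_x, (w_k)_{xx}$. Coupling the nonlocal constraint $y(L)=0$ with the spiral dynamics of the third-order flow appears to require a combination of the identity $(\star)$, quantitative estimates on the stable focus, and a careful study of the nonlinear return map.
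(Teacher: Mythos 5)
Your reduction to $\theta_{sss}=c\sin\theta$ with the boundary conditions, the dissipation identity $(\star)$, and the derivations of (i) and (iii) from $(\star)$ are correct and coincide with the paper's argument (compare equation \eqref{main-1-2} and the discussion following Remark \ref{rem:pro}). The observation that $\kappa_s = cy$ pointwise, so that $\theta_{ss}(L)=0$ is equivalent to $y(L)=0$ when $c\neq 0$, is the same reduction used in Remark \ref{rem:pro}.

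For (ii) and (iv), however, what you have written is a plan rather than a proof, and there are genuine gaps where the plan does not obviously close. First, you fix $|c|=1$ and shoot in $(\alpha,L)$, whereas the paper fixes $L=1$ and shoots in $(\alpha,c)$; in your normalization the base case $\psi_+=\psi_-$ sits at $c=0$, which is not in the set $\{|c|=1\}$, so the ``continuation from the arc'' for (ii) degenerates (it corresponds to $L\to\infty$), and you would still need a substitute for Proposition \ref{thm:exit_a}(iv) to anchor the shooting. Second, the assertion that $\{\theta=-\psi_+\}$ is crossed at an infinite sequence $L_1(\alpha)<L_2(\alpha)<\cdots$ is false in general: the oscillation is \emph{damped} by exactly the mechanism quantified in Lemma \ref{lem:com_Psi1-2} and Proposition \ref{prop:est_order}(II-ii), so the amplitude eventually drops below $\psi_+$ and only finitely many crossings exist. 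Detecting how many, and proving they exist for $\psi_+$ below the announced thresholds, is precisely the content of Sections \ref{sec:zero-point}--\ref{sec:main}; your proposal names the thresholds $m_j$ as ``critical values at which zeros first appear'' without establishing that they are well defined, that they decrease, or that they are attained. Third, the phase-plane heuristic based on linearizing at the origin is only suggestive: the solution launched from $\theta(0)=\psi_-$ need not enter a regime where the linearization governs the dynamics, and the inductive zero-point ordering in Proposition \ref{prop:com_zero-point} is proved from the nonlinear structure, not from a local spectral picture. Fourth, your argument that ``additional oscillations increase $\int\kappa_s^2\,ds$ and hence $c$'' after renormalizing $L=1$ is not an argument; the paper obtains the ordering $c_1<\cdots<c_{2j-1}$ from the nested topology of the sets $J_{\hat\delta_l^\pm}$, $J_{\hat\mu_l^\mp}$ and the intermediate value theorem, not from $(\star)$ alone. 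Finally, you omit the existence and uniqueness of the first shooting parameter (your $\alpha$ given $L$, the paper's $\hat\alpha(c)$), which requires the monotonicity and connectedness analysis of $I(c)$ in Section \ref{sec:shoo-alpha} and is needed even to define the return map you wish to study. In short, you have identified the right reduction and the right obstruction, but the core of (ii) and (iv) --- the entire analysis in Sections \ref{sec:shoo-alpha} through \ref{sec:main} --- is left unproved, and some of the intermediate claims, notably the infinitude of crossings and the ordering of $c_k$, would need to be corrected, not merely filled in.
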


\begin{figure}[t]
\begin{center}
\scalebox{0.75}{\includegraphics{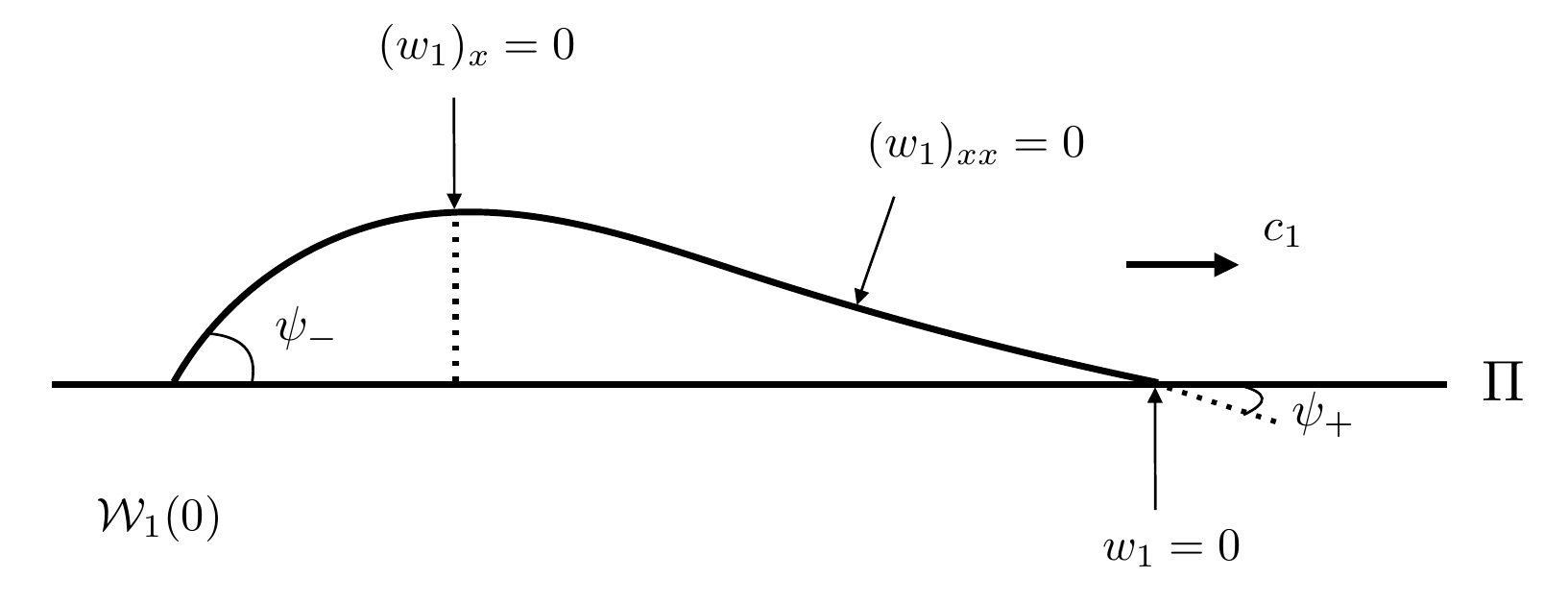}} \\
\scalebox{0.75}{\includegraphics{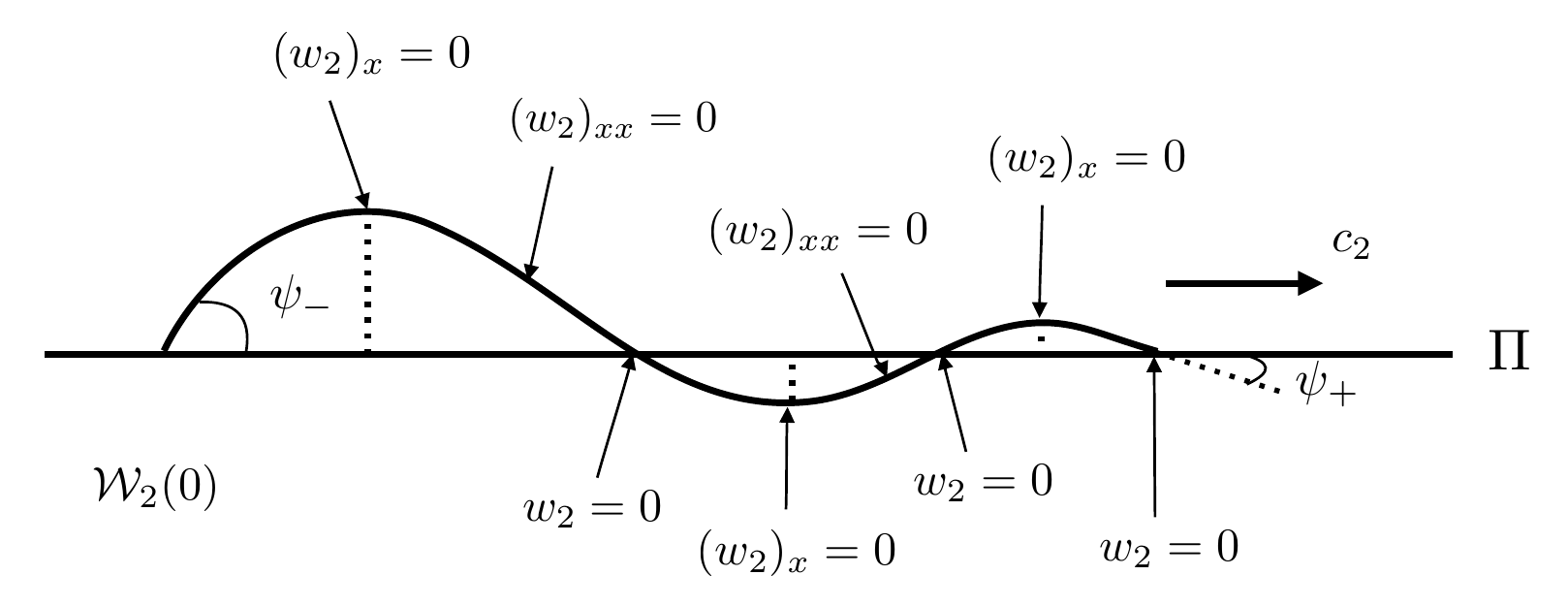}} \\[0.15cm]
\scalebox{0.75}{\includegraphics{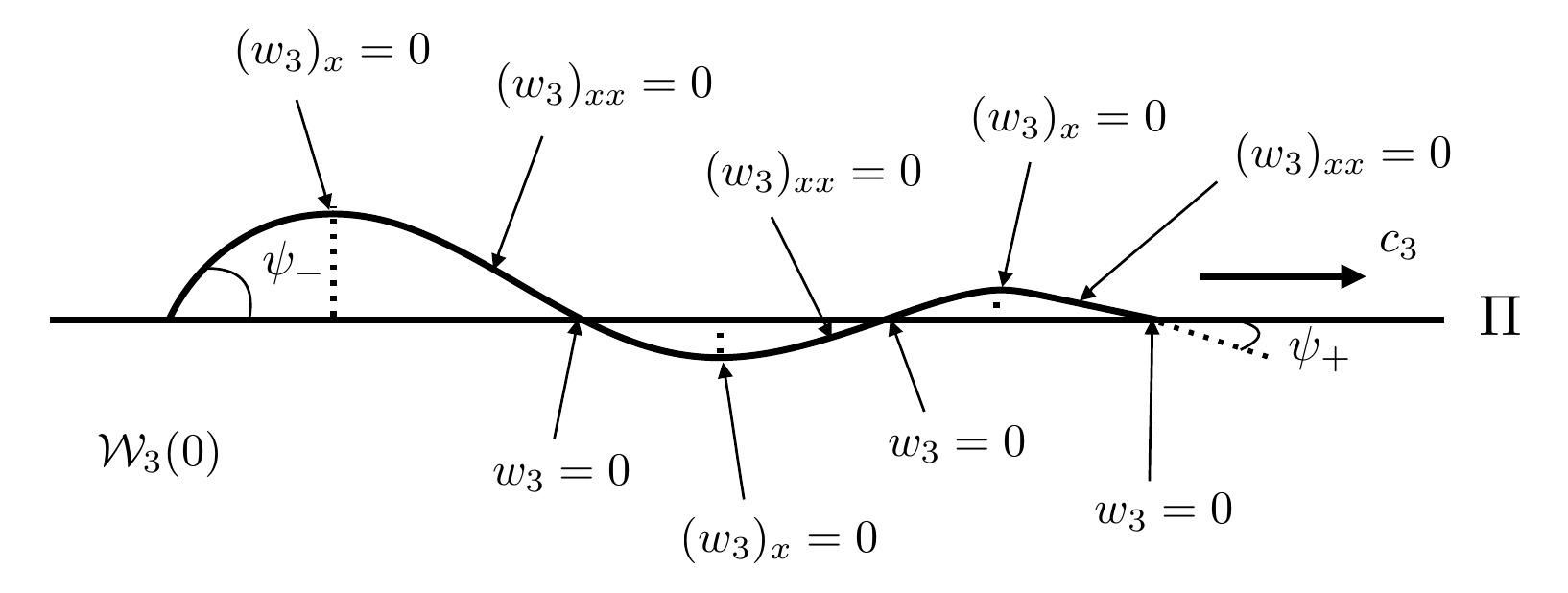}}
\end{center}
\caption{Examples of the profile curves in the case $\psi_+\in[m_3,m_2)$, that is, the case $j=2$. 
At least three kinds of profile curves can be constructed.}
\end{figure}

Since the sign changes of $(w_k)_{xx}$ occur for any $k$, 
the result (iv) shows the existence of non-convex traveling waves if $\psi_-$ or $\psi_+$ 
is sufficiently small. 
The result (iv) also contains the non-uniqueness of traveling waves up to the translation and 
the scaling $(\mathcal{W}(0), c) \mapsto (\lambda \mathcal{W}(0) + a \vec{e}_1, c/\lambda^3)$ 
for $a \in \mathbb{R}$ and $\lambda > 0$. 
The traveling waves with arbitrary length can be constructed by the above scaling. 
The results (i) and (iii) also hold for \eqref{APB} (see \cite{SK}). 
Therefore, the differences associated to the convexity and the uniqueness of traveling waves 
for \eqref{SDB} and \eqref{APB} appear only if $\psi_- \neq \psi_+$. 
We also remark that we can construct traveling waves even if $\psi_+ < 0$ as in (iv)
(see Remark \ref{rem:last}). 
Note that if $\psi_+ < 0$, there is no traveling wave for \eqref{APB} 
due to the convexity of the traveling waves. 

We refer to the results related to the evolution of plane curves by 
surface diffusion. 
Giga and Ito \cite{GI2} proved a loss of embeddedness by constructing 
a dumbbell-like initial curve which ceases to be embedded during the evolution by surface diffusion. 
Elliott and Maier-Paape \cite{EM} showed a graph-breaking by constructing 
a smooth initial curve which is represented as a graph, but loses the graph property during 
the evolution by surface diffusion. 
Note that a loss of embeddedness and a graph-breaking can occur in 
our problem \eqref{SDB}, in particular, if the contact angles are not restricted in 
the interval $(-\pi/2,\pi/2)$.  
There are several results on the stability of steady states for the evolution by 
surface diffusion in a bounded domain,  see \cite{GIK1} for one curve and \cite{EGI,GIK2,IK1,IK2} 
for three curves with a triple junction.  
In particular, Garcke, Ito and the second author \cite{GIK1,GIK2} studied the stability 
of steady states by investigating the sign of the eigenvalues corresponding to the 
linearized problem around steady states. 
This will be one of the effective arguments in the analysis of the stability 
of the traveling waves for our problem since Shimojo and the first author \cite{SK} also 
studied the spectral theory for the linearized problem to prove the stability of 
the traveling wave for \eqref{APB}. 
Asai and Giga \cite{AG} studied the existence of a self-similar solution and its stability for 
the evolution of the unbounded curves in the half space $\{(x,y)\in\mathbb{R}^2\,|\,x>0\}$, 
which intersect the $y$-axis with the constant angle. 
Kanel, Novick-Cohen and Vilenkin \cite{KNV} studied the existence and the uniqueness of 
a traveling wave for the the evolution of three curves with a triple junction in the whole space 
$\mathbb{R}^2$ by surface diffusion coupled with the curvature flow. 
Our approach based on the Gauss map is motivated by this paper. 
For the evolution of curves by other geometric flows with a similar boundary condition to ours, 
see \cite{CGK,CG,GH,GMSW}.

The present paper is organized as follows. 
In Section \ref{sec:profile}, we introduce the Gauss map and derive a profile equation of 
traveling waves for \eqref{SDB}, which is represented as a boundary value problem 
for a 3rd order ODE (denoted by BVP). 
Section \ref{sec:initial}\,--\,\ref{sec:main} are devoted to find solutions of BVP by applying 
the shooting method. In Section \ref{sec:initial}, we discuss the existence and the uniqueness 
of the solution to the initial value problem (denoted by IVP), which is derived from BVP and 
contains two parameters $c>0$ and $\alpha \in \mathbb{R}$. 
Here $c$ and $\alpha$ are the wave speed and the curvature of 
the profile curve at the left endpoint, respectively. 
The continuity of the solution of IVP with respect to $c$ and $\alpha$ is also proved. 
In Section \ref{sec:shoo-alpha}, the shooting with respect to $\alpha$ is studied. 
Section \ref{sec:zero-point}\,--\,\ref{sec:order-est} are dedicated to deriving the relation 
between ``oscillation'' of the solution of IVP and the parameter $c>0$. 
For the solution and its derivatives, the analysis of the zero points of them is carried out in 
Section \ref{sec:zero-point} and the estimates of them with the order of convergence on $c>0$ 
at their zero points are obtained in Section \ref{sec:order-est}. Finally, in Section \ref{sec:main}, 
we have the shooting argument with respect to $c$ in order to obtain the solution of BVP. 

%
%

\section{Profile equation of traveling wave}\label{sec:profile} 

In this section, we introduce a profile equation of traveling waves for \eqref{SDB}. 
We first observe that $(\lambda\mathcal{W}(0)+a \vec{e}_1, c/\lambda^3)$ constructs 
a traveling wave as in \eqref{def-traveling} for any $a \in \mathbb{R}$ and $\lambda >0$ 
if $(\mathcal{W}(0), c)$ constructs a traveling wave. 
Note that our aim is to find traveling waves such that those profile curves are represented by graphs. 
Therefore, it is sufficient to find a traveling wave $\mathcal{W}(t)$ 
constructed by a pair $(\mathcal{W}(0),c)$ such that 
\begin{itemize}
\item[(W1)] 
the length of $\mathcal{W}(0)$ is equal to $1$, 
\item[(W2)] 
the left endpoint of $\mathcal{W}(0)$ is $(0,0)$ in the $(x,y)$-coordinate plane, 
\item[(W3)] 
$\mathcal{W}(0)$ is represented by a graph. 
\end{itemize}
Then, $\mathcal{W}(0)$ can be parametrized as 
\[
\mathcal{W}(0) = \{(x(s), y(s))\,|\, 0 \le s \le 1 \}, 
\]
where $s$ is the arc-length parameter of $\mathcal{W}(0)$. 
Here, we introduce the smooth function $\Theta : [0,1] \to \mathbb{R}$ associated to 
the Gauss map defined as 
\begin{equation}\label{def-theta} 
x'(s) = \cos \Theta(s), \quad y'(s) = \sin \Theta(s) \,\,\ \mbox{for}\,\  0 \le s \le 1, 
\end{equation}
where $'$ means the differential with respect to $s$. 
We note that the range of $\Theta$ can be restricted as $|\Theta(s)| < \pi/2$ for 
$s \in [0,1]$ due to the assumption (W3). 
By using this function $\Theta$, $\mathcal{W}(t)$ can be parametrized as 
\begin{equation}\label{para-tra} 
\mathcal{W}(t) = \left\{ \left(ct + \int_0^s \cos\Theta(\tilde{s}) \; d\tilde{s}, 
\int_0^s \sin\Theta(\tilde{s}) \; d\tilde{s}\right) \,\bigg|\, 0 \le s \le 1\right\} 
\end{equation}
since $\mathcal{W}(t)$ is defined as \eqref{def-traveling}. 
Then the normal velocity $V$ and the curvature $\kappa$ of $\mathcal{W}(t)$ are 
represented as
\[ 
V(s,t) = V(s) = c \sin \Theta(s), \quad \kappa(s,t) = \kappa(s) = \Theta'(s). 
\]
If $\mathcal{W}(t)$ is a solution of \eqref{SDB}, $\Theta$ satisfies  
\begin{equation} 
\left\{\begin{array}{l} 
\Theta^{(3)}=c\sin\Theta,\,\,\ s \in(0,1) \\[0.1cm]
\Theta(0)=\psi_-,\,\,\ \Theta(1)=-\psi_+, \\[0.1cm]
\Theta''(0)=\Theta''(1)=0. 
\end{array}\right. 
\label{SD_TW_3rdODE} 
\end{equation}
In oder to obtain the boundary condition $\Theta(0)=\psi_-$ and 
$\Theta(1)=-\psi_+$, we use the restriction $|\Theta(s)| < \pi/2$ for $s \in [0,1]$. 
Thus, our problems are to find pairs of a function $\Theta: [0,1] \mapsto (-\pi/2, \pi/2)$ and 
a constant $c$ satisfying \eqref{SD_TW_3rdODE} and analyze their properties 
on the configuration. 

\begin{rem}\label{rem:pro}
We can construct the traveling wave for \eqref{SDB} from the solution $(\Theta, c)$ of  \eqref{SD_TW_3rdODE} by parametrizing $\mathcal{W}(t)$ as in \eqref{para-tra}. 
It is easy to see by the above argument that $\mathcal{W}(t)$ satisfies \eqref{SDB} 
except the condition $P_\pm(t) \in \Pi$. We confirm that the condition $P_\pm(t) \in \Pi$ 
is fulfilled in the cases $c \neq 0$ and $c = 0$, respectively. 

In the case $c \neq 0$, \eqref{SD_TW_3rdODE} implies
\[ 
\int_0^1 \sin \Theta(s) \; ds = \dfrac{1}{c}\int_0^1 \Theta^{(3)}(s) \; ds 
= \dfrac{1}{c}(\Theta''(1)-\Theta''(0)) = 0, 
\]
so that we see that the condition $P_\pm(t) \in \Pi$ holds. 

In the case $c = 0$, it follows from \eqref{SD_TW_3rdODE} that 
\[ 
\Theta(s) = -(\psi_- + \psi_+) s + \psi_- \,\,\ \mbox{for}\,\ s \in [0,1]. 
\]
Hence we see that the condition $P_\pm(t) \in \Pi$ holds only if  
$\psi_+ = \psi_-$. 

We also note that the restriction $|\Theta| < \pi/2$ for $s \in [0,1]$ is equivalent to 
the assumption (W3). It is known that a loss of embeddedness \cite{GI2} and 
a graph-breaking \cite{EM} can occur in the evolution by surface diffusion. Thus,   
if we study traveling waves for \eqref{SDB} without the assumption (W3), 
there is a possibility that we will find traveling waves which are not represented by 
a graph or not even simple curves. 
\end{rem}

Let us prove the first property of Theorem \ref{thm:main}. 

\begin{proof}[Proof of Theorem \ref{thm:main}(i)] 
It is sufficient to prove \eqref{con-c-angle} for any pair $(\Theta,c)$ satisfying 
\eqref{SD_TW_3rdODE}. First, we consider the case $c \neq 0$. 
Since $\Theta$ satisfies $\Theta^{(3)}=c\sin\Theta$ and $\Theta(0) = \psi_- > 0$, 
there exists a constant $\hat{s} \in (0,1]$ such that 
\[ 
\Theta(s) > 0 \quad {\rm and} \quad \Theta^{(3)}(s) \neq 0\,\,\ \mbox{for}\,\ s \in [0,\hat{s}]. 
\]
Therefore, by the condition $\Theta''(0) = 0$, we have 
\begin{equation}\label{main-1-1}
\Theta''(s) \neq 0\,\,\ \mbox{for}\,\ s \in (0,\hat{s}]. 
\end{equation}
Multiplying the equation $\Theta^{(3)}=c\sin\Theta$ by $\Theta'$ and integrating it 
on the interval $[0,1]$ with the help of the boundary conditions, 
we obtain 
\begin{equation}\label{main-1-2} 
- \int_0^1 (\Theta''(s))^2 \; ds
= c(\cos \psi_- - \cos \psi_+). 
\end{equation}
Since \eqref{main-1-1} guarantees that the left hand side of \eqref{main-1-2} is 
strictly negative, we see that 
\[ 
c(\cos \psi_- - \cos \psi_+) < 0.
\]
Hence the conditions $\psi_->\psi_+$ and $\psi_-<\psi_+$ are equivalent to $c > 0$ 
and $c < 0$, respectively. 
These equivalences also imply that the condition $\psi_- = \psi_+$ is equivalent to 
$c=0$. 
\end{proof}

From Theorem \ref{thm:main}(i) and the argument for $c=0$ in Remark \ref{rem:pro}, 
it is easy to conclude that Theorem \ref{thm:main}(iii) holds. 
Therefore, in the following sections, we may prove Theorem \ref{thm:main}(ii) and (iv) 
only for the case $c>0$ and $\psi_- > \psi_+$. We remark that, as we mentioned in 
Theorem \ref{thm:main}(iv), the traveling waves for the case $c<0$ and $\psi_- < \psi_+$ 
can be constructed by reflecting those for the above case with respect to the $y$-axis. 
Furthermore, according to Remark \ref{rem:pro}, the solutions of \eqref{SD_TW_3rdODE} 
construct the traveling waves $\mathcal{W}(t)$ for \eqref{SDB} by using \eqref{para-tra}. 
This implies that for the representation $\mathcal{W}(0)=\{(x,w(x))\,|\,l^-<x<l^+\}$ 
\begin{align*}
&w_x(x)=\frac{y'(s)}{x'(s)}=\frac{\sin\Theta(s)}{\cos\Theta(s)}, \\
&w_{xx}(x)=\frac{-x''(s)y'(s)+x'(s)y''(s)}{(x'(s))^3}=\frac{\Theta'(s)}{\cos^3\Theta(s)}, \\
&w(x)=y(s)=\int_0^s\sin\Theta(\tilde{s})\,d\tilde{s}
=\frac1{c}\int_0^s\Theta^{(3)}(\tilde{s})\,d\tilde{s}=\frac{\Theta''(s)}{c}. 
\end{align*}
Since $\cos\Theta(s)>0$ by the restriction $\Theta(s)\in(-\pi/2,\pi/2)$ for $s\in[0,1]$, 
the sign of $w_x$, $w_{xx}$ and $w$ are the same as that of 
$\Theta$, $\Theta'$ and $\Theta''$, respectively. 
Thus it is sufficient to analyze the sign changes of $\Theta$ and 
its derivatives in order to obtain the ``oscillation'' in Theorem \ref{thm:main}(iv). 

\section{Initial value problem} \label{sec:initial}
Let us consider the existence of a solution $(\Theta, c)$ to the boundary value problem 
\eqref{SD_TW_3rdODE} in the case $c>0$ and $\psi_- > \psi_+$. 
Our strategy is based on the shooting method which consists of 
the following three steps.
First, we study the initial value problem 
\begin{equation} 
\left\{\begin{array}{l} 
\Psi^{(3)}=c\sin\Psi, \\[0.15cm]
\Psi(0)=\psi_-,\,\ \Psi'(0)=\alpha,\,\ \Psi''(0)=0
\end{array}\right. 
\label{IVP} 
\end{equation} 
for constants $c\in\mathbb{R}$ and $\alpha\in\mathbb{R}$. 
Then we will find a unique solution $\Psi(\,\cdot\,; \alpha, c) \in C^\infty([0,1])$ 
of \eqref{IVP} for each $\alpha \in \mathbb{R}$ and $c\in\mathbb{R}$. 
The second step is to find a $\hat{\alpha}(c) \in \mathbb{R}$ such that 
$\Psi''(1; \hat{\alpha}(c), c) = 0$ for each $c>0$, where $\Psi(\,\cdot\,; \alpha, c)$ 
is a solution of \eqref{IVP}. 
In this step, the monotonicity of $\Psi''(1; \alpha, c)$ with respect to $\alpha$ is one of 
the key tools to prove the existence and the uniqueness of $\hat{\alpha}(c)$. Set 
$\hat{\Psi}(\,\cdot\,;c):=\Psi(\,\cdot\,; \hat{\alpha}(c), c)$. Then $\hat{\Psi}(\,\cdot\,;c)$ fulfills 
\eqref{SD_TW_3rdODE} except the boundary condition $\hat{\Psi}'(1;c)=-\psi_+$. 
Hence the third step is to find constants $c > 0$ such that 
$\hat{\Psi}'(1;c)=-\psi_+$. As a result, 
the pair $(\hat{\Psi}(\,\cdot\,;c), c)$ is a solution of \eqref{SD_TW_3rdODE}. 
In the third step, we observe the ``oscillation'' of $\hat{\Psi}(\,\cdot\,;c)$. 
Roughly speaking, the period of oscillation of $\hat{\Psi}(\,\cdot\,;c)$ contracts 
as $c$ increase,  
while the amplitude of it around $s=1$ is smaller than that around $s=0$ 
since a kind of ``energy loss'' as in \eqref{main-1-2} occurs. 
Therefore, non-uniqueness of the traveling wave will be shown 
if $\psi_+$ is sufficiently small. 

Let us discuss the existence of a solution of \eqref{IVP} and the continuity of 
its solution with respect to the parameters $(\alpha,c)$. 
Define a function $\bmf : \mathbb{R}^4 \to \mathbb{R}^3$ by 
\[
\bmf(\bmz,c):=\begin{pmatrix}z_2 \\ z_3 \\ c\sin z_1 \end{pmatrix} 
\,\,\ \mbox{for}\,\ \bmz = \begin{pmatrix} z_1 \\ z_2 \\ z_3 \end{pmatrix} \in \mathbb{R}^3, 
\,\ c \in \mathbb{R}.  
\]
Then the initial value problem \eqref{IVP} can be rewritten as 
\begin{equation} 
\bmu'=\bmf(\bmu,c), \quad \bmu(0)=\bmu_0,
\label{IVP_vector} 
\end{equation}
where $\bmu(s) = {}^t (\Psi(s), \Psi'(s), \Psi''(s))$ and $\bmu_0 = {}^t (\psi_-, \alpha, 0)$. 
Thus we show the existence of a solution to \eqref{IVP_vector} instead of that 
to \eqref{IVP}. 
Fix $\rho>1$. Let $K>\sqrt{(2\rho+1)^2+4}\,\psi_-$ and $b>0$ be arbitrary large constants and 
define the domain $\Omega(K,b)$ as
\[
\Omega(K,b):=\bigl\{(s,\bmz,c)\,\big|\,s\in(-\rho,\rho),\ 
\|\bmz\|_{\mathbb{R}^3}<K,\ c\in(-b,b)\bigr\}, 
\]
where $\|\cdot\|_{\mathbb{R}^3}$ is the Euclidean norm. 
Clearly, $\bmf$ is continuous in $\Omega(K,b)$ for any $K$ and $b$. 
Moreover, we observe that $\bmf$ satisfies the Lipschitz condition with respect to 
$\bmz$ in $\Omega(K,b)$. 
Indeed, using the mean value theorem and 
\[
\nabla_{\mbox{\footnotesize{$\bmz$}} }\bmf(\bmz,c)
=\begin{pmatrix}\,0&0&c\cos z_1 \\ \,1&0&0& \\ \,0&1&0 \end{pmatrix},
\]
we are led to
\begin{align*}
\|\bmf(\bmz,c)-\bmf(\tilde{\bmz},c)\|_{\mathbb{R}^3} 
\le&\,\|\nabla_{\mbox{\footnotesize{$\bmz$}} }\bmf((1-\sigma)\bmz+\sigma\tilde{\bmz},c)\|
_{\mathbb{R}^{3\times3}} 
\|\bmz-\tilde{\bmz}\|_{\mathbb{R}^3}\,\,\ \\[0.05cm]
\le&\,\sqrt{2+b^2}\,\|\bmz-\tilde{\bmz}\|_{\mathbb{R}^3} 
\end{align*}
for any $\bmz, \tilde{\bmz} \in \mathbb{R}^3$ and some $\sigma \in (0,1)$, 
where $\|\cdot\|_{\mathbb{R}^{3 \times 3}}$ is also the Euclidean norm. 

Set 
\[
\widetilde{\bmu}(s):=\begin{pmatrix}-2 \psi_- s+\psi_- \\ \,\,-2\psi_- \\ \,0 \end{pmatrix}, \quad
\widetilde{\bmu}_0:=\begin{pmatrix}\,\,\psi_- \\ -2\psi_- \\ 0 \end{pmatrix}.
\]
We easily see that $\widetilde{\bmu}$ is a solution of 
\[
\bmu'=\bmf(\bmu,0), \quad \bmu(0)=\widetilde{\bmu}_0
\]
for $s \in(-\rho,\rho)$. Note that $\widetilde{\bmu}$ satisfies 
$\|\widetilde{\bmu}(s)\|_{\mathbb{R}^3}\le \sqrt{(2\rho+1)^2+4}\,\psi_-$ for any $s \in (-\rho, \rho)$ 
and $\|\bmu_0-\widetilde{\bmu}_0\|_{\mathbb{R}^3}=|\alpha-(-2\psi_-)|$. 
Then, applying \cite[Theorem 7.4]{CL}, there exists $h>0$ such that for any 
\[
(\alpha,c)\in U_h=\{(\alpha,c)\,|\,|\alpha-(-2\psi_-)|+|c|<h\} 
\]
\eqref{IVP_vector} has a unique smooth solution $\bmu(s;\alpha,c)$ for 
$s\in(-\rho,\rho)$ and $\bmu(\,\cdot\,;\alpha,c)$ is continuous with respect to 
$(\alpha,c)$ in $U_h$. 

Applying the Taylor's theorem to the components of the solution $\bmu(s;\alpha,c)$ 
obtained as above and using the equation $\Psi^{(3)}=c\sin\Psi$,  
we can represent $\Psi'', \Psi'$ and $\Psi$ as 
\begin{align} 
&\Psi''(s;\alpha,c)
=c\,\int_0^s \sin\Psi(\tilde{s};\alpha,c)\,d\tilde{s}, \label{Psi2}\\
&\Psi'(s;\alpha,c)
=\alpha+c\int_0^s (s-\tilde{s})\sin\Psi(\tilde{s};\alpha,c)\,d\tilde{s}, \label{Psi1}\\
&\Psi(s;\alpha,c)
=\psi_-+\alpha s+\frac{c}2\int_0^s (s-\tilde{s})^2\sin\Psi(\tilde{s};\alpha,c)\,d\tilde{s}. 
\label{Psi} 
\end{align} 
Then it follows that for $s \in (-\rho,\rho)$ 
\[ 
|\Psi''(s;\alpha,c)| \le |c|\rho, \quad 
|\Psi'(s;\alpha,c)| \le |\alpha| + \dfrac{|c|\rho^2}{2}, \quad 
|\Psi(s;\alpha,c)| \le \psi_- + |\alpha|\rho + \dfrac{|c| \rho^3}{6}. 
\]
Therefore, we can choose sufficiently large $\tilde{K}>0$ such that  
$\|\bmu(s;\alpha,c)\|_{\mathbb{R}^3} < \tilde{K}$ for any 
$s \in (-\rho,\rho)$, and also take sufficiently large $\tilde{b}>0$ such that $|c|<\tilde{b}$. 
A similar argument as above works well to obtain a constant $\tilde{h}>0$ such that for any 
\[
(\tilde{\alpha}, \tilde{c}) \in U_{\tilde{h}} 
= \{(\tilde{\alpha},\tilde{c}) \,|\,|\tilde{\alpha}-\alpha|+|\tilde{c} - c|<\tilde{h}\} 
\]
\eqref{IVP_vector} has a unique smooth solution $\bmu(s;\tilde{\alpha},\tilde{c})$ 
for $s\in(-\rho,\rho)$ and $\bmu(\,\cdot\,;\tilde{\alpha},\tilde{c})$ is continuous 
with respect to $(\tilde{\alpha},\tilde{c})$ in $U_{\tilde{h}}$.
Since this argument can be applied whenever $\alpha$ and $c$ are finite, we see that 
there exists a unique solution $\Psi(\,\cdot\,;\alpha,c)\in C^3(-\rho,\rho)$ of \eqref{IVP} 
for any $\alpha \in \mathbb{R}$ and $c \in \mathbb{R}$, and 
$\Psi^{(k)}(\,\cdot\,;\alpha,c)\,(k=0,1,2,3)$ is continuous with respect to $(\alpha,c)$. 
Therefore, we obtain the following lemma. 

\begin{lemma}\label{lem:conti_ineq} 
Let $\rho > 1$. 
For any $\alpha,c \in \mathbb{R}$, there exists a unique solution 
$\Psi(\,\cdot\,;\alpha,c)\in C^3(-\rho,\rho)$ of \eqref{IVP}. 
Moreover, 
\[
\lim_{(\tilde{\alpha},\tilde{c}) \to (\alpha,c)} 
\|\Psi(\,\cdot\,;\tilde{\alpha},\tilde{c})-\Psi(\,\cdot\,;\alpha,c)\|_{C^3([-\rho,\rho])} = 0. 
\]
\end{lemma}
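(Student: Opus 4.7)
The plan is to reformulate \eqref{IVP} as the first-order system \eqref{IVP_vector} with unknown $\bmu = {}^t(\Psi,\Psi',\Psi'')$ and then apply a standard Picard-type existence, uniqueness and continuous-dependence theorem (e.g.\ \cite[Theorem 7.4]{CL}) combined with an a priori bound argument to cover all $(\alpha,c) \in \mathbb{R}^2$ at once.

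First I would verify the hypotheses of the classical theorem on the domain $\Omega(K,b)$. Continuity of $\bmf$ is obvious, and since $\nabla_{\bmz}\bmf$ is bounded by a constant depending only on $b$ on $\Omega(K,b)$, the mean value theorem gives a global Lipschitz estimate in $\bmz$ with constant $\sqrt{2+b^2}$. Applied around the explicit reference solution $\widetilde{\bmu}(s)=(-2\psi_- s+\psi_-,-2\psi_-,0)$ of $\bmu'=\bmf(\bmu,0)$ with initial value $\widetilde{\bmu}_0$, the theorem yields a neighborhood $U_h$ of $(-2\psi_-,0)$ in parameter space on which \eqref{IVP_vector} has a unique $C^1$ solution $\bmu(\,\cdot\,;\alpha,c)$ on $(-\rho,\rho)$, continuous in $(\alpha,c)$.

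The main obstacle is upgrading this purely local (in parameters) result to arbitrary $(\alpha,c)\in\mathbb{R}^2$. For this I would use the integral representations \eqref{Psi2}--\eqref{Psi}, which follow from the equation $\Psi^{(3)} = c \sin \Psi$ and the initial conditions by Taylor's theorem. Because $|\sin\Psi| \le 1$, these identities furnish the explicit a priori bounds
\[
\|\bmu(s;\alpha,c)\|_{\mathbb{R}^3} \le \psi_- + (|\alpha|+1)\rho + |c|\rho + \tfrac{|c|\rho^2}{2} + \tfrac{|c|\rho^3}{6}
\]
on any interval where the solution exists, so one may choose $\tilde K$ and $\tilde b$ so large that the solution curve stays strictly inside $\Omega(\tilde K, \tilde b)$ on all of $(-\rho,\rho)$. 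A continuation argument then shows the solution extends to the whole interval and does not blow up. Iterating the local continuous-dependence theorem around any fixed $(\alpha,c)$ — using a neighborhood $U_{\tilde h}$ whose size is controlled by $\tilde K,\tilde b$ — yields a unique $C^3$ solution on $(-\rho,\rho)$ for every $(\alpha,c)\in\mathbb{R}^2$, together with continuity in $(\alpha,c)$ in the $C^0$-sense of each component.

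Finally, the $C^3$-convergence claim $\|\Psi(\,\cdot\,;\tilde\alpha,\tilde c)-\Psi(\,\cdot\,;\alpha,c)\|_{C^3([-\rho,\rho])}\to 0$ is obtained by bootstrapping: continuity of $\Psi^{(k)}$ in $(\alpha,c)$ for $k=0,1,2$ follows from the system formulation, while $\Psi^{(3)} = c\sin\Psi$ transfers the $C^0$-convergence of $\Psi$ and of $c$ to $C^0$-convergence of $\Psi^{(3)}$. The only subtlety is that the theorem in \cite{CL} usually gives convergence of $\bmu$, hence of $\Psi,\Psi',\Psi''$; upgrading to convergence of $\Psi^{(3)}$ is immediate from the ODE. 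Assembling these steps gives the stated lemma.
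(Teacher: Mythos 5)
Your proof follows exactly the same route as the paper: rewrite \eqref{IVP} as the first-order system \eqref{IVP_vector}, verify continuity and a Lipschitz estimate for $\bmf$ on $\Omega(K,b)$, invoke \cite[Theorem 7.4]{CL} around the explicit reference solution $\widetilde{\bmu}$ at $(\alpha,c)=(-2\psi_-,0)$, and then use the integral representations \eqref{Psi2}--\eqref{Psi} to obtain a priori bounds that let the local continuous-dependence result be iterated to cover all finite $(\alpha,c)$, with the $C^3$ claim upgraded from $C^2$ via $\Psi^{(3)}=c\sin\Psi$. The argument is correct (your displayed a priori bound has a harmless arithmetic imprecision but the needed conclusion — that the solution stays in a compact set over $(-\rho,\rho)$ — still holds).
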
 

%
%

\section{Shooting with respect to $\alpha$}\label{sec:shoo-alpha}

As we mentioned in the beginning of Section \ref{sec:initial}, we will find 
a constant $\hat{\alpha}(c)$ such that the solution $\Psi(s;\alpha,c)$ of \eqref{IVP} 
fulfills $\Psi''(1;\hat{\alpha}(c), c) = 0$ for each $c>0$. Note that if we set 
$\hat{\Psi}(s; c) := \Psi(s; \hat{\alpha}(c), c)$, hen $\hat{\Psi}$ satisfies 
\begin{equation}\label{IVP2}
\left\{\begin{array}{l} 
\hat{\Psi}^{(3)}=c\sin\hat{\Psi},\,\,\ s \in(0,1) \\[0.1cm]
\hat{\Psi}(0)=\psi_-,\,\ \hat{\Psi}''(0)=\hat{\Psi}''(1)=0.
\end{array}\right. 
\end{equation}
In order to find this $\hat{\alpha}(c)$, we prove the following proposition in this section. 

\begin{prop}\label{thm:exit_a} 
Let $\psi_-$ be a given constant and satisfy $\psi_-\in(0,\pi/2)$. 
Assume that $\Psi(s;\alpha,c)$ is a solution of \eqref{IVP} 
for each $\alpha\in\mathbb{R}$ and $c>0$. 
\begin{list}{}{\leftmargin=0cm\itemindent=0.2cm\topsep=0cm\itemsep=0cm}
\item[(i)]
Then, for any $c>0$, there exists a unique $\hat{\alpha}(c)$ such that 
\begin{equation}\label{exit_a} 
\Psi''(1;\hat{\alpha}(c),c)=0 \,\,\ \mbox{and}\,\,\ 
-\frac{\pi}2<\Psi(s;\hat{\alpha}(c),c)<\frac{\pi}2 
\,\,\ \mbox{for}\,\ s \in [0,1]. 
\end{equation}
\end{list}
Moreover, $\hat{\alpha}(c)$ satisfies the following properties. 
\begin{list}{}{\leftmargin=0cm\itemindent=0.2cm\topsep=0cm\itemsep=0cm}
\item[(ii)] $\hat{\alpha}(c)$ is continuous with respect to $c>0$. 
\item[(iii)] $\hat{\alpha}(c)$ is negative for all $c>0$. 
\item[(iv)] As $c > 0$ tend to $0$, the limit of $\hat{\alpha}(c)$ exists and 
\[
\hat{\alpha}(c) \to -2\psi_-, \quad \Psi(s; \hat{\alpha}(c), c) \to \psi_- - 2\psi_- s \quad 
{\rm in}\,\ C^2([0,1]). \]
\end{list}
\end{prop}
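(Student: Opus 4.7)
The plan is a shooting argument in $\alpha$ organized around three ingredients: a monotonicity lemma obtained by linearizing \eqref{IVP} in $\alpha$, the energy identity obtained by multiplying $\Psi^{(3)} = c\sin \Psi$ by $\Psi'$ and integrating, and a boundary analysis of the safe set
\[
I_c := \{\alpha \in \mathbb{R} : |\Psi(s;\alpha,c)| < \pi/2 \text{ for all } s \in [0,1]\}.
\]

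First I would differentiate \eqref{IVP} with respect to $\alpha$: the variation $\varphi := \partial_\alpha \Psi$ solves $\varphi^{(3)} = c(\cos \Psi)\varphi$ with $\varphi(0) = \varphi''(0) = 0$ and $\varphi'(0) = 1$. On any subinterval $[0, s_*]$ along which $|\Psi| < \pi/2$, and hence $\cos \Psi > 0$, a short bootstrap from these initial data forces $\varphi, \varphi', \varphi'' > 0$ on $(0, s_*]$. This monotonicity delivers: (a) uniqueness of $\hat\alpha(c)$ in $I_c$, because $\alpha \mapsto \Psi''(1;\alpha,c)$ is strictly increasing on every connected component of $I_c$; (b) continuity (ii) via the implicit function theorem applied to $F(\alpha,c) := \Psi''(1;\alpha,c)$ at $(\hat\alpha(c), c)$, with continuity of $F$ and its partial derivatives supplied by Lemma \ref{lem:conti_ineq}; and (c) negativity (iii): if some $\alpha \geq 0$ lay in $I_c$, the same bootstrap applied directly to $\Psi$ (using $\Psi(0) = \psi_- > 0$, $\Psi'(0) \geq 0$, $\Psi''(0) = 0$ and $\Psi^{(3)}(0) = c\sin \psi_- > 0$) would give $\Psi, \Psi', \Psi'' > 0$ on $(0,1]$, contradicting $F(\alpha,c) = 0$.

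The existence in (i) is the core step. Multiplying the ODE by $\Psi'$ and integrating yields
\begin{equation*}
\Psi'(s)\Psi''(s) + c\cos \Psi(s) = c\cos \psi_- + \int_0^s (\Psi''(\tilde s))^2\,d\tilde s. \qquad (\ast)
\end{equation*}
Let $(\alpha_-, \alpha_+)$ be a connected component of $I_c$. At each endpoint $\Psi$ first touches $\pm\pi/2$ at some $s_0 \in (0,1]$, and the interior case $s_0 < 1$ is ruled out by $(\ast)$: such an $s_0$ would be a local extremum of $\Psi$, so $\Psi'(s_0) = 0$ and $\cos\Psi(s_0) = 0$, whence $(\ast)$ would degenerate to $0 = c\cos \psi_- + \int_0^{s_0}(\Psi'')^2 > 0$, a contradiction. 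Hence $s_0 = 1$, and the monotonicity side condition on $\alpha_\pm$ (from Step 1, combined with $\partial_\alpha \Psi > 0$) forces $\Psi(1;\alpha_-,c) = -\pi/2$ and $\Psi(1;\alpha_+,c) = +\pi/2$; the one-sided monotonicity of $\Psi$ as $s \to 1^-$ together with $\Psi'(1)\Psi''(1) > 0$ from $(\ast)$ at $s=1$ then gives $F(\alpha_-, c) < 0 < F(\alpha_+, c)$. Continuity of $F$ on $[\alpha_-, \alpha_+]$ and the intermediate value theorem produce $\hat\alpha(c) \in (\alpha_-, \alpha_+)$ with $F(\hat\alpha(c), c) = 0$, and strict monotonicity of $\alpha \mapsto \Psi(1;\alpha,c)$ on $I_c$ with boundary values $\pm\pi/2$ at each component rules out a second component, securing global uniqueness.

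For (iv), Lemma \ref{lem:conti_ineq} together with \eqref{Psi2}--\eqref{Psi} gives $\|\Psi(\cdot;\alpha,c) - (\psi_- + \alpha s)\|_{C^2([0,1])} = O(c)$ locally uniformly in $\alpha$, and the leading-order identity $\Psi''(1;\alpha,c) \sim c\,(\cos \psi_- - \cos(\psi_- + \alpha))/\alpha$ changes sign as $\alpha$ crosses $-2\psi_-$. Choosing $\alpha_\pm = -2\psi_- \pm \delta$ therefore produces $F(\alpha_-, c) < 0 < F(\alpha_+, c)$ for all sufficiently small $c>0$, pinning $\hat\alpha(c) \in (-2\psi_- - \delta, -2\psi_- + \delta)$; letting $\delta \to 0$ gives $\hat\alpha(c) \to -2\psi_-$, and the stated $C^2$ convergence of the profile follows from Lemma \ref{lem:conti_ineq}. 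The main obstacle will be the boundary analysis of $I_c$: pinpointing via $(\ast)$ that the first touching cannot occur at an interior $s_0 < 1$ and then extracting the correct signs of $\Psi'(1)$ and $\Psi''(1)$ at $\alpha_\pm$ from $(\ast)$ at $s = 1$ is what closes the shooting and distinguishes the fourth-order equation from the second-order case.
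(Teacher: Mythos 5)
Your proposal takes a genuinely different route from the paper at several key points, and most of it is sound, but there is one real gap that the paper handles with non-trivial work and which your outline skips.

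On the differences in route: for the monotonicity, the paper compares $\Psi(\cdot;\alpha_1,c)$ and $\Psi(\cdot;\alpha_2,c)$ directly through the integral representation \eqref{Psi} (Lemma~\ref{lem:mono_a} and Corollary~\ref{cor:mono-deri}), whereas you differentiate the flow in $\alpha$ and bootstrap the signs of $\varphi=\partial_\alpha\Psi,\varphi',\varphi''$ from the linearized equation $\varphi^{(3)}=c(\cos\Psi)\varphi$. This is cleaner, and it gives $\partial_\alpha\Psi''(1;\alpha,c)=\varphi''(1)>0$ directly, which licenses the implicit function theorem for (ii) in place of the paper's sequential-compactness argument (note you should invoke smooth dependence of ODE solutions on parameters rather than Lemma~\ref{lem:conti_ineq}, which only gives continuity). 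Your bootstrap for (iii) is a valid shortcut past Lemma~\ref{lem:delta_non-exist}, and your first-order expansion $\Psi''(1;\alpha,c)=c\,(\cos\psi_--\cos(\psi_-+\alpha))/\alpha+O(c^2)$ is an effective replacement for the paper's energy-identity argument in (iv). The boundary analysis via the energy identity $(\ast)$, ruling out interior touching and extracting the signs $F(\alpha_-,c)<0<F(\alpha_+,c)$, is essentially the paper's Lemmas~\ref{lem:bdy_a} and~\ref{pro_bdy_a}.

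The gap: your shooting begins with ``Let $(\alpha_-,\alpha_+)$ be a connected component of $I_c$'', but you never establish that $I(c)$ is non-empty for \emph{every} $c>0$. For small $c$ this is easy ($0\in I(c)$ for $c<6(\pi/2-\psi_-)$, directly from \eqref{Psi}), but for large $c$ the admissible $\alpha$ have to shrink like $-c^{1/3}$ and there is no fixed choice of $\alpha$ that works uniformly. The paper devotes Lemmas~\ref{lem:conti_bdy_a} and~\ref{lem:non-empty} to precisely this: it proves the boundary maps $\overline{\alpha}(c),\underline{\alpha}(c)$ are continuous on $(0,\hat c)$ and runs a continuation argument via the openness of $D=\bigcup_{c>0}I(c)\times\{c\}$ to conclude $\hat c=\infty$. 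Without some version of this step, your shooting has nowhere to start when $c$ is large, so (i) --- and hence (ii), (iii), (iv), which presuppose it --- is not yet proved. (Boundedness of each component, which you also use implicitly so that $\alpha_\pm$ are finite, is the easy estimate of Lemma~\ref{lem:bound_I} and is not a real issue.)
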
 

In the proof of the existence of traveling waves for \eqref{SDB} given by 
\eqref{para-tra}, one of the main purposes is to construct 
a function $\Theta$ associated to the Gauss map \eqref{def-theta} such that 
$|\Theta(s)| < \pi/2$ for $s \in [0,1]$. Thus we need to 
find $\hat{\alpha}$ satisfying the second property of \eqref{exit_a}. 
Define a subset $I(c) \subset \mathbb{R}$ for any $c>0$ as
\begin{equation}\label{def-ic} 
I(c) := \{ \alpha \in \mathbb{R}\,|\, |\Psi(s; \alpha, c)| < \pi/2\,\,\ \mbox{for}\,\ s \in [0,1] \} 
\end{equation}
and let us find $\hat{\alpha}(c)$ in the set $I(c)$. 
Note that it follows from the continuity of $\Psi(\,\cdot\,; \alpha, c)$ and 
$\Psi(0; \alpha, c) = \psi_- \in (0,\pi/2)$ that  
\begin{equation}\label{short-bdd} 
0 < \Psi(s; \alpha, c) < \pi/2 \,\,\ \mbox{for}\,\  s \in [0,\varepsilon), 
\end{equation} 
where $[0, \varepsilon)$ is a short interval depending only on $\alpha$ and $c$. 
In Section \ref{sec:ex-alpha}, we prove the existence of $\hat{\alpha}(c)$ as in 
Proposition \ref{thm:exit_a}(i). The additional properties (ii)--(iv) of $\hat{\alpha}(c)$ 
are useful for  analyzing ``oscillation'' of $\Psi(\,\cdot\,; \hat{\alpha}(c),c)$ and 
the shooting argument with respect to $c$. 
Proposition \ref{thm:exit_a}(ii)--(iv) will be proved in Section \ref{sec:add}. 

\subsection{Existence of $\hat{\alpha}$}\label{sec:ex-alpha}

In this subsection, we prove the existence of $\hat{\alpha}(c)$ satisfying \eqref{exit_a} 
for any $c>0$ through the analysis for $I(c)$ 
defined by \eqref{def-ic}. 
The following monotonicity is one of the key tools in the shooting argument. 

\begin{lemma}\label{lem:mono_a} 
Fix $c>0$. Assume that $\alpha_1<\alpha_2$ and 
$|\Psi(s; \alpha_i, c)| < \pi/2$ for $i=1,2$ and any $s \in [0,\hat{s})$ with some $\hat{s} \in (0,1]$. 
Then 
\begin{equation}\label{ep_mono_a} 
\Psi(s; \alpha_1,c) < \Psi(s; \alpha_2, c)\,\,\ \mbox{for}\,\ s \in (0, \hat{s}]. 
\end{equation} 
\end{lemma}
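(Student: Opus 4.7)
The plan is to establish the monotonicity via a standard comparison argument, turning the difference $\Phi(s) := \Psi(s;\alpha_2,c) - \Psi(s;\alpha_1,c)$ into the solution of a linear third-order ODE with a positive zeroth-order coefficient, and then propagating positivity by successively integrating the inequalities starting from the initial data.

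First, I would set up the difference $\Phi$ and observe that it satisfies
\begin{equation*}
\Phi^{(3)}(s) = c\bigl(\sin \Psi(s;\alpha_2,c) - \sin \Psi(s;\alpha_1,c)\bigr) = c\cos(\xi(s))\,\Phi(s),
\end{equation*}
where $\xi(s)$ lies between $\Psi(s;\alpha_1,c)$ and $\Psi(s;\alpha_2,c)$, obtained via the mean value theorem. Here the hypothesis $|\Psi(s;\alpha_i,c)|<\pi/2$ on $[0,\hat s)$ is used crucially: it forces $\xi(s) \in (-\pi/2,\pi/2)$, so the coefficient $c\cos(\xi(s))$ is strictly positive on $[0,\hat s)$. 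The initial data read off from \eqref{IVP} are $\Phi(0)=0$, $\Phi'(0) = \alpha_2-\alpha_1 > 0$, and $\Phi''(0)=0$.

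Next I would argue by contradiction: suppose the conclusion fails, and let $s_0 \in (0,\hat s]$ be the smallest point at which $\Phi(s_0)\le 0$. Since $\Phi(s) = (\alpha_2-\alpha_1)s + O(s^3)$ near $s=0$ (using $\Phi(0)=\Phi''(0)=0$), we have $\Phi > 0$ on a right-neighborhood of $0$, so $s_0 > 0$ and $\Phi > 0$ on $(0,s_0)$ with $\Phi(s_0)=0$. By the ODE above, this gives $\Phi^{(3)} > 0$ on $(0,s_0)$. Integrating from $0$ with $\Phi''(0)=0$ yields $\Phi''>0$ on $(0,s_0]$; integrating again with $\Phi'(0)>0$ yields $\Phi' > 0$ on $[0,s_0]$; and integrating once more with $\Phi(0)=0$ yields $\Phi(s_0) > 0$, contradicting $\Phi(s_0)=0$. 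Hence $\Phi > 0$ on $(0,\hat s]$, which is exactly \eqref{ep_mono_a}.

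The main obstacle I anticipate is ensuring that the initial triple $(\Phi(0),\Phi'(0),\Phi''(0))=(0,+,0)$ is strong enough to seed the bootstrap, since the vanishing second derivative means $\Phi''$ is not immediately positive and a naive maximum-principle argument is unavailable for this third-order equation. The argument succeeds precisely because $\Phi^{(3)}$ has a definite sign on the interval where $\Phi>0$, allowing the sequential integrations to preserve positivity all the way down to $\Phi$ itself. No regularity issues arise since $\Psi(\cdot;\alpha_i,c)\in C^3$ by Lemma \ref{lem:conti_ineq}.
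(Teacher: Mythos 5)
Your proof is correct and follows essentially the same strategy as the paper's: establish $\Phi := \Psi(\cdot;\alpha_2,c)-\Psi(\cdot;\alpha_1,c) > 0$ near $0$ from the initial data, argue by contradiction at the first point where $\Phi$ would vanish, and derive a positive value there from the sign of the forcing term. The paper reaches the contradiction in one step by plugging into the stored triple-integral representation \eqref{Psi} and invoking monotonicity of $\sin$ on $[-\pi/2,\pi/2]$, whereas you factor $\sin\Psi_2-\sin\Psi_1$ via the mean value theorem and integrate the linearized ODE for $\Phi$ three times in sequence; these are only presentational variants of the same argument.
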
 

\begin{proof} 
First, we note that the inequality $\Psi(s; \alpha_1, c) < \Psi(s; \alpha_2, c)$ holds in a short interval $[0, \varepsilon)$ from the continuity of $\Psi(\,\cdot\,; \alpha_i,c)$ for $i=1,2$ and 
\[ \Psi(0; \alpha_1, c) = \Psi(0; \alpha_2, c) = \psi_-, \quad \Psi'(0; \alpha_1, c) = \alpha_1 < \alpha_2 = \Psi'(0; \alpha_2, c). \]
In proof by contradiction, suppose that there exists $\hat{\varepsilon} \in (0, \hat{s}]$ such that 
\begin{align} 
&-\frac{\pi}2<\Psi(s;\alpha_1,c)<\Psi(s;\alpha_2,c)<\frac{\pi}2\,\,\ 
\mbox{for}\,\ s\in(0,\hat{\varepsilon} ) \label{mono_1}\\
&\Psi(\hat{\varepsilon};\alpha_2,c)-\Psi(\hat{\varepsilon};\alpha_1,c)=0. \label{mono_2} 
\end{align} 
By using \eqref{Psi}, we have
\begin{equation} \label{mono_3} 
\begin{aligned}
&\Psi(\hat{\varepsilon};\alpha_2,c)-\Psi(\hat{\varepsilon};\alpha_1,c) \\
&=(\alpha_2-\alpha_1)\hat{\e} 
+\frac{c}2\int_0^{\hat{\e}}(\hat{\e}-\tilde{s})^2\{
\sin\Psi(\tilde{s};\alpha_2,c)-\sin\Psi(\tilde{s};\alpha_1,c)\}\,d\tilde{s}.
\end{aligned}
\end{equation}
It follows from $\alpha_2-\alpha_1>0$, $c>0$, \eqref{mono_1} and the monotonicity of 
the sine function on $[-\pi/2, \pi/2]$ that 
$\Psi(\hat{\varepsilon}; \alpha_2, c) - \Psi(\hat{\varepsilon}; \alpha_1, c) > 0$. 
This contradicts \eqref{mono_2} and we obtain the conclusion. 
\end{proof} 

\noindent
This monotonicity immediately implies the following corollary. 

\begin{cor} 
Fix $c>0$ and assume that $\alpha_1<\alpha_2$. If $\alpha_1,\alpha_2 \in I(c)$, 
\[
\Psi(s;\alpha_1,c)<\Psi(s;\alpha_2,c)\,\,\ \mbox{for}\,\ s \in (0,1]. 
\]
\end{cor}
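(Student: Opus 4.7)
The plan is to derive this corollary as an immediate consequence of Lemma \ref{lem:mono_a}, applied with $\hat{s}=1$. The hypotheses of the lemma require that $|\Psi(s;\alpha_i,c)|<\pi/2$ for $i=1,2$ on the half-open interval $[0,\hat{s})$; once this is verified, the lemma produces the strict inequality $\Psi(s;\alpha_1,c)<\Psi(s;\alpha_2,c)$ on $(0,\hat{s}]$, which is exactly the desired conclusion when $\hat{s}=1$.

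First I would unpack the hypothesis $\alpha_1,\alpha_2\in I(c)$ using the definition \eqref{def-ic} of $I(c)$: this says precisely that $|\Psi(s;\alpha_i,c)|<\pi/2$ for every $s\in[0,1]$ and each $i=1,2$. In particular, the bound is valid on the half-open interval $[0,1)$, so the hypothesis of Lemma \ref{lem:mono_a} is met with the choice $\hat{s}=1\in(0,1]$. Applying the lemma then yields $\Psi(s;\alpha_1,c)<\Psi(s;\alpha_2,c)$ for all $s\in(0,1]$, which is the statement of the corollary.

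There is no substantive obstacle here: the argument is simply a matter of matching definitions and invoking the preceding lemma. The only point worth noting is that the lemma's strict inequality already extends up to and including the right endpoint $\hat{s}$, so no separate limiting argument is needed to cover $s=1$; the membership of $\alpha_1,\alpha_2$ in $I(c)$ provides more than enough room, since the strict bound $|\Psi(s;\alpha_i,c)|<\pi/2$ holds on the closed interval $[0,1]$.
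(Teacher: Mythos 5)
Your argument is correct and matches the paper's intent: the corollary is stated immediately after Lemma \ref{lem:mono_a} with the remark that the monotonicity ``immediately implies'' it, and your application with $\hat{s}=1$, after unpacking the definition \eqref{def-ic} of $I(c)$, is exactly that one-line deduction.
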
 

Let us show some properties of $I(c)$.

\begin{lemma}\label{lem:conect} 
For each $c>0$, $I(c)$ is connected. 
\end{lemma}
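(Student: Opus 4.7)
The plan is to prove connectedness by showing that $I(c) \subset \mathbb{R}$ is an interval: given any $\alpha_1 < \alpha_2$ in $I(c)$ and any intermediate $\alpha \in (\alpha_1, \alpha_2)$, I would show $\alpha \in I(c)$. The essential ingredient is the strict monotonicity established in Lemma~\ref{lem:mono_a} (and its corollary), combined with a standard first-exit-time argument.

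First I would set up the first exit time
\[
\hat{s} := \inf\bigl\{s \in [0,1] \,\bigm|\, |\Psi(s; \alpha, c)| = \pi/2 \bigr\},
\]
with the convention that the infimum over the empty set is larger than $1$. Continuity of $\Psi(\,\cdot\,; \alpha, c)$ and $\Psi(0;\alpha,c) = \psi_- \in (0,\pi/2)$ imply $\hat{s} > 0$, and if $\hat{s} \le 1$ then $|\Psi(s; \alpha, c)| < \pi/2$ for $s \in [0,\hat{s})$ while $|\Psi(\hat{s}; \alpha, c)| = \pi/2$ by continuity and minimality.

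Next I would argue by contradiction, assuming $\hat{s} \le 1$, and invoke Lemma~\ref{lem:mono_a} on the half-open interval $[0,\hat{s})$ for the pairs $(\alpha_1,\alpha)$ and $(\alpha,\alpha_2)$. Its hypotheses are met because $\alpha_1,\alpha_2 \in I(c)$ gives $|\Psi(s;\alpha_i,c)|<\pi/2$ for all $s \in [0,1]$ (hence on $[0,\hat{s})$), while the definition of $\hat{s}$ gives the same bound for $\Psi(\,\cdot\,;\alpha,c)$. The lemma then yields
\[
\Psi(\hat{s}; \alpha_1, c) < \Psi(\hat{s}; \alpha, c) < \Psi(\hat{s}; \alpha_2, c).
\]
Since $\alpha_1,\alpha_2 \in I(c)$ enforce $-\pi/2 < \Psi(\hat{s};\alpha_i,c) < \pi/2$, the squeeze forces $|\Psi(\hat{s};\alpha,c)| < \pi/2$, contradicting $|\Psi(\hat{s};\alpha,c)| = \pi/2$. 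Therefore $\hat{s} > 1$, i.e.\ $\alpha \in I(c)$, and $I(c)$ is an interval, hence connected.

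The main obstacle is really just bookkeeping at the exit time: one must be careful to define $\hat{s}$ so that the monotonicity lemma is applicable on the half-open interval $[0,\hat{s})$ (where all three solutions are still controlled) and then extend the strict inequality to the closed endpoint $s=\hat{s}$, which is exactly what Lemma~\ref{lem:mono_a} provides. No further analytic input beyond this is needed; in particular, the argument does not require $I(c)$ to be open or bounded.
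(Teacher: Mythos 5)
Your proof is correct and takes essentially the same approach as the paper: argue by contradiction, locate the first exit time $\hat{s}$ of $\Psi(\,\cdot\,;\alpha,c)$ from $(-\pi/2,\pi/2)$, and apply Lemma~\ref{lem:mono_a} to trap $\Psi(\hat{s};\alpha,c)$ strictly between $\Psi(\hat{s};\alpha_1,c)$ and $\Psi(\hat{s};\alpha_2,c)$, contradicting $|\Psi(\hat{s};\alpha,c)|=\pi/2$. The only cosmetic difference is that you squeeze both cases at once, while the paper treats $\Psi(\hat{s};\alpha_\ast,c)=\pi/2$ and $=-\pi/2$ as two separate sub-cases.
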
 

\begin{proof} 
In proof by contradiction, suppose that for $\alpha_1,\alpha_2\in I(c)$ with 
$\alpha_1 < \alpha_2$ there exists $\alpha_\ast$ such that 
\[
\alpha_\ast\in(\alpha_1,\alpha_2) \quad\mbox{and}\quad 
\alpha_\ast\not\in I(c). 
\]
Since \eqref{short-bdd} holds, 
$\alpha_\ast\not\in I(c)$ implies the existence of $\hat{s} \in (0,1)$ such that 
\begin{equation}\label{conect-1}
|\Psi(s; \alpha_\ast, c)| < \dfrac{\pi}{2} \,\,\ \mbox{for} \,\ s \in [0,\hat{s}), \quad 
|\Psi(\hat{s}; \alpha_\ast, c)| = \dfrac{\pi}{2}. 
\end{equation}
On the other hand, $\alpha_i \in I(c)$ for $i=1,2$ gives
\begin{equation}\label{conect-2}
|\Psi(s; \alpha_i, c)| < \dfrac{\pi}{2}\,\,\ \mbox{for}\,\ s \in [0,1],\,\ i=1,2. 
\end{equation}
If $\Psi(\hat{s}; \alpha_\ast, c) = \pi/2$, it follows from \eqref{conect-1}, \eqref{conect-2}, Lemma \ref{lem:mono_a} and $\alpha_\ast<\alpha_2$ that 
\[
\Psi(\hat{s};\alpha_2,c)>\Psi(\hat{s};\alpha_\ast,c)=\frac{\pi}2. 
\]
This contradicts $\alpha_2\in I(c)$. Applying a similar argument 
to the case $\Psi(\hat{s}; \alpha_\ast, c) = -\pi/2$, 
we obtain
\[
\Psi(\hat{s};\alpha_1,c)
<\Psi(\hat{s};\alpha_\ast,c)=-\frac{\pi}2. 
\]
This contradicts $\alpha_1\in I(c)$. 
%
%
\end{proof} 

\begin{lemma}\label{lem:bound_I} 
Fix $c>0$. Then 
\[
\overline{I(c)}\subset
\left(-\dfrac{\pi}{2}-\psi_--\dfrac{c}{6} ,\dfrac{\pi}{2}-\psi_-+\dfrac{c}{6}\right). 
\]
\end{lemma}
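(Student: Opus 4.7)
The plan is to exploit the Volterra-type representation \eqref{Psi} evaluated at $s=1$. Solving it for $\alpha$ gives
\[
\alpha = \Psi(1;\alpha,c) - \psi_- - \frac{c}{2}\int_0^1 (1-\tilde{s})^2 \sin\Psi(\tilde{s};\alpha,c)\,d\tilde{s},
\]
so the task reduces to controlling the right-hand side whenever $\alpha\in\overline{I(c)}$.

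First I would observe that by the $C^3$-continuity of $\Psi$ in $(\alpha,c)$ supplied by Lemma \ref{lem:conti_ineq}, the closure $\overline{I(c)}$ consists of those $\alpha$ for which $|\Psi(s;\alpha,c)|\le\pi/2$ holds on $[0,1]$ (pass to a limit along any approximating sequence in $I(c)$). In particular both $|\Psi(1;\alpha,c)|\le\pi/2$ and $|\sin\Psi(\tilde{s};\alpha,c)|\le 1$ are automatic, and together with $\int_0^1(1-\tilde{s})^2\,d\tilde{s}=1/3$ the displayed identity immediately yields the non-strict two-sided bound
\[
-\frac{\pi}{2}-\psi_--\frac{c}{6}\le \alpha \le \frac{\pi}{2}-\psi_-+\frac{c}{6}.
\]

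The main obstacle, and the only remaining point, is to upgrade these inequalities to strict ones so as to land in the open interval of the statement. Suppose $\alpha$ attained the upper endpoint $\pi/2-\psi_-+c/6$; then the two bounds $\Psi(1;\alpha,c)\le\pi/2$ and $(c/2)\int_0^1(1-\tilde{s})^2\sin\Psi\,d\tilde{s}\ge -c/6$ would have to be saturated simultaneously. Since $c>0$ and $(1-\tilde{s})^2\ge 0$, the latter equality forces $\sin\Psi(\tilde{s};\alpha,c)\equiv -1$ on $[0,1]$, so by continuity $\Psi\equiv -\pi/2$ there, contradicting $\Psi(0;\alpha,c)=\psi_-\in(0,\pi/2)$. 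An entirely symmetric argument with $\sin\Psi\equiv 1$ and $\Psi\equiv \pi/2$ rules out the lower endpoint, completing the proof.
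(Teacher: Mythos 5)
Your proof is correct and is essentially the same as the paper's: both evaluate \eqref{Psi} at $s=1$, use $|\sin\Psi|\le 1$ together with $\int_0^1(1-\tilde{s})^2\,d\tilde{s}=1/3$, and then observe that the resulting inequality is actually strict because $\Psi(0)=\psi_-\in(0,\pi/2)$ prevents $\sin\Psi$ from being identically $\pm1$. The only cosmetic difference is the direction of the contrapositive — the paper shows directly that any $\alpha$ outside the open interval forces $|\Psi(1;\alpha,c)|>\pi/2$, whereas you first derive the non-strict two-sided bound for $\alpha\in\overline{I(c)}$ and then exclude the endpoints — but the underlying computation and the source of strictness are identical.
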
 

\begin{proof} 
Let us prove that $\alpha\not\in\overline{I(c)}$ for any $\alpha\in
\left(-\infty, -\pi/2-\psi_--c/6\right]\cup\left[\pi/2-\psi_-+c/6,\infty\right)$. 
If $\alpha\in\left(-\infty, -\pi/2-\psi_--c/6\right]$, it follows from \eqref{Psi} that 
\[
\Psi(1;\alpha, c)
=\psi_-+\alpha+\frac{c}2\int_0^1(1-\tilde{s})^2\sin\Psi(\tilde{s};\alpha,c)\,d\tilde{s}
<\psi_-+\alpha+\dfrac{c}{6} 
\le-\dfrac{\pi}{2}. 
\]
Therefore, we have $\alpha\not\in\overline{I(c)}$. 
If $\alpha\in\left[\pi/2-\psi_-+c/6,\infty\right)$, \eqref{Psi} implies that 
\[
\Psi(1;\alpha, c) 
=\psi_-+\alpha+\frac{c}2\int_0^1(1-\tilde{s})^2\sin\Psi(\tilde{s};\alpha,c)\,d\tilde{s}
>\psi_-+\alpha-\dfrac{c}{6} 
\ge\dfrac{\pi}{2},
\]
which gives $\alpha\not\in\overline{I(c)}$. 
\end{proof} 

For $c>0$ satisfying $I(c)\ne\emptyset$, set
\begin{align*} 
\overline{\alpha}(c):=\sup I(c), \quad \underline{\alpha}(c):=\inf I(c). 
\end{align*} 
By  means of Lemma \ref{lem:conect} and Lemma \ref{lem:bound_I}, we obtain 
the following corollary. 

\medskip
\begin{cor} 
For $c>0$ satisfying $I(c)\ne\emptyset$, 
\begin{align*} 
&-\dfrac{\pi}{2}-\psi_--\dfrac{c}{6}\le\underline{\alpha}(c) 
<\overline{\alpha}(c)\le\dfrac{\pi}{2}-\psi_-+\dfrac{c}{6}, \\[0.05cm]
&I(c)=(\underline{\alpha}(c), \overline{\alpha}(c)). 
\end{align*} 
\end{cor}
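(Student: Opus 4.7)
The plan is to combine the two preceding lemmas with a straightforward openness argument for $I(c)$. A nonempty connected subset of $\mathbb{R}$ is an interval, so once I know $I(c)$ is both open and bounded, it must coincide with $(\underline{\alpha}(c), \overline{\alpha}(c))$ for finite endpoints satisfying $\underline{\alpha}(c) < \overline{\alpha}(c)$, and the claimed bracketing inequalities on those endpoints will then be immediate from Lemma \ref{lem:bound_I}.

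The one ingredient not stated verbatim earlier is the openness of $I(c)$. To establish it, I would fix any $\alpha_0 \in I(c)$; by compactness of $[0,1]$, the quantity $M := \max_{s \in [0,1]} |\Psi(s;\alpha_0,c)|$ is strictly less than $\pi/2$. Lemma \ref{lem:conti_ineq} provides continuity of $\alpha \mapsto \Psi(\,\cdot\,;\alpha,c)$ in the $C^3([-\rho,\rho])$-norm, which dominates the sup-norm on $[0,1]$, so for $\alpha$ sufficiently close to $\alpha_0$ the bound $\max_{s \in [0,1]} |\Psi(s;\alpha,c)| < \pi/2$ persists. This places a whole neighborhood of $\alpha_0$ inside $I(c)$, giving openness.

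Combining this with Lemma \ref{lem:conect} identifies $I(c)$ as a nonempty open interval. Lemma \ref{lem:bound_I} asserts $\overline{I(c)} \subset (-\pi/2 - \psi_- - c/6,\; \pi/2 - \psi_- + c/6)$, so $I(c)$ is bounded; consequently $\underline{\alpha}(c)$ and $\overline{\alpha}(c)$ are finite real numbers, the equality $I(c) = (\underline{\alpha}(c), \overline{\alpha}(c))$ follows from openness together with connectedness, and in particular $\underline{\alpha}(c) < \overline{\alpha}(c)$. Since both endpoints belong to $\overline{I(c)}$, Lemma \ref{lem:bound_I} also delivers the bracketing inequalities $-\pi/2 - \psi_- - c/6 \le \underline{\alpha}(c)$ and $\overline{\alpha}(c) \le \pi/2 - \psi_- + c/6$.

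I do not expect any serious obstacle here: this is essentially a packaging corollary of the two preceding lemmas. The only point that requires any care is verifying that openness of $I(c)$ genuinely uses the full strength of continuous dependence on $\alpha$ over the compact parameter interval $[0,1]$ — once that is in hand, the remaining assertions are formal.
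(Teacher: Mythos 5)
Your proof is correct and follows essentially the same route the paper implicitly takes: the paper simply says the corollary follows ``by means of'' Lemma \ref{lem:conect} and Lemma \ref{lem:bound_I} without elaboration. You rightly observe that identifying $I(c)$ with the \emph{open} interval $(\underline{\alpha}(c),\overline{\alpha}(c))$ additionally requires openness of $I(c)$ (connectedness alone would allow a closed or half-open interval), and your continuity-plus-compactness argument via Lemma \ref{lem:conti_ineq} is exactly the right way to supply that; the paper only records this openness fact later, when it introduces the set $D$.
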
 

In the following, we analyze the maximum and minimum point of $\Psi(\,\cdot\,; \alpha, c)$ 
at $\alpha = \overline{\alpha}(c)$ and $\underline{\alpha}(c)$. 
To do it, the following ``energy estimate'' 
\begin{equation}\label{int_eq} 
\begin{aligned}
&\bigl[\Psi''(s; \alpha, c)\Psi'(s; \alpha, c)\bigr]_{s=s_1}^{s=s_2}
-\int_{s_1}^{s_2}\bigl(\Psi''(s; \alpha, c)\bigr)^2\;ds \\
&=-c\bigl(\cos\Psi(s_2; \alpha, c)-\cos\Psi(s_1; \alpha, c)\bigr).
\end{aligned} 
\end{equation}
for any $0 \le s_1 < s_2 \le 1$ will be useful. In order to obtain 
the equality \eqref{int_eq}, we multiply the equation $\Psi^{(3)}=c\sin\Psi$ by $\Psi'$ 
and integrate it on $s \in [s_1,s_2]$. 

\begin{lemma}\label{lem:bdy_a} 
Fix $c>0$ and assume that $I(c)\ne\emptyset$. 
If $\alpha=\overline{\alpha}(c)$ or $\underline{\alpha}(c)$, 
\begin{equation}\label{bdy_a} 
-\frac{\pi}2<\Psi(s;\alpha,c)<\frac{\pi}2\,\,\ \mbox{for}\,\ s\in[0,1). 
\end{equation} 
\end{lemma}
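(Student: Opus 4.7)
The plan is to argue by contradiction, supposing there exists $s_0 \in [0, 1)$ with $|\Psi(s_0; \alpha, c)| = \pi/2$, and derive a contradiction via the energy identity \eqref{int_eq}, using that $s_0$ must then be an interior local extremum of $\Psi$.

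First, I would upgrade the strict inequality defining $I(c)$ to a non-strict one at the endpoints $\overline{\alpha}(c), \underline{\alpha}(c)$: namely, $|\Psi(s;\alpha,c)| \le \pi/2$ for all $s \in [0,1]$ when $\alpha = \overline{\alpha}(c)$ or $\underline{\alpha}(c)$. This follows by choosing $\alpha_n \in I(c)$ with $\alpha_n \uparrow \overline{\alpha}(c)$ (resp.\ $\alpha_n \downarrow \underline{\alpha}(c)$) and passing to the limit using the continuity of $\Psi$ in $\alpha$ from Lemma \ref{lem:conti_ineq}.

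Next, suppose for contradiction that $|\Psi(s_0;\alpha,c)| = \pi/2$ for some $s_0 \in [0, 1)$. Because $\Psi(0;\alpha,c) = \psi_- \in (0,\pi/2)$, the case $s_0 = 0$ is excluded, so $s_0 \in (0, 1)$. The weak bound from the previous step forces $s_0$ to be an interior local extremum of $\Psi(\cdot;\alpha,c)$, so $\Psi'(s_0;\alpha,c) = 0$. I would then apply \eqref{int_eq} with $s_1 = 0$ and $s_2 = s_0$: the boundary contributions in the bracket vanish since $\Psi''(0;\alpha,c) = 0$ by the initial condition in \eqref{IVP} and $\Psi'(s_0;\alpha,c) = 0$. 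Combined with $\cos \Psi(s_0;\alpha,c) = \cos(\pm \pi/2) = 0$, the identity collapses to
\[
-\int_0^{s_0}\bigl(\Psi''(s;\alpha,c)\bigr)^2\, ds = c\cos\psi_-,
\]
whose left-hand side is non-positive while the right-hand side is strictly positive because $c>0$ and $\psi_- \in (0,\pi/2)$. This contradiction yields \eqref{bdy_a}.

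The principal obstacle is really the first step: one needs to know that the limiting profile $\Psi(\cdot;\overline{\alpha}(c),c)$ (and $\Psi(\cdot;\underline{\alpha}(c),c)$) still obeys the non-strict bound $|\Psi|\le\pi/2$, which is supplied cleanly by the uniform continuity of the solution map in Lemma \ref{lem:conti_ineq}. Once that is in hand, the energy identity \eqref{int_eq} does all of the remaining work, with no delicate ODE analysis required beyond the vanishing of boundary terms at the extremum.
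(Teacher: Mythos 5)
Your proposal is correct and follows essentially the same path as the paper's own proof: pass to the limit along a sequence in $I(c)$ to obtain the weak bound $|\Psi|\le \pi/2$ on $[0,1]$, observe that an interior point $s_0\in(0,1)$ where $|\Psi(s_0)|=\pi/2$ must be a local extremum with $\Psi'(s_0)=0$, and then apply the energy identity \eqref{int_eq} on $[0,s_0]$ to obtain $-\int_0^{s_0}(\Psi'')^2\,ds = c\cos\psi_->0$, which is impossible. The only cosmetic difference is that you treat the cases $\Psi(s_0)=\pm\pi/2$ uniformly via $\cos(\pm\pi/2)=0$, whereas the paper spells out the $+\pi/2$ case and notes the $-\pi/2$ case is symmetric; the substance is identical.
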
 

\begin{proof} 
For each $\alpha=\overline{\alpha}(c)$ and $\underline{\alpha}(c)$ 
there exists a sequence $\{\alpha_n\}_{n \in \mathbb{N}}\subset I(c)$ such that 
\[
\lim_{n\to\infty}\alpha_n=\alpha. 
\]
It follows from $\alpha_n\in I(c)$ and the continuity of $\Psi$ with respect to 
$\alpha$ that 
\begin{equation}\label{min-max} 
-\frac{\pi}2\le\Psi(s;\alpha,c)\le\frac{\pi}2\,\,\ \mbox{for}\,\ s\in[0,1]. 
\end{equation} 
Note that $\Psi(0;\alpha,c)=\psi_-\ne\pm\pi/2$. 

In proof by contradiction, suppose that there exists $\hat{s} \in (0,1)$ such that 
$\Psi(\hat{s};\alpha,c)=\pi/2$. Then \eqref{min-max} implies that $\hat{s}$ is 
the maximum point of $\Psi$, so that we have
\begin{equation}\label{pt_max} 
\Psi(\hat{s};\alpha,c)=\frac{\pi}2, \quad \Psi'(\hat{s};\alpha,c)=0.
\end{equation} 
Taking $s_1=0$ and $s_2=\hat{s}$ in \eqref{int_eq} and substituting 
\eqref{pt_max} into it, we are led to 
\begin{align*} 
0\ge-\int^{\hat{s}}_0\{\Psi''(s;\alpha,c)\}^2\,ds=c\cos\psi_->0.
\end{align*} 
This is a contradiction. 

On the other hand, suppose that there exists $\hat{s} \in (0,1)$ such that 
$\Psi(\hat{s};\alpha,c)=-\pi/2$. Then, by virtue of \eqref{min-max}, we see 
that $\hat{s}$ is the minimum point of $\Psi$. This fact gives 
\[
\Psi(\hat{s};\alpha,c)=-\frac{\pi}2, \quad \Psi'(\hat{s};\alpha,c)=0.
\]
Applying a similar argument above, we also obtain a contradiction. 

Consequently, we see that $\Psi(s;\alpha,c)\ne\pm\pi/2$ for $s \in[0,1)$. 
From this fact and \eqref{min-max}, we conclude \eqref{bdy_a}. 
\end{proof} 

\begin{lemma}\label{pro_bdy_a} 
For $c>0$ satisfying $I(c)\ne\emptyset$, 
\begin{align*} 
&\Psi(1;\underline{\alpha}(c),c)=-\frac{\pi}2,\,\,\ 
\Psi'(1;\underline{\alpha}(c),c)<0,\,\,\ 
\Psi''(1;\underline{\alpha}(c),c)<0, \\
&\Psi(1;\overline{\alpha}(c),c)=\frac{\pi}2,\,\,\  
\Psi'(1;\overline{\alpha}(c),c)>0,\,\,\ 
\Psi''(1;\overline{\alpha}(c),c)>0.
\end{align*} 
\end{lemma}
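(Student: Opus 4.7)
The two statements are symmetric (upper endpoint versus lower endpoint of $I(c)$), so I will describe the argument for $\alpha = \overline{\alpha}(c)$ and indicate that the case $\alpha = \underline{\alpha}(c)$ follows by the same method with obvious sign changes. The plan rests on three ingredients already available: Lemma \ref{lem:bdy_a} which says $|\Psi(s;\overline{\alpha}(c),c)| < \pi/2$ on $[0,1)$, the $\alpha$-monotonicity in Lemma \ref{lem:mono_a}, and the energy identity \eqref{int_eq}.

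\smallskip

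\textbf{Step 1: Value at $s=1$.} First I would show $\Psi(1;\overline{\alpha}(c),c) = \pi/2$. By continuous dependence on $\alpha$ (Lemma \ref{lem:conti_ineq}) together with the definition $\overline{\alpha}(c) = \sup I(c)$, one has $|\Psi(1;\overline{\alpha}(c),c)| \le \pi/2$; and if the inequality were strict, one could find a neighborhood of $\overline{\alpha}(c)$ still in $I(c)$ by continuity, contradicting the supremum. Hence $\Psi(1;\overline{\alpha}(c),c) = \pm \pi/2$. To rule out $-\pi/2$, pick any $\alpha_0 \in I(c)$ and apply Lemma \ref{lem:mono_a} with $\alpha_1 = \alpha_0$, $\alpha_2 = \overline{\alpha}(c)$, $\hat s = 1$ (valid because Lemma \ref{lem:bdy_a} supplies the required bounds on $[0,1)$ for $\alpha_2$, and $\alpha_1 \in I(c)$ supplies them on $[0,1]$); the resulting strict inequality at $s=1$ gives $\Psi(1;\overline{\alpha}(c),c) > \Psi(1;\alpha_0,c) > -\pi/2$, so only $+\pi/2$ remains.

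\smallskip

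\textbf{Step 2: Sign of $\Psi'(1)$ and $\Psi''(1)$.} Since $\Psi(1;\overline{\alpha}(c),c) = \pi/2$ is attained from strictly below on $[0,1)$, the one-sided derivative test gives $\Psi'(1;\overline{\alpha}(c),c) \ge 0$. To upgrade to strict inequality, apply the energy identity \eqref{int_eq} on $[s_1,s_2]=[0,1]$ using $\Psi''(0)=0$:
\[
\Psi''(1)\Psi'(1) - \int_0^1 (\Psi''(s))^2\,ds = c\bigl(\cos \psi_- - \cos(\pi/2)\bigr) = c \cos \psi_- > 0.
\]
If $\Psi'(1)=0$, the left side would be $\le 0$, a contradiction; hence $\Psi'(1;\overline{\alpha}(c),c) > 0$. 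The same identity, rearranged as
\[
\Psi''(1)\Psi'(1) = \int_0^1 (\Psi''(s))^2\,ds + c\cos\psi_- > 0,
\]
then yields $\Psi''(1;\overline{\alpha}(c),c) > 0$. The case $\alpha = \underline{\alpha}(c)$ is completely parallel: monotonicity forces $\Psi(1;\underline{\alpha}(c),c) = -\pi/2$ (now $\Psi(1;\underline{\alpha}(c),c) < \Psi(1;\alpha_0,c) < \pi/2$ for $\alpha_0 \in I(c)$), the min property gives $\Psi'(1) \le 0$, and the same identity (with $\cos(-\pi/2)=0$) yields $\Psi''(1)\Psi'(1) > 0$ together with $\Psi'(1) \ne 0$, forcing $\Psi'(1) < 0$ and hence $\Psi''(1) < 0$.

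\smallskip

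\textbf{Anticipated obstacle.} The substantive content lies entirely in Step 1, specifically in verifying that one may apply Lemma \ref{lem:mono_a} up to $\hat s = 1$ despite $\overline{\alpha}(c)$ itself not belonging to $I(c)$; once the boundary value is pinned down, Step 2 is essentially a one-line exploitation of \eqref{int_eq}, because the two terms $\Psi''(0)\Psi'(0)$ and $\Psi'(1)$ or $\Psi''(1)$ control everything and no new analytic input is needed.
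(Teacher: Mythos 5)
Your proposal is correct and uses the same three ingredients as the paper's own proof (Lemma \ref{lem:bdy_a} to pin the extreme value to $s=1$, Lemma \ref{lem:mono_a} to select the correct sign, and the energy identity \eqref{int_eq} on $[0,1]$ to get the strict sign of $\Psi'(1)$ and $\Psi''(1)$), with only a cosmetic variation: to rule out the wrong sign at $s=1$, you compare $\overline{\alpha}(c)$ with an interior point $\alpha_0 \in I(c)$ via monotonicity, whereas the paper argues by contradiction that a slightly smaller $\alpha < \underline{\alpha}(c)$ would then belong to $I(c)$. Both routes rest on the same lemmas and the overall structure is the same.
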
 

\begin{proof} 
By the definition of $I(c)$ and Lemma \ref{lem:bdy_a}, 
we have
\begin{equation}\label{value-s=1} 
|\Psi(1;\alpha,c)| = \dfrac{\pi}{2}\,\,\ \mbox{at}\,\ 
\alpha = \overline{\alpha}(c), \underline{\alpha}(c). 
\end{equation}
We only prove the properties for $\underline{\alpha}(c)$. A similar argument is 
applicable to the proof for $\overline{\alpha}(c)$. 

First, let us prove that 
\begin{equation}\label{under-a_1} 
\Psi(1;\underline{\alpha}(c),c)=-\frac{\pi}2.  
\end{equation} 
In proof by contradiction, suppose that \eqref{under-a_1} does not hold. 
Then it follows from Lemma \ref{lem:bdy_a} and \eqref{value-s=1} that 
\[
-\frac{\pi}2<\Psi(s;\underline{\alpha}(c),c)<\frac{\pi}2\,\,\ 
\mbox{for}\,\ s\in[0,1), \quad  
\Psi(1;\underline{\alpha}(c),c)=\frac{\pi}2. 
\]
Recalling Lemma \ref{lem:mono_a} and the continuity of $\Psi$ with respect to $\alpha$, 
we see
\[
-\frac{\pi}2<\Psi(s;\alpha,c)<\frac{\pi}2\,\,\ \mbox{for}\,\ s\in[0,1], 
\]
provided that $\alpha$ is close to $\underline{\alpha}(c)$ and satisfies 
$\alpha<\underline{\alpha}(c)$. This implies $\alpha\in I(c)$, which 
contradicts the definition of $\underline{\alpha}(c)$. 
Thus \eqref{under-a_1} holds. 

By means of \eqref{under-a_1} and Lemma \ref{lem:bdy_a}, we obtain
\begin{equation}\label{under-a_2} 
\Psi'(1;\underline{\alpha}(c),c)\le0.
\end{equation} 
Taking $s_1=0$ and $s_2=1$ in \eqref{int_eq} and 
using \eqref{under-a_1}, we have
\[ 
\Psi''(1;\underline{\alpha}(c),c)\Psi'(1;\underline{\alpha}(c),c) 
=c\cos\psi_-+\int_0^1\{\Psi''(s;\underline{\alpha}(c),c)\}^2\;ds>0.
\]
This implies that the sign of $\Psi''(1;\underline{\alpha}(c),c)$ is the same as 
that of $\Psi'(1;\underline{\alpha}(c),c)$ and both of them are not 
equal to $0$. Hence, by \eqref{under-a_2}, we are led to 
$\Psi'(1;\underline{\alpha}(c),c)<0$ and $\Psi''(1;\underline{\alpha}(c),c)<0$. 
\end{proof} 

From Lemma \ref{lem:mono_a}, Lemma \ref{lem:bdy_a}, \eqref{Psi2} and 
\eqref{Psi1}, we easily obtain the following result. 

\begin{cor}\label{cor:mono-deri} 
Fix $c>0$ and assume that $I(c)\ne\emptyset$. 
If $\alpha_1,\alpha_2\in[\underline{\alpha}(c),\overline{\alpha}(c)]$ and 
$\alpha_1<\alpha_2$, 
\[ 
\Psi^{(k)}(s;\alpha_1,c)<\Psi^{(k)}(s;\alpha_2,c)\,\,\ 
\mbox{for}\,\ s\in(0,1],\ k=1,2. 
\]
\end{cor}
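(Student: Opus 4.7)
My plan is to derive both monotonicity statements directly from the integral representations \eqref{Psi2} and \eqref{Psi1}, using Lemma \ref{lem:mono_a} to supply the necessary pointwise ordering of $\Psi$ itself.

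First I would verify that the hypothesis of Lemma \ref{lem:mono_a} holds with $\hat{s}=1$. Since $\alpha_i \in [\underline{\alpha}(c),\overline{\alpha}(c)]$ for $i=1,2$, the definition \eqref{def-ic} of $I(c)$ handles the case $\alpha_i \in (\underline{\alpha}(c),\overline{\alpha}(c))$, while Lemma \ref{lem:bdy_a} handles the boundary cases $\alpha_i \in \{\underline{\alpha}(c),\overline{\alpha}(c)\}$; in either case $|\Psi(s;\alpha_i,c)|<\pi/2$ for every $s \in [0,1)$. Applying Lemma \ref{lem:mono_a} then yields
\[
\Psi(s;\alpha_1,c) < \Psi(s;\alpha_2,c) \quad \mbox{for}\,\ s \in (0,1].
\]

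Next, from \eqref{Psi2} I would write
\[
\Psi''(s;\alpha_2,c) - \Psi''(s;\alpha_1,c) = c \int_0^s \bigl(\sin \Psi(\tilde{s};\alpha_2,c) - \sin \Psi(\tilde{s};\alpha_1,c)\bigr)\, d\tilde{s}.
\]
For each $\tilde{s} \in (0,1)$ both values $\Psi(\tilde{s};\alpha_i,c)$ lie strictly inside $(-\pi/2,\pi/2)$ where $\sin$ is strictly increasing, so the integrand is strictly positive on $(0,\min\{s,1\})$. Even when $\alpha_1=\underline{\alpha}(c)$ or $\alpha_2=\overline{\alpha}(c)$, the single boundary point $\tilde{s}=1$ contributes measure zero. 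Combined with $c>0$, this gives $\Psi''(s;\alpha_1,c)<\Psi''(s;\alpha_2,c)$ for every $s \in (0,1]$, which is the $k=2$ case.

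Finally, the $k=1$ case follows from \eqref{Psi1} in the same way:
\[
\Psi'(s;\alpha_2,c) - \Psi'(s;\alpha_1,c) = (\alpha_2-\alpha_1) + c \int_0^s (s-\tilde{s})\bigl(\sin \Psi(\tilde{s};\alpha_2,c) - \sin \Psi(\tilde{s};\alpha_1,c)\bigr)\, d\tilde{s},
\]
where the constant term is already strictly positive by assumption and the integral is non-negative by the argument above. There is no real obstacle in this corollary; the only subtlety worth attention is the endpoint behaviour at $s=1$ when $\alpha$ coincides with $\underline{\alpha}(c)$ or $\overline{\alpha}(c)$, and this is neutralised by the fact that the pointwise ordering of $\Psi$ extends to $s=1$ (via Lemma \ref{lem:mono_a}) even though $\sin \Psi$ may only be weakly ordered there.
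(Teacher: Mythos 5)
Your proof is correct and follows the same route the paper indicates: verify the hypothesis of Lemma \ref{lem:mono_a} with $\hat{s}=1$ using the definition of $I(c)$ (interior) together with Lemma \ref{lem:bdy_a} (boundary), and then read off the strict ordering of $\Psi''$ and $\Psi'$ from the integral representations \eqref{Psi2} and \eqref{Psi1} via the strict monotonicity of $\sin$ on $[-\pi/2,\pi/2]$. The only superfluous bit is the measure-zero remark about $\tilde{s}=1$; since $\sin$ is strictly increasing on the closed interval $[-\pi/2,\pi/2]$ and Lemma \ref{lem:mono_a} orders $\Psi$ on $(0,1]$ including $s=1$, the integrand is ordered there as well, so no exceptional-point caveat is needed.
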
 

\medskip
Consequently, we see that for $c>0$ satisfying $I(c)\ne\emptyset$ there exists 
a unique $\hat{\alpha}(c)\in I(c)$ such that $\Psi''(1;\hat{\alpha}(c),c)=0$. 

The rest of the proof is to prove that $I(c)\ne\emptyset$ for any $c>0$. 
Set 
\begin{equation}\label{def-D}
D:=\bigcup_{c>0} I(c)\times\{c\}. 
\end{equation}
From the continuity of $\Psi$ with respect to $(\alpha,c)$, 
$D$ is an open set. 


\begin{lemma} 
$0\in I(c)$ for $c\in(0,6(\pi/2-\psi_-))$. 
\end{lemma}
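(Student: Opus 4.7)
The plan is to exploit the integral representation \eqref{Psi} with $\alpha = 0$ together with the universal bound $|\sin(\,\cdot\,)| \le 1$. By Lemma \ref{lem:conti_ineq}, a smooth solution $\Psi(\,\cdot\,;0,c)$ exists on $(-\rho,\rho)$ for every $c \in \mathbb{R}$, so the lemma reduces to a sup-norm estimate on an already-given solution.

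First, I would specialize $\alpha = 0$ in \eqref{Psi}, which kills the linear-in-$s$ term and leaves
\[
\Psi(s;0,c) - \psi_- = \frac{c}{2}\int_0^s (s-\tilde{s})^2 \sin \Psi(\tilde{s};0,c)\,d\tilde{s}.
\]
Bounding $|\sin|$ by $1$, which requires no a priori information on $\Psi$ itself, the right-hand side is at most $cs^3/6 \le c/6$ in absolute value for every $s \in [0,1]$.

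Second, I would insert the hypothesis $c < 6(\pi/2 - \psi_-)$, which sharpens the previous estimate to $|\Psi(s;0,c) - \psi_-| < \pi/2 - \psi_-$, so that
\[
2\psi_- - \frac{\pi}{2} < \Psi(s;0,c) < \frac{\pi}{2}
\]
on $[0,1]$. The upper inequality yields $\Psi < \pi/2$ directly; the lower inequality exceeds $-\pi/2$ because $\psi_- > 0$. Combining, $|\Psi(s;0,c)| < \pi/2$ on $[0,1]$, which is precisely $0 \in I(c)$ in the sense of \eqref{def-ic}.

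The step that might \emph{look} like an obstacle is the absence of a priori control on $\Psi$ before applying the bound on $\sin$: one could initially expect to need a continuity/bootstrap argument in the spirit of Lemma \ref{lem:bdy_a}. However, because $|\sin|$ is globally bounded by $1$, the Picard-type estimate closes in a single application and no bootstrap is needed. The lemma is therefore a clean quantitative consequence of \eqref{Psi} at the distinguished parameter $\alpha = 0$, where the linear-in-$s$ contribution vanishes.
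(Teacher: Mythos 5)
Your argument is correct and is essentially the paper's own proof: both plug $\alpha=0$ into the integral formula \eqref{Psi}, bound $|\sin|\le 1$ to obtain $|\Psi(s;0,c)-\psi_-|\le c/6$ (the paper states it compactly as $|\Psi(s;0,c)|\le\psi_-+c/6$), and then use $c<6(\pi/2-\psi_-)$ to conclude $|\Psi(s;0,c)|<\pi/2$ on $[0,1]$. Your observation that no bootstrap is needed because $|\sin|$ is globally bounded is exactly what makes this a one-line estimate in the paper.
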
 

\begin{proof} 
Using \eqref{Psi}, we obtain 
\[ 
|\Psi(s;0,c)|\le\psi_-+\dfrac{c}{6}<\dfrac{\pi}{2}\,\,\ 
\mbox{for}\,\ s\in[0,1],
\]
so that $0\in I(c)$. 
\end{proof} 

Thus we can define $\overline{\alpha}(c)$ and $\underline{\alpha}(c)$ for 
$c\in(0,6(\pi/2-\psi_-))$. Set
\[ 
\hat{c}:=\sup\{\tilde{c}> 0\,|\,I(c)\ne\emptyset\,\ \mbox{for}\ c\in(0,\tilde{c})\}. 
\]

\begin{lemma}\label{lem:conti_bdy_a} 
$\overline{\alpha}(c)$, $\underline{\alpha}(c)$ is continuous on 
$(0,\hat{c} )$. 
\end{lemma}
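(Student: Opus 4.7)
The plan is to prove continuity of $\overline{\alpha}$ on $(0, \hat{c})$ via a subsequential compactness argument; the argument for $\underline{\alpha}$ will be entirely symmetric with $\pi/2$ replaced by $-\pi/2$. I would fix $c_0 \in (0, \hat{c})$ and a sequence $c_n \to c_0$ with $c_n \in (0, \hat{c})$, and use Lemma \ref{lem:bound_I} together with the boundedness of $\{c_n\}$ to conclude that $\{\overline{\alpha}(c_n)\}$ is bounded. By extracting a convergent subsequence $\overline{\alpha}(c_{n_k}) \to \alpha^*$, it then suffices to show that every such accumulation point $\alpha^*$ must coincide with $\overline{\alpha}(c_0)$; standard subsequential reasoning then upgrades this to convergence of the whole sequence.

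To identify $\alpha^*$, I would use the $C^3$-continuity of $\Psi$ in $(\alpha, c)$ from Lemma \ref{lem:conti_ineq} to pass to the limit in the relations $\Psi(1; \overline{\alpha}(c_{n_k}), c_{n_k}) = \pi/2$ (Lemma \ref{pro_bdy_a}) and $|\Psi(s; \overline{\alpha}(c_{n_k}), c_{n_k})| \le \pi/2$ on $[0, 1]$, obtaining
\begin{equation*}
\Psi(1; \alpha^*, c_0) = \pi/2, \qquad |\Psi(s; \alpha^*, c_0)| \le \pi/2 \text{ for } s \in [0, 1].
\end{equation*}
The next step is to upgrade the latter to the strict bound $|\Psi(s; \alpha^*, c_0)| < \pi/2$ for $s \in [0, 1)$, by mimicking the energy argument in the proof of Lemma \ref{lem:bdy_a}: if $\Psi(s_0; \alpha^*, c_0) = \pm \pi/2$ at some interior $s_0 \in (0, 1)$, then $s_0$ is an interior extremum, so $\Psi'(s_0; \alpha^*, c_0) = 0$, and using $\Psi''(0; \alpha^*, c_0) = 0$ the energy identity \eqref{int_eq} on $[0, s_0]$ reduces to $-\int_0^{s_0} (\Psi''(s; \alpha^*, c_0))^2\,ds = c_0 \cos \psi_-$, which is impossible since the left-hand side is nonpositive and the right-hand side is strictly positive.

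Finally, with the strict bounds on $[0, 1)$ in hand for both $\Psi(\cdot; \alpha^*, c_0)$ and $\Psi(\cdot; \overline{\alpha}(c_0), c_0)$ (the latter by Lemma \ref{lem:bdy_a}), I would invoke Lemma \ref{lem:mono_a} with $\hat{s} = 1$: if $\alpha^* \neq \overline{\alpha}(c_0)$, strict monotonicity of $\alpha \mapsto \Psi(1; \alpha, c_0)$ forces $\Psi(1; \alpha^*, c_0) \neq \Psi(1; \overline{\alpha}(c_0), c_0)$, contradicting that both equal $\pi/2$. Hence $\alpha^* = \overline{\alpha}(c_0)$. The main obstacle I expect is the strict upgrade carried out in the previous paragraph, since the conclusion of Lemma \ref{lem:bdy_a} is stated only for $\alpha = \overline{\alpha}(c)$ or $\underline{\alpha}(c)$ and must be re-derived at the candidate limit $\alpha^*$; once this is in place, the monotonicity Lemma \ref{lem:mono_a} automatically pins down the limit and yields continuity of $\overline{\alpha}$, and an analogous argument with the energy identity evaluated against $-\pi/2$ handles $\underline{\alpha}$.
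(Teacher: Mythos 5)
Your proof is correct and follows essentially the same structure as the paper's: bound the sequence via Lemma \ref{lem:bound_I}, extract a convergent subsequence, pass to the limit to get $\Psi(1;\alpha^*,c_0)=\pi/2$ and the non-strict bound, upgrade to a strict bound on $[0,1)$ via the energy identity \eqref{int_eq}, and then identify $\alpha^*$. The only cosmetic difference is in the final identification step: the paper first shows $\alpha^* \in \partial I(c_0)$ and then appeals to Lemma \ref{pro_bdy_a}, while you apply the monotonicity Lemma \ref{lem:mono_a} directly at $\hat{s}=1$ to both $\alpha^*$ and $\overline{\alpha}(c_0)$ (both now satisfying the strict hypothesis on $[0,1)$) to conclude $\Psi(1;\cdot,c_0)$ is injective on the pair, whence $\alpha^*=\overline{\alpha}(c_0)$; this is an equivalent and if anything slightly more direct route to the same conclusion.
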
 

\begin{proof} 
We only prove the continuity of $\overline{\alpha}(c)$. 
Let us show that 
\begin{equation}\label{conti_over-a} 
\lim_{n\to\infty}\overline{\alpha}(c_n)=\overline{\alpha}(c) 
\end{equation} 
for any sequences $\{c_n\}_{n \in \mathbb{N}}\subset(0,\hat{c} )$ 
which converges to $c\in(0,\hat{c} )$. Since $c_n\to c\,(n\to\infty)$, 
there exists $n_0\in\mathbb{N}$ such that
\[
c_n\le\max\{c_1,\cdots,c_{n_0} ,c+1\} =:c_{max}\,\,\ (n\in\mathbb{N} ). 
\]
Then Lemma \ref{lem:bound_I} implies that 
\[
\overline{\alpha}(c_n)\in\overline{I(c_n)}\subset 
\left(-\dfrac{\pi}{2}-\psi_--\dfrac{c_{max}}{6} ,
\dfrac{\pi}{2}-\psi_-+\dfrac{c_{max}}{6}\right). 
\]
Thus $\{\overline{\alpha}(c_n)\}_{n\in\mathbb{N}}$ includes a convergent subsequence 
$\{\overline{\alpha}(c_{n_k} )\}_{k\in\mathbb{N}}$. Let $\alpha^\ast$ be a limit of 
its subsequence, that is, 
\[ 
\lim_{k\to\infty}\overline{\alpha}(c_{n_k} )=\alpha^\ast. 
\]
By means of the continuity of $\Psi$ with respect to $(\alpha,c)$, 
Lemma \ref{lem:bdy_a} and Lemma \ref{pro_bdy_a}, we obtain
\begin{align} 
&\Psi(1;\alpha^\ast,c)=\lim_{k\to\infty}\Psi(1;\overline{\alpha}(c_{n_k} ),c_{n_k} )
=\frac{\pi}2, \label{a-ast_at1}\\
&-\frac{\pi}2\le\Psi(s;\alpha^\ast,c)\le\frac{\pi}2\,\,\ 
\mbox{for}\,\ s\in[0,1). 
\label{a-ast_in} 
\end{align} 
Hence, it follows that 
\begin{equation}\label{conti_bdy-1} 
-\frac{\pi}2<\Psi(s;\alpha^\ast,c)<\frac{\pi}2\,\,\ \mbox{for}\,\ s\in[0,1). 
\end{equation}
Indeed, if we suppose that \eqref{conti_bdy-1} does not hold, it follows from \eqref{a-ast_in} that 
there exists $\hat{s} \in (0,1)$ such that 
\[ 
\Psi(\hat{s};\alpha^\ast,c)
=\frac{\pi}2\ \mbox{or}\ -\frac{\pi}2, \quad 
\Psi'(\hat{s};\alpha^\ast,c)=0.
\]
Taking $s_1=0$ and $s_2=\hat{s}$ in \eqref{int_eq} and 
applying a similar argument to the proof of Lemma \ref{lem:bdy_a}, 
we are led to a contradiction. As a result, we have \eqref{conti_bdy-1}. 
Using the continuity of $\Psi$ with respect to $\alpha$, Lemma \ref{lem:mono_a}, 
\eqref{a-ast_at1} and \eqref{conti_bdy-1}, we see that for any $\varepsilon>0$
\[
(\alpha^\ast-\varepsilon,\alpha^\ast)\cap I(c)\ne\emptyset, \quad
(\alpha^\ast,\alpha^\ast+\varepsilon)\cap (\mathbb{R}\setminus I(c))\ne\emptyset, 
\]
which implies $\alpha^\ast\in\partial I(c)$. Thus, by virtue of Lemma \ref{pro_bdy_a} 
and \eqref{a-ast_at1}, we have $\alpha^\ast=\overline{\alpha}(c)$. 
This means that a limit of any convergent subsequences is unique and 
its value is equal to $\overline{\alpha}(c)$, so that \eqref{conti_over-a} holds. 
\end{proof} 

\begin{lemma}\label{lem:non-empty} 
$\hat{c} =\infty$. 
\end{lemma}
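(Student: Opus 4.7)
My plan is to argue by contradiction: assume $\hat{c} < \infty$. The set $D$ defined in \eqref{def-D} is open by Lemma \ref{lem:conti_ineq}, so if I can exhibit any $\alpha_0 \in I(\hat{c})$ then openness of $D$ yields a $\delta > 0$ with $I(c) \neq \emptyset$ for all $c \in (\hat{c} - \delta, \hat{c} + \delta)$. Combined with $I(c) \neq \emptyset$ for $c \in (0,\hat{c})$, this gives $I(c) \neq \emptyset$ on the longer interval $(0, \hat{c} + \delta)$, contradicting the definition of $\hat{c}$. Thus the task reduces to showing $I(\hat{c}) \neq \emptyset$.

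To produce an element of $I(\hat{c})$, I would choose a sequence $c_n \uparrow \hat{c}$ and exploit the boundary values $\underline{\alpha}(c_n)$ and $\overline{\alpha}(c_n)$, which exist since $I(c_n) \neq \emptyset$. Lemma \ref{lem:bound_I} shows they are uniformly bounded, so after extracting subsequences I may assume $\underline{\alpha}(c_n) \to \underline{\alpha}^*$ and $\overline{\alpha}(c_n) \to \overline{\alpha}^*$. Passing to the limit in the identities $\Psi(1;\underline{\alpha}(c_n),c_n) = -\pi/2$ and $\Psi(1;\overline{\alpha}(c_n),c_n) = \pi/2$ from Lemma \ref{pro_bdy_a} (using Lemma \ref{lem:conti_ineq}) gives $\Psi(1;\underline{\alpha}^*,\hat{c}) = -\pi/2$ and $\Psi(1;\overline{\alpha}^*,\hat{c}) = \pi/2$; in particular $\underline{\alpha}^* < \overline{\alpha}^*$.

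Next, I claim that every $\alpha \in (\underline{\alpha}^*, \overline{\alpha}^*)$ lies in $I(\hat{c})$. For $n$ large such $\alpha$ satisfies $\alpha \in I(c_n)$, so $|\Psi(s;\alpha,c_n)| < \pi/2$ on $[0,1]$; letting $n \to \infty$ gives $|\Psi(s;\alpha,\hat{c})| \le \pi/2$ on $[0,1]$. Equality at an interior point $\hat{s} \in (0,1)$ would make $\hat{s}$ a global extremum with $\Psi'(\hat{s};\alpha,\hat{c}) = 0$, and the energy identity \eqref{int_eq} with $s_1 = 0$, $s_2 = \hat{s}$ would force $\hat{c} \cos \psi_- \le 0$, a contradiction. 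The same reasoning gives $|\Psi(s;\underline{\alpha}^*,\hat{c})|$ and $|\Psi(s;\overline{\alpha}^*,\hat{c})|$ both strictly less than $\pi/2$ on $[0,1)$. I can then apply Lemma \ref{lem:mono_a} with $\hat{s} = 1$ to the pairs $(\underline{\alpha}^*,\alpha)$ and $(\alpha,\overline{\alpha}^*)$, obtaining the strict inequalities $-\pi/2 = \Psi(1;\underline{\alpha}^*,\hat{c}) < \Psi(1;\alpha,\hat{c}) < \Psi(1;\overline{\alpha}^*,\hat{c}) = \pi/2$. Hence $\alpha \in I(\hat{c})$, which supplies the required contradiction.

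The main obstacle is ruling out the possibility $|\Psi(1;\alpha,\hat{c})| = \pi/2$ in the limit, because the energy-estimate trick used for interior extrema of $\Psi$ is unavailable at $s = 1$, where $\Psi'$ is unconstrained. The way out is to upgrade the strict inequality $|\Psi| < \pi/2$ from $[0,1)$ to the right endpoint via the strict $\alpha$-monotonicity of Lemma \ref{lem:mono_a}, bracketing $\alpha$ between the limiting extremal values $\underline{\alpha}^*$ and $\overline{\alpha}^*$ whose boundary behavior at $s=1$ is already pinned down by Lemma \ref{pro_bdy_a}.
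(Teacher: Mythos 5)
Your proof is correct and takes essentially the same approach as the paper: extract a convergent subsequence of boundary $\alpha$-values as $c_n \to \hat{c}$, use the energy identity \eqref{int_eq} to carry the strict bound $|\Psi|<\pi/2$ on $[0,1)$ into the limit, exhibit an element of $I(\hat{c})$, and invoke openness of $D$ to contradict the definition of $\hat{c}$. The only variation is tactical: the paper works with $\overline{\alpha}^\ast = \lim\overline{\alpha}(c_n)$ alone and backs off by a small $\varepsilon$ (using continuity of $\Psi$ in $\alpha$ for the lower bound on $[0,1]$), whereas you bracket $\alpha$ between both limits $\underline{\alpha}^\ast$ and $\overline{\alpha}^\ast$ and apply Lemma \ref{lem:mono_a} from both sides at $s=1$, which dispenses with the $\varepsilon$-continuity step.
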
 

\begin{proof} 
In proof by contradiction, suppose that $\hat{c}<\infty$. Applying an argument 
similar to the first part of the proof of Lemma \ref{lem:conti_bdy_a}, 
for any sequences $\{c_n\}_{n\in\mathbb{N}}\subset(0,\hat{c} )$ satisfying 
$c_n\to\hat{c}\,(n\to\infty)$ the sequences $\{\overline{\alpha}(c_n)\}_{n\in\mathbb{N}}$ 
include a convergent subsequence $\{\overline{\alpha}(c_{n_k} )\}_{k\in\mathbb{N}}$. 
Let $\alpha^\ast$ be a limit its subsequence, that is, 
\[ 
\lim_{k\to\infty}\overline{\alpha}(c_{n_k} )=\alpha^\ast. 
\]
Using a similar argument to the second part of the proof of Lemma \ref{lem:conti_bdy_a}, 
we see 
\[ 
\Psi(1;\alpha^\ast,\hat{c} )=\frac{\pi}2, \quad 
-\frac{\pi}2<\Psi(s;\alpha^\ast,\hat{c} ),<\frac{\pi}2\,\,\ 
\mbox{for}\,\ s\in[0,1). 
\]
Thus it follows from the continuity of $\Psi$ with respect to $\alpha$ and 
Lemma \ref{lem:mono_a} that there exists a small $\varepsilon>0$ such that 
$\alpha^\ast - \varepsilon \in I(\hat{c})$. Since $D$ defined as \eqref{def-D} is open, we can choose 
a sufficiently small $r>0$ such that the ball with a radius $r$ and a center 
$(\alpha^\ast-\varepsilon,\hat{c})$ is contained in $D$.
Hence $\alpha^\ast -\varepsilon \in I(c)$ for $c \in (\hat{c}-r, \hat{c}+r)$, which contradicts 
the definition of $\hat{c}$. 
\end{proof} 

\begin{proof}[Proof of Proposition \ref{thm:exit_a}(i)] 
From Lemma \ref{lem:non-empty}, $I(c)\ne\emptyset$ for any $c>0$. 
Therefore, by applying the intermediate value theorem for $\Psi''(1;\alpha, c)$ with respect to $\alpha$ 
and using Lemma \ref{pro_bdy_a} and Corollary \ref{cor:mono-deri}, 
we obtain the unique $\hat{\alpha}(c)\in I(c)$ such that $\Psi''(1;\hat{\alpha}(c),c)=0$ for any $c>0$. 
\end{proof}

\subsection{The proofs of Proposition \ref{thm:exit_a}(ii)--(iv)}\label{sec:add}

First, we prove the continuity of $\hat{\alpha}(c)$ with respect to $c>0$.  

\begin{proof}[Proof of Proposition \ref{thm:exit_a}(ii)]
For $c>0$, let $\{c_n\}_{n \in \mathbb{N}}$ be a sequence satisfying 
$c_n>0\,(n\in\mathbb{N} )$ and $c_n\to c\,(n\to\infty)$. 
By means of a similar argument to the proof of Lemma \ref{lem:conti_bdy_a}, we see that 
$\{\hat{\alpha}(c_n)\}_{n\in\mathbb{N}}$ includes a convergent sequence 
$\{\hat{\alpha}(c_{n_k} )\}_{k\in\mathbb{N}}$. Let $\alpha^\ast$ be a limit of 
its subsequence, that is, 
\[
\lim_{k\to\infty}\hat{\alpha}(c_{n_k} )=\alpha^\ast. 
\]
Then it follows from $\hat{\alpha}(c_{n_k} )\in I(c_{n_k} )
=(\underline{\alpha}(c_{n_k} ),\overline{\alpha}(c_{n_k} ))$ and Lemma \ref{lem:conti_bdy_a} 
that 
\begin{equation}\label{bound_a-ast} 
\underline{\alpha}(c)\le\alpha^\ast\le\overline{\alpha}(c). 
\end{equation} 
Applying a similar argument to the proof of Lemma \ref{lem:conti_bdy_a}, we obtain
\[ 
\Psi''(1;\alpha^\ast,c)=0, \quad 
-\frac{\pi}2<\Psi(s;\alpha^\ast,c)<\frac{\pi}2\,\,\ \mbox{for} 
\,\ s\in[0,1). 
\]
Lemma \ref{pro_bdy_a} implies $\alpha^\ast\ne\overline{\alpha}(c),\underline{\alpha}(c)$, 
so that we get $\alpha^\ast\in I(c)$ by \eqref{bound_a-ast}. 
Since $\alpha\in I(c)$ satisfying $\Psi''(1;\alpha,c)=0$ is unique for each $c>0$, 
we have $\alpha^\ast=\hat{\alpha}(c)$. This completes the proof. 
\end{proof} 

In order to prove the negativity of $\hat{\alpha}(c)$, we prepare the following lemma.

\begin{lemma}\label{lem:delta_non-exist} 
For any $c>0$, $\Psi(\,\cdot\,; \hat{\alpha}(c), c)$ has at least one zero point in the interval $(0,1)$. 
\end{lemma}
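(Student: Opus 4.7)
The plan is to derive an integral constraint from integrating the profile ODE across $[0,1]$ and combine it with the a priori bound $|\hat{\Psi}(s;c)| < \pi/2$ guaranteed by Proposition \ref{thm:exit_a}(i). For brevity, write $\hat{\Psi}(s) := \Psi(s;\hat{\alpha}(c),c)$, so that $\hat{\Psi}$ satisfies \eqref{IVP2}.

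First I would integrate the identity $\hat{\Psi}^{(3)}(s) = c\sin\hat{\Psi}(s)$ over $[0,1]$. Using the two boundary conditions $\hat{\Psi}''(0) = 0$ (from the initial data in \eqref{IVP}) and $\hat{\Psi}''(1) = 0$ (from the definition of $\hat{\alpha}(c)$ in Proposition \ref{thm:exit_a}(i)), this yields
$$
0 \;=\; \hat{\Psi}''(1) - \hat{\Psi}''(0) \;=\; c\int_0^1 \sin\hat{\Psi}(s)\,ds,
$$
so that $\int_0^1 \sin\hat{\Psi}(s)\,ds = 0$, since $c>0$.

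Next I would argue by contradiction. Suppose $\hat{\Psi}$ has no zero in $(0,1)$. Since $\hat{\Psi}(0) = \psi_- > 0$, continuity together with the assumed absence of zeros forces $\hat{\Psi}(s) > 0$ throughout $[0,1)$. Combined with the bound $|\hat{\Psi}(s)| < \pi/2$ on $[0,1]$, this gives $\sin\hat{\Psi}(s) > 0$ on $[0,1)$, and hence $\int_0^1 \sin\hat{\Psi}(s)\,ds > 0$, contradicting the identity derived above. Therefore $\hat{\Psi}$ must possess at least one zero in $(0,1)$, as claimed.

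There is essentially no obstacle in this argument: it relies only on the two Neumann-type boundary conditions $\hat{\Psi}''(0) = \hat{\Psi}''(1) = 0$, which convert the third-order equation into a zero-mean condition on $\sin\hat{\Psi}$, and on the bound $|\hat{\Psi}| < \pi/2$, which lets us identify the sign of $\sin\hat{\Psi}$ with that of $\hat{\Psi}$. Both inputs are already secured before this lemma is invoked.
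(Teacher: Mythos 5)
Your proof is correct and follows essentially the same route as the paper: integrate $\hat{\Psi}^{(3)}=c\sin\hat{\Psi}$ once, use $\hat{\Psi}''(0)=\hat{\Psi}''(1)=0$ to obtain the zero-mean condition on $\sin\hat{\Psi}$, then argue by contradiction using the sign restriction $0<\hat{\Psi}<\pi/2$ (coming from $\hat{\alpha}(c)\in I(c)$) to force strict positivity of $\int_0^1\sin\hat{\Psi}\,ds$. The paper presents the same calculation by directly invoking the integral formula \eqref{Psi2} for $\Psi''(1)$; the content is identical.
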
 
\begin{proof} 
In proof by contradiction, suppose that $\Psi(\,\cdot\,; \hat{\alpha}(c), c)$ does not 
have any zero points in $(0,1)$ for some $c>0$.
Then it follows from $\Psi(1; \hat{\alpha}(c), c) = \psi_- > 0$ and $\hat{\alpha}(c) \in I(c)$ 
that 
\[
0<\Psi(s;\hat{\alpha}(c),c)<\frac{\pi}2\,\,\ \mbox{for}\,\ s\in[0,1), 
\]  
so that $\sin\Psi(s;\hat{\alpha}(c),c)>0$ for $s\in[0,1)$. 
Recalling \eqref{Psi2}, we are led to
\[
\Psi''(1;\hat{\alpha},c)=c\int_0^1\sin\Psi(\tilde{s};\alpha,c)\,d\tilde{s}>0, 
\]
which contradicts $\Psi''(1; \hat{\alpha}(c), c)=0$. 
\end{proof} 

\begin{proof}[Proof of Proposition \ref{thm:exit_a}(iii)]
From \eqref{short-bdd} and Lemma \ref{lem:delta_non-exist}, there exists $\hat{s} \in (0,1)$ such that
\begin{equation}\label{posi-iii}
\Psi(s;\hat{\alpha}(c),c)>0\,\,\ \mbox{for}\,\ s\in[0,\hat{s}), \quad
\Psi(\hat{s};\hat{\alpha}(c),c)=0.
\end{equation}
Using \eqref{Psi}, we obtain
\[
0=\Psi(\hat{s};\hat{\alpha}(c),c) =\,\psi_- + \hat{\alpha}(c)\hat{s}
+\frac{c}2\int_0^{\hat{s}}(\hat{s}-\tilde{s})^2
\sin\Psi(\tilde{s};\hat{\alpha}(c),c)\,d\tilde{s}. 
\]
This equality implies $\hat{\alpha}(c) < 0$ since $\psi_-$ and $\hat{s}$ are positive and the positivity 
of $\sin \Psi$ follows from \eqref{posi-iii} and $\hat{\alpha}(c) \in I(c)$. 
\end{proof} 

Finally, we study the limit of $\hat{\alpha}(c)$ and $\Psi(\,\cdot\,;\hat{\alpha}(c), c)$ as $c \to 0+$. 
These limits play a key role in the shooting argument with respect to $c$. 

\begin{proof}[Proof of Proposition \ref{thm:exit_a}(iv)] 
Let $\{c_n\}_{n \in \mathbb{N}}$ be a sequence satisfying 
$c_n>0\,(n\in\mathbb{N} )$ and $c_n\to0\,(n\to\infty)$. 
We show that a limit of the sequence $\{\hat{\alpha}(c_n)\}$ exists and satisfies 
\begin{equation} 
\lim_{n\to\infty}\hat{\alpha}(c_n)=-2\psi_-. 
\label{limit_0+_a} 
\end{equation} 
Applying a similar argument to the proof of Lemma \ref{lem:conti_bdy_a}, we see that 
$\{\hat{\alpha}(c_n)\}_{n\in\mathbb{N}}$ includes a convergent sequence 
$\{\hat{\alpha}(c_{n_k} )\}_{k\in\mathbb{N}}$. Let $\alpha^\ast$ be a limit of 
its subsequence, that is, 
\[ 
\lim_{k\to\infty}\hat{\alpha}(c_{n_k} )=\alpha^\ast. 
\]
Using \eqref{int_eq} with $s_1=0$ and $s_2=1$, we obtain 
\begin{equation}\label{int_eq_seq} 
-c_{n_k}\bigl(\cos\psi_--\cos\Psi(1;\hat{\alpha}(c_{n_k} ),c_{n_k} )\bigr)
=\int_0^1\left(\Psi''(s;\hat{\alpha}(c_{n_k} ),c_{n_k} )\right)^2\,ds. 
\end{equation} 
Since \eqref{Psi2} implies 
\[
|\Psi''(s;\hat{\alpha}(c_{n_k} ),c_{n_k} )|
\le c_{n_k}\int_0^s|\sin\Psi(\tilde{s};\hat{\alpha}(c_{n_k} ),c_{n_k} )|\,d\tilde{s}
\le c_{n_k}\,\,\ \mbox{for}\,\ s\in[0,1], 
\]
we are led to
\[ 
0\le\int_0^1\left(\Psi''(s;\hat{\alpha}(c_{n_k} ),c_{n_k} )\right)^2\,ds
\le c_{n_k}^2. 
\]
Then it follows from \eqref{int_eq_seq} that 
\[ 
0\le-\bigl(\cos\psi_--\cos\Psi(1;\hat{\alpha}(c_{n_k} ),c_{n_k} )\bigr)\le c_{n_k}. 
\]
Taking $k\to\infty$ which gives $c_{n_k}\to0$ and using the continuity of $\Psi$ 
with respect to $(\alpha,c)$, we get 
\[ 
\Psi(1;\alpha^\ast,0)=\psi_- \quad\mbox{or}\quad \Psi(1;\alpha^\ast,0)=-\psi_-. 
\]
In proof by contradiction, suppose that $\Psi(1;\alpha^\ast, 0) = \psi_-$. 
Since $\Psi(s;\alpha^\ast, 0)=\alpha^\ast s+\psi_-$, 
we have $\alpha^\ast=0$, so that $\Psi(s;\alpha^\ast,0)\equiv\psi_-$ for $s\in[0,1]$. 
From the continuity of $\Psi$ with respect to $\alpha$ and $c$, 
$\Psi(\,\cdot\,; \hat{\alpha}(c_{n_k}),c_{n_k})$ does not have any zero points in the interval $(0,1)$ for sufficiently large $k$, which contradicts Lemma \ref{lem:delta_non-exist}.
Thus we get $\Psi(1;\alpha^\ast, 0)=-\psi_-$, which gives $\alpha^\ast=-2\psi_-$. 
This means that a limit of any convergent sequences is unique and its limit is equal to $-2\psi_-$, 
therefore we conclude the first convergence of Proposition \ref{thm:exit_a}(iv). 
The second convergence follows from $\Psi(s;-2\psi_-, 0) = -2\psi_- s + \psi_-$ and the continuity 
of $\Psi$ with respect to $\alpha$ and $c$.
\end{proof} 

\section{Zero points of $\Psi$ and its derivatives} \label{sec:zero-point}
From now on, we analyze the ``oscillation'' of $\Psi(\,\cdot\,;\hat{\alpha}(c),c)$ 
in order to use it in the shooting argument with respect to $c$. 
Set 
\[ 
\hat{\Psi}(\,\cdot\,;c) := \Psi(\,\cdot\,; \hat{\alpha}(c), c). 
\] 
In this section, we prove that for any $c>0$, the zero points of $\hat{\Psi}(\,\cdot\,;c)$, 
$\hat{\Psi}'(\,\cdot\,;c)$ and $\hat{\Psi}''(\,\cdot\,;c)$ appear alternately in the interval $(0,1)$ 
and the sign of $\hat{\Psi}$ and its derivatives changes at their zero points. 
Therefore, 
we see that the graph of $\hat{\Psi}(\,\cdot\,;c)$ should ``oscillate''. 
These properties will be proved inductively from the left zero point. 

It follows from the boundary condition $\hat{\Psi}(0;c) = \psi_- > 0$ and Proposition \ref{thm:exit_a}(iii) 
that $\hat{\Psi}(\,\cdot\,;c)>0$ and $\hat{\Psi}'(\,\cdot\,;c)<0$ 
in some short interval $[0,\varepsilon)$.  
Furthermore, the equation $\hat{\Psi}^{(3)}=c\sin\hat{\Psi}$, the positivity of $\hat{\Psi}$ 
and the boundary condition $\hat{\Psi}''(0;c) = 0$ imply that $\hat{\Psi}''(\,\cdot\,;c)>0$ 
in the interval $(0,\varepsilon)$. 
Therefore, in order to analyze the first zero points of $\Psi$ and its derivatives, 
we define 
\begin{align*} 
&\hat{\delta}^+_1(c)
:=\sup\{\delta\,|\,\Psi(s;c)>0\,\ \mbox{for}\ s\in[0,\delta)\}, \\
&\hat{\mu}^-_1(c)
:=\sup\{\mu\,|\,\hat{\Psi}'(s;c)<0\,\,\ \mbox{for}\,\ s\in[0,\mu) \}, \\
&\hat{\gamma}^+_1(c)
:=\sup\{\gamma\,|\,\hat{\Psi}''(s;c)>0\,\,\ \mbox{for}\,\ s\in(0,\gamma)\}, \\
&J_{\hat{\delta}^+_1}:=\{c>0\,|\,\mbox{$\hat{\delta}^+_1(c)$ exists in the interval $(0,1)$}\}, \\
&J_{\hat{\mu}^-_1}:=\{c>0\,|\,\mbox{$\hat{\mu}^-_1(c)$ exists in the interval $(0,1)$}\}, \\
&J_{\hat{\gamma}^+_1}:=\{c>0\,|\,\mbox{$\hat{\gamma}^+_1(c)$ exists in the interval $(0,1)$}\}.
\end{align*} 
According to Lemma \ref{lem:delta_non-exist}, $\hat{\delta}^+_1(c)$ exists 
in the interval $(0,1)$ for any $c>0$, so that $J_{\hat{\delta}^+_1}=(0,\infty)$. 
In addition, by virtue of $\hat{\Psi}''(1;c)=0$, $\hat{\gamma}^+_1(c)$ exists 
in $(0,1]$ for any $c>0$. 
The existence of $\hat{\mu}_1^-(c)$ and $\hat{\gamma}_1^+(c)$ in the interval $(0,1)$ for 
sufficiently large $c>0$, which implies that $J_{\hat{\mu}^-_1}$ 
and $J_{\hat{\gamma}^+_1}$ are not empty, will be proved in the next section. 

\begin{lemma}\label{lem:com_d1+g1+} 
$\hat{\delta}^+_1(c)<\hat{\gamma}^+_1(c)$ for any $c>0$. 
\end{lemma}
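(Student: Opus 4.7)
The plan is to exploit the integral representation (3.4) for $\Psi''$ together with Proposition \ref{thm:exit_a}(i) in order to show that $\hat{\Psi}''(\,\cdot\,;c)$ is strictly positive up to and including $s=\hat{\delta}^+_1(c)$. This will force the first zero $\hat{\gamma}^+_1(c)$ of $\hat{\Psi}''$ to lie strictly to the right of $\hat{\delta}^+_1(c)$.

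First, I would combine the definition of $\hat{\delta}^+_1(c)$ with Proposition \ref{thm:exit_a}(i) to deduce that
\[
0<\hat{\Psi}(s;c)<\frac{\pi}{2}\,\,\ \mbox{for}\,\ s\in[0,\hat{\delta}^+_1(c)),
\]
since $\hat{\Psi}(s;c)>0$ on this interval by definition of $\hat{\delta}^+_1(c)$ and $|\hat{\Psi}(s;c)|<\pi/2$ throughout $[0,1]$. Consequently, $\sin\hat{\Psi}(\tilde{s};c)>0$ for $\tilde{s}\in(0,\hat{\delta}^+_1(c))$.

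Second, specializing \eqref{Psi2} to $\alpha=\hat{\alpha}(c)$, I would write
\[
\hat{\Psi}''(s;c)=c\int_0^s \sin\hat{\Psi}(\tilde{s};c)\,d\tilde{s}.
\]
For any $s\in(0,\hat{\delta}^+_1(c)]$, the integrand is strictly positive on the open subinterval $(0,\min\{s,\hat{\delta}^+_1(c)\})=(0,s)$ (the value $\sin\hat{\Psi}(\hat{\delta}^+_1(c);c)=0$ at the right endpoint when $s=\hat{\delta}^+_1(c)$ does not affect the integral). Hence $\hat{\Psi}''(s;c)>0$ for every $s\in(0,\hat{\delta}^+_1(c)]$; in particular $\hat{\Psi}''(\hat{\delta}^+_1(c);c)>0$.

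Third, by continuity of $\hat{\Psi}''(\,\cdot\,;c)$ (guaranteed by Lemma \ref{lem:conti_ineq}), there is some $\varepsilon>0$ with $\hat{\Psi}''(\,\cdot\,;c)>0$ on $[\hat{\delta}^+_1(c),\hat{\delta}^+_1(c)+\varepsilon)$. By the very definition of $\hat{\gamma}^+_1(c)$ as the supremum of $\gamma$ such that $\hat{\Psi}''(\,\cdot\,;c)>0$ on $(0,\gamma)$, this yields $\hat{\gamma}^+_1(c)\ge\hat{\delta}^+_1(c)+\varepsilon>\hat{\delta}^+_1(c)$, which is the claim.

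There is essentially no serious obstacle here; the statement is an immediate consequence of the monotonicity structure imposed by the ODE together with the sign control given by Proposition \ref{thm:exit_a}(i). The only mildly delicate point is the boundary behavior at $s=\hat{\delta}^+_1(c)$, where $\sin\hat{\Psi}$ vanishes but the integral defining $\hat{\Psi}''$ remains strictly positive because the integrand is strictly positive on the full open interval $(0,\hat{\delta}^+_1(c))$.
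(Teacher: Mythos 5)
Your proof is correct and rests on exactly the same key ingredient as the paper's: the integral representation \eqref{Psi2} for $\hat{\Psi}''$ together with the positivity of $\sin\hat{\Psi}$ on $(0,\hat{\delta}_1^+(c))$. The paper phrases it by evaluating \eqref{Psi2} at $s=\hat{\gamma}_1^+(c)$ (so the integral must vanish, forcing $\hat\Psi$ to change sign before $\hat{\gamma}_1^+(c)$), whereas you evaluate it at $s=\hat{\delta}_1^+(c)$ to get $\hat{\Psi}''(\hat{\delta}_1^+(c);c)>0$ and then appeal to the definition of $\hat{\gamma}_1^+(c)$; these are the same argument up to a trivial rearrangement.
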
 

\begin{proof} 
Using \eqref{Psi2} and the definition of $\hat{\gamma}^+_1(c)$, 
we obtain 
\begin{equation}\label{int_sin_0}
0=\hat{\Psi}''(\hat{\gamma}^+_1(c),c)
=c\int_0^{\hat{\gamma}^+_1(c)}\sin\hat{\Psi}(s;c)\,ds. 
\end{equation}
Thus we can easily see $\hat{\delta}^+_1(c)<\hat{\gamma}^+_1(c)$. 
\end{proof}

\begin{lemma}\label{lem:com_d1+m1-} 
$\hat{\delta}^+_1(c)<\hat{\mu}^-_1(c)$ for $c\in J_{\hat{\mu}^-_1}$. 
\end{lemma}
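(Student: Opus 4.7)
The plan is to argue by contradiction in the spirit of Lemma~\ref{lem:delta_non-exist}, using the boundary condition $\hat{\Psi}''(1;c)=0$ as the source of the contradiction.

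Suppose toward a contradiction that $\hat{\mu}^-_1(c)\le\hat{\delta}^+_1(c)$ for some $c\in J_{\hat{\mu}^-_1}$, and set $\mu:=\hat{\mu}^-_1(c)\in(0,1)$. By the definitions of $\hat{\mu}^-_1$ and $\hat{\delta}^+_1$, together with continuity, we have $\hat{\Psi}(s;c)>0$ on $[0,\mu)$, $\hat{\Psi}(\mu;c)\ge 0$, $\hat{\Psi}'(s;c)<0$ on $(0,\mu)$, and $\hat{\Psi}'(\mu;c)=0$. Since $\hat{\alpha}(c)\in I(c)$ forces $\hat{\Psi}(s;c)\in(0,\pi/2)$ on $[0,\mu)$, the equation $\hat{\Psi}^{(3)}=c\sin\hat{\Psi}$ gives $\hat{\Psi}^{(3)}>0$ on $(0,\mu)$. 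Combined with the initial condition $\hat{\Psi}''(0;c)=0$ and \eqref{Psi2}, this yields $\hat{\Psi}''(s;c)>0$ on $(0,\mu]$.

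The key step is then to propagate positivity past $\mu$. At $s=\mu$ we have $\hat{\Psi}(\mu;c)\ge 0$, $\hat{\Psi}'(\mu;c)=0$ and $\hat{\Psi}''(\mu;c)>0$, so $\mu$ is a local minimum of $\hat{\Psi}(\,\cdot\,;c)$, and a Taylor expansion around $\mu$ shows $\hat{\Psi}(s;c)>0$ for $s\in(\mu,\mu+\varepsilon)$ with some $\varepsilon>0$. Define
\[
\tau:=\sup\bigl\{\,s\in(\mu,1]\,\big|\,\hat{\Psi}(s';c)>0\,\ \mbox{for}\,\ s'\in(\mu,s]\,\bigr\}.
\]
On $(\mu,\tau)$ we have $\hat{\Psi}\in(0,\pi/2)$ by the graph restriction, hence $\hat{\Psi}^{(3)}>0$, so $\hat{\Psi}''$ is strictly increasing; since it starts at $\hat{\Psi}''(\mu;c)>0$, it stays strictly positive, which forces $\hat{\Psi}'>0$ on $(\mu,\tau]$ (starting from $\hat{\Psi}'(\mu;c)=0$), and therefore $\hat{\Psi}$ is strictly increasing on $[\mu,\tau]$. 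If $\tau<1$, then $\hat{\Psi}(\tau;c)=0$ by continuity and the definition of $\tau$, yet strict monotonicity and $\hat{\Psi}(\mu;c)\ge 0$ give $\hat{\Psi}(\tau;c)>\hat{\Psi}(\mu;c)\ge 0$, a contradiction. Hence $\tau=1$, so $\hat{\Psi}(s;c)>0$ on $(\mu,1]$.

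Combining with $\hat{\Psi}(s;c)>0$ on $[0,\mu)$ and $\hat{\Psi}(\mu;c)\ge 0$, we get $\hat{\Psi}(s;c)\ge 0$ on $[0,1]$ with equality at most at the single point $\mu$. In particular $\sin\hat{\Psi}(s;c)\ge 0$ on $[0,1]$ and $\sin\hat{\Psi}(s;c)>0$ almost everywhere, so \eqref{Psi2} evaluated at $s=1$ yields
\[
\hat{\Psi}''(1;c)=c\int_0^1\sin\hat{\Psi}(\tilde{s};c)\,d\tilde{s}>0,
\]
which contradicts $\hat{\Psi}''(1;c)=0$. The main technical point is the continuation argument showing that $\hat{\Psi}$ must keep rising after the supposed local minimum at $\mu$, which is precisely where the sign structure of the ODE and the a priori bound $|\hat{\Psi}|<\pi/2$ are both used in an essential way.
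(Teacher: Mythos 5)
Your proof is correct, and it takes a somewhat different route from the paper's. The paper also argues by contradiction, but it reaches the contradiction at $s=\hat{\gamma}^+_1(c)\le 1$ rather than at $s=1$: it invokes Lemma~\ref{lem:com_d1+g1+} to place $\hat{\mu}^-_1(c)<\hat{\gamma}^+_1(c)$, uses the positivity of $\hat{\Psi}''$ on $(0,\hat{\gamma}^+_1(c))$ to pin down the sign pattern of $\hat{\Psi}'$, concludes $\hat{\mu}^-_1(c)$ is the global minimum of $\hat{\Psi}$ on $[0,\hat{\gamma}^+_1(c)]$ so that $\hat{\Psi}\ge 0$ there, and contradicts $\hat{\Psi}''(\hat{\gamma}^+_1(c);c)=0$ via \eqref{Psi2}. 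You instead bypass $\hat{\gamma}^+_1(c)$ and Lemma~\ref{lem:com_d1+g1+} entirely: your continuation argument propagates the sign of $\hat{\Psi}$, $\hat{\Psi}'$, $\hat{\Psi}''$ all the way from $\mu$ to $s=1$ and contradicts $\hat{\Psi}''(1;c)=0$ directly, which makes the lemma self-contained. One small observation: your continuation step already shows $\hat{\Psi}''>0$ on $(\mu,1]$, which together with $\hat{\Psi}''>0$ on $(0,\mu]$ contradicts $\hat{\Psi}''(1;c)=0$ immediately, so the closing integral computation is logically redundant (though harmless). A cosmetic slip: you write $\hat{\Psi}'(s;c)<0$ on $(0,\mu)$, but this in fact holds on $[0,\mu)$ since $\hat{\Psi}'(0;c)=\hat{\alpha}(c)<0$ by Proposition~\ref{thm:exit_a}(iii).
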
 

\begin{proof} 
In proof by contradiction, suppose that $\hat{\delta}^+_1(c)\ge\hat{\mu}^-_1(c)$. 
Then, by virtue of Lemma \ref{lem:com_d1+g1+}, we see 
$\hat{\mu}^-_1(c)<\hat{\gamma}^+_1(c)$. Since $\hat{\Psi}'(0;c)=\hat{\alpha}(c)<0$ 
by Proposition \ref{exit_a} (iii), it follows from this fact and the definition of 
$\hat{\gamma}^+_1(c)$ that 
\[
\hat{\Psi}'(s;c)\left\{\begin{array}{ll} 
<0&(s\in[0,\hat{\mu}^-_1(c))), \\ =0&(s=\hat{\mu}^-_1(c)), \\
>0&(s\in(\hat{\mu}^-_1(c),\hat{\gamma}^+_1(c)]). 
\end{array}\right. 
\]
This implies that $\hat{\Psi}(s;c)\ge\hat{\Psi}(\hat{\mu}^-_1(c);c)$ for 
$s\in[0,\hat{\gamma}^+_1(c)]$. Moreover, by means of 
$\hat{\delta}^+_1(c)\ge\hat{\mu}^-_1(c)$, we see $\hat{\Psi}(\hat{\mu}^-_1(c);c)\ge0$, 
so that $\hat{\Psi}(s;c)\ge0$ for $s\in[0,\hat{\gamma}^+_1(c)]$. 
Since $\hat{\Psi}$ is not a constant function, this fact and \eqref{Psi2} give
\[
\hat{\Psi}''(\hat{\gamma}^+_1(c),c)
=c\int_0^{\hat{\gamma}^+_1(c)}\sin\hat{\Psi}(\tilde{s};c)\,d\tilde{s}>0, 
\]
which contradicts $\hat{\Psi}''(\hat{\gamma}^+_1(c),c)=0$. 
\end{proof}

\begin{lemma}\label{lem:com_m1-g1+} 
If $c\in J_{\hat{\gamma}^+_1}$, then $c\in J_{\hat{\mu}^-_1}$ and 
$\hat{\mu}^-_1(c)<\hat{\gamma}^+_1(c)$. 
\end{lemma}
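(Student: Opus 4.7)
I plan to prove the lemma by contradiction, assuming that either $c \notin J_{\hat{\mu}_1^-}$ or $\hat{\mu}_1^-(c) \ge \hat{\gamma}_1^+(c)$. Both cases reduce to the single hypothesis that $\hat{\Psi}'(s;c) < 0$ for all $s \in [0, \hat{\gamma}_1^+(c))$, which will be my working assumption. The goal is to derive a contradiction with the boundary condition $\hat{\Psi}''(1;c) = 0$ from \eqref{IVP2}.

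First, I will observe that under the assumption, $\hat{\Psi}(\,\cdot\,;c)$ is strictly decreasing on $[0, \hat{\gamma}_1^+(c)]$. Combining this monotonicity with Lemma \ref{lem:com_d1+g1+}, which asserts $\hat{\delta}_1^+(c) < \hat{\gamma}_1^+(c)$, the function $\hat{\Psi}$ crosses zero strictly before $\hat{\gamma}_1^+(c)$ and continues to decrease, so $\hat{\Psi}(\hat{\gamma}_1^+(c);c) < 0$. Since $\hat{\alpha}(c) \in I(c)$ keeps $\hat{\Psi}(\,\cdot\,;c) \in (-\pi/2, \pi/2)$, the ODE then gives $\hat{\Psi}'''(\hat{\gamma}_1^+(c);c) = c \sin \hat{\Psi}(\hat{\gamma}_1^+(c);c) < 0$, while continuity of $\hat{\Psi}'$ forces $\hat{\Psi}'(\hat{\gamma}_1^+(c);c) \le 0$.

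The heart of the argument is a cascade on $[\hat{\gamma}_1^+(c), 1]$. I will show that $\hat{\Psi}(s;c) < 0$ throughout this interval by a continuation argument: suppose for contradiction that $s^* := \inf\{s > \hat{\gamma}_1^+(c) : \hat{\Psi}(s;c) = 0\}$ satisfies $s^* \le 1$. Then on $(\hat{\gamma}_1^+(c), s^*)$ the sign structure is self-reinforcing: $\hat{\Psi} < 0$ implies $\hat{\Psi}''' < 0$, hence $\hat{\Psi}''(\,\cdot\,;c)$ is strictly decreasing from $\hat{\Psi}''(\hat{\gamma}_1^+(c);c) = 0$ and stays strictly negative; this makes $\hat{\Psi}'(\,\cdot\,;c)$ strictly decreasing from a value $\le 0$, so $\hat{\Psi}' < 0$; and this in turn forces $\hat{\Psi}$ strictly decreasing, yielding $\hat{\Psi}(s^*;c) < \hat{\Psi}(\hat{\gamma}_1^+(c);c) < 0$, which contradicts $\hat{\Psi}(s^*;c) = 0$.

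With $\hat{\Psi}(\,\cdot\,;c) < 0$ established on $(\hat{\gamma}_1^+(c), 1]$, the same cascade yields that $\hat{\Psi}''(\,\cdot\,;c)$ is strictly decreasing on $[\hat{\gamma}_1^+(c), 1]$, so $\hat{\Psi}''(1;c) < \hat{\Psi}''(\hat{\gamma}_1^+(c);c) = 0$, directly contradicting $\hat{\Psi}''(1;c) = 0$. The main subtlety I anticipate is handling the borderline possibility $\hat{\Psi}'(\hat{\gamma}_1^+(c);c) = 0$, where a third-order Taylor expansion at $\hat{\gamma}_1^+(c)$ using $\hat{\Psi}'''(\hat{\gamma}_1^+(c);c) < 0$ is needed to confirm that $\hat{\Psi}'$ becomes strictly negative immediately past $\hat{\gamma}_1^+(c)$ so that the cascade genuinely initiates.
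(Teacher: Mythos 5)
Your proof is correct and takes essentially the same approach as the paper's: a contradiction argument that first establishes $\hat{\Psi}(\hat{\gamma}^+_1(c);c) < 0$ from the working hypothesis together with Lemma~\ref{lem:com_d1+g1+}, and then propagates negative signs of $\hat{\Psi}$, $\hat{\Psi}''$, $\hat{\Psi}^{(3)}$ past $\hat{\gamma}^+_1(c)$ to conflict with $\hat{\Psi}''(1;c)=0$; the paper pivots on $\hat{\gamma}^-_1(c)$ (the first subsequent zero of $\hat{\Psi}''$) and the integral identity $\hat{\Psi}''(\hat{\gamma}^-_1)-\hat{\Psi}''(\hat{\gamma}^+_1)=c\int\sin\hat{\Psi}\,ds$, whereas you pivot on the first subsequent zero $s^*$ of $\hat{\Psi}$ and monotonicity, but these amount to the same bookkeeping. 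One small note: the borderline case $\hat{\Psi}'(\hat{\gamma}^+_1(c);c)=0$ that you flag does not require a Taylor expansion, since the cascade you set up already forces $\hat{\Psi}''$ to be strictly negative immediately past $\hat{\gamma}^+_1(c)$, which makes $\hat{\Psi}'$ strictly decreasing from a value $\le 0$ and hence strictly negative there.
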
 

\begin{proof} 
In proof by contradiction, suppose that $c\not\in J_{\hat{\mu}^-_1}$ or 
$\hat{\mu}^-_1(c)\ge\hat{\gamma}^+_1(c)$. Recalling the definition of $\hat{\gamma}^+_1(c)$, 
we have 
\begin{equation} 
\hat{\Psi}''(s;c)>0\,\,\ \mbox{for}\,\ s\in(0,\hat{\gamma}^+_1(c)), \quad
\hat{\Psi}''(\hat{\gamma}^+_1(c);c)=0.
\label{Psi2+_sup} 
\end{equation} 
By virtue of $\hat{\Psi}'(0;c)=\hat{\alpha}(c)<0$ and the supposition that 
$c\not\in J_{\hat{\mu}^-_1}$ or $\hat{\mu}^-_1(c)\ge\hat{\gamma}^+_1(c)$, 
we see
\[
\hat{\Psi}'(s;c)<0\,\,\ \mbox{for}\,\ s\in[0,\hat{\gamma}^+_1(c)). 
\]
Thus it follows from this fact and Lemma \ref{lem:com_d1+g1+} that 
$\hat{\Psi}(\hat{\gamma}^+_1(c);c)<0$, so that 
\[
\hat{\Psi}^{(3)}(\hat{\gamma}^+_1(c);c)
=c\sin\hat{\Psi}(\hat{\gamma}^+_1(c);c)<0.
\]
Since $\hat{\gamma}^+_1(c)<1$ by $c\in J_{\hat{\gamma}^+_1}$ and $\hat{\Psi}^{(3)}$ is 
continuous with respect to $s$, there exists $\e>0$ such that
\[
\hat{\Psi}^{(3)}(s;c)<0\,\,\ \mbox{for}\,\ 
s\in[\hat{\gamma}^+_1(c),\hat{\gamma}^+_1(c)+\e). 
\]
Then, with the help of \eqref{Psi2+_sup}, we obtain
\[
\hat{\Psi}''(s;c)<0\,\,\ \mbox{for}\,\ 
s\in(\hat{\gamma}^+_1(c),\hat{\gamma}^+_1(c)+\e). 
\]
Set
\begin{equation}\label{def_gamma1-} 
\hat{\gamma}^-_1(c):=\sup\{\gamma\,|\,
\hat{\Psi}''(s;c)<0\,\ \mbox{for}\ s\in(\hat{\gamma}^+_1(c),\gamma)\}. 
\end{equation} 
Note that $\hat{\gamma}^-_1(c)\le1$ due to $\hat{\gamma}^+_1(c)<\hat{\gamma}^-_1(c)$ 
and $\hat{\Psi}''(1;c)=0$. Then, by the definition of 
$\hat{\gamma}^-_1(c)$, we have
\[
\hat{\Psi}''(s;c)<0\,\,\ \mbox{for}\,\  
s\in(\hat{\gamma}^+_1(c),\hat{\gamma}^-_1(c)), \quad 
\hat{\Psi}''(\hat{\gamma}^-_1(c);c)=0.
\]
This fact and $\hat{\Psi}'(\hat{\gamma}^+_1(c);c)\le0$ imply
\[
\hat{\Psi}'(s;c)<0\,\,\ \mbox{for}\,\  
s\in(\hat{\gamma}^+_1(c),\hat{\gamma}^-_1(c)]. 
\]
Recalling $\hat{\Psi}(\hat{\gamma}^+_1(c);c)<0$, we are led to 
\[
\hat{\Psi}(s;c)<0\,\,\ \mbox{for}\,\  
s\in(\hat{\gamma}^+_1(c),\hat{\gamma}^-_1(c)]. 
\]
which gives
\[
0=\hat{\Psi}''(\hat{\gamma}^-_1(c);c)-\hat{\Psi}''(\hat{\gamma}^+_1(c);c)
=c\int_{\hat{\gamma}^+_1(c)}^{\hat{\gamma}^-_1(c)} 
\sin\hat{\Psi}(s;c)\,ds<0.
\]
This is a contradiction. 
\end{proof} 

By virtue of $\hat{\Psi}'(s;c)<0$ for $s\in[0,\min\{1,\mu^-_1(c)\} )$, 
$\hat{\delta}^+_1(c)<1$ and Lemma \ref{lem:com_d1+m1-}, there exists 
$\varepsilon>0$ such that $\hat{\Psi}(s;c)<0$ for 
$s\in(\hat{\delta}^+_1(c),\hat{\delta}^+_1(c)+\varepsilon)$. 
Set 
\begin{align*}
&\hat{\delta}^-_1(c)
:=\sup\{\delta\,|\,\hat{\Psi}(s;c)<0\,\ \mbox{for}\ 
s\in(\hat{\delta}^+_1(c),\delta)\}, \\
&J_{\hat{\delta}^-_1}:=\{c>0\,|\,\mbox{$\hat{\delta}^-_1(c)$ exists in $(0,1)$}\}.
\end{align*}

\begin{lemma}\label{lem:com_m1-d1-} 
If $c\in J_{\hat{\delta}^-_1}\cup\{c>0\,|\,\hat{\delta}^-_1(c)=1\}$, then $c\in J_{\hat{\mu}^-_1}$ 
and $\hat{\mu}^-_1(c)<\hat{\delta}^-_1(c)$. 
\end{lemma}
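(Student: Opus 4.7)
My plan is to argue by contradiction using Rolle's theorem: once we know $\hat\Psi(\,\cdot\,;c)$ vanishes at both endpoints of the negativity interval $[\hat{\delta}^+_1(c),\hat{\delta}^-_1(c)]$, Rolle directly produces a zero of $\hat\Psi'(\,\cdot\,;c)$ strictly inside that interval, which is precisely what is needed to place $\hat{\mu}^-_1(c)$ before $\hat{\delta}^-_1(c)$. This is a more direct route than the propagation-style arguments of Lemmas \ref{lem:com_m1-g1+}--\ref{lem:com_d1+m1-} because $\hat\Psi$ and $\hat\Psi'$ are adjacent-order derivatives.

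Suppose for contradiction that $c\notin J_{\hat{\mu}^-_1}$ or $\hat{\mu}^-_1(c)\ge\hat{\delta}^-_1(c)$. Since $\hat{\Psi}'(0;c)=\hat{\alpha}(c)<0$ by Proposition \ref{thm:exit_a}(iii) and $\hat{\mu}^-_1(c)$ is defined as the first point at which $\hat\Psi'(\,\cdot\,;c)$ ceases to be negative, this supposition is equivalent to
\[
\hat{\Psi}'(s;c)<0 \quad \text{for all } s\in[0,\hat{\delta}^-_1(c)).
\]

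The first step is the preliminary observation that $\hat{\Psi}(\hat{\delta}^-_1(c);c)=0$ in both subcases $\hat{\delta}^-_1(c)\in(0,1)$ and $\hat{\delta}^-_1(c)=1$. By definition, $\hat{\Psi}(\,\cdot\,;c)<0$ on $(\hat{\delta}^+_1(c),\hat{\delta}^-_1(c))$, so continuity yields $\hat{\Psi}(\hat{\delta}^-_1(c);c)\le 0$; if the inequality were strict, smoothness of $\hat\Psi(\,\cdot\,;c)$ on an open interval containing $[0,1]$, provided by Lemma \ref{lem:conti_ineq}, would extend the negativity to a full neighborhood of $\hat{\delta}^-_1(c)$, contradicting the supremum in its definition. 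The boundary case $\hat{\delta}^-_1(c)=1$ is the main delicate point, as it relies precisely on the smooth extension of $\hat\Psi$ past $s=1$ to rule out $\hat{\Psi}(1;c)<0$.

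The second step applies Rolle's theorem to $\hat{\Psi}(\,\cdot\,;c)$ on $[\hat{\delta}^+_1(c),\hat{\delta}^-_1(c)]$: both endpoint values vanish and $\hat\Psi$ is $C^\infty$, so there exists $\xi\in(\hat{\delta}^+_1(c),\hat{\delta}^-_1(c))$ with $\hat{\Psi}'(\xi;c)=0$. Since $\xi\in(0,\hat{\delta}^-_1(c))$, this contradicts the displayed negativity, so the supposition fails. Therefore $c\in J_{\hat{\mu}^-_1}$ and $\hat{\mu}^-_1(c)<\hat{\delta}^-_1(c)$, as claimed.
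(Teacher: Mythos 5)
Your proof is correct and rests on the same key idea as the paper's: apply Rolle's theorem to $\hat{\Psi}(\,\cdot\,;c)$ on $[\hat{\delta}^+_1(c),\hat{\delta}^-_1(c)]$, whose endpoint values both vanish, to produce a zero of $\hat{\Psi}'(\,\cdot\,;c)$ strictly before $\hat{\delta}^-_1(c)$. The paper runs this directly, first invoking Lemma \ref{lem:com_d1+m1-} to pin down $\hat{\Psi}'(\hat{\delta}^+_1(c);c)<0$ and then reading off $\hat{\mu}^-_1(c)\le\mu^\ast(c)<\hat{\delta}^-_1(c)$, whereas your contradiction framing makes $\hat{\Psi}'<0$ on $[0,\hat{\delta}^-_1(c))$ automatic and so avoids citing that lemma; you also spell out the boundary case $\hat{\delta}^-_1(c)=1$ (using the smooth extension of the solution past $s=1$ from Lemma \ref{lem:conti_ineq} to force $\hat{\Psi}(1;c)=0$), a step the paper leaves implicit. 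These are presentational differences on top of the same argument.
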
 

\begin{proof} 
By means of Lemma \ref{lem:com_d1+m1-}, we have $\hat{\delta}^+_1(c)<\min\{1,\mu^-_1(c)\}$, 
which gives 
\begin{equation}\label{Psi1_d1+} 
\hat{\Psi}'(\hat{\delta}^+_1(c);c)<0.
\end{equation}
Since $c\in J_{\hat{\delta}^-_1}\cup\{c>0\,|\,\hat{\delta}^-_1(c)=1\}$ implies $\delta^-_1(c)\le1$, 
it follows from the definition of $\hat{\delta}^{\pm}_1(c)$ and Rolle's theorem that there exists 
$\mu^\ast(c)\in(\hat{\delta}^+(c),\hat{\delta}^-(c))$ such that $\hat{\Psi}'(\mu^\ast(c);c)=0$. 
Thus, by virtue of this fact, \eqref{Psi1_d1+} and the definition of $\mu^-_1(c)$, we see that 
$\mu^-_1(c)$ exists in $(\hat{\delta}^+(c),\mu^\ast(c)]\subset(0,1)$, so that we obtain 
$c\in J_{\hat{\mu}^-_1}$ and $\mu^-_1(c)\le\mu^\ast(c)<\hat{\delta}^-_1(c)$. 
\end{proof} 

\begin{lemma}\label{lem:com_g1+d1-} 
$\hat{\gamma}^+_1(c)<\hat{\delta}^-_1(c)$ for $c\in J_{\hat{\delta}^-_1}$. 
\end{lemma}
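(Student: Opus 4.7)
My plan is to argue by contradiction, supposing $\hat{\gamma}^+_1(c) \geq \hat{\delta}^-_1(c)$ for some $c \in J_{\hat{\delta}^-_1}$, and to derive an impossible sign for an integral of $\sin\hat{\Psi}$. Under this hypothesis $\hat{\Psi}''(\,\cdot\,;c) > 0$ on $(0,\hat{\gamma}^+_1(c)) \supseteq (0, \hat{\delta}^-_1(c))$. Combined with $\hat{\mu}^-_1(c) < \hat{\delta}^-_1(c)$ from Lemma~\ref{lem:com_m1-d1-} and $\hat{\Psi}'(\hat{\mu}^-_1(c);c)=0$, the strict monotonicity of $\hat{\Psi}'$ gives $\hat{\Psi}'(s;c)>0$ for $s \in (\hat{\mu}^-_1(c), \hat{\gamma}^+_1(c)]$. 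I then split into two sub-cases.

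First, if the inequality is strict, $\hat{\gamma}^+_1 > \hat{\delta}^-_1$, then by \eqref{Psi2} the quantity $\hat{\Psi}''(\hat{\delta}^-_1;c)/c = \int_0^{\hat{\delta}^-_1}\sin\hat{\Psi}(\tilde{s};c)\,d\tilde{s}$ is strictly positive. Moreover, combining $\hat{\Psi}(\hat{\delta}^-_1;c)=0$ with the positivity of $\hat{\Psi}'$ just established shows that $\hat{\Psi}$ is strictly positive on $(\hat{\delta}^-_1,\hat{\gamma}^+_1]$, so (using also $|\hat{\Psi}|<\pi/2$) the remaining integral $\int_{\hat{\delta}^-_1}^{\hat{\gamma}^+_1}\sin\hat{\Psi}\,d\tilde{s}$ is strictly positive as well. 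Adding these two integrals and appealing once more to \eqref{Psi2} contradicts $\hat{\Psi}''(\hat{\gamma}^+_1;c)=0$.

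The delicate sub-case is the equality $\hat{\gamma}^+_1=\hat{\delta}^-_1$, where $\hat{\Psi}(\hat{\gamma}^+_1)=\hat{\Psi}''(\hat{\gamma}^+_1)=\hat{\Psi}^{(3)}(\hat{\gamma}^+_1)=0$ simultaneously, so the preceding integral argument trivializes. The idea is to push the analysis past $\hat{\gamma}^+_1$. Differentiating the ODE yields $\hat{\Psi}^{(4)}(\hat{\gamma}^+_1;c) = c\cos\hat{\Psi}(\hat{\gamma}^+_1;c)\,\hat{\Psi}'(\hat{\gamma}^+_1;c) = c\hat{\Psi}'(\hat{\gamma}^+_1;c)>0$, which forces $\hat{\Psi}''$ to become strictly positive immediately after $\hat{\gamma}^+_1$. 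Since $\hat{\gamma}^+_1<1$ (because $\hat{\delta}^-_1<1$) and $\hat{\Psi}''(1;c)=0$, I introduce $\hat{\gamma}_2 := \sup\{\gamma > \hat{\gamma}^+_1 \,|\, \hat{\Psi}''>0 \mbox{ on }(\hat{\gamma}^+_1,\gamma)\} \leq 1$, at which $\hat{\Psi}''$ vanishes by continuity. On $(\hat{\gamma}^+_1,\hat{\gamma}_2)$ the positivity of $\hat{\Psi}''$ together with $\hat{\Psi}'(\hat{\gamma}^+_1;c)>0$ keeps $\hat{\Psi}'$ positive, so $\hat{\Psi}$ strictly increases from $0$ and is therefore positive throughout, giving $\int_{\hat{\gamma}^+_1}^{\hat{\gamma}_2}\sin\hat{\Psi}>0$. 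But integrating the ODE between these two zeros of $\hat{\Psi}''$ gives $c\int_{\hat{\gamma}^+_1}^{\hat{\gamma}_2}\sin\hat{\Psi}=\hat{\Psi}''(\hat{\gamma}_2;c)-\hat{\Psi}''(\hat{\gamma}^+_1;c)=0$, a contradiction.

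The main obstacle is this degenerate equality case: the simultaneous vanishing of $\hat{\Psi}$, $\hat{\Psi}''$, and $\hat{\Psi}^{(3)}$ at $\hat{\gamma}^+_1$ kills the direct integral contradiction, and its resolution requires both the fourth-order expansion of $\hat{\Psi}$ at $\hat{\gamma}^+_1$ and the right-endpoint condition $\hat{\Psi}''(1;c)=0$, an ingredient which played no role in the prior lemmas of this section.
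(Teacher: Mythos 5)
Your proof is correct and follows essentially the same two-case contradiction argument as the paper. In the strict-inequality case the two arguments coincide. In the degenerate case $\hat{\gamma}^+_1(c)=\hat{\delta}^-_1(c)$, you invoke the fourth derivative $\hat{\Psi}^{(4)}(\hat{\gamma}^+_1;c)=c\hat{\Psi}'(\hat{\gamma}^+_1;c)>0$ together with the simultaneous vanishing of $\hat{\Psi}''$ and $\hat{\Psi}^{(3)}$ at $\hat{\gamma}^+_1$, whereas the paper uses the more elementary observation that $\hat{\Psi}$ stays strictly positive on a short right-neighborhood (since $\hat{\Psi}(\hat{\gamma}^+_1)=0$ and $\hat{\Psi}'(\hat{\gamma}^+_1)>0$), hence $\hat{\Psi}^{(3)}=c\sin\hat{\Psi}>0$ there, and then integrates this to get $\hat{\Psi}''>0$; both routes deliver the same conclusion and then define the next zero of $\hat{\Psi}''$ and derive the same integral contradiction. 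One small factual slip in your closing remark: the condition $\hat{\Psi}''(1;c)=0$ is not new to this lemma --- it already appears in the proof of Lemma~\ref{lem:com_m1-g1+}, where it is used to show $\hat{\gamma}^-_1(c)\le1$ after introducing $\hat{\gamma}^-_1(c)$.
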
 

\begin{proof} 
In proof by contradiction, suppose that $\hat{\gamma}^+_1(c)\ge\hat{\delta}^-_1(c)$. 
Since Lemma \ref{lem:com_m1-d1-} implies that $\hat{\mu}^-_1(c)$ exists in $(0,1)$, 
we have $\hat{\mu}^-_1(c)<\hat{\delta}^-_1(c)\le\hat{\gamma}^+_1(c)$. 

In the case of $\hat{\gamma}^+_1(c)>\hat{\delta}^-_1(c)$, we see 
\[
\hat{\Psi}''(\hat{\delta}^-_1(c);c)>0, \quad 
\hat{\Psi}(s;c)>0\,\,\ \mbox{for}\,\  
s\in(\hat{\delta}^-_1(c),\hat{\gamma}^+_1(c)). 
\]
Thus it follows that 
\[
\hat{\Psi}''(\hat{\gamma}^+_1(c),c)
=\hat{\Psi}''(\hat{\delta}^-_1(c),c)
+c\int_{\hat{\delta}^-_1(c)}^{\hat{\gamma}^+_1(c)}\sin\hat{\Psi}(s;c)\,ds>0.
\]
This contradicts $\hat{\Psi}''(\hat{\gamma}^+_1(c),c)=0$. 

In the case of $\hat{\gamma}^+_1(c)=\hat{\delta}^-_1(c)\,(\,<1)$, we have 
\[
\hat{\Psi}'(\hat{\gamma}^+_1(c);c)>0, \quad \hat{\Psi}(\hat{\gamma}^+_1(c);c)=0.
\]
Since  $\hat{\Psi}$ and $\hat{\Psi}'$ are continuous with respect to $s$, 
there exists $\varepsilon>0$ such that 
\[
\hat{\Psi}(s;c)>0\,\,\ \mbox{for}\,\  
s\in(\hat{\gamma}^+_1(c),\hat{\gamma}^+_1(c)+\e]. 
\]
Then we see
\[
\hat{\Psi}^{(3)}(s;c)=c\sin\hat{\Psi}(s;c)>0\,\,\ \mbox{for}\,\ 
s\in(\hat{\gamma}^+_1(c),\hat{\gamma}^+_1(c)+\e]. 
\]
This fact and $\hat{\Psi}''(\hat{\gamma}^+_1(c);c)=0$ imply 
\[
\hat{\Psi}''(s;c)>0\,\,\ \mbox{for}\,\  
s\in(\hat{\gamma}^+_1(c),\hat{\gamma}^+_1(c)+\e]. 
\]
Set
\[
\widetilde{\gamma}^+_2(c)
:=\sup\{\gamma\,|\,\hat{\Psi}''(s;c)>0\,\ \mbox{for}\ 
s\in(\hat{\gamma}^+_1(c),\gamma)\}. 
\]
Note that $\widetilde{\gamma}^+_2(c)\le1$ by virtue of 
$\hat{\gamma}^+_1(c)<\widetilde{\gamma}^+_2(c)$ and $\hat{\Psi}''(1;c)=0$. 
With the help of the definition of $\widetilde{\gamma}^+_2(c)$, we obtain 
\[
\hat{\Psi}''(s;c)>0\,\,\ \mbox{for}\,\ 
s\in(\hat{\gamma}^+_1(c),\widetilde{\gamma}^+_2(c)), \quad 
\hat{\Psi}''(\widetilde{\gamma}^+_2(c);c)=0.
\]
It follows from $\hat{\Psi}'(\hat{\gamma}^+_1(c);c)>0$ that  
\[
\hat{\Psi}'(s;c)>0\,\,\ \mbox{for}\,\ 
s\in(\hat{\gamma}^+_1(c),\widetilde{\gamma}^+_2(c)]. 
\]
Thus, by means of this fact and $\hat{\Psi}(\hat{\gamma}^+_1(c);c)=0$, 
we are led to 
\[
\hat{\Psi}(s;c)>0\,\,\ \mbox{for}\,\ 
s\in(\hat{\gamma}^+_1(c),\widetilde{\gamma}^+_2(c)],
\]
so that 
\[
0=\hat{\Psi}''(\widetilde{\gamma}^+_2(c),c)-\hat{\Psi}''(\hat{\gamma}^+_1(c),c)
=c\int_{\hat{\gamma}^+_1(c)}^{\widetilde{\gamma}^+_2(c)} 
\sin\hat{\Psi}(s;c)\,ds>0.
\]
This is a contradiction. 
\end{proof} 

\noindent
This lemma implies the following corollary. 

\begin{cor}\label{cor:com_g1+d1-}
If $c\in J_{\hat{\delta}^-_1}$, then $c\in J_{\hat{\gamma}^+_1}$. 
\end{cor}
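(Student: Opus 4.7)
The plan is to observe that this corollary is an immediate bookkeeping consequence of the preceding lemma. Recall from the discussion right after the definitions in Section \ref{sec:zero-point} that $\hat{\gamma}^+_1(c)$ always exists as an element of $(0,1]$: the quantity $\hat{\Psi}''$ starts at $0$ at $s=0$, is strictly positive on a short interval $(0,\varepsilon)$ (by $\hat{\Psi}(0;c)=\psi_->0$ together with the equation $\hat{\Psi}^{(3)}=c\sin\hat{\Psi}$ and $\hat{\Psi}''(0;c)=0$), and vanishes at $s=1$ by the boundary condition $\hat{\Psi}''(1;c)=0$. Consequently, membership of $c$ in $J_{\hat{\gamma}^+_1}$ is equivalent to the strict inequality $\hat{\gamma}^+_1(c)<1$, so the corollary reduces to showing that $\hat{\gamma}^+_1(c)$ cannot equal $1$ whenever $c\in J_{\hat{\delta}^-_1}$.

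Now assume $c\in J_{\hat{\delta}^-_1}$. By definition this means $\hat{\delta}^-_1(c)$ exists in the open interval $(0,1)$. Apply Lemma \ref{lem:com_g1+d1-} directly to conclude
\[
\hat{\gamma}^+_1(c)<\hat{\delta}^-_1(c)<1.
\]
Combined with $\hat{\gamma}^+_1(c)>0$ (which holds by construction), this places $\hat{\gamma}^+_1(c)$ in $(0,1)$, which is exactly the statement $c\in J_{\hat{\gamma}^+_1}$.

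There is no real obstacle here; the corollary is a one-line deduction from Lemma \ref{lem:com_g1+d1-}, whose proof by contradiction (splitting into the two cases $\hat{\gamma}^+_1(c)>\hat{\delta}^-_1(c)$ and $\hat{\gamma}^+_1(c)=\hat{\delta}^-_1(c)$ and exploiting \eqref{Psi2} to contradict $\hat{\Psi}''(\hat{\gamma}^+_1(c);c)=0$) has already done the substantive work.
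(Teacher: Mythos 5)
Your proof is correct and matches the paper's own reasoning exactly: the paper gives no explicit proof, simply remarking that Lemma \ref{lem:com_g1+d1-} implies the corollary, and your one-line deduction ($\hat{\gamma}^+_1(c)<\hat{\delta}^-_1(c)<1$, combined with the already-established existence of $\hat{\gamma}^+_1(c)$ in $(0,1]$ for every $c>0$) is precisely that implicit argument spelled out.
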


By means of Lemma \ref{lem:com_d1+g1+} and Lemma \ref{lem:com_g1+d1-}, 
we have $\hat{\Psi}(\hat{\gamma}^+_1(c);c)<0$, which implies 
$\hat{\Psi}^{(3)}(\hat{\gamma}^+_1(c);c)=c\sin\hat{\Psi}(\hat{\gamma}^+_1(c);c)<0$. 
It follows from this fact and $\hat{\Psi}''(\hat{\gamma}^+_1(c);c)=0$ that 
there exists $\varepsilon>0$ such that $\hat{\Psi}''(s;c)<0$ for $s\in 
(\hat{\gamma}^+_1(c),\hat{\gamma}^+_1(c)+\varepsilon)$. 
Then we define $\hat{\gamma}^-_1(c)$ as \eqref{def_gamma1-}. 

\begin{lemma}\label{lem:com_d1-g1-} 
If $c\in J_{\hat{\gamma}^+_1}$, then $c\in J_{\hat{\delta}^-_1}$ and 
$\hat{\delta}^-_1(c)<\hat{\gamma}^-_1(c)$. 
\end{lemma}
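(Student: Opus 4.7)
The plan is to argue by contradiction, following the same template as Lemma \ref{lem:com_g1+d1-} but with the roles of $\hat{\Psi}$ and $\hat{\Psi}''$ interchanged in the chain of supremum definitions. The two ways the conclusion can fail -- either $c \notin J_{\hat{\delta}_1^-}$ (so $\hat{\delta}_1^-(c) \ge 1$) or $\hat{\delta}_1^-(c) \ge \hat{\gamma}_1^-(c)$ directly -- can be merged: since $\hat{\gamma}_1^-(c) \le 1$ always, both cases are captured by the single hypothesis $\hat{\delta}_1^-(c) \ge \hat{\gamma}_1^-(c)$ (with the convention that $\hat{\delta}_1^-(c) \ge 1$ when $c \notin J_{\hat{\delta}_1^-}$).

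Under this hypothesis, the supremum definition of $\hat{\delta}_1^-(c)$ yields $\hat{\Psi}(s;c) < 0$ on $(\hat{\delta}_1^+(c), \hat{\gamma}_1^-(c))$. Combining with Lemma \ref{lem:com_d1+g1+}, which gives $\hat{\delta}_1^+(c) < \hat{\gamma}_1^+(c)$, the smaller interval $(\hat{\gamma}_1^+(c), \hat{\gamma}_1^-(c))$ is contained in $(\hat{\delta}_1^+(c), \hat{\gamma}_1^-(c))$, so $\hat{\Psi}(s;c) < 0$ there as well. Because $|\hat{\Psi}(s;c)| < \pi/2$ thanks to $\hat{\alpha}(c) \in I(c)$, this forces $\sin \hat{\Psi}(s;c) < 0$ strictly on that open interval.

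Now integrating the ODE $\hat{\Psi}^{(3)} = c \sin \hat{\Psi}$ from $\hat{\gamma}_1^+(c)$ to $\hat{\gamma}_1^-(c)$ and exploiting the defining conditions $\hat{\Psi}''(\hat{\gamma}_1^+(c);c) = \hat{\Psi}''(\hat{\gamma}_1^-(c);c) = 0$, we obtain
\[
0 \;=\; \hat{\Psi}''(\hat{\gamma}_1^-(c);c) - \hat{\Psi}''(\hat{\gamma}_1^+(c);c) \;=\; c \int_{\hat{\gamma}_1^+(c)}^{\hat{\gamma}_1^-(c)} \sin \hat{\Psi}(s;c)\,ds \;<\; 0,
\]
since $c > 0$ and the integrand is strictly negative on a nondegenerate interval (recall $\hat{\gamma}_1^+(c) < \hat{\gamma}_1^-(c)$ strictly). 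This contradiction refutes the assumption.

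I do not expect any serious obstacle: the three key facts -- the vanishing of $\hat{\Psi}''$ at both $\hat{\gamma}_1^\pm(c)$, the sign identification $\operatorname{sgn}\sin\hat{\Psi} = \operatorname{sgn}\hat{\Psi}$ on $(-\pi/2,\pi/2)$, and the ordering $\hat{\delta}_1^+(c) < \hat{\gamma}_1^+(c)$ from Lemma \ref{lem:com_d1+g1+} -- are all already in hand. The only mild care required is in treating the two failure modes uniformly, which the reformulation above handles at the outset.
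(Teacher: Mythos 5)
Your proof is correct, but it takes a genuinely different route from the paper. The paper's argument is direct: since $\hat{\Psi}''$ vanishes at both $\hat{\gamma}_1^+(c)$ and $\hat{\gamma}_1^-(c)$ (with $\hat{\gamma}_1^-(c)\le 1$ by the boundary condition), Rolle's theorem produces a point $\delta^\ast\in(\hat{\gamma}_1^+(c),\hat{\gamma}_1^-(c))$ with $\hat{\Psi}^{(3)}(\delta^\ast;c)=0$, hence $\hat{\Psi}(\delta^\ast;c)=0$; combined with $\hat{\Psi}(\hat{\gamma}_1^+(c);c)<0$ this pins $\hat{\delta}_1^-(c)\in(\hat{\gamma}_1^+(c),\delta^\ast]\subset(0,1)$, giving both parts of the conclusion at once. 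This is the same Rolle's-theorem template the paper uses for Lemma~\ref{lem:com_m1-d1-}. Your argument instead mimics the contradiction-plus-integration template of Lemmas~\ref{lem:com_m1-g1+} and \ref{lem:com_g1+d1-}: assuming $\hat{\delta}_1^-(c)\ge\hat{\gamma}_1^-(c)$, you get $\sin\hat{\Psi}<0$ throughout $(\hat{\gamma}_1^+(c),\hat{\gamma}_1^-(c))$ and then $0=c\int_{\hat{\gamma}_1^+}^{\hat{\gamma}_1^-}\sin\hat{\Psi}\,ds<0$. Both routes are sound; the paper's yields the conclusion in one step with no case split, while yours unifies the two failure modes up front (a clean observation, since $\hat{\gamma}_1^-(c)\le 1$ always holds) and fits naturally into the inductive Step pattern the paper lays out after this section, where the contradiction-by-integration argument recurs repeatedly. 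One small point worth making explicit: the nondegeneracy $\hat{\gamma}_1^+(c)<\hat{\gamma}_1^-(c)$, which makes your final integral strictly negative, comes from the remark preceding the lemma that $\hat{\Psi}(\hat{\gamma}_1^+(c);c)<0$ forces $\hat{\Psi}''<0$ on a right-neighborhood of $\hat{\gamma}_1^+(c)$.
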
 

\begin{proof} 
By virtue of $c\in J_{\hat{\gamma}^+_1}$ and $\hat{\Psi}''(1;c)=0$, 
we have $\hat{\gamma}^-_1(c)\le1$. Then it follows from the definition 
of $\gamma^\pm_1(c)$ and Rolle's theorem that there exists 
$\delta^\ast(c)\in(\hat{\gamma}^+_1(c),\gamma^-_1(c))$ such that 
$\hat{\Psi}^{(3)}(\delta^\ast(c);c)=0$, which implies 
$\hat{\Psi}(\delta^\ast(c);c)=0$. Recalling $\hat{\Psi}(\hat{\gamma}^+_1(c);c)<0$ and 
the definition of $\delta^-_1(c)$, we see that $\delta^-_1(c)$ exists in 
$(\hat{\gamma}^+_1(c),\delta^\ast(c)]\subset(0,1)$, so that we obtain $c\in J_{\hat{\delta}^-_1}$ 
and $\hat{\delta}^-_1(c)\le\delta^\ast(c)<\hat{\gamma}^-_1(c)$. 
\end{proof} 

As a result, Corollary \ref{cor:com_g1+d1-} and Lemma \ref{lem:com_d1-g1-} imply 
\begin{equation}\label{Jg1+d1-}
J_{\hat{\gamma}^+_1}=J_{\hat{\delta}^-_1}.
\end{equation}
Moreover, it follows from Lemma \ref{lem:com_d1+g1+}\,-\,\ref{lem:com_d1-g1-} that 
if $c\in J_{\hat{\gamma}^+_1}$, then 
\[
\delta^+_1(c)<\mu^-_1(c)<\hat{\gamma}^+_1(c)
<\delta^-_1(c)\,(\,<\gamma^-_1(c)\le1). 
\] 
For $j\in\mathbb{N}$, set
\begin{align} 
&\hat{\delta}^-_j(c)
:=\sup\{\delta\,|\,\hat{\Psi}(s;c)<0\,\ \mbox{for}\ 
s\in(\hat{\delta}^+_j(c),\delta)\}, \notag\\
&\hat{\delta}^+_{j+1}(c)
:=\sup\{\delta\,|\,\hat{\Psi}(s;c)>0\,\ \mbox{for}\ 
s\in(\hat{\delta}^-_j(c),\delta)\}, \notag\\
&\hat{\mu}^+_j(c)
:=\sup\{\mu\,|\,\hat{\Psi}'(s;c)>0\,\ \mbox{for}\ 
s\in(\hat{\mu}^-_j(c),\mu)\}, \notag\\
&\hat{\mu}^-_{j+1}(c)
:=\sup\{\mu\,|\,\hat{\Psi}'(s;c)<0\,\ \mbox{for}\ 
s\in(\hat{\mu}^+_j(c),\mu)\}, \notag\\
&\hat{\gamma}^-_j(c)
:=\sup\{\gamma\,|\,\hat{\Psi}''(s;c)<0\,\ \mbox{for}\ 
s\in(\hat{\gamma}^+_j(c),\gamma)\}, \notag\\
&\hat{\gamma}^+_{j+1}(c)
:=\sup\{\gamma\,|\,\hat{\Psi}''(s;c)>0\,\ \mbox{for}\ 
s\in(\hat{\gamma}^-_j(c),\gamma)\}, \notag\\
&J_{\hat{\delta}^{\pm}_j}:=\{c>0\,|\,\mbox{$\hat{\delta}^{\pm}_j(c)$ exists in the interval $(0,1)$}\}, \label{def-dipm}\\
&J_{\hat{\mu}^{\pm}_j}:=\{c>0\,|\,\mbox{$\hat{\mu}^{\pm}_j(c)$ exists in the interval $(0,1)$}\}, 
\label{def-mipm}\\
&J_{\hat{\gamma}^{\pm}_j}:=\{c>0\,|\,\mbox{$\hat{\gamma}^{\pm}_j(c)$ exists in the interval $(0,1)$}\}. \notag
\end{align} 
We can obtain the order of zero points of $\hat{\Psi}$ and its derivatives by induction  
consisting of the following steps. 
Under the assumption $\hat{\delta}^+_j(c)<\hat{\gamma}^+_j(c)$ for any $c\in J_{\hat{\delta}^+_j}$, 
we can prove each step by a similar argument to the proof of 
Lemma \ref{lem:com_d1+m1-}--\ref{lem:com_d1-g1-}. 

\begin{list}{}{\topsep=0.25cm\itemsep=0cm\leftmargin=0cm\itemindent=0.25cm} 
\item Step 1:  
$\hat{\delta}^+_j(c)<\hat{\mu}^-_j(c)$ for $c\in J_{\hat{\mu}^-_j}$. 
Moreover, if $c\in J_{\hat{\gamma}^+_j}$, then $c\in J_{\hat{\mu}^-_j}$ and 
$\hat{\mu}^-_j(c)<\hat{\gamma}^+_j(c)$. 

\item Step 2: 
If $c\in J_{\hat{\delta}^-_j}\cup\{c>0\,|\,\hat{\delta}^-_j(c)=1\}$, then $c\in J_{\hat{\mu}^-_j}$ 
and $\hat{\mu}^-_j(c)<\hat{\delta}^-_j(c)$. 

\item Step 3: 
$\hat{\gamma}^+_j(c)<\hat{\delta}^-_j(c)$ for $c\in J_{\hat{\delta}^-_j}$. 

\item Step 4: 
If $c\in J_{\hat{\gamma}^+_j}$, then $c\in J_{\hat{\delta}^-_j}$ and 
$\hat{\delta}^-_j(c)<\hat{\gamma}^-_j(c)$. 

\item Step 5: 
$\hat{\delta}^-_j(c)<\hat{\mu}^+_j(c)$ for $c\in J_{\hat{\mu}^+_j}$. 
Moreover, if $c\in J_{\hat{\gamma}^-_j}$, then $c\in J_{\hat{\mu}^+_j}$ and 
$\hat{\mu}^+_j(c)<\hat{\gamma}^-_j(c)$. 

\item Step 6: 
If $c\in J_{\hat{\delta}^+_{j+1}}\cup\{c>0\,|\,\hat{\delta}^+_{j+1}(c)=1\}$, 
then $c\in J_{\hat{\mu}^+_j}$ and $\hat{\mu}^+_j(c)<\hat{\delta}^+_{j+1}(c)$. 

\item Step 7: 
$\hat{\gamma}^-_j(c)<\hat{\delta}^+_{j+1}(c)$ for $c\in J_{\hat{\delta}^+_{j+1}}$. 

\item Step 8: 
If $c\in J_{\hat{\gamma}^-_j}$, then $c\in J_{\hat{\delta}^+_{j+1}}$ and 
$\hat{\delta}^+_{j+1}(c)<\hat{\gamma}^+_{j+1}(c)$. 

\end{list} 

Note that Step 3, 4, 7 and 8 lead us to the equality 
\begin{equation}\label{Jgj+dj-}
J_{\hat{\gamma}^+_j} = J_{\hat{\delta}^-_j}, \quad J_{\hat{\gamma}^-_j} = J_{\hat{\delta}^+_{j+1}}. 
\end{equation}
We have already obtained in Lemma \ref{lem:com_d1+g1+} that 
$\hat{\delta}_1^+(c) < \hat{\gamma}_1^+(c)$ for any $c \in J_{\hat{\delta}_1^+} = (0,\infty)$, 
which is the first step of this induction. 
Carrying out the above steps for $j=1$, we have 
$\hat{\delta}_2^+(c) < \hat{\gamma}^+_2(c)$ for any $c \in J_{\hat{\delta}^+_2}$. 
Therefore, we can carry out them again for $j=2$ and repeat this argument for any 
$j \in \mathbb{N}$. As a result, we are led to the following proposition.

\begin{prop}\label{prop:com_zero-point}
\begin{list}{}{\topsep=0cm\itemsep=0cm\leftmargin=0cm\itemindent=0.25cm}
\item[(i)] 
The following inclusion holds. 
\[ 
\begin{aligned}
(0,\infty) =&\; J_{\hat{\delta}^+_1} \supset J_{\hat{\mu}_1^-} \supset 
J_{\hat{\gamma}_1^+} = J_{\hat{\delta}_1^-} \supset \cdots \\
\supset&\; J_{\hat{\gamma}_{j-1}^-} = J_{\hat{\delta}_j^+} \supset 
J_{\hat{\mu}_j^-} \supset J_{\hat{\gamma}_j^+} = J_{\hat{\delta}_j^-} 
\supset J_{\hat{\mu}_j^+} \supset J_{\hat{\gamma}_j^-} 
= J_{\hat{\delta}_{j+1}^+} \supset \cdots. 
\end{aligned}
\]
\item[(ii)] 
For any $j \in \mathbb{N}\setminus\{1\}$ and $c \in J_{\hat{\delta}_j^+}$, the zero points 
of $\hat{\Psi}(\,\cdot\,;c)$ and its derivatives appear alternately in the interval $(0,1)$ as 
\[ 
\hat{\delta}_1^+(c) < \cdots < \hat{\delta}_{j-1}^-(c) < \hat{\mu}_{j-1}^+(c) < \hat{\gamma}_{j-1}^-(c) 
< \hat{\delta}_j^+(c)\, ( < \hat{\gamma}_j^+(c) \le 1). 
\]
\item[(iii)] 
For any $j \in \mathbb{N}$ and $c \in J_{\hat{\delta}_j^-}$, the zero points of $\hat{\Psi}(\,\cdot\,;c)$ 
and its derivatives appear alternately in the interval $(0,1)$ as 
\[ 
\hat{\delta}_1^+(c) < \cdots < \hat{\delta}_j^+(c) < \hat{\mu}_j^-(c) < \hat{\gamma}_j^+(c) 
< \hat{\delta}_j^-(c)\, ( < \hat{\gamma}_j^-(c) \le 1). 
\]
\item[(iv)] 
For any $j \in \mathbb{N}$ and $c \in J_{\hat{\mu}_j^-}$ $($resp.\ $J_{\hat{\mu}_j^+}$$)$, 
the zero points of $\hat{\Psi}(\,\cdot\,;c)$ and its derivatives appear alternately in the interval 
$(0,1)$ as 
\[ 
\begin{aligned}
\hat{\delta}_1^+(c) < &\;\cdots < \hat{\delta}_j^+(c) < \hat{\mu}_j^-(c)\, 
( < \hat{\gamma}_j^+(c) \le 1) \\
&\;(\mbox{resp.\ $\cdots < \hat{\gamma}_j^+(c) < \hat{\delta}_j^-(c) < \hat{\mu}_j^+(c)\, 
( < \hat{\gamma}_j^-(c) \le 1)$}) 
\end{aligned}
\]
\item[(v)] 
For any $j \in \mathbb{N}$, $c \in J_{\hat{\mu}_j^-}$ (resp.\ $c \in J_{\hat{\mu}_j^+}$) and 
$\hat{\mu}_j^-(c) < \hat{\delta}_j^-(c)$ (resp.\ $\hat{\mu}_j^+(c) < \hat{\delta}_{j+1}^+(c)$) 
if $\hat{\delta}_j^-(c) = 1$ (resp.\ $\hat{\delta}_{j+1}^+(c) = 1$). 
\end{list}
\end{prop}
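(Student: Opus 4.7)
The plan is to prove the proposition by induction on $j$, carrying out the 8-step cycle spelled out right before the proposition. The base case $j=1$ is essentially in hand: Lemma \ref{lem:com_d1+g1+} gives $\hat{\delta}_1^+(c) < \hat{\gamma}_1^+(c)$ on $J_{\hat{\delta}_1^+} = (0,\infty)$, and Lemmas \ref{lem:com_d1+m1-}--\ref{lem:com_d1-g1-} already establish Steps 1--4 for $j=1$. The task is therefore to prove Steps 5--8 for $j=1$ and then the full 8-step cycle for general $j$, under the hypothesis $\hat{\delta}_j^+(c) < \hat{\gamma}_j^+(c)$ on $J_{\hat{\delta}_j^+}$ that emerges from Step 8 at stage $j-1$.

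The proof of each step will follow the template of Lemmas \ref{lem:com_d1+m1-}--\ref{lem:com_d1-g1-}, modulo flipping signs. The three recurring tools are: (a) the integral identity
\[
\hat{\Psi}''(s;c) - \hat{\Psi}''(s';c) = c\int_{s'}^{s}\sin\hat{\Psi}(\tilde{s};c)\,d\tilde{s},
\]
obtained from $\hat{\Psi}^{(3)} = c\sin\hat{\Psi}$, combined with the strict monotonicity of $\sin$ on $(-\pi/2,\pi/2)$ and the range restriction $|\hat{\Psi}|<\pi/2$ coming from $\hat{\alpha}(c)\in I(c)$; (b) Rolle's theorem applied to $\hat{\Psi}$, $\hat{\Psi}'$, or $\hat{\Psi}''$ between two zeros of one of them; (c) the boundary condition $\hat{\Psi}''(1;c)=0$, which always caps an interval of constant sign of $\hat{\Psi}''$ at $s=1$. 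A typical step, say Step 5, goes by contradiction: if $\hat{\Psi}'<0$ persists on $(\hat{\mu}_j^-(c),\hat{\gamma}_j^-(c)]$, then Step 3 places $\hat{\delta}_j^-(c)$ strictly between $\hat{\gamma}_j^+(c)$ and $\hat{\gamma}_j^-(c)$, forcing $\hat{\Psi}$ to be positive on $(\hat{\delta}_j^-(c),\hat{\gamma}_j^-(c)]$, and then identity (a) applied on $[\hat{\gamma}_j^+(c),\hat{\gamma}_j^-(c)]$ yields a strictly positive right-hand side, contradicting $\hat{\Psi}''(\hat{\gamma}_j^+(c);c)=\hat{\Psi}''(\hat{\gamma}_j^-(c);c)=0$. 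Steps 2 and 6 carry the extra subtlety that the extremum of $\hat{\Psi}'$ could a priori coincide with $s=1$; this is where the strict inequality asserted in part (v) is read off.

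The main obstacle will be establishing the two sets of equalities $J_{\hat{\gamma}_j^+} = J_{\hat{\delta}_j^-}$ and $J_{\hat{\gamma}_j^-} = J_{\hat{\delta}_{j+1}^+}$, i.e.\ Steps 4 and 8. The inclusions $\supset$ are the comparatively easy Steps 3 and 7. The reverse inclusions $\subset$ demand that once $\hat{\Psi}''$ returns to zero at $\hat{\gamma}_j^+(c)$ with $\hat{\Psi}(\hat{\gamma}_j^+(c);c)<0$, the function $\hat{\Psi}$ crosses zero again before $\hat{\Psi}''$ does. The crucial observation is that Rolle's theorem applied to $\hat{\Psi}''$ on $[\hat{\gamma}_j^+(c),\hat{\gamma}_j^-(c)]$ produces some $\delta^\ast\in(\hat{\gamma}_j^+(c),\hat{\gamma}_j^-(c))$ with $\hat{\Psi}^{(3)}(\delta^\ast;c)=0$, and the ODE then gives $\hat{\Psi}(\delta^\ast;c)=0$; since $\hat{\Psi}$ was strictly negative at the left endpoint, it must have crossed zero earlier, placing $\hat{\delta}_j^-(c)$ strictly before $\hat{\gamma}_j^-(c)$. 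Step 8 is the analogous statement for $\hat{\gamma}_j^-$ versus $\hat{\delta}_{j+1}^+$.

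Once Steps 1--8 are established for every $j$, parts (i)--(v) follow directly. The inclusion chain in (i) is simply the record of the inclusions and equalities between the index sets produced at each step, read off in order. The alternating orderings (ii), (iii), (iv) are obtained by concatenating the two-term inequalities coming from each step along a walk through the chain in (i), starting from $\hat{\delta}_1^+(c)$. Finally, (v) is just the strengthened form of Steps 2 and 6 already noted above. No new ideas are needed beyond the induction and the eight template arguments.
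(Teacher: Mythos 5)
Your plan is the same as the paper's: an induction on $j$ built from the 8-step cycle, with the base case read off from Lemmas \ref{lem:com_d1+g1+}--\ref{lem:com_d1-g1-}, and the index-set equalities $J_{\hat{\gamma}_j^+}=J_{\hat{\delta}_j^-}$, $J_{\hat{\gamma}_j^-}=J_{\hat{\delta}_{j+1}^+}$ coming from Steps 3/4 and 7/8. Parts (i)--(iv) are then bookkeeping, and (v) is the ``or $\hat{\delta}=1$'' branch already built into Steps 2 and 6. So the route is correct and matches the paper.

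The one place where your sketch would fail is the illustrative Step 5. First a sign slip: past $\hat{\mu}_j^-(c)$ the derivative $\hat{\Psi}'$ turns positive, so the hypothetical to contradict is ``$\hat{\Psi}'>0$ persists'', not ``$\hat{\Psi}'<0$ persists''. More seriously, the contradiction you draw from the identity $\hat{\Psi}''(\hat{\gamma}_j^-(c);c)-\hat{\Psi}''(\hat{\gamma}_j^+(c);c)=c\int_{\hat{\gamma}_j^+(c)}^{\hat{\gamma}_j^-(c)}\sin\hat{\Psi}$ does not work: both endpoints are zeros of $\hat{\Psi}''$, so this integral is always zero, and it is in fact a signed quantity whose integrand changes sign at $\hat{\delta}_j^-(c)$ (which sits strictly between $\hat{\gamma}_j^+(c)$ and $\hat{\gamma}_j^-(c)$ by Steps 3 and 4). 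There is no forced positivity. The correct argument, which is exactly Lemma \ref{lem:com_m1-g1+}'s template transposed half a period, goes one interval further to the right: from $\hat{\Psi}(\hat{\gamma}_j^-(c);c)>0$ and $\hat{\Psi}''(\hat{\gamma}_j^-(c);c)=0$ one gets $\hat{\Psi}''>0$ on a right-neighbourhood of $\hat{\gamma}_j^-(c)$; the boundary condition $\hat{\Psi}''(1;c)=0$ forces the next zero $\hat{\gamma}_{j+1}^+(c)\le 1$ of $\hat{\Psi}''$ to exist; on $(\hat{\gamma}_j^-(c),\hat{\gamma}_{j+1}^+(c)]$ the persisting $\hat{\Psi}'>0$ keeps $\hat{\Psi}>0$; and then $0=\hat{\Psi}''(\hat{\gamma}_{j+1}^+(c);c)-\hat{\Psi}''(\hat{\gamma}_j^-(c);c)=c\int_{\hat{\gamma}_j^-(c)}^{\hat{\gamma}_{j+1}^+(c)}\sin\hat{\Psi}>0$ gives the contradiction on an interval where $\sin\hat{\Psi}$ has one sign. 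With this fix your eight-step engine runs as intended, and the rest of your plan goes through.
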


Note that the sign of $\hat{\Psi}$ and its derivatives changes at each zero point 
due to the equation $\hat{\Psi}^{(3)}=c\sin\hat{\Psi}$ and 
the order of zero points of $\hat{\Psi}$ and its derivatives in Proposition \ref{prop:com_zero-point}. 

\section{Estimates of $\Psi$ and its derivatives}\label{sec:order-est}

In Section \ref{sec:zero-point}, we obtained that the graph of $\hat{\Psi}(\,\cdot\,;c)$ 
``oscillates''. 
As we mentioned in the beginning of Section \ref{sec:initial}, the period of 
the oscillation of $\hat{\Psi}$ contracts as $c$ increase and 
the amplitude of the oscillation around $s=1$ is smaller than that around $s=0$. 
In this section, we first prove ``the contraction of the period'' 
by evaluating the distance between each pair of the zero points of $\hat{\Psi}$ and 
its derivatives with the order of convergence on $c>0$. 
These estimates imply that new zero points appear 
in the interval $(0,1)$ if $c>0$ becomes sufficiently large. 
In order to analyze ``the gap of the amplitude'', we derive the estimates  
of $\hat{\Psi}$ and its derivatives with the order of convergence on $c>0$ 
at each zero point.
In this analysis, ``energy loss'' as in \eqref{main-1-2} plays a key role. 
The following lemma is relevant to ``energy loss''. 

\begin{lemma}\label{lem:com_Psi1-2} 
For any $c>0$, the following properties hold. 

\noindent
(i) If $s_i\,(i=1,2)$ with $0\le s_1<s_2\le1$ 
satisfy the following (a) and (b):
\begin{center} 
(a) $\hat{\Psi}'(s_1;c)=0$ or $\hat{\Psi}''(s_1;c)=0$, \quad
(b) $\hat{\Psi}'(s_2;c)=0$ or $\hat{\Psi}''(s_2;c) = 0$,  
\end{center} 
then
\begin{equation}\label{com_Psi} 
|\hat{\Psi}(s_1;c)| \ge |\hat{\Psi}(s_2;c)|. 
\end{equation}
(ii) If $s_i\,(i=1,2)$ with $0\le s_1<s_2\le1$ satisfy 
the following (a) and (b):
\begin{center} 
(a) $\hat{\Psi}(s_1;c)=0$ or $\hat{\Psi}''(s_1;c)=0$, \quad
(b) $\hat{\Psi}(s_2;c)=0$ or $\hat{\Psi}''(s_2;c)=0$, 
\end{center} 
then 
\[
|\hat{\Psi}'(s_1;c)|\ge|\hat{\Psi}'(s_2;c)|. 
\]
(iii) If $s_i\,(i=1,2)$ with $0\le s_1<s_2\le1$ satisfy 
the following (a) and (b):
\begin{center} 
(a) $\hat{\Psi}(s_1;c)=0$ or $\hat{\Psi}'(s_1;c)=0$, \quad
(b) $\hat{\Psi}(s_2;c)=0$ or $\hat{\Psi}'(s_2;c)=0$,  
\end{center} 
then
\[
|\hat{\Psi}''(s_1;c)|\ge|\hat{\Psi}''(s_2;c)|. 
\]
\end{lemma}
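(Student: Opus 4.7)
The plan is to reduce each of the three inequalities to the monotonicity along $s$ of a suitable ``energy-like'' quantity built from $\hat{\Psi}$ and its derivatives, where $\hat{\Psi}$ solves $\hat{\Psi}^{(3)}=c\sin\hat{\Psi}$. Each of the three quantities will be chosen so that it reduces on the nose to $c\cos\hat{\Psi}$, to $(\hat{\Psi}')^2$, or to $(\hat{\Psi}'')^2/2$ exactly on the pair of vanishing sets appearing in (i), (ii), and (iii) respectively, and so that its derivative has a definite sign thanks to $c>0$ together with $|\hat{\Psi}(s;c)|<\pi/2$ on $[0,1]$ (which holds by Proposition~\ref{thm:exit_a}(i)).

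For (i), set $E(s) := \hat{\Psi}'(s)\hat{\Psi}''(s) + c\cos\hat{\Psi}(s)$; this is the quantity behind \eqref{int_eq}. A direct differentiation together with $\hat{\Psi}^{(3)}=c\sin\hat{\Psi}$ yields $E'(s)=(\hat{\Psi}''(s))^2\ge 0$, so $E$ is non-decreasing on $[0,1]$. Under the hypothesis of (i), $\hat{\Psi}'(s_i)\hat{\Psi}''(s_i)=0$ for $i=1,2$, hence $E(s_i)=c\cos\hat{\Psi}(s_i)$, and the monotonicity gives $\cos\hat{\Psi}(s_2)\ge\cos\hat{\Psi}(s_1)$; because $|\hat{\Psi}|<\pi/2$, this is equivalent to $|\hat{\Psi}(s_1)|\ge|\hat{\Psi}(s_2)|$.

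For (iii), set $F(s) := \tfrac{1}{2}(\hat{\Psi}''(s))^2 - c\hat{\Psi}'(s)\sin\hat{\Psi}(s)$. Differentiating and using the equation gives $F'(s) = -c\cos\hat{\Psi}(s)(\hat{\Psi}'(s))^2\le 0$, since $\cos\hat{\Psi}>0$ on $[0,1]$. Under the hypothesis of (iii), $\hat{\Psi}'(s_i)\sin\hat{\Psi}(s_i)=0$ at both endpoints, so $F(s_i) = \tfrac{1}{2}(\hat{\Psi}''(s_i))^2$, and the non-increase of $F$ gives $|\hat{\Psi}''(s_1)|\ge|\hat{\Psi}''(s_2)|$.

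For (ii), which is the least standard of the three, I would introduce $H(s) := (\hat{\Psi}'(s))^2 - 2\hat{\Psi}(s)\hat{\Psi}''(s)$. Differentiating and using $\hat{\Psi}^{(3)}=c\sin\hat{\Psi}$, the two $\hat{\Psi}'\hat{\Psi}''$ terms cancel and one obtains $H'(s) = -2c\hat{\Psi}(s)\sin\hat{\Psi}(s)\le 0$, the sign again coming from $|\hat{\Psi}|<\pi/2$. Under the hypothesis of (ii), $\hat{\Psi}(s_i)\hat{\Psi}''(s_i)=0$ for $i=1,2$, so $H(s_i)=(\hat{\Psi}'(s_i))^2$, and the monotonicity of $H$ yields $|\hat{\Psi}'(s_1)|\ge|\hat{\Psi}'(s_2)|$. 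The main obstacle is guessing this quantity $H$: unlike $E$ and $F$, it is not produced by the natural strategy of multiplying the equation by $\hat{\Psi}'$ or $\hat{\Psi}''$ and integrating by parts, but rather by the algebraic observation that $2\hat{\Psi}\hat{\Psi}''$ controls $(\hat{\Psi}')^2$ up to a term that differentiates to a sign-definite function of $\hat{\Psi}$ alone.
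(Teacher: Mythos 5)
Your proof is correct and is essentially the paper's argument in differential form: the quantities $E$, $F$, $H$ are precisely the primitives obtained by multiplying $\hat{\Psi}^{(3)}=c\sin\hat{\Psi}$ by $\hat{\Psi}'$, $\hat{\Psi}''$, and $\hat{\Psi}$ respectively and integrating by parts, which is exactly how the paper derives its identities \eqref{int_eq}, \eqref{int_eq_Psi2}, and \eqref{int_eq_Psi}. The only small inaccuracy is your closing remark that $H$ does not come from such a multiply-and-integrate device; it does, namely by multiplying by $\hat{\Psi}$, which is the paper's route.
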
 

\begin{proof} 
For (i), recalling \eqref{int_eq} and using the conditions (a) and (b), we have
\[
-c\bigl(\cos\hat{\Psi}(s_2;c)-\cos\hat{\Psi}(s_1;c)\bigr)
=-\int_{s_1}^{s_2}\bigl(\hat{\Psi}''(s;c)\bigr)^2\,ds\le0.
\]
Since $\alpha\in I(c)$, this implies \eqref{com_Psi}. 

Let us prove (ii). Multiplying $\hat{\Psi}$ by $\hat{\Psi}^{(3)} =c\sin\hat{\Psi}$ and integrating 
it by parts in $[s_1,s_2]$, we obtain
\begin{equation}\label{int_eq_Psi} 
\left[\hat{\Psi}''(s;c)\hat{\Psi}(s;c) - \frac{\bigl(\hat{\Psi}'(s;c)\bigr)^2}2\right]_{s=s_1}^{s=s_2} 
=c\int_{s_1}^{s_2}\hat{\Psi}(s;c)\sin\hat{\Psi}(s;c)\,ds. 
\end{equation} 
Since $\hat{\Psi}\sin\hat{\Psi}>0$, we are led to (ii). 

Let us show (iii). Multiplying $\hat{\Psi}''$ by $\hat{\Psi}^{(3)} =c\sin\hat{\Psi}$ and integrating 
it by parts in $[s_1,s_2]$, we get
\begin{equation}\label{int_eq_Psi2} 
\left[\frac{\bigl(\hat{\Psi}''(s;c)\bigr)^2}2\right]_{s=s_1}^{s=s_2} 
=c\bigl[\hat{\Psi}'(s;c)\sin\hat{\Psi}(s;c)\bigr]_{s=s_1}^{s=s_2} 
-c\int_{s_1}^{s_2}\bigl(\hat{\Psi}'(s;c)\bigr)^2\cos\hat{\Psi}(s;c)\,ds. 
\end{equation} 
Since $\cos\hat{\Psi}>0$, we see (iii). 
\end{proof} 

By virtue of Lemma \ref{lem:com_Psi1-2}, 
we  easily derive the following corollary. 

\begin{cor}\label{cor:bound_Psi0-2} 
For $s\in[0,1]$ and $c>0$, 
\begin{align*} 
&|\hat{\Psi}(s;c)|\le|\hat{\Psi}(0;c)|=\psi_-, \\
&|\hat{\Psi}'(s;c)|\le|\hat{\Psi}'(0;c)|=-\hat{\alpha}(c), \\ 
&|\hat{\Psi}''(s;c)|\le|\hat{\Psi}''(\hat{\delta}^+_1(c);c)|=\hat{\Psi}''(\hat{\delta}^+_1(c);c). 
\end{align*} 
\end{cor}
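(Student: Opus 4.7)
The plan is to obtain each of the three bounds by invoking Lemma \ref{lem:com_Psi1-2} once, after locating where the maximum of $|\hat{\Psi}^{(k)}(\,\cdot\,;c)|$ on $[0,1]$ lives. The common structure is: let $s_2 \in [0,1]$ be a maximizer of $|\hat{\Psi}^{(k)}|$ and observe that, because of the ODE and the initial/boundary conditions on $\hat{\Psi}$, at such a point one of the other derivatives (or $\hat{\Psi}$ itself) necessarily vanishes, so that the hypotheses of the appropriate part of Lemma \ref{lem:com_Psi1-2} are satisfied with the reference point $s_1$ chosen as in the statement.

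For the first inequality, take $s_1 = 0$ (where $\hat{\Psi}''(0;c)=0$ by \eqref{IVP2}) and let $s_2 \in [0,1]$ maximize $|\hat{\Psi}(\,\cdot\,;c)|$. If $s_2 \in (0,1)$, then Fermat's theorem gives $\hat{\Psi}'(s_2;c)=0$; if $s_2 = 1$, then $\hat{\Psi}''(1;c)=0$; if $s_2=0$, there is nothing to prove. In every case, Lemma \ref{lem:com_Psi1-2}(i) yields $|\hat{\Psi}(s;c)| \le |\hat{\Psi}(s_2;c)| \le |\hat{\Psi}(0;c)| = \psi_-$. For the second inequality, the same choice $s_1=0$ works, with $s_2$ a maximizer of $|\hat{\Psi}'(\,\cdot\,;c)|$: at an interior maximizer $\hat{\Psi}''(s_2;c)=0$, and at $s_2=1$ one has $\hat{\Psi}''(1;c)=0$, so Lemma \ref{lem:com_Psi1-2}(ii) applies and gives $|\hat{\Psi}'(s;c)| \le |\hat{\Psi}'(0;c)| = -\hat{\alpha}(c)$, using Proposition \ref{thm:exit_a}(iii) for the sign.

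The third inequality needs slightly more care since the reference point $\hat{\delta}_1^+(c)$ is in the interior. On $[0,\hat{\delta}_1^+(c)]$ the function $\hat{\Psi}$ is positive, so $\hat{\Psi}^{(3)}=c\sin\hat{\Psi}>0$, and combined with $\hat{\Psi}''(0;c)=0$ this shows $\hat{\Psi}''(\,\cdot\,;c)$ is strictly increasing from $0$ to $\hat{\Psi}''(\hat{\delta}_1^+(c);c)>0$ there, so the bound holds on $[0,\hat{\delta}_1^+(c)]$ trivially. On $[\hat{\delta}_1^+(c),1]$, let $s_2$ maximize $|\hat{\Psi}''(\,\cdot\,;c)|$. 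At an interior maximizer $\hat{\Psi}^{(3)}(s_2;c)=0$, i.e.\ $\sin\hat{\Psi}(s_2;c)=0$, and the restriction $|\hat{\Psi}(\,\cdot\,;c)|<\pi/2$ forces $\hat{\Psi}(s_2;c)=0$; at $s_2=1$, one has $\hat{\Psi}''(1;c)=0$, in which case there is again nothing to prove. Taking $s_1=\hat{\delta}_1^+(c)$ (where $\hat{\Psi}(s_1;c)=0$) and applying Lemma \ref{lem:com_Psi1-2}(iii) closes the argument. The only real obstacle is the bookkeeping needed to verify in the third case that every interior candidate maximizer lies on the set $\{\hat{\Psi}=0\}$, which is exactly where the restriction $|\hat{\Psi}|<\pi/2$ from Proposition \ref{thm:exit_a}(i) is used.
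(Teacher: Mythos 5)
Your proof is correct and fills in exactly the argument the paper leaves implicit with its "easily derive" remark: pick a maximizer of $|\hat{\Psi}^{(k)}|$, check via Fermat's theorem (together with $\hat{\Psi}''(0;c)=\hat{\Psi}''(1;c)=0$, $\hat{\alpha}(c)<0$, and $|\hat{\Psi}|<\pi/2$) that it satisfies the hypotheses of the matching part of Lemma \ref{lem:com_Psi1-2}, and compare with the designated reference point. The splitting of $[0,1]$ at $\hat{\delta}_1^+(c)$ in the third estimate, using the monotonicity of $\hat{\Psi}''$ on the left piece, is the right way to handle the fact that the reference point is interior; this matches the paper's intent.
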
 

Recalling Proposition \ref{thm:exit_a}(iv), we have 
\[
\lim_{c\to+0}\hat{\Psi}'(s;c)=-2\psi_-<0\,\,\ \mbox{for}\,\ s \in [0,1], 
\]
which is uniform convergence. 
Thus, there exists $c_\ast>0$ such that 
\begin{equation}\label{def_c_ast}
\hat{\Psi}'(s;c)< 0\,\,\ \mbox{for}\,\ s\in[0,1],\,\ c\in(0,c_\ast). 
\end{equation}
This means that $\hat{\mu}^-_1(c)$ does not exist in the interval $(0,1)$ for $c\in(0,c_\ast)$. 

Let us derive the estimates of $\hat{\Psi}$ and its derivatives with the order 
of convergence on $c>0$ at zero points.  
These estimates can be proved inductively from the left zero points. 
First, we show the estimates of $\hat{\alpha}(c)$ and $\delta^+_1(c)$ 
with the order of convergence on $c>0$. 

\begin{lemma}\label{lem:c-rate_a} 
There exist $M_{\hat{\alpha}} ,m_{\hat{\alpha}}>0$ such that 
\[
-M_{\hat{\alpha}}c^{1/3}\le\hat{\alpha}(c)\le-m_{\hat{\alpha}}c^{1/3}\,\,\ 
\mbox{for}\,\ c\ge c_\ast. 
\]
\end{lemma}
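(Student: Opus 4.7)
The plan is to establish the two one-sided bounds on $-\hat{\alpha}(c)$ separately, using the Taylor-like representations \eqref{Psi2}--\eqref{Psi}, the a priori bounds of Corollary \ref{cor:bound_Psi0-2}, and the energy identity \eqref{int_eq_Psi2} from the proof of Lemma \ref{lem:com_Psi1-2}(iii). For the upper bound $-\hat{\alpha}(c) \le M_{\hat{\alpha}} c^{1/3}$, I would start from the crude estimate $|\hat{\Psi}^{(3)}(s;c)| \le c$ together with the initial conditions $\hat{\Psi}(0;c) = \psi_-$, $\hat{\Psi}'(0;c) = \hat{\alpha}(c)$, $\hat{\Psi}''(0;c) = 0$ to obtain the Taylor-type pointwise bound $|\hat{\Psi}(s;c) - \psi_- - \hat{\alpha}(c)s| \le cs^3/6$. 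Combined with $\hat{\Psi}(s;c) \ge -\psi_-$ from the corollary, this gives
\[
-\hat{\alpha}(c) \le \frac{2\psi_-}{s} + \frac{cs^2}{6} \quad \text{for every } s \in (0,1].
\]
Minimizing the right-hand side at the optimal choice $s = (6\psi_-/c)^{1/3}$ (valid once $c \ge 6\psi_-$) yields $-\hat{\alpha}(c) \le C_1 c^{1/3}$ with $C_1$ depending only on $\psi_-$; on the residual compact range $c \in [c_\ast, 6\psi_-]$, continuity of $\hat{\alpha}(\cdot)$ from Proposition \ref{thm:exit_a}(ii) together with $c^{1/3} \ge c_\ast^{1/3}$ gives the required bound after enlarging $M_{\hat{\alpha}}$.

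For the lower bound $-\hat{\alpha}(c) \ge m_{\hat{\alpha}} c^{1/3}$, I would sandwich $\hat{\Psi}''(\hat{\delta}_1^+(c);c)$ between two estimates of genuinely different origin. Applying the identity \eqref{int_eq_Psi2} with $s_1 = 0$ and $s_2 = \hat{\delta}_1^+(c)$, and using $\sin\hat{\Psi}(\hat{\delta}_1^+(c);c) = 0$, $\hat{\Psi}''(0;c) = 0$, and nonnegativity of $\int_0^{\hat{\delta}_1^+(c)} (\hat{\Psi}')^2\cos\hat{\Psi}\,ds$, immediately yields the upper sandwich
\[
\hat{\Psi}''(\hat{\delta}_1^+(c);c)^2 \le -2c\hat{\alpha}(c)\sin\psi_-.
\]
For the matching lower sandwich I would observe that $\hat{\Psi}''(s;c) \ge 0$ on $[0, \hat{\gamma}_1^+(c)] \supset [0, \hat{\delta}_1^+(c)]$ by Lemma \ref{lem:com_d1+g1+}, so $\hat{\Psi}'(s;c) \ge \hat{\alpha}(c)$ and hence $\hat{\Psi}(s;c) \ge \psi_- + \hat{\alpha}(c)s \ge \psi_-/2$ for $s \in [0, \psi_-/(2(-\hat{\alpha}(c)))]$. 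Since applying \eqref{Psi} at $s = \hat{\delta}_1^+(c)$ with $\sin\hat{\Psi} \ge 0$ gives $\hat{\delta}_1^+(c) \ge \psi_-/(-\hat{\alpha}(c))$, this subinterval sits inside $[0, \hat{\delta}_1^+(c)]$, and \eqref{Psi2} then produces
\[
\hat{\Psi}''(\hat{\delta}_1^+(c);c) \;\ge\; c\sin(\psi_-/2)\cdot\frac{\psi_-}{2(-\hat{\alpha}(c))}.
\]
Squaring the second estimate and comparing with the first gives $(-\hat{\alpha}(c))^3 \ge c\cdot\psi_-^2\sin^2(\psi_-/2)/(8\sin\psi_-)$, which is the claimed lower bound.

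The upper bound is essentially a one-line computation once Corollary \ref{cor:bound_Psi0-2} is in hand, so the main obstacle lies in the lower bound. Its key point is pairing two estimates on $\hat{\Psi}''(\hat{\delta}_1^+(c);c)$ that come from genuinely different mechanisms: the upper one from the ``energy loss'' identity that drives all of Section \ref{sec:order-est}, and the lower one from the fact that $\hat{\Psi}$ must remain at least $\psi_-/2$ on an interval of length proportional to $1/(-\hat{\alpha}(c))$ before it can vanish. Only after combining these does the correct power $c^{1/3}$ fall out algebraically.
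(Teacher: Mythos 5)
Your argument for the lower bound $-M_{\hat{\alpha}}c^{1/3}\le\hat{\alpha}(c)$ is essentially the paper's: both use the representation \eqref{Psi} together with the a priori bound $\hat{\Psi}\ge-\psi_-$, evaluated at one judiciously chosen $s$. The only differences are cosmetic: you optimize $s=(6\psi_-/c)^{1/3}$, which forces you to handle the compact range $c\in[c_\ast,6\psi_-]$ separately by continuity, whereas the paper plugs in $s=(c_\ast/c)^{1/3}\le1$, which covers all $c\ge c_\ast$ at once with no side argument.

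For the upper bound $\hat{\alpha}(c)\le-m_{\hat{\alpha}}c^{1/3}$, your route is genuinely different from the paper's, and it is correct. The paper splits into two cases according to whether $\hat{\mu}_1^-(c)$ exists in $(0,1)$ and lies to the left or right of $(c_\ast/c)^{1/3}$, using direct integration of $\hat{\Psi}'$ over $[0,\hat{\delta}_1^+(c)]$ in one case and integration by parts of \eqref{Psi1} over $[0,(c_\ast/c)^{1/3}]$ in the other. You instead sandwich $\hat{\Psi}''(\hat{\delta}_1^+(c);c)$ from both sides: the upper bound $(\hat{\Psi}''(\hat{\delta}_1^+(c);c))^2\le-2c\hat{\alpha}(c)\sin\psi_-$ comes from the identity \eqref{int_eq_Psi2} applied on $[0,\hat{\delta}_1^+(c)]$, where the boundary term at $\hat{\delta}_1^+(c)$ vanishes since $\sin\hat{\Psi}(\hat{\delta}_1^+(c);c)=0$ and the cosine integral is nonnegative; the lower bound comes from the pointwise bound $\hat{\Psi}\ge\psi_-/2$ on an interval of length $\psi_-/(2(-\hat{\alpha}(c)))$ sitting inside $[0,\hat{\delta}_1^+(c)]$, which you correctly justify via $\hat{\delta}_1^+(c)\ge\psi_-/(-\hat{\alpha}(c))$ from \eqref{Psi}. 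Eliminating $\hat{\Psi}''(\hat{\delta}_1^+(c);c)$ produces $(-\hat{\alpha}(c))^3\gtrsim c$ cleanly. What this buys you is the avoidance of the paper's case analysis (your argument is uniform in $c$), and it also builds in, as a by-product, the two-sided $c^{2/3}$ estimate on $\hat{\Psi}''(\hat{\delta}_1^+(c);c)$ that the paper derives separately in Lemma \ref{lem:c-rate_Psi2_d1+}; the trade-off is that you invoke the identity \eqref{int_eq_Psi2} at this stage, which the paper reserves for the later estimates, keeping this particular lemma at the level of integration by parts of \eqref{Psi} and \eqref{Psi1} only.
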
 

\begin{proof} 
First, let us derive the lower bound. 
Since $(c_\ast/c)^{1/3}$ is in $(0,1]$ for $c\ge c_\ast$, we obtain by \eqref{Psi}
\begin{align*} 
-\psi_-
<&\,\hat{\Psi}((c_\ast/c)^{1/3};c) \\
=&\,\psi_-+\hat{\alpha}(c)\cdot(c_\ast/c)^{1/3} 
+\frac{c}2\int_0^{(c_\ast/c)^{1/3}}((c_\ast/c)^{1/3}-s)^2\sin\hat{\Psi}(s;c)\,ds \\
=&\,\psi_-+c_\ast^{1/3}\hat{\alpha}(c)c^{-1/3} 
+\frac{c_\ast}2\int_0^1(1-\xi)^2\sin\hat{\Psi}((c_\ast/c)^{1/3}\xi;c)\,d\xi \\
\le&\,\psi_-+c_\ast^{1/3}\hat{\alpha}(c)c^{-1/3}+\frac{c_\ast} 6\sin\psi_-. 
\end{align*} 
This implies
\[
\hat{\alpha}(c)\ge
-\frac1{c_\ast^{1/3}}\left(2\psi+\frac{c_\ast}6\sin\psi_-\right)c^{1/3}. 
\]
Setting
\[
M_{\hat{\alpha}}:=\frac1{c_\ast^{1/3}}\left(2\psi+\frac{c_\ast}6\sin\psi_-\right), 
\]
we are led to the lower bound. 

Let us derive the upper bound. To do it, we divide in the following two cases: 
\begin{list}{}{\topsep=0.15cm\itemsep=0cm\leftmargin=1.5cm\labelwidth=1cm} 
\item[(I)\,]
$\mu^-_1(c)$ exists in $(0,1)$ and $\mu^-_1(c)<(c_\ast/c)^{1/3}$. 
\item[(II)]
$\mu^-_1(c)$ exists in $(0,1)$ and $\mu^-_1(c)\ge(c_\ast/c)^{1/3}$, or 
$\mu^-_1(c)$ does not exist in $(0,1)$. 
\end{list} 

Case (I): Recalling Lemma \ref{lem:com_d1+m1-}, we have 
$0<\hat{\delta}^+_1(c)<\hat{\mu}^-_1(c)<(c/c_\ast)^{1/3}$. In addition, 
by virtue of Lemma \ref{lem:com_m1-g1+} and $\hat{\gamma}^+_1(c)\le1$, we see 
$\mu^-_1(c)<\hat{\gamma}^+_1(c)$, so that $\hat{\Psi}''(s;c)>0$ for $s\in(0,\mu^-_1(c)]$. 
Since $\hat{\Psi}'(0;c)=\hat{\alpha}(c)<0$ by Proposition \ref{thm:exit_a} (iii), we are led to
\[
-\psi_-=\hat{\Psi}(\hat{\delta}^+_1(c),c)-\hat{\Psi}(0;c)
=\int_0^{\hat{\delta}^+_1(c)}\hat{\Psi}'(s;c)\,ds
\ge\hat{\delta}^+_1(c)\hat{\alpha}(c)
>(c_\ast/c)^{1/3}\hat{\alpha}(c). 
\]
Thus we get $\hat{\alpha}(c)\le-(\psi_-/c_\ast^{1/3} )c^{1/3}$. 

Case (II): Using \eqref{Psi1} and integrating it by parts, we obtain
\begin{align*} 
&\hat{\Psi}'((c_\ast/c)^{1/3};c) \\
&=\hat{\alpha}(c)+c\int_0^{(c_\ast/c)^{1/3}}((c_\ast/c)^{1/3}-\tilde{s})\sin\hat{\Psi}(\tilde{s};c)\,d\tilde{s} \\
&=\hat{\alpha}(c)+c_\ast^{2/3}c^{1/3}\int_0^1(1-\xi)\sin\hat{\Psi}((c_\ast/c)^{1/3}\xi;c)\,d\xi \\
&=\hat{\alpha}(c)+\frac{c_\ast^{2/3}c^{1/3}\sin\psi}2
+\frac{c_\ast}2\int_0^1(1-\xi)^2\hat{\Psi}'((c_\ast/c)^{1/3}\xi;c)\cos\hat{\Psi}((c_\ast/c)^{1/3}\xi;c)\,d\xi. 
\end{align*} 
Note that it follows from Lemma \ref{lem:com_m1-g1+} that $\hat{\gamma}^+_1(c)=1$ 
if $\mu^-_1(c)$ does not exist in $(0,1)$. 
Then, since $\hat{\alpha}(c)\le\hat{\Psi}'((c_\ast/c)^{1/3}\xi;c)\le0$ for $\xi\in[0,1]$, we see 
\begin{align*} 
0\ge\hat{\Psi}'((c_\ast/c)^{1/3};c)
\ge\hat{\alpha}(c)+\frac{c_\ast^{2/3}c^{1/3}\sin\psi_-}2+\frac{c_\ast} 6\hat{\alpha}(c),
\end{align*} 
so that $\hat{\alpha}(c)\le-\bigl\{c_\ast^{2/3}\sin\psi_-/2(1+c_\ast/6)\bigr\}c^{1/3}$. 

Consequently, setting 
\[
m_{\hat{\alpha}} 
:=\min\left\{\frac{\psi_-}{c_\ast^{1/3}} ,\frac{c_\ast^{2/3}\sin\psi_-}{2(1+c_\ast/6)}\right\} ,
\]
we are led to the upper bound. 
\end{proof} 

\begin{lemma}\label{lem:c-rate_d1+} 
For any $c\ge c_\ast$,  
\begin{equation}\label{c-rate_d1+} 
r_{\hat{\delta}^+_1}c^{-1/3}\le\delta^+_1(c)\le R_{\hat{\delta}^+_1}c^{-1/3} ,
\end{equation} 
where
\[
r_{\hat{\delta}^+_1}:=\frac{\psi_-}{M_{\hat{\alpha}} }, \quad 
R_{\hat{\delta}^+_1}:=\frac{2\psi_-}{m_{\hat{\alpha}} }. 
\]
\end{lemma}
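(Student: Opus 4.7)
The plan is to integrate $\hat{\Psi}'(\,\cdot\,;c)$ across the interval $[0,\hat{\delta}^+_1(c)]$. Since $\hat{\Psi}(0;c)=\psi_-$ and $\hat{\Psi}(\hat{\delta}^+_1(c);c)=0$, the fundamental theorem of calculus yields
$$
\psi_-=\int_0^{\hat{\delta}^+_1(c)} \bigl(-\hat{\Psi}'(s;c)\bigr)\,ds,
$$
and the task reduces to estimating $u(s):=-\hat{\Psi}'(s;c)$ on this interval, after which Lemma \ref{lem:c-rate_a} will convert any bound in terms of $-\hat{\alpha}(c)$ into the claimed bound in terms of $c^{1/3}$.

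First I would record the pointwise structure of $u$ on $[0,\hat{\delta}^+_1(c)]$. From Proposition \ref{thm:exit_a}(iii), $u(0)=-\hat{\alpha}(c)>0$, and the initial condition $\hat{\Psi}''(0;c)=0$ yields $u'(0)=0$. Since $\hat{\Psi}(s;c)>0$ on $(0,\hat{\delta}^+_1(c))$, the ODE $\hat{\Psi}^{(3)}=c\sin\hat{\Psi}$ gives $u''(s)=-c\sin\hat{\Psi}(s;c)\le 0$, with strict inequality on the interior. Thus $u$ is concave on $[0,\hat{\delta}^+_1(c)]$, and together with $u'(0)=0$ this forces $u$ to be decreasing on the interval; moreover $u(\hat{\delta}^+_1(c))\ge 0$, for otherwise $\hat{\Psi}$ would have to descend to a negative minimum inside $(0,\hat{\delta}^+_1(c))$, contradicting $\hat{\Psi}>0$ there.

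With these two facts the two bounds follow at once. The monotonicity $u(s)\le u(0)=-\hat{\alpha}(c)$ gives
$$
\psi_-=\int_0^{\hat{\delta}^+_1(c)} u(s)\,ds\le -\hat{\alpha}(c)\,\hat{\delta}^+_1(c),
$$
and invoking $-\hat{\alpha}(c)\le M_{\hat{\alpha}}c^{1/3}$ produces $\hat{\delta}^+_1(c)\ge \psi_-/(M_{\hat{\alpha}}c^{1/3})=r_{\hat{\delta}^+_1}c^{-1/3}$. For the reverse inequality, concavity of $u$ says its graph lies above the chord through $(0,u(0))$ and $(\hat{\delta}^+_1(c),u(\hat{\delta}^+_1(c)))$, so the trapezoidal estimate gives
$$
\psi_-=\int_0^{\hat{\delta}^+_1(c)} u(s)\,ds\ge\frac{\hat{\delta}^+_1(c)}{2}\bigl(u(0)+u(\hat{\delta}^+_1(c))\bigr)\ge \frac{\hat{\delta}^+_1(c)}{2}\bigl(-\hat{\alpha}(c)\bigr),
$$
and combining with $-\hat{\alpha}(c)\ge m_{\hat{\alpha}}c^{1/3}$ yields $\hat{\delta}^+_1(c)\le 2\psi_-/(m_{\hat{\alpha}}c^{1/3})=R_{\hat{\delta}^+_1}c^{-1/3}$.

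I do not anticipate a genuine obstacle here: the only substantive observation is that the concavity of $u=-\hat{\Psi}'$ (coming directly from the positivity of $\sin\hat{\Psi}$ on $[0,\hat{\delta}^+_1(c)]$) is exactly the structural ingredient that upgrades the trivial rectangular monotonicity bound (which alone delivers only the lower estimate) to the upper estimate via the trapezoidal comparison. The factor of $2$ appearing in $R_{\hat{\delta}^+_1}$ is precisely the familiar ratio between the rectangle and trapezoid approximations for a decreasing concave function with flat initial slope.
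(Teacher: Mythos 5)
Your proof is correct, and it takes a genuinely different route from the paper's for the upper bound. For the lower bound the two arguments are morally identical: the paper plugs $s=\hat{\delta}^+_1(c)$ into the Taylor representation \eqref{Psi} and drops the nonnegative integral, while you bound $\int_0^{\hat{\delta}^+_1} (-\hat{\Psi}')\,ds$ by the rectangle $-\hat{\alpha}(c)\hat{\delta}^+_1(c)$ using monotonicity of $-\hat{\Psi}'$ --- the same inequality in disguise. For the upper bound the approaches really diverge. The paper divides the integral form of $\hat{\Psi}$ by $s^2$, differentiates in $s$ to obtain an identity containing both $\hat{\Psi}$ and $\hat{\Psi}'$, evaluates at $s=\hat{\delta}^+_1(c)$ (where $\hat{\Psi}=0$, $\hat{\Psi}'<0$), and discards the nonnegative integral; this is a somewhat opaque algebraic manipulation whose purpose only becomes clear after the fact. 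You instead observe that $u=-\hat{\Psi}'$ has $u''=-c\sin\hat{\Psi}\le 0$ on $[0,\hat{\delta}^+_1(c)]$, so it is concave, and apply the standard trapezoidal lower bound $\int_0^{\hat{\delta}^+_1} u\ge \tfrac{1}{2}\hat{\delta}^+_1(u(0)+u(\hat{\delta}^+_1))$, together with $u(\hat{\delta}^+_1)\ge 0$. This recovers exactly the same constant $R_{\hat{\delta}^+_1}=2\psi_-/m_{\hat{\alpha}}$, and makes it transparent where the factor $2$ comes from. Your approach is more conceptual and self-contained; the paper's is a compact computation but hides the geometric reason the estimate works.

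One small remark: your justification that $u(\hat{\delta}^+_1(c))\ge 0$ (i.e.\ $\hat{\Psi}'(\hat{\delta}^+_1(c);c)\le 0$) is correct but slightly compressed. The cleanest way to say it is: if $\hat{\Psi}'(\hat{\delta}^+_1(c);c)>0$, then since $\hat{\Psi}(\hat{\delta}^+_1(c);c)=0$ we would have $\hat{\Psi}<0$ immediately to the left of $\hat{\delta}^+_1(c)$, contradicting $\hat{\Psi}>0$ on $(0,\hat{\delta}^+_1(c))$. Alternatively, this is immediate from Lemma \ref{lem:com_d1+m1-} (combined with the observation that if $\hat{\mu}^-_1(c)$ does not exist in $(0,1)$ then $\hat{\Psi}'<0$ on all of $[0,1)\supset\{\hat{\delta}^+_1(c)\}$).
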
 

\begin{proof} 
First, let us derive the lower bound. 
By means of \eqref{Psi} with $s=\hat{\delta}^+_1(c)$, the definition of 
$\hat{\delta}^+_1(c)$ and Lemma \ref{lem:c-rate_a}, we see
\begin{align*} 
0=\hat{\Psi}(\hat{\delta}^+_1(c);c)
=&\,\psi_-+\hat{\alpha}(c)\hat{\delta}^+_1(c)
+\frac{c}2\int_0^{\hat{\delta}^+_1(c)}(\hat{\delta}^+_1(c)-\tilde{s})^2\sin\hat{\Psi}(\tilde{s};c)\,d\tilde{s} \\
\ge&\,\psi_--M_{\hat{\alpha}}c^{1/3}\hat{\delta}^+_1(c). 
\end{align*} 
This implies $\hat{\delta}^+_1(c)\ge(\psi_-/M_{\hat{\alpha}} )c^{-1/3}$. 

Let us derive the upper bound. Using \eqref{Psi} again and multiplying it by $1/s^2$, 
we obtain
\[
\frac{\hat{\Psi}(s;c)}{s^2} 
=\frac{\psi_-}{s^2}+\frac{\hat{\alpha}(c)}{s} 
+\frac{c}2\int_0^s\left(1-\frac{\tilde{s}}{s}\right)^2\sin\hat{\Psi}(\tilde{s};c)\,d\tilde{s}. 
\]
Differentiating with respect to $s$, we have
\[
\frac{\hat{\Psi}'(s;c)}{s^2}-\frac{2\hat{\Psi}(s;c)}{s^3} 
=-\frac{2\psi_-}{s^3}-\frac{\hat{\alpha}(c)}{s^2} 
+c\int_0^s\left(1-\frac{\tilde{s}}{s}\right)\frac{\tilde{s}}{s^2}\sin\hat{\Psi}(\tilde{s};c)\,d\tilde{s}. 
\]
Take $s=\hat{\delta}^+_1(c)$. Since $\hat{\Psi}'(\hat{\delta}^+_1(c);c)<0$ by 
Lemma \ref{lem:com_d1+m1-} and $\hat{\Psi}(s;c)>0$ for $s\in[0,\hat{\delta}^+_1(c))$, 
it follows from Lemma \ref{lem:c-rate_a} that 
\[
0>\frac{\hat{\Psi}'(\hat{\delta}^+_1(c);c)}{(\hat{\delta}^+_1(c))^2} 
\ge-\frac{2\psi_-}{(\hat{\delta}^+_1(c))^3}-\frac{\hat{\alpha}(c)}{(\hat{\delta}^+_1(c))^2} 
\ge-\frac{2\psi_-}{(\hat{\delta}^+_1(c))^3}+\frac{m_{\hat{\alpha}}c^{1/3}}{(\hat{\delta}^+_1(c))^2}. 
\]
As a result, we are led to $\hat{\delta}^+_1(c)\le(2\psi_-/m_{\hat{\alpha}} )c^{-1/3}$. 
\end{proof} 

In the rest of this section, we derive the estimates of $\hat{\Psi}$ and its derivatives  
at zero points, inductively. 

\begin{lemma}\label{lem:c-rate_Psi2_d1+} 
There exist $M_{2,\hat{\delta}^+_1}, m_{2,\hat{\delta}^+_1}>0$ such that 
\[
m_{2,\hat{\delta}^+_1}c^{2/3}\le\hat{\Psi}''(\hat{\delta}^+_1(c);c)\le M_{2,\hat{\delta}^+_1}c^{2/3}
\,\,\ \mbox{for}\,\ c\ge c_\ast. 
\]
\end{lemma}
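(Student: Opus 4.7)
The plan is to apply the integral identity $\hat{\Psi}''(\hat{\delta}_1^+(c);c) = c\int_0^{\hat{\delta}_1^+(c)} \sin\hat{\Psi}(\tilde{s};c)\,d\tilde{s}$ coming from \eqref{Psi2}, and to control the right-hand side by combining the two-sided bounds on $\hat{\delta}_1^+(c)$ and on $-\hat{\alpha}(c)$ already obtained in Lemmas \ref{lem:c-rate_a} and \ref{lem:c-rate_d1+}. Since Proposition \ref{thm:exit_a}(i) together with Lemma \ref{lem:com_d1+m1-} ensures $0 < \hat{\Psi}(s;c) < \psi_- < \pi/2$ on $(0,\hat{\delta}_1^+(c))$, the integrand lies in $(0,\sin\psi_-]$, so the prefactor $c$ should produce the expected order $c^{2/3}$ once combined with the length estimate $\hat{\delta}_1^+(c)\le R_{\hat{\delta}_1^+}c^{-1/3}$.

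For the upper bound, the crude pointwise estimate $\sin\hat{\Psi}(\tilde{s};c) \le \sin\psi_-$ suffices: it gives $\hat{\Psi}''(\hat{\delta}_1^+(c);c) \le c\sin\psi_-\cdot\hat{\delta}_1^+(c) \le (\sin\psi_-)R_{\hat{\delta}_1^+}\,c^{2/3}$, so one may take $M_{2,\hat{\delta}_1^+} := (\sin\psi_-)R_{\hat{\delta}_1^+}$.

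The main obstacle is the lower bound, since simply using $\sin\hat{\Psi} \ge 0$ is too crude: it ignores the fact that $\hat{\Psi}$ stays close to $\psi_-$ on most of the interval. The key observation is that $\hat{\Psi}(\,\cdot\,;c)$ is convex on $[0,\hat{\delta}_1^+(c)]$: indeed Lemma \ref{lem:com_d1+g1+} gives $\hat{\delta}_1^+(c) < \hat{\gamma}_1^+(c)$, and by definition of $\hat{\gamma}_1^+(c)$ we have $\hat{\Psi}''(s;c) > 0$ on $(0,\hat{\gamma}_1^+(c))$. Convexity together with $\hat{\Psi}(0;c) = \psi_-$ and $\hat{\Psi}'(0;c) = \hat{\alpha}(c)$ yields the tangent-line bound $\hat{\Psi}(s;c) \ge \psi_- + \hat{\alpha}(c) s$. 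This bound is nonnegative precisely on $[0,-\psi_-/\hat{\alpha}(c)]$, and evaluating it at $s = \hat{\delta}_1^+(c)$ (where $\hat{\Psi} = 0$) forces $-\psi_-/\hat{\alpha}(c) \le \hat{\delta}_1^+(c)$. Then, using $\sin x \ge (2/\pi)x$ on $[0,\pi/2]$ and integrating the tangent bound over this sub-interval, I expect
\[
\int_0^{\hat{\delta}_1^+(c)} \sin\hat{\Psi}(\tilde{s};c)\,d\tilde{s} \ge \frac{2}{\pi}\int_0^{-\psi_-/\hat{\alpha}(c)} \bigl(\psi_- + \hat{\alpha}(c)\tilde{s}\bigr)\,d\tilde{s} = -\frac{\psi_-^2}{\pi\,\hat{\alpha}(c)}.
\]
Combining this with $-\hat{\alpha}(c) \le M_{\hat{\alpha}}c^{1/3}$ from Lemma \ref{lem:c-rate_a} produces $\hat{\Psi}''(\hat{\delta}_1^+(c);c) \ge (\psi_-^2/(\pi M_{\hat{\alpha}}))\,c^{2/3}$, so $m_{2,\hat{\delta}_1^+} := \psi_-^2/(\pi M_{\hat{\alpha}})$ is admissible.
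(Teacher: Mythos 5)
Your proof is correct; the upper bound is essentially identical to the paper's, while your lower bound takes a genuinely different route. The paper's proof of the lower bound is a ``level-set'' argument: it locates the point $\hat{s}_{\psi_-/2}(c)$ where $\hat{\Psi}$ first drops to $\psi_-/2$, shows from \eqref{Psi} and Lemma \ref{lem:c-rate_a} that $\hat{s}_{\psi_-/2}(c)\ge \psi_-/(2M_{\hat{\alpha}})c^{-1/3}$, bounds $\hat{\Psi}''(\hat{s}_{\psi_-/2}(c);c)$ from below using $\sin\hat{\Psi}\ge\sin(\psi_-/2)$ on $[0,\hat{s}_{\psi_-/2}(c)]$, and then transfers this to $s=\hat{\delta}_1^+(c)$ via the monotonicity of $\hat{\Psi}''$ coming from $\hat{\Psi}^{(3)}=c\sin\hat{\Psi}>0$. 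Your approach instead exploits the convexity of $\hat{\Psi}$ on $[0,\hat{\gamma}_1^+(c)]$ directly through the tangent-line inequality $\hat{\Psi}(s;c)\ge \psi_-+\hat{\alpha}(c)s$, deduces $\hat{\delta}_1^+(c)\ge -\psi_-/\hat{\alpha}(c)$, and integrates the tangent line with Jordan's inequality $\sin x\ge (2/\pi)x$ to bound $\hat{\Psi}''(\hat{\delta}_1^+(c);c)=c\int_0^{\hat{\delta}_1^+(c)}\sin\hat{\Psi}\,d\tilde{s}$ in one step. Both arguments rely on the same structural facts ($\hat{\Psi}''>0$ and $\hat{\Psi}'<0$ on $(0,\hat{\delta}_1^+(c)]$, the two-sided estimate on $\hat{\alpha}(c)$, and the length bound on $\hat{\delta}_1^+(c)$); your version is somewhat more streamlined and avoids the auxiliary level set, at the cost of invoking the slightly heavier ``convexity above tangent line'' geometric fact and Jordan's inequality, whereas the paper uses only the crude bound $\sin\hat{\Psi}\ge\sin(\psi_-/2)$ on a sub-interval. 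The resulting constants differ ($\psi_-^2/(\pi M_{\hat{\alpha}})$ versus $\psi_-\sin(\psi_-/2)/(2M_{\hat{\alpha}})$) but both are admissible.
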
 

\begin{proof} 
First, we derive the lower bound. 
By means of $\hat{\Psi}(0;c)=\psi_-$, $\hat{\Psi}(\hat{\delta}^+_1(c);c)=0$ and the continuity of 
$\hat{\Psi}$ with respect to $s$, we can find $\hat{s}_{\frac{\psi_-}2}(c)\in
(0,\hat{\delta}^+_1(c))$ satisfying $\hat{\Psi}(\hat{s}_{\frac{\psi_-}2}(c);c)=\psi_-/2$. 
Then, using \eqref{Psi} with $s=\hat{s}_{\frac{\psi_-}2}(c)$ and 
Lemma \ref{lem:c-rate_a}, we obtain
\begin{align*} 
\frac{\psi_-}2
=&\,\hat{\Psi}(\hat{s}_{\frac{\psi_-}2}(c);c) \\
=&\,\psi_-+\hat{\alpha}(c)\,\hat{s}_{\frac{\psi_-}2}(c)
+\frac{c}2\int_0^{\hat{s}_{\frac{\psi_-}2}(c)}\left(\hat{s}_{\frac{\psi_-}2}(c)-\tilde{s}\right)^2
\sin\hat{\Psi}(\tilde{s};c)\,d\tilde{s} \\[0.15cm]
\ge&\,\psi_--M_{\hat{\alpha}}c^{1/3}\,\hat{s}_{\frac{\psi_-}2}(c). 
\end{align*} 
Thus the lower bound of $\hat{s}_{\frac{\psi_-}2}(c)$ is given by 
\[
\hat{s}_{\frac{\psi_-}2}(c)\ge\frac{\psi_-}{2M_{\hat{\alpha}} }c^{-1/3}. 
\]
By virtue of this fact, \eqref{Psi2} with $s=\hat{s}_{\frac{\psi_-}2}(c)$ 
and $\hat{\Psi}'(s;c)<0$ for $s\in[0,\hat{\delta}^+_1(c)]$, we see
\[
\hat{\Psi}''(\hat{s}_{\frac{\psi_-}2}(c);c)
=c\int_0^{\hat{s}_{\frac{\psi_-}2}(c)}\sin\hat{\Psi}(\tilde{s};c)\,d\tilde{s}
\ge c\,\hat{s}_{\frac{\psi_-}2}(c)\sin\frac{\psi_-}2
\ge \frac{\psi_-\sin(\psi_-/2)}{2M_{\hat{\alpha}} }c^{2/3}. 
\]
Since $\hat{\Psi}^{(3)}(s;c)=c\sin\hat{\Psi}(s;c)>0$ for 
$s\in[0,\hat{\delta}^+_1(c))$, we are led to
\[
\hat{\Psi}''(\hat{\delta}^+_1(c);c)>\hat{\Psi}''(\hat{s}_{\frac{\psi_-}2}(c);c)
\ge\frac{\psi_-\sin(\psi_-/2)}{2M_{\hat{\alpha}} }c^{2/3}. 
\]

Let us derive the upper bound. Using \eqref{Psi2} with $s=\hat{\delta}^+_1(c)$, 
we have
\[
\hat{\Psi}''(\hat{\delta}^+_1(c);c) 
=c\int_0^{\hat{\delta}^+_1(c)}\sin\hat{\Psi}(\tilde{s};c)\,d\tilde{s}
\le c\,\hat{\delta}^+_1(c)\sin\psi_-\le M_{\hat{\delta}^+_1}c^{2/3}\sin\psi_-. 
\]
This gives the upper bound. 
\end{proof} 

\begin{lemma}\label{lem:c-rate_Psi1_d1+} 
There exist $M_{1,\hat{\delta}^+_1} ,m_{1,\hat{\delta}^+_1}>0$ such that 
\[ 
-M_{1,\hat{\delta}^+_1}c^{1/3}\le\hat{\Psi}'(\hat{\delta}^+_1(c);c) \le-m_{1,\hat{\delta}^+_1}c^{1/3} 
\,\,\ \mbox{for}\,\ c\ge c_\ast. 
\]
\end{lemma}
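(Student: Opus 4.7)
The plan is to prove the two inequalities separately, using that $\hat{\Psi}$ is convex and monotone decreasing on $(0,\hat{\delta}^+_1(c))$.

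For the lower bound $\hat{\Psi}'(\hat{\delta}^+_1(c);c)\ge -M_{1,\hat{\delta}^+_1}c^{1/3}$, I would invoke Proposition \ref{prop:com_zero-point}, which gives $\hat{\delta}^+_1(c) < \hat{\gamma}^+_1(c)$, so $\hat{\Psi}''(s;c)>0$ throughout $(0,\hat{\delta}^+_1(c)]$ and $\hat{\Psi}'$ is strictly increasing there. Combined with the negativity $\hat{\Psi}'(\hat{\delta}^+_1(c);c)<0$ from Lemma \ref{lem:com_d1+m1-} and Lemma \ref{lem:c-rate_a}, this yields
\[
-M_{\hat{\alpha}}c^{1/3}\le \hat{\alpha}(c)=\hat{\Psi}'(0;c)<\hat{\Psi}'(\hat{\delta}^+_1(c);c)<0,
\]
so I set $M_{1,\hat{\delta}^+_1}:=M_{\hat{\alpha}}$.

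For the upper bound $\hat{\Psi}'(\hat{\delta}^+_1(c);c)\le -m_{1,\hat{\delta}^+_1}c^{1/3}$, the natural approach is the ``energy'' identity \eqref{int_eq_Psi} with $s_1=0$ and $s_2=\hat{\delta}^+_1(c)$; since $\hat{\Psi}''(0;c)=0$ and $\hat{\Psi}(\hat{\delta}^+_1(c);c)=0$ kill the boundary contributions, it reduces to
\[
\bigl(\hat{\Psi}'(\hat{\delta}^+_1(c);c)\bigr)^2 = \hat{\alpha}(c)^2 - 2c\int_0^{\hat{\delta}^+_1(c)}\hat{\Psi}\sin\hat{\Psi}\,ds.
\]
Using $0\le\hat{\Psi}\le\psi_-$ on this interval together with \eqref{Psi2} gives
\[
2c\int_0^{\hat{\delta}^+_1(c)}\hat{\Psi}\sin\hat{\Psi}\,ds\le 2\psi_-\hat{\Psi}''(\hat{\delta}^+_1(c);c)\le 2\psi_-M_{2,\hat{\delta}^+_1}c^{2/3},
\]
which combined with $\hat{\alpha}(c)^2\ge m_{\hat{\alpha}}^2 c^{2/3}$ (Lemma \ref{lem:c-rate_a}) produces
\[
\bigl(\hat{\Psi}'(\hat{\delta}^+_1(c);c)\bigr)^2 \ge (m_{\hat{\alpha}}^2 - 2\psi_-M_{2,\hat{\delta}^+_1})c^{2/3}.
\]

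The hard part is ensuring that the coefficient $m_{\hat{\alpha}}^2-2\psi_-M_{2,\hat{\delta}^+_1}$ is strictly positive. Substituting $M_{2,\hat{\delta}^+_1}=R_{\hat{\delta}^+_1}\sin\psi_- = (2\psi_-/m_{\hat{\alpha}})\sin\psi_-$ from Lemma \ref{lem:c-rate_Psi2_d1+} reduces this to the cubic condition $m_{\hat{\alpha}}^3>4\psi_-^2\sin\psi_-$, which does not transparently follow from the explicit expression for $m_{\hat{\alpha}}$ in Lemma \ref{lem:c-rate_a}. If a sharper bound on the integral, for instance exploiting the chord estimate $\hat{\Psi}(s)\le\psi_-(1-s/\hat{\delta}^+_1(c))$ coming from convexity, does not close the inequality, I would fall back on a rescaling/compactness argument: set $\Phi_c(t):=\hat{\Psi}(t/c^{1/3};c)$, so that $\Phi_c^{(3)}=\sin\Phi_c$ autonomously, with $\Phi_c(0)=\psi_-$, $\Phi_c''(0)=0$, $\Phi_c'(0)=\hat{\alpha}(c)/c^{1/3}\in[-M_{\hat{\alpha}},-m_{\hat{\alpha}}]$, and first zero $t_c^*=c^{1/3}\hat{\delta}^+_1(c)\in[r_{\hat{\delta}^+_1},R_{\hat{\delta}^+_1}]$. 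If $|\Phi_c'(t_c^*)|$ failed to be bounded away from $0$ uniformly in $c\ge c_*$, then continuous dependence on initial data would produce a limiting solution $\Phi_\infty$ with $\Phi_\infty(t_\infty^*)=\Phi_\infty'(t_\infty^*)=0$ and $\Phi_\infty''(t_\infty^*)\ge m_{2,\hat{\delta}^+_1}>0$, which by Taylor expansion forces $\Phi_\infty\ge 0$ in a neighborhood of $t_\infty^*$, contradicting the sign change at the first zero guaranteed by Proposition \ref{prop:com_zero-point}.
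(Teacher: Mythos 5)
Your lower bound is correct and coincides with the paper's: $\hat{\Psi}''>0$ on $(0,\hat{\delta}^+_1(c)]$ gives $\hat{\Psi}'(\hat{\delta}^+_1(c);c)>\hat{\alpha}(c)\ge -M_{\hat{\alpha}}c^{1/3}$.

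For the upper bound, however, both of your routes have gaps. Your primary route (the energy identity \eqref{int_eq_Psi} on $[0,\hat{\delta}^+_1(c)]$) is a genuinely different idea from the paper's, but you correctly flag that it hinges on the inequality $m_{\hat{\alpha}}^2 > 2\psi_- M_{2,\hat{\delta}^+_1}$ (equivalently $m_{\hat{\alpha}}^3 > 4\psi_-^2\sin\psi_-$), and there is no reason this should hold: the constants $m_{\hat{\alpha}}$ and $M_{2,\hat{\delta}^+_1}$ in Lemmas \ref{lem:c-rate_a} and \ref{lem:c-rate_Psi2_d1+} are crude comparison constants built from $c_\ast$ and $\psi_-$, and nothing constrains their ratio. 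Sharpening the integrand via the chord bound does not change the order of magnitude, so this approach does not close.

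Your compactness fallback also does not close as written. The contradiction you invoke --- that $\Phi_\infty\ge 0$ near $t_\infty^*$ conflicts with ``the sign change at the first zero guaranteed by Proposition \ref{prop:com_zero-point}'' --- does not hold, because Proposition \ref{prop:com_zero-point} is a statement about $\hat{\Psi}(\,\cdot\,;c)$ (equivalently each $\Phi_c$), which satisfies the boundary condition $\hat{\Psi}''(1;c)=0$; the rescaled limit $\Phi_\infty$ solves the autonomous IVP $\Phi^{(3)}=\sin\Phi$ with $\Phi(0)=\psi_-$, $\Phi''(0)=0$, $\Phi'(0)=\alpha^*<0$, but the right-endpoint constraint recedes to $t=c^{1/3}\to\infty$ under the rescaling and leaves no trace. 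A solution of this IVP with a double zero $\Phi_\infty(t^*)=\Phi_\infty'(t^*)=0$, $\Phi_\infty''(t^*)>0$ is not excluded by any local ODE structure: on $[0,t^*)$ one has $\Phi_\infty>0$, hence $\Phi_\infty^{(3)}>0$, $\Phi_\infty''\nearrow$ from $0$, $\Phi_\infty'\nearrow$ from $\alpha^*$ to $0$, which is perfectly consistent; after $t^*$ the solution simply bounces off $0$. Each $\Phi_c$ crossing strictly through $0$ at $t_c^*$ is entirely compatible with the limit having only a tangential zero. To rule this out you would need an additional quantitative input that does not circularly rely on Lemma \ref{lem:c-rate_Psi1_d1+} itself (note that Lemmas \ref{lem:exist_m1-}, \ref{lem:c-rate_Psi_m1-} do rely on it, so they cannot be used here).

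For comparison, the paper avoids both difficulties entirely: it uses the \emph{lower} bound $m_{2,\hat{\delta}^+_1}c^{2/3}\le\hat{\Psi}''(\hat{\delta}^+_1(c);c)$ from Lemma \ref{lem:c-rate_Psi2_d1+} together with Corollary \ref{cor:bound_Psi0-2} to show $\hat{\Psi}''(s;c)\ge (m_{2,\hat{\delta}^+_1}/2)c^{2/3}$ on an interval of length $\bigl(m_{2,\hat{\delta}^+_1}/(2\sin\psi_-)\bigr)c^{-1/3}$ to the right of $\hat{\delta}^+_1(c)$, then splits according to whether $\hat{\mu}^-_{\rm min}(c)$ lies inside or outside that interval. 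In the first case it integrates $\hat{\Psi}''$ from $\hat{\delta}^+_1(c)$ to $\hat{\mu}^-_{\rm min}(c)$; in the second it integrates from $\hat{\mu}^-_1(c)$ to $\hat{\gamma}^+_1(c)$ and transfers the estimate back to $s=\hat{\delta}^+_1(c)$ via Lemma \ref{lem:com_Psi1-2}(i). This side-steps any need to compare $m_{\hat{\alpha}}^2$ with $M_{2,\hat{\delta}^+_1}$ or to take a blow-up limit.
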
 

\begin{proof} 
By virtue of $\hat{\Psi}''(s;c)>0$ for $s\in(0,\hat{\delta}^+_1(c)]$ and 
Lemma \ref{lem:c-rate_a}, we get
\[
\hat{\Psi}'(\hat{\delta}^+_1(c);c)>\hat{\Psi}'(0;c)=\hat{\alpha}(c)
\ge-M_{\hat{\alpha}}c^{1/3}, 
\]
which implies the lower bound. 

Let us derive the upper bound. Using Lemma \ref{lem:c-rate_Psi2_d1+} and $\hat{\Psi}(s;c)\ge-\psi_-$ 
for $s\in[0,1]$ by Corollary \ref{cor:bound_Psi0-2}, we obtain 
\begin{align*}
\hat{\Psi}''(s;c)
=&\,\hat{\Psi}''(\hat{\delta}^+_1(c);c)+c\int_{\hat{\delta}^+_1(c)}^{s}\sin\hat{\Psi}(\tilde{s};c)\,d\tilde{s} \\
\ge&\,m_{2,\hat{\delta}^+_1}c^{2/3}-c(s-\hat{\delta}^+_1(c))\sin\psi_- \\
=&\,c\sin\psi_-\left(\frac{m_{2,\hat{\delta}^+_1}}{\sin\psi_-}c^{-1/3}-(s-\hat{\delta}^+_1(c))\right)
\end{align*}
for $s\in(\hat{\delta}^+_1(c),1]$. 
This implies that if $s\in[\hat{\delta}^+_1(c),
\hat{\delta}^+_1(c)+\bigl(m_{2,\hat{\delta}^+_1}/(2\sin\psi_-)\bigr)c^{-1/3}]$, then 
\begin{equation}
\label{Psi2_lower_d1+} 
\hat{\Psi}''(s;c)\ge\frac{m_{2,\hat{\delta}^+_1}}2c^{2/3}>0. 
\end{equation} 
Thus, by virtue of this fact, the definition of $\hat{\gamma}^+_1(c)$ and 
Lemma \ref{lem:com_d1+g1+}, we have 
\begin{equation}\label{d1+g1+} 
\hat{\delta}^+_1(c)+\bigl(m_{2,\hat{\delta}^+_1}/(2\sin\psi_-)\bigr)c^{-1/3} 
<\hat{\gamma}^+_1(c)\le1.
\end{equation}
Set $\hat{\mu}^-_{\rm min}(c)=\min\{1,\hat{\mu}^-_1(c)\}$. By means of Lemma \ref{lem:com_d1+m1-} and 
Lemma \ref{lem:com_m1-g1+}, we see $\hat{\delta}^+_1(c)<\hat{\mu}^-_1(c)<\hat{\gamma}^+_1(c)$ 
or $\hat{\delta}^+_1(c)<\hat{\gamma}^+_1(c)=1$. 
Here, we divide the following two cases:
\begin{list}{}{\topsep=0.15cm\itemsep=0cm\leftmargin=1.5cm\labelwidth=1cm} 
\item[(I)\,]
$\hat{\mu}^-_{\rm min}(c)\ge\hat{\delta}^+_1(c)+\bigl(m_{2,\hat{\delta}^+_1}/(4\sin\psi_-)\bigr)c^{-1/3}$. 
\item[(II)]
$\hat{\mu}^-_{\rm min}(c)<\hat{\delta}^+_1(c)+\bigl(m_{2,\hat{\delta}^+_1}/(4\sin\psi_-)\bigr)c^{-1/3}$. 
\end{list}

Case (I): It follows from $\hat{\Psi}''(s;c) > 0$ for $s \in [\hat{\delta}_1^+(c), \hat{\mu}_{\rm min}^-(c))$ 
and \eqref{Psi2_lower_d1+} that
\begin{align*} 
\hat{\Psi}'(\hat{\delta}^+_1(c);c)
=&\,\hat{\Psi}'(\hat{\mu}^-_{\rm min}(c);c)
-\int_{\hat{\delta}^+_1(c)}^{\hat{\mu}^-_{\rm min}(c)}\hat{\Psi}''(s;c)\,ds \\
\le&\,-\int_{\hat{\delta}^+_1(c)}^{\hat{\mu}^-_{\rm min}(c)}\hat{\Psi}''(s;c)\,ds \\
\le&\,\int_{\hat{\delta}^+_1(c)}^{\hat{\delta}^+_1(c)+(m_{2,\hat{\delta}^+_1}/(4\sin\psi_-))c^{-1/3}} 
(-\hat{\Psi}''(s;c))\,ds
\le-\frac{m_{2,\hat{\delta}^+_1}^2}{8\sin\psi_-}c^{1/3}.
\end{align*}

Case (II): By \eqref{d1+g1+}, we have 
$\hat{\mu}^-_{\rm min}(c)= \hat{\mu}^-_1(c)$. 
From $\hat{\Psi}''(s;c) > 0$ for $s \in (\hat{\mu}_1^-(c), \hat{\gamma}_1^+(c)]$ and \eqref{Psi2_lower_d1+}, 
we are led to 
\begin{align*} 
\hat{\Psi}'(\hat{\gamma}^+_1(c);c)
=&\,\int_{\hat{\mu}^-_1(c)}^{\hat{\gamma}^+_1(c)}\hat{\Psi}''(s;c)\,ds \\
\ge&\,\int_{\hat{\delta}^+_1(c)+(m_{2,\hat{\delta}^+_1}/(4\sin\psi_-))c^{-1/3}}
^{\hat{\delta}^+_1(c)+(m_{2,\hat{\delta}^+_1}/(2\sin\psi_-))c^{-1/3}} 
\hat{\Psi}''(s;c)\,ds
\ge\frac{m_{2,\hat{\delta}^+_1}^2}{8\sin\psi_-}c^{1/3}. 
\end{align*}
By means of Lemma \ref{lem:com_d1+g1+} and Lemma \ref{lem:com_Psi1-2}(i), we obtain 
\[
-\hat{\Psi}'(\hat{\delta}^+_1(c);c)\ge\hat{\Psi}'(\hat{\gamma}^+_1(c);c)
\ge\frac{m_{2,\hat{\delta}^+_1}^2}{8\sin\psi_-}c^{1/3} ,
\]
which gives the upper bound of $\hat{\Psi}'(\hat{\delta}^+_1(c);c)$. 

By the estimate for each case, we can choose
\[
m_{1,\hat{\delta}^+_1}:=\frac{m_{2,\hat{\delta}^+_1}^2}{8\sin\psi_-}
\]
as a constant of the upper bound. 
\end{proof} 

\begin{rem}\label{rem:order-est3}
The estimates of $\hat{\delta}_1^+(c)$, $\hat{\Psi}(\hat{\delta}^+_1(c);c)$ and 
$\hat{\Psi}'(\hat{\delta}^+_1(c);c)$ 
as in Lemma \ref{lem:c-rate_d1+}--\ref{lem:c-rate_Psi1_d1+} hold for $c>c_\ast$, where $c_\ast$ 
is given by \eqref{def_c_ast}. 
The following estimates of $\hat{\Psi}$ and its derivatives at the other zero points can be proved 
for more restricted $c>c_\ast$. 
For a better understanding of induction, we will give the remarks on the key points of the proofs 
when necessary.  
\end{rem}

\begin{lemma}\label{lem:exist_m1-} 
There exists $c_{\hat{\mu}^-_1},\,r_{\hat{\mu}^-_1},\,R_{\hat{\mu}^-_1}>0$ such that 
$\hat{\mu}^-_1(c)$ exists in $(0,1)$ for any $c>c_{\hat{\mu}^-_1}$ and fulfills
\begin{equation}
r_{\hat{\mu}^-_1}c^{-1/3}\le\hat{\mu}^-_1(c)-\hat{\delta}^+_1(c)\le R_{\hat{\mu}^-_1}c^{-1/3}.
\label{c-rate_m1-d1+}
\end{equation}
\end{lemma}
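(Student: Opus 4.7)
The plan is to obtain both the existence of $\hat{\mu}^-_1(c)$ in $(0,1)$ and the two-sided estimate by integrating $\hat{\Psi}''$ between $\hat{\delta}^+_1(c)$ and $\hat{\mu}^-_1(c)$, comparing the change of $\hat{\Psi}'$ to its value at $\hat{\delta}^+_1(c)$. The inputs are the sharp two-sided estimates already secured in Lemma~\ref{lem:c-rate_Psi2_d1+} and Lemma~\ref{lem:c-rate_Psi1_d1+}, the uniform control from Corollary~\ref{cor:bound_Psi0-2}, and the pointwise lower bound \eqref{Psi2_lower_d1+} derived in the proof of Lemma~\ref{lem:c-rate_Psi1_d1+}.

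For the lower bound, assuming for the moment that $\hat{\mu}^-_1(c)$ exists in $(0,1)$, I would first note that $\hat{\Psi}''(s;c)>0$ on $[\hat{\delta}^+_1(c),\hat{\mu}^-_1(c)]$. This follows by combining Lemma~\ref{lem:com_d1+m1-} with Lemma~\ref{lem:com_m1-g1+} and the definition of $\hat{\gamma}^+_1(c)$, which together give $\hat{\mu}^-_1(c)<\hat{\gamma}^+_1(c)$. Writing
\[
-\hat{\Psi}'(\hat{\delta}^+_1(c);c)
=\int_{\hat{\delta}^+_1(c)}^{\hat{\mu}^-_1(c)}\hat{\Psi}''(s;c)\,ds
\]
and using $|\hat{\Psi}''(s;c)|\le\hat{\Psi}''(\hat{\delta}^+_1(c);c)\le M_{2,\hat{\delta}^+_1}c^{2/3}$ from Corollary~\ref{cor:bound_Psi0-2} and Lemma~\ref{lem:c-rate_Psi2_d1+}, together with $|\hat{\Psi}'(\hat{\delta}^+_1(c);c)|\ge m_{1,\hat{\delta}^+_1}c^{1/3}$ from Lemma~\ref{lem:c-rate_Psi1_d1+}, one extracts $\hat{\mu}^-_1(c)-\hat{\delta}^+_1(c)\ge (m_{1,\hat{\delta}^+_1}/M_{2,\hat{\delta}^+_1})c^{-1/3}$, and sets $r_{\hat{\mu}^-_1}:=m_{1,\hat{\delta}^+_1}/M_{2,\hat{\delta}^+_1}$.

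For existence and the upper bound, I would take $c$ large enough that $\bar{s}(c):=\hat{\delta}^+_1(c)+(m_{2,\hat{\delta}^+_1}/(2\sin\psi_-))c^{-1/3}$ lies in $(0,1)$, which is legitimate since Lemma~\ref{lem:c-rate_d1+} gives $\hat{\delta}^+_1(c)=O(c^{-1/3})\to 0$. On $[\hat{\delta}^+_1(c),\bar{s}(c)]$ the pointwise bound \eqref{Psi2_lower_d1+} applies, so $\hat{\Psi}''(s;c)\ge(m_{2,\hat{\delta}^+_1}/2)c^{2/3}$. Integrating this from $\hat{\delta}^+_1(c)$ gives a linear-in-length increase of $\hat{\Psi}'$ at rate $(m_{2,\hat{\delta}^+_1}/2)c^{2/3}$; since the starting value $\hat{\Psi}'(\hat{\delta}^+_1(c);c)$ is negative of size at most $M_{1,\hat{\delta}^+_1}c^{1/3}$, within a length $O(c^{-1/3})$ it should flip sign, and the intermediate value theorem then produces $\hat{\mu}^-_1(c)$ inside this interval, which simultaneously yields the upper bound $\hat{\mu}^-_1(c)-\hat{\delta}^+_1(c)\le R_{\hat{\mu}^-_1}c^{-1/3}$.

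The hard part is calibrating constants: one has to verify that the accumulated rise $(\bar{s}(c)-\hat{\delta}^+_1(c))\cdot (m_{2,\hat{\delta}^+_1}/2)c^{2/3}=(m_{2,\hat{\delta}^+_1}^2/(4\sin\psi_-))c^{1/3}$ actually dominates the starting magnitude $M_{1,\hat{\delta}^+_1}c^{1/3}$ by the end of the good interval, because otherwise $\hat{\Psi}'$ might fail to reach $0$ before \eqref{Psi2_lower_d1+} ceases to apply. If the constants from Lemmas~\ref{lem:c-rate_Psi2_d1+}--\ref{lem:c-rate_Psi1_d1+} do not directly yield this, I would refine the lower bound on $\hat{\Psi}''$ by replacing $|\sin\hat{\Psi}|\le\sin\psi_-$ with the sharper $|\sin\hat{\Psi}|\le|\hat{\Psi}|\le M_{1,\hat{\delta}^+_1}c^{1/3}(s-\hat{\delta}^+_1(c))$, which extends the interval on which $\hat{\Psi}''\ge(m_{2,\hat{\delta}^+_1}/2)c^{2/3}$; failing that, one iterates the integration using only the positivity of $\hat{\Psi}''$ on $[\hat{\delta}^+_1(c),\hat{\gamma}^+_1(c)]$ and absorbs the worst case into a sufficiently large threshold $c_{\hat{\mu}^-_1}$. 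Either refinement produces constants $R_{\hat{\mu}^-_1}, c_{\hat{\mu}^-_1}>0$ as required.
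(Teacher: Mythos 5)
Your lower-bound argument matches the paper's exactly: evaluating \eqref{est_Psi1} at $s=\hat{\mu}^-_1(c)$ gives $\hat{\mu}^-_1(c)-\hat{\delta}^+_1(c)\ge(m_{1,\hat{\delta}^+_1}/M_{2,\hat{\delta}^+_1})c^{-1/3}$.

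The existence and upper-bound part has a genuine gap, which you flag but do not resolve. You integrate the pointwise bound \eqref{Psi2_lower_d1+} over the interval of length $(m_{2,\hat{\delta}^+_1}/(2\sin\psi_-))c^{-1/3}$ on which it holds and need the resulting rise in $\hat{\Psi}'$, namely $(m_{2,\hat{\delta}^+_1}^2/(4\sin\psi_-))c^{1/3}$, to exceed the starting deficit, which is only known to be $\le M_{1,\hat{\delta}^+_1}c^{1/3}$. There is no reason for this dominance: from the proof of Lemma~\ref{lem:c-rate_Psi1_d1+}, the lower-bound constant is $m_{1,\hat{\delta}^+_1}=m_{2,\hat{\delta}^+_1}^2/(8\sin\psi_-)$, half of your accumulated rise, while the upper-bound constant $M_{1,\hat{\delta}^+_1}=M_{\hat{\alpha}}$ bears no prescribed relation to $m_{2,\hat{\delta}^+_1}$. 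Your first suggested remedy (replacing $\sin\psi_-$ by $|\hat{\Psi}|\le M_{1,\hat{\delta}^+_1}c^{1/3}(s-\hat{\delta}^+_1(c))$) only trades the quadratic-in-$c^{1/3}(s-\hat{\delta}^+_1(c))$ deficit inequality for a cubic one whose coefficients again carry no forced relation; your second (``iterate using only positivity of $\hat{\Psi}''$'') is not a quantitative argument, since positivity of $\hat{\Psi}''$ alone gives monotonicity of $\hat{\Psi}'$ but no bound on the length of $[\hat{\delta}^+_1(c),\hat{\mu}^-_1(c)]$.

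The paper bypasses the constant-calibration problem with a different integrated identity. It first shows (as in \eqref{Psi_upper}) that $\hat{\Psi}(s;c)\le -r_{\hat{\mu}^-_1}m_{1,\hat{\delta}^+_1}/4$ on $[\hat{\delta}^+_1(c)+(r_{\hat{\mu}^-_1}/2)c^{-1/3},\hat{\mu}^-_{\rm min}(c)]$, where $\hat{\mu}^-_{\rm min}(c)=\min\{1,\hat{\mu}^-_1(c)\}$. It then invokes \eqref{int_eq_Psi} (multiply the ODE by $\hat{\Psi}$, not by $\hat{\Psi}'$) on $[\hat{\delta}^+_1(c),\hat{\mu}^-_{\rm min}(c)]$: because $\hat{\Psi}(\hat{\delta}^+_1(c);c)=0$, $\hat{\Psi}''(\hat{\mu}^-_{\rm min}(c);c)\ge 0$ and $\hat{\Psi}(\hat{\mu}^-_{\rm min}(c);c)<0$, the boundary terms yield
\[
\tfrac12\bigl(\hat{\Psi}'(\hat{\delta}^+_1(c);c)\bigr)^2
\ge c\int_{\hat{\delta}^+_1(c)}^{\hat{\mu}^-_{\rm min}(c)}\hat{\Psi}\sin\hat{\Psi}\,ds,
\]
whose left side is $O(c^{2/3})$ while the integrand has a $c$-independent positive lower bound on the sub-interval above. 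This forces the length to be $O(c^{-1/3})$, with no constraint on the relative sizes of $M_{1,\hat{\delta}^+_1}$ and $m_{2,\hat{\delta}^+_1}$, and simultaneously gives existence because $\hat{\mu}^-_{\rm min}(c)<1$ implies $\hat{\mu}^-_1(c)=\hat{\mu}^-_{\rm min}(c)\in(0,1)$. You should adopt this energy-type identity rather than direct integration of $\hat{\Psi}''$.
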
 

\begin{proof} 
As the first consideration, we derive the estimate of $\hat{\Psi}'$ in the neighborhood 
of $s=\hat{\delta}^+_1(c)$. Using Corollary \ref{cor:bound_Psi0-2}, 
Lemma \ref{lem:c-rate_Psi2_d1+} and Lemma \ref{lem:c-rate_Psi1_d1+}, we see 
that for $s\in(\hat{\delta}^+_1(c),1]$
\begin{equation}\label{est_Psi1} 
\begin{aligned}
\hat{\Psi}'(s;c)
=&\,\hat{\Psi}'(\hat{\delta}^+_1(c);c)+\int_{\hat{\delta}^+_1(c)}^s\hat{\Psi}''(\tilde{s};c)\,d\tilde{s} \\
\le&\,-m_{1,\hat{\delta}^+_1}c^{1/3}+(s -\hat{\delta}^+_1(c))M_{2,\hat{\delta}^+_1}c^{2/3} \\
=&\,M_{2,\hat{\delta}^+_1}c^{2/3}\left(
-\frac{m_{1,\hat{\delta}^+_1}}{M_{2,\hat{\delta}^+_1}}c^{-1/3}+s -\hat{\delta}^+_1(c)
\right). 
\end{aligned} 
\end{equation}
Set $r_{\hat{\mu}^-_1}:=m_{1,\hat{\delta}^+_1}/M_{2,\hat{\delta}^+_1}$. 
Then, for $s\in[\hat{\delta}^+_1(c),
\min\{1,\hat{\delta}^+_1(c)+(r_{\hat{\mu}^-_1}/2)c^{-1/3}\}]$ 
\[
\hat{\Psi}'(s;c)\le-\frac{m_{1,\hat{\delta}^+_1}}2c^{1/3}<0.
\]
Thus it follows from this fact and Lemma \ref{lem:com_d1+m1-} that if $\hat{\mu}^-_1(c)$ 
exists in $(0,1)$, then $\hat{\delta}^+_1(c)+(r_{\hat{\mu}^-_1}/2)c^{-1/3}<\hat{\mu}^-_1(c)$. 

Let us prove that $\hat{\mu}^-_1(c)$ exists in $(0,1)$ for sufficiently large $c>c_\ast$. 
From Lemma \ref{lem:c-rate_d1+}, we obtain  $\hat{\delta}^+_1(c)+(r_{\hat{\mu}^-_1}/2)c^{-1/3}<1$ 
for $c > (R_{\hat{\delta}_1^+} + r_{\hat{\mu}_1^-}/2)^3$. 
Set $\hat{\mu}^-_{\rm min}(c):=\min\{1,\hat{\mu}^-_1(c)\}$. Since $\hat{\Psi}'(s;c)<0$ 
for $s\in[\hat{\delta}^+_1(c)+(r_{\hat{\mu}^-_1}/2)c^{-1/3},\hat{\mu}^-_{\rm min}(c))$, 
we see that for $s\in[\hat{\delta}^+_1(c)+(r_{\hat{\mu}^-_1}/2)c^{-1/3} ,\hat{\mu}^-_{\rm min}(c)]$
\begin{equation}\label{Psi_upper}
\begin{aligned} 
\hat{\Psi}(s;c)
\le&\,\hat{\Psi}\left(\hat{\delta}^+_1(c)+\frac{r_{\hat{\mu}^-_1}}2c^{-1/3};c\right) \\
=&\,\int_{\hat{\delta}^+_1(c)}^{\hat{\delta}^+_1(c)+(r_{\hat{\mu}^-_1}/2)c^{-1/3}}\hat{\Psi}'(s;c)\,ds
\le-\frac{r_{\hat{\mu}^-_1} m_{1,\hat{\delta}^+_1}}4. 
\end{aligned} 
\end{equation} 
Using \eqref{int_eq_Psi} with $s_1=\delta^+_1(c)$ and $s_2=\hat{\mu}^-_{\rm min}(c)$ and 
taking account of $\hat{\Psi}''(\hat{\mu}^-_{\rm min}(c);c)\ge0$ and 
$\hat{\Psi}(\hat{\mu}^-_{\rm min}(c);c)<0$, 
we have 
\begin{align*} 
\frac{\bigl(\hat{\Psi}'(\delta^+_1(c);c)\bigr)^2}2
\ge&\,c\int_{\delta^+_1(c)}^{\hat{\mu}^-_{\rm min}(c)}\hat{\Psi}(s;c)\sin\hat{\Psi}(s;c)\,ds \\
\ge&\,c\int_{\hat{\delta}^+_1(c)+(r_{\hat{\mu}^-_1}/2)c^{-1/3}}^{\hat{\mu}^-_{\rm min}(c)} 
\hat{\Psi}(s;c)\sin\hat{\Psi}(s;c)\,ds. 
\end{align*} 
Thus it follows from Lemma \ref{lem:c-rate_Psi1_d1+} and \eqref{Psi_upper} that 
\[
\frac{M_{1,\hat{\delta}^+_1}^2}2
\ge c^{1/3}\left(\hat{\mu}^-_{\rm min}(c)-\hat{\delta}^+_1(c)-\frac{r_{\hat{\mu}^-_1}}2c^{-1/3}\right)
\frac{r_{\hat{\mu}^-_1}m_{1,\hat{\delta}^+_1}}4\sin\frac{r_{\hat{\mu}^-_1} m_{1,\hat{\delta}^+_1}}4. 
\]
This gives 
\begin{equation}\label{est_m1-} 
\hat{\mu}^-_{\rm min}(c)\le \hat{\delta}^+_1(c)+R_{\hat{\mu}^-_1}c^{-1/3} 
\le(R_{\hat{\delta}^+_1}+R_{\hat{\mu}^-_1})c^{-1/3}.
\end{equation} 
where
\[
R_{\hat{\mu}^-_1}
:=\frac{2M_{1,\hat{\delta}^+_1}^2} 
{r_{\hat{\mu}^-_1} m_{1,\hat{\delta}^+_1}\sin\dfrac{r_{\hat{\mu}^-_1} m_{1,\hat{\delta}^+_1}}4} 
+\frac{r_{\hat{\mu}^-_1}}2. 
\]
Define $c_{\hat{\mu}_1^-}$ as $c_{\hat{\mu}_1^-}:=(R_{\hat{\delta}_1^+}+R_{\hat{\mu}_1^-})^3$. 
Note that $c_{\hat{\mu}_1^-}>(R_{\hat{\delta}_1^+} + r_{\hat{\mu}_1^-}/2)^3$. Then, 
for $c>c_{\hat{\mu}_1^-}$, 
we obtain $\hat{\mu}^-_{\rm min}(c)<1$, which implies that $\hat{\mu}^-_1(c)$ exists in $(0,1)$. 

The estimate \eqref{c-rate_m1-d1+} follows from \eqref{est_Psi1} with $s=\hat{\mu}^-_1(c)$ 
and \eqref{est_m1-}. 
\end{proof} 

\begin{lemma}\label{lem:c-rate_Psi_m1-} 
There exist 
$M_{0,\hat{\mu}^-_1},m_{0,\hat{\mu}^-_1}>0$ such that 
\[
-M_{0,\hat{\mu}^-_1}\le\hat{\Psi}(\hat{\mu}^-_1(c);c)\le-m_{0,\hat{\mu}^-_1} 
\,\,\ \mbox{for}\,\  c\in J_{\hat{\mu}^-_1}. 
\]
\end{lemma}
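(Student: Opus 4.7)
\noindent
The plan is to read both inequalities essentially off of facts already established in the proof of Lemma \ref{lem:exist_m1-} and Corollary \ref{cor:bound_Psi0-2}; no new energy identity is needed. The lower bound $\hat{\Psi}(\hat{\mu}^-_1(c);c)\ge-M_{0,\hat{\mu}^-_1}$ is immediate from Corollary \ref{cor:bound_Psi0-2}, which gives $|\hat{\Psi}(s;c)|\le\psi_-$ on $[0,1]$, so that taking $M_{0,\hat{\mu}^-_1}=\psi_-$ suffices.

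The substantive content is the upper bound $\hat{\Psi}(\hat{\mu}^-_1(c);c)\le -m_{0,\hat{\mu}^-_1}$, and the key observation is that the estimate \eqref{Psi_upper} in the proof of Lemma \ref{lem:exist_m1-} already produces it when evaluated at $s=\hat{\mu}^-_1(c)$. More precisely, from \eqref{est_Psi1} we have the pointwise bound
\[
\hat{\Psi}'(s;c)\le -\tfrac{m_{1,\hat{\delta}^+_1}}{2}c^{1/3}
\quad\text{on}\quad
[\hat{\delta}^+_1(c),\,\hat{\delta}^+_1(c)+(r_{\hat{\mu}^-_1}/2)c^{-1/3}],
\]
with $r_{\hat{\mu}^-_1}=m_{1,\hat{\delta}^+_1}/M_{2,\hat{\delta}^+_1}$. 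The lower half of \eqref{c-rate_m1-d1+} guarantees $\hat{\mu}^-_1(c)-\hat{\delta}^+_1(c)\ge r_{\hat{\mu}^-_1}c^{-1/3}$, so the above subinterval sits strictly inside $[\hat{\delta}^+_1(c),\hat{\mu}^-_1(c)]$, where $\hat{\Psi}'(\,\cdot\,;c)$ remains non-positive by the definition of $\hat{\mu}^-_1(c)$. Combining $\hat{\Psi}(\hat{\delta}^+_1(c);c)=0$ with the fundamental theorem of calculus, restricted to this sub-interval, yields
\[
\hat{\Psi}(\hat{\mu}^-_1(c);c)
=\int_{\hat{\delta}^+_1(c)}^{\hat{\mu}^-_1(c)}\hat{\Psi}'(s;c)\,ds
\le \int_{\hat{\delta}^+_1(c)}^{\hat{\delta}^+_1(c)+(r_{\hat{\mu}^-_1}/2)c^{-1/3}}\hat{\Psi}'(s;c)\,ds
\le -\frac{r_{\hat{\mu}^-_1}\,m_{1,\hat{\delta}^+_1}}{4}.
\]
Setting $m_{0,\hat{\mu}^-_1}:=r_{\hat{\mu}^-_1}\,m_{1,\hat{\delta}^+_1}/4$ then gives the required upper bound.

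There is no real obstacle here, as every ingredient (the linear-in-$c^{1/3}$ slope estimate, the reciprocal-in-$c^{1/3}$ spacing estimate, and the absolute bound $|\hat{\Psi}|\le\psi_-$) has been prepared in Sections \ref{sec:shoo-alpha} and the first lemmas of Section \ref{sec:order-est}. The only conceptual point worth emphasizing is that the two $c$-dependent factors—slope of order $c^{1/3}$ and length of order $c^{-1/3}$—cancel exactly, producing a constant-order amplitude at $\hat{\mu}^-_1(c)$; this is the concrete manifestation at the first critical point of the ``energy loss'' phenomenon alluded to at the start of Section \ref{sec:order-est} and is what will make the inductive comparison with later critical points (and eventually the non-uniqueness argument) work.
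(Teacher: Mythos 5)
Your proof is correct and follows essentially the same route as the paper's. The paper's own argument likewise takes $M_{0,\hat{\mu}^-_1}=\psi_-$ from Corollary \ref{cor:bound_Psi0-2} and obtains the upper bound with $m_{0,\hat{\mu}^-_1}=r_{\hat{\mu}^-_1}m_{1,\hat{\delta}^+_1}/4$ by invoking \eqref{Psi_upper} (noting $\hat{\mu}^-_{\rm min}(c)=\hat{\mu}^-_1(c)$ for $c\in J_{\hat{\mu}^-_1}$); you simply re-derive \eqref{Psi_upper} from \eqref{est_Psi1} in-line rather than citing it.
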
 
\begin{proof} 
According to Corollary \ref{cor:bound_Psi0-2}, $\hat{\Psi}(s;c)\ge-\psi_-$ for $s\in[0,1]$. 
This gives the lower bound as $M_{0,\hat{\mu}^-_1}:=\psi_-$. 
On the other hand, the upper bound with 
$m_{0,\hat{\mu}_1^-} = r_{\hat{\mu}_1^-}m_{1,\hat{\delta}_1^+}/4$ follows from \eqref{Psi_upper} 
since $\hat{\mu}^-_{\rm min}(c) = \hat{\mu}_1^-(c)$ if $c \in J_{\hat{\mu}_1^-}$. 
\end{proof}

\begin{lemma}\label{lem:c-rate_Psi2_m1-}
There exist $M_{2,\hat{\mu}^-_1},m_{2,\hat{\mu}^-_1}>0$ such that 
\begin{equation}\label{c-rate_Psi2_m1-}
m_{2,\hat{\mu}^-_1}c^{2/3}\le\hat{\Psi}''(\hat{\mu}^-_1(c);c)\le M_{2,\hat{\mu}^-_1}c^{2/3} 
\,\,\ \mbox{for}\,\  c\in J_{\hat{\delta}^-_1}.
\end{equation}
\end{lemma}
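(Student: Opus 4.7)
The upper bound follows by a direct application of \eqref{Psi2}. Plugging $s = \hat{\mu}_1^-(c)$ and using the uniform bound $|\sin\hat{\Psi}(s;c)| \le \sin\psi_-$ (Corollary \ref{cor:bound_Psi0-2}) together with the estimate $\hat{\mu}_1^-(c) \le (R_{\hat{\delta}_1^+} + R_{\hat{\mu}_1^-})c^{-1/3}$ from Lemma \ref{lem:exist_m1-} yields $0 < \hat{\Psi}''(\hat{\mu}_1^-(c);c) \le (R_{\hat{\delta}_1^+} + R_{\hat{\mu}_1^-})\sin\psi_-\,c^{2/3}$, where the positivity comes from $\hat{\mu}_1^-(c) < \hat{\gamma}_1^+(c)$ (Lemma \ref{lem:com_m1-g1+}). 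Setting $M_{2,\hat{\mu}_1^-} = (R_{\hat{\delta}_1^+} + R_{\hat{\mu}_1^-})\sin\psi_-$ disposes of the upper estimate.

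For the lower bound I plan to mimic the case-analysis strategy used in the proof of Lemma \ref{lem:c-rate_Psi1_d1+}. The key structural fact is that $\hat{\Psi}^{(3)} = c\sin\hat{\Psi} < 0$ on $(\hat{\delta}_1^+(c), \hat{\delta}_1^-(c))$, so $\hat{\Psi}''(\cdot\,;c)$ is strictly decreasing throughout that range. Integrating the pointwise bound $|\hat{\Psi}^{(3)}| \le c\sin\psi_-$ away from Lemma \ref{lem:c-rate_Psi2_d1+} gives
\[
\hat{\Psi}''(s;c) \ge m_{2,\hat{\delta}_1^+}c^{2/3} - c\sin\psi_-(s - \hat{\delta}_1^+(c)) \ge \tfrac{1}{2}m_{2,\hat{\delta}_1^+}c^{2/3}
\]
whenever $\hat{\delta}_1^+(c) \le s \le \hat{\delta}_1^+(c) + \bigl(m_{2,\hat{\delta}_1^+}/(2\sin\psi_-)\bigr)c^{-1/3}$. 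In the favorable case that $\hat{\mu}_1^-(c)$ belongs to this subinterval, the desired lower bound holds immediately with constant $m_{2,\hat{\delta}_1^+}/2$.

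In the complementary case I would exploit the hypothesis $c \in J_{\hat{\delta}_1^-}$ by applying the energy identity \eqref{int_eq_Psi} on $[\hat{\mu}_1^-(c), \hat{\delta}_1^-(c)]$. Since $\hat{\Psi}'(\hat{\mu}_1^-(c);c) = 0$ and $\hat{\Psi}(\hat{\delta}_1^-(c);c) = 0$, the boundary terms collapse to
\[
\hat{\Psi}''(\hat{\mu}_1^-(c);c)\,|\hat{\Psi}(\hat{\mu}_1^-(c);c)| = \tfrac{1}{2}(\hat{\Psi}'(\hat{\delta}_1^-(c);c))^2 + c\int_{\hat{\mu}_1^-(c)}^{\hat{\delta}_1^-(c)}\hat{\Psi}\sin\hat{\Psi}\,ds.
\]
The integrand is non-negative, so the right-hand side is already bounded below by $\tfrac{1}{2}(\hat{\Psi}'(\hat{\delta}_1^-(c);c))^2$. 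A matching $c^{2/3}$ lower bound for $|\hat{\Psi}'(\hat{\delta}_1^-(c);c)|$ can be built by an argument parallel to Case~(II) in Lemma \ref{lem:c-rate_Psi1_d1+}, exploiting that the complementary case forces the gap $\hat{\delta}_1^-(c) - \hat{\mu}_1^-(c)$ to be of order $c^{-1/3}$ together with the monotonicity of $\hat{\Psi}''$ on the descending side $[\hat{\gamma}_1^+(c), \hat{\delta}_1^-(c)]$. Dividing by $|\hat{\Psi}(\hat{\mu}_1^-(c);c)| \le M_{0,\hat{\mu}_1^-}$ from Lemma \ref{lem:c-rate_Psi_m1-} then completes the argument, with $m_{2,\hat{\mu}_1^-}$ defined as the minimum of the two positive constants produced in the two cases.

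The principal obstacle is the complementary case: producing an independent positive lower bound of order $c^{2/3}$ on $(\hat{\Psi}'(\hat{\delta}_1^-(c);c))^2$ without circular reasoning, since the quantitative estimates in this section proceed in an inductive chain. Overcoming this will likely require an auxiliary interval estimate of the form $r_{\hat{\delta}_1^-}c^{-1/3} \le \hat{\delta}_1^-(c) - \hat{\mu}_1^-(c)$ symmetric to Lemma \ref{lem:exist_m1-}, derived from an energy identity applied on $[\hat{\mu}_1^-(c), \hat{\delta}_1^-(c)]$, before the Taylor-type propagation can be closed up.
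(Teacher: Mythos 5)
Your upper bound is fine in spirit (a direct Taylor bound plus Lemma \ref{lem:exist_m1-}), though the paper's route is cleaner: by Lemma \ref{lem:com_Psi1-2}(iii) with $s_1=\hat{\delta}_1^+(c)$ (where $\hat{\Psi}=0$) and $s_2=\hat{\mu}_1^-(c)$ (where $\hat{\Psi}'=0$), one gets $\hat{\Psi}''(\hat{\mu}_1^-(c);c)\le\hat{\Psi}''(\hat{\delta}_1^+(c);c)\le M_{2,\hat{\delta}_1^+}c^{2/3}$ without invoking the interval estimate at all, avoiding the issue that Lemma \ref{lem:exist_m1-} only asserts \eqref{c-rate_m1-d1+} for $c>c_{\hat{\mu}_1^-}$.

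The lower bound is where your proof does not close, and you identify the obstruction yourself. Your Case 2 reduces $\hat{\Psi}''(\hat{\mu}_1^-(c);c)$ to $(\hat{\Psi}'(\hat{\delta}_1^-(c);c))^2/|\hat{\Psi}(\hat{\mu}_1^-(c);c)|$, but a lower bound of order $c^{1/3}$ on $|\hat{\Psi}'(\hat{\delta}_1^-(c);c)|$ is exactly the content of Lemma \ref{lem:c-rate_Psi2_d1-}, which sits strictly downstream of the present lemma in the inductive chain (it uses Lemma \ref{lem:c-rate_Psi_g1+}, which in turn uses the estimates here). ``Running an argument parallel to Case~(II) of Lemma \ref{lem:c-rate_Psi1_d1+}'' does not resolve this: that argument needs a lower bound on $\hat{\Psi}''$ on a subinterval of $(\hat{\mu}_1^-(c),\hat{\delta}_1^-(c))$, which is precisely what we are trying to prove.

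The paper sidesteps the circularity by never touching $\hat{\Psi}'(\hat{\delta}_1^-(c);c)$. Instead it shows, from Lemma \ref{lem:c-rate_Psi_m1-}, Corollary \ref{cor:bound_Psi0-2} and Lemma \ref{lem:c-rate_a}, that $\hat{\Psi}(s;c)\le -m_{0,\hat{\mu}_1^-}/2$ on a subinterval of length $\bigl(m_{0,\hat{\mu}_1^-}/(2M_{\hat{\alpha}})\bigr)c^{-1/3}$ to the right of $\hat{\mu}_1^-(c)$, which by $c\in J_{\hat{\delta}_1^-}$ lies inside $(\hat{\mu}_1^-(c),\hat{\delta}_1^-(c))$. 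It then splits on whether $\hat{\gamma}_1^+(c)$ falls in the first half or the second half of that subinterval (a different case split from yours, which was indexed by the position of $\hat{\mu}_1^-(c)$ relative to $\hat{\delta}_1^+(c)$). In the first case one integrates $\hat{\Psi}^{(3)}=c\sin\hat{\Psi}$ over $[\hat{\mu}_1^-(c),\hat{\gamma}_1^+(c)]$ to get $\hat{\Psi}''(\hat{\mu}_1^-(c);c)\ge \mathrm{const}\cdot c^{2/3}$ directly. In the second case the same uniform bound on $-\hat{\Psi}$ over $[\hat{\gamma}_1^+(c),\hat{\delta}_1^-(c)]$ gives $\hat{\Psi}''(\hat{\delta}_1^-(c);c)\le -\mathrm{const}\cdot c^{2/3}$ by the same integral representation, and then Lemma \ref{lem:com_Psi1-2}(iii) (applied with $s_1=\hat{\mu}_1^-(c)$, $s_2=\hat{\delta}_1^-(c)$) converts that into the desired lower bound on $\hat{\Psi}''(\hat{\mu}_1^-(c);c)$. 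This is the step you were missing: the needed comparison is between $\hat{\Psi}''$ at the two endpoints, not between $\hat{\Psi}''$ and $\hat{\Psi}'$.
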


\begin{proof} 
First, we show the upper bound. Applying Lemma \ref{lem:com_Psi1-2}(ii) and 
Lemma \ref{lem:c-rate_Psi2_d1+}, we have 
\[
\hat{\Psi}''(\hat{\mu}^-_1(c);c)<\hat{\Psi}''(\hat{\delta}^+_1(c);c)\le M_{2,\hat{\delta}^+_1}c^{2/3}.
\]
This gives the upper bound. 

Let us derive the lower bound. It follows from Lemma \ref{lem:c-rate_Psi_m1-}, 
Corollary \ref{cor:bound_Psi0-2} and Lemma \ref{lem:c-rate_a} that for $s\in(\hat{\mu}^-_1(c),1]$
\begin{equation}\label{est_Psi}
\begin{aligned}
\hat{\Psi}(s;c)
=&\,\hat{\Psi}(\hat{\mu}^-_1(c);c)+\int_{\hat{\mu}^-_1(c)}^{s}\hat{\Psi}'(\tilde{s};c)\,d\tilde{s} \\
\le&\,-m_{0,\hat{\mu}^-_1}+(s-\hat{\mu}^-_1(c))M_{\hat{\alpha}}c^{1/3} \\
\le&\,M_{\hat{\alpha}}c^{1/3}\left(-\frac{m_{0,\hat{\mu}^-_1}}{M_{\hat{\alpha}}}c^{-1/3}
+s-\hat{\mu}^-_1(c)\right). 
\end{aligned}
\end{equation}
Then, for $s\in[\hat{\mu}^-_1(c),
\min\bigl\{1,\hat{\mu}^-_1(c)+\bigl(m_{0,\hat{\mu}^-_1}/(2M_{\hat{\alpha}})\bigr)c^{-1/3}\bigr\}]$
\begin{equation}\label{Psi_upper_m1-} 
\hat{\Psi}(s;c)\le-\dfrac{m_{0,\hat{\mu}^-_1}}2<0.
\end{equation}
By means of $c\in J_{\hat{\delta}^-_1}$, we see that $\hat{\delta}^-_1(c)$ 
exists in the interval $(0,1)$, so that Lemma \ref{lem:com_d1+m1-} and \eqref{Psi_upper_m1-} 
imply that 
\begin{equation}\label{m1-d1-}
\hat{\mu}^-_1(c)+\frac{m_{0,\hat{\mu}^-_1}}{2M_{\hat{\alpha}}}c^{-1/3}<\hat{\delta}^-_1(c). 
\end{equation}
With the help of Lemma \ref{lem:com_m1-g1+} and Lemma \ref{lem:com_g1+d1-}, we have 
$\hat{\mu}^-_1(c)<\hat{\gamma}^+_1(c)<\hat{\delta}^-_1(c)$. 
Here, we divide the following two cases:
\begin{list}{}{\topsep=0.15cm\itemsep=0cm\leftmargin=1.5cm\labelwidth=1cm} 
\item[(I)\,]$\hat{\gamma}^+_1(c)\ge\hat{\mu}^-_1(c)+\bigl(m_{0,\hat{\mu}^-_1}/(4M_{\hat{\alpha}})\bigr)c^{-1/3}$. 
\item[(II)]$\hat{\gamma}^+_1(c)<\hat{\mu}^-_1(c)+\bigl(m_{0,\hat{\mu}^-_1}/(4M_{\hat{\alpha}})\bigr)c^{-1/3}$. 
\end{list}

Case (I): From the equation $\hat{\Psi}^{(3)}=c\sin\hat{\Psi}$, $\hat{\Psi}(s;c)<0$ for 
$s\in[\hat{\mu}^-_1(c),\hat{\gamma}^+_1(c)]$ and \eqref{Psi_upper_m1-}, 
we are led to
\begin{align*}
\hat{\Psi}''(\hat{\mu}^-_1(c);c)
=&\,-c\int_{\hat{\mu}^-_1(c)}^{\hat{\gamma}^+_1(c)}\sin\hat{\Psi}(s;c)\,ds \\
\ge&\,c\int_{\hat{\mu}^-_1(c)}^{\hat{\mu}^-_1(c)+(m_{0,\hat{\mu}^-_1}/(4M_{\hat{\alpha}}))c^{-1/3}}
\sin(-\hat{\Psi}(s;c))\,ds
\ge\frac{m_{0,\hat{\mu}^-_1}\sin(m_{0,\hat{\mu}^-_1}/2)}{4M_{\hat{\alpha}}}c^{2/3}.
\end{align*}

Case (II): From the equation $\hat{\Psi}^{(3)}=c\sin\hat{\Psi}$, $\hat{\Psi}(s;c)<0$ 
for $s \in [\hat{\gamma}^+_1(c),\hat{\delta}^-_1(c))$,  \eqref{Psi_upper_m1-} 
and \eqref{m1-d1-}, we obtain 
\begin{align*}
\hat{\Psi}''(\hat{\delta}^-_1(c);c)
=&\,c\int_{\hat{\gamma}^+_1(c)}^{\hat{\delta}^-_1(c)}\sin\hat{\Psi}(s;c)\,ds \\
\le&\,c\int_{\hat{\mu}^-_1(c)+(m_{0,\hat{\mu}^-_1}/(4M_{\hat{\alpha}}))c^{-1/3}}
^{\hat{\mu}^-_1(c)+(m_{0,\hat{\mu}^-_1}/(2M_{\hat{\alpha}}))c^{-1/3}}
\sin\hat{\Psi}(s;c)\,ds
\le-\frac{m_{0,\hat{\mu}^-_1}\sin(m_{0,\hat{\mu}^-_1}/2)}{4M_{\hat{\alpha}}}c^{2/3}.
\end{align*}
Then Lemma \ref{lem:com_Psi1-2}(ii) implies that
\[ 
\hat{\Psi}''(\hat{\mu}^-_1(c);c)\ge-\hat{\Psi}''(\hat{\delta}^-_1(c);c)
\ge\frac{m_{0,\hat{\mu}^-_1}\sin(m_{0,\hat{\mu}^-_1}/2)}{4M_{\hat{\alpha}}}c^{2/3}.
\]

Consequently, we have the lower bound in both cases. 
\end{proof} 

\begin{rem}\label{rem:order-est}
In the case (II) of the proof of Lemma \ref{lem:c-rate_Psi1_d1+} and 
Lemma \ref{lem:c-rate_Psi2_m1-}, we used the inequalities
$-\hat{\Psi}'(\hat{\delta}_1^+(c),c)\ge\hat{\Psi}'(\hat{\gamma}_1^+(c),c)$ and 
$\hat{\Psi}''(\hat{\mu}_1^-(c),c)\ge-\hat{\Psi}''(\hat{\delta}_1^-(c),c)$.
That is, in order to obtain the estimates of $\hat{\Psi}'(\hat{\delta}_1^+(c),c)$ and 
$\hat{\Psi}''(\hat{\mu}_1^-(c),c)$,  we need the existence of $\hat{\gamma}_1^+(c)$ and 
$\hat{\delta}_1^-(c)$ in the interval $(0,1]$, respectively. Note that, according to 
Proposition \ref{prop:com_zero-point}, the inclusion  
\[
(0,\infty) = J_{\hat{\delta}^+_1} \supset J_{\hat{\mu}_1^-} \supset 
J_{\hat{\gamma}_1^+} = J_{\hat{\delta}_1^-} \supset J_{\hat{\mu}_1^+} \supset 
J_{\hat{\gamma}_1^-} = J_{\hat{\delta}_2^+} \supset \cdots
\]
holds and $\hat{\gamma}_1^+(c)$ (resp. $\hat{\delta}_1^-(c)$) is the second zero point 
from $\hat{\delta}_1^+(c)$ (resp. $\hat{\mu}_1^-(c)$) in a larger direction for the order
\[
\hat{\delta}_1^+(c) < \hat{\mu}_1^-(c) < \hat{\gamma}_1^+(c) < \hat{\delta}_1^-(c) 
< \hat{\mu}_1^+(c) < \hat{\gamma}_1^-(c) < \hat{\delta}_2^+(c) < \cdots.
\]

In the former case, this inclusion and the boundary condition $\hat{\Psi}''(1;c)=0$ give 
$J_{\hat{\gamma}^+_1}\cup\{c>0\,|\,\hat{\gamma}^+_1(c)=1\}=(0,\infty)$, so that 
the existence of $\hat{\gamma}^+_1(c)$ in the interval $(0,1]$ is guaranteed for any $c>0$. 
In general, in order to derive the estimates of the derivatives of $\hat{\Psi}$ at 
$s=\delta^\pm_j(c)$, we need the existence of $\hat{\gamma}^\pm_j(c)$ in the interval $(0,1]$. 
Since Proposition \ref{prop:com_zero-point}(i) and the boundary condition $\hat{\Psi}''(1;c)=0$ imply  
$J_{\hat{\gamma}^\pm_j}\cup\{c>0\,|\,\hat{\gamma}^\pm_j(c)=1\}=J_{\hat{\delta}^\pm_j}$, 
the existence of $\hat{\gamma}^\pm_j(c)$ in the interval $(0,1]$ is guaranteed for 
$c\in J_{\hat{\delta}^\pm_j}$.

On the other hand, in the latter case, we do not know whether 
$J_{\hat{\mu}_1^-} \subset J_{\hat{\delta}_1^-} \cup \{c>0\,|\,\hat{\delta}_1^-(c) = 1\}$ 
or not. 
Therefore, the estimate in Lemma \ref{lem:c-rate_Psi2_m1-} can be proved only for 
$c \in J_{\hat{\delta}^-_1}$. 

\end{rem}

\begin{lemma}\label{lem:exist_g1+} 
There exist $c_{\hat{\gamma}^+_1}>c_{\hat{\mu}^-_1}$ and $R_{\gamma^+_1},
r_{\hat{\gamma}^+_1}>0$ such that $\hat{\gamma}^+_1(c)$ exists in $(0,1)$ for any 
$c>c_{\hat{\gamma}^+_1}$ and fulfills 
\begin{equation}\label{c-rate_g1+m1-}
r_{\hat{\gamma}^+_1}c^{-1/3}\le\hat{\gamma}^+_1(c)-\hat{\mu}^-_1(c)\le R_{\gamma^+_1}c^{-1/3}.
\end{equation}
\end{lemma}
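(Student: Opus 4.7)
The strategy will mirror that of Lemma~\ref{lem:exist_m1-}, with the pair $(\hat{\Psi}',\hat{\Psi}'')$ evaluated at $\hat{\mu}_1^-(c)$ playing the role of $(\hat{\Psi},\hat{\Psi}')$ at $\hat{\delta}_1^+(c)$. Set $\hat{\gamma}_{\min}^+(c):=\min\{1,\hat{\gamma}_1^+(c)\}$. In either alternative $\hat{\Psi}''(\hat{\gamma}_{\min}^+(c);c)=0$ (by the definition of $\hat{\gamma}_1^+$ or by the boundary condition $\hat{\Psi}''(1;c)=0$), so integrating $\hat{\Psi}^{(3)}=c\sin\hat{\Psi}$ from $\hat{\mu}_1^-(c)$ to $\hat{\gamma}_{\min}^+(c)$ and using that $\hat{\Psi}<0$ on this interval (valid for $c$ large, since $\hat{\Psi}$ is increasing there from $\hat{\Psi}(\hat{\mu}_1^-(c);c)\le-m_{0,\hat{\mu}_1^-}$) gives the key identity
\[
\hat{\Psi}''(\hat{\mu}_1^-(c);c)=c\int_{\hat{\mu}_1^-(c)}^{\hat{\gamma}_{\min}^+(c)}\lvert\sin\hat{\Psi}(s;c)\rvert\,ds,
\]
on which the whole argument will hinge.

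The plan is to handle the lower bound first. Once the existence of $\hat{\gamma}_1^+(c)$ in $(0,1)$ has been established (see below), \eqref{Jg1+d1-} places $c$ in $J_{\hat{\delta}_1^-}$, so Lemma~\ref{lem:c-rate_Psi2_m1-} supplies $\hat{\Psi}''(\hat{\mu}_1^-(c);c)\ge m_{2,\hat{\mu}_1^-}c^{2/3}$. Combining this with $|\sin\hat{\Psi}|\le\sin\psi_-$ from Corollary~\ref{cor:bound_Psi0-2} in the key identity immediately produces $\hat{\gamma}_1^+(c)-\hat{\mu}_1^-(c)\ge(m_{2,\hat{\mu}_1^-}/\sin\psi_-)c^{-1/3}$, yielding $r_{\hat{\gamma}_1^+}$.

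For the upper bound together with the existence, I would first produce an $O(c^{-1/3})$-initial sub-interval on which $\hat{\Psi}$ is bounded away from zero: since $\hat{\Psi}'(\hat{\mu}_1^-(c);c)=0$ and $\hat{\Psi}''(s;c)\le M_{2,\hat{\mu}_1^-}c^{2/3}$ on $[\hat{\mu}_1^-(c),\hat{\gamma}_{\min}^+(c)]$ (the upper bound in Lemma~\ref{lem:c-rate_Psi2_m1-}, which via Lemma~\ref{lem:com_Psi1-2}(iii) needs only $c\in J_{\hat{\mu}_1^-}$), a second-order Taylor estimate together with Lemma~\ref{lem:c-rate_Psi_m1-} will give
\[
\hat{\Psi}(s;c)\le-\tfrac{m_{0,\hat{\mu}_1^-}}{2}\quad\text{for}\quad s\in\bigl[\hat{\mu}_1^-(c),\,\hat{\mu}_1^-(c)+\tau c^{-1/3}\bigr],\ \tau:=\sqrt{m_{0,\hat{\mu}_1^-}/M_{2,\hat{\mu}_1^-}}.
\]
I will then split on whether $\hat{\gamma}_{\min}^+(c)-\hat{\mu}_1^-(c)\le\tau c^{-1/3}$ (in which case the upper bound with $R_{\hat{\gamma}_1^+}=\tau$ is immediate) or exceeds $\tau c^{-1/3}$; in the latter case I would apply the energy identity~\eqref{int_eq_Psi} with $s_1=\hat{\mu}_1^-(c)$ and $s_2=\hat{\gamma}_{\min}^+(c)$. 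Using $\hat{\Psi}'(s_1)=0$, $\hat{\Psi}''(s_2)=0$, $\hat{\Psi}(s_2)\le 0$, and the upper bound $|\hat{\Psi}''(\hat{\mu}_1^-(c);c)\hat{\Psi}(\hat{\mu}_1^-(c);c)|\le M_{2,\hat{\mu}_1^-}\psi_-c^{2/3}$, this reduces to
\[
M_{2,\hat{\mu}_1^-}\psi_-c^{2/3}\ge c\int_{\hat{\mu}_1^-(c)+\tau c^{-1/3}}^{\hat{\gamma}_{\min}^+(c)}\hat{\Psi}(s;c)\sin\hat{\Psi}(s;c)\,ds,
\]
which combined with an appropriate lower bound for $\hat{\Psi}\sin\hat{\Psi}$ on the tail interval will close the upper bound as $\hat{\gamma}_{\min}^+(c)-\hat{\mu}_1^-(c)\le R_{\hat{\gamma}_1^+}c^{-1/3}$. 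Choosing $c_{\hat{\gamma}_1^+}>c_{\hat{\mu}_1^-}$ so that $(R_{\hat{\delta}_1^+}+R_{\hat{\mu}_1^-}+R_{\hat{\gamma}_1^+})c^{-1/3}<1$ will force $\hat{\gamma}_{\min}^+(c)<1$, establishing $\hat{\gamma}_1^+(c)\in(0,1)$.

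The main obstacle will be that needed lower bound for $\hat{\Psi}\sin\hat{\Psi}$ on the tail interval $[\hat{\mu}_1^-(c)+\tau c^{-1/3},\hat{\gamma}_{\min}^+(c)]$. Unlike in the proof of Lemma~\ref{lem:exist_m1-}, where the monotonicity of $\hat{\Psi}$ propagates the initial Taylor bound to the whole interval, here $\hat{\Psi}$ is increasing on $[\hat{\mu}_1^-(c),\hat{\gamma}_{\min}^+(c)]$ so that $|\hat{\Psi}|$ may decay toward zero near the right endpoint. To get around this I plan to exploit the monotone decrease of $\hat{\Psi}''$ (which follows from $\hat{\Psi}^{(3)}=c\sin\hat{\Psi}<0$ where $\hat{\Psi}<0$) to get quantitative control of $\hat{\Psi}'$, and hence of $\hat{\Psi}$, on the tail; the constants must then be assembled carefully so as not to depend circularly on the lower bound of Lemma~\ref{lem:c-rate_Psi2_m1-}, which becomes available only a posteriori via \eqref{Jg1+d1-}.
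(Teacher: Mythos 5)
Your high-level plan — a second-order Taylor bound yielding $\hat\Psi\le-\tfrac{m_{0,\hat{\mu}_1^-}}{2}$ on an initial sub-interval of length $\tau c^{-1/3}$ past $\hat\mu_1^-(c)$, a case split on whether $\hat\gamma^+_{\min}(c)$ lies in that sub-interval, an energy estimate on the tail, and then forcing $\hat\gamma^+_{\min}(c)<1$ by making $c$ large — has exactly the paper's shape, and the Taylor step (a quadratic variant of the paper's linear bound \eqref{est_Psi}) is valid. But the tail estimate has a genuine gap, and it comes from the choice of energy identity. You invoke \eqref{int_eq_Psi}, whose integrand is $\hat\Psi\sin\hat\Psi$, and so you need this to stay bounded below on $[\hat\mu_1^-(c)+\tau c^{-1/3},\hat\gamma^+_{\min}(c)]$. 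You correctly observe that on $[\hat\mu_1^-(c),\hat\gamma_1^+(c)]$ the function $\hat\Psi$ is increasing from its minimum toward $\hat\delta_1^-(c)$, so $|\hat\Psi|$ decays and the Taylor bound does not propagate. The fix you propose — use the monotone decrease of $\hat\Psi''$ to control $\hat\Psi'$, and ``hence $\hat\Psi$'' — cannot supply the missing bound: a lower bound on $\hat\Psi'$ controls how fast $\hat\Psi$ rises toward zero, and therefore gives only an \emph{upper} bound on the time before $|\hat\Psi|$ becomes small, never a \emph{lower} bound on $|\hat\Psi|$ along the tail. Changing variables $u=\hat\Psi(s;c)$ makes this explicit: $\int\hat\Psi\sin\hat\Psi\,ds=\int u\sin u\,(\hat\Psi')^{-1}\,du$, so a lower bound on $\hat\Psi'$ bounds the integral from \emph{above}. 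The larger $\hat\Psi'$ becomes, the faster $\hat\Psi\sin\hat\Psi$ collapses.

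The quantity that is monotone \emph{away} from zero on $[\hat\mu_1^-(c),\hat\gamma_1^+(c)]$ is $\hat\Psi'$, not $\hat\Psi$ (because $\hat\Psi'(\hat\mu_1^-(c);c)=0$ and $\hat\Psi''>0$ there). This means the correct identity is \eqref{int_eq_Psi2}, whose integrand is $(\hat\Psi')^2\cos\hat\Psi$: once a bound of the form $\hat\Psi'(s;c)\gtrsim c^{1/3}$ is established past an initial sub-interval of length $O(c^{-1/3})$ (the paper's \eqref{est_Psi2}--\eqref{Psi1_lower}), it propagates to all of $[\hat\mu_1^-(c),\hat\gamma_1^+(c)]$ by the monotonicity of $\hat\Psi'$, and \eqref{int_eq_Psi2} with $s_1=\hat\mu_1^-(c)$, $s_2=\hat\gamma_1^+(c)$ closes the upper bound. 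This is precisely the heuristic that made your Lemma~\ref{lem:exist_m1-} template work, applied to the wrong derivative here: in Lemma~\ref{lem:exist_m1-}, $\hat\Psi$ vanishes at the left endpoint $\hat\delta_1^+(c)$ and $|\hat\Psi|$ grows, so \eqref{int_eq_Psi} is the right tool; in the present lemma the roles shift by one derivative and \eqref{int_eq_Psi2} is required. Finally, the circularity you flag concerning the lower bound of Lemma~\ref{lem:c-rate_Psi2_m1-} is real, and your proposal does not supply the resolution; the paper handles it by re-entering the (I)/(II) case split from Lemma~\ref{lem:c-rate_Psi2_m1-}'s proof once \eqref{exist_g1+-1} is forced for large $c$ — in case (II) one already gets $\hat\gamma_1^+(c)<1$, hence $c\in J_{\hat\delta_1^-}$ via \eqref{Jg1+d1-}, while case (I) gives the estimate directly.
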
 

\begin{proof} 
Let us show that $\hat{\gamma}^+_1(c)$ exists in $(0,1)$ for sufficiently large $c>c_\ast$. 
Since $\hat{\gamma}^+_1(c) \le 1$ for any $c>0$, it is enough to prove $\hat{\gamma}^+_1(c) < 1$ and \eqref{c-rate_g1+m1-}. 
First, we prove that the estimate \eqref{c-rate_Psi2_m1-} holds for sufficiently large $c>0$. 
Let $c$ satisfy 
\begin{equation}\label{exist_g1+-2} 
c > \left(c_{\hat{\mu}^-_1}^{1/3} + \dfrac{m_{0,\hat{\mu}_1^-}}{4M_{\hat{\alpha}}} \right)^3. 
\end{equation} 
Then $\hat{\mu}^-_1(c)$ exists in $(0,1)$ since $c > c_{\hat{\mu}^-_1}$. 
Note that 
\begin{equation}\label{exist_g1+-3} 
\hat{\mu}^-_1(c) < c_{\hat{\mu}^-_1}^{1/3}c^{-1/3} 
\end{equation} 
due to \eqref{c-rate_d1+}, \eqref{c-rate_m1-d1+} and 
$c_{\hat{\mu}^-_1} = (R_{\hat{\delta}_1^+} + R_{\hat{\mu}_1^-})^3$. 
Therefore, \eqref{exist_g1+-2} leads us to
\begin{equation}\label{exist_g1+-1} 
\hat{\mu}_1^-(c) + \dfrac{m_{0,\hat{\mu}_1^-}}{4M_{\hat{\alpha}}} c^{-1/3} < 1. 
\end{equation} 
Here, we divide the two cases (I) and (II) as in the proof of Lemma \ref{lem:c-rate_Psi2_m1-}. 
For the case (I), we have the estimate \eqref{c-rate_Psi2_m1-} by the same argument 
as the proof of Lemma \ref{lem:c-rate_Psi2_m1-}. For the case (II), we obtain 
$c \in J_{\hat{\gamma}^+_1}$ from \eqref{exist_g1+-1}, so that 
\eqref{Jg1+d1-} implies that $c \in J_{\hat{\delta}_1^-}$. Thus   
the estimate \eqref{c-rate_Psi2_m1-} holds.
As a result, we can apply the estimate \eqref{c-rate_Psi2_m1-} to show that 
for $s\in(\hat{\mu}^-_1(c),1]$
\begin{equation}\label{est_Psi2}
\begin{aligned}
\hat{\Psi}''(s;c)
=&\,\hat{\Psi}''(\hat{\mu}^-_1(c);c)+c\int_{\hat{\mu}^-_1(c)}^{s}\sin\hat{\Psi}(\tilde{s};c)\,d\tilde{s} \\
\ge&\,m_{2,\hat{\mu}^-_1}c^{2/3}-c(s-\hat{\mu}^-_1(c))\sin\psi_- \\
=&\,c\sin\psi_-\left\{\frac{m_{2,\hat{\mu}^-_1}}{\sin\psi_-}c^{-1/3}
-\bigl(s-\hat{\mu}^-_1(c)\bigr)\right\}. 
\end{aligned}
\end{equation}
Set $r_{\hat{\gamma}^+_1}:=m_{2,\hat{\mu}^-_1}/\sin\psi_-$ and retake $c>0$ 
satisfying \eqref{exist_g1+-2} and $(c_{\hat{\mu}^-_1}^{1/3}+r_{\hat{\gamma}^+_1})c^{-1/3}<1$. 
Note that the second property is equivalent to $c>(c_{\hat{\mu}^-_1}^{1/3}+r_{\hat{\gamma}^+_1})^3$. 
Applying \eqref{exist_g1+-3}, we obtain $\hat{\mu}^-_1(c)+r_{\hat{\gamma}^+_1}c^{-1/3}<1$. 
Thus it follows from \eqref{est_Psi2} that for 
$s\in[\hat{\mu}^-_1(c),\hat{\mu}^-_1(c)+(r_{\hat{\gamma}^+_1}/2)c^{-1/3}]$
\[
\hat{\Psi}''(s;c)\ge\frac{m_{2,\hat{\mu}^-_1}}2c^{2/3}>0.
\]
By means of this fact and Lemma \ref{lem:com_m1-g1+}, we are led to 
$\hat{\mu}^-_1(c)+(r_{\hat{\gamma}^+_1}/2)c^{-1/3}<\hat{\gamma}^+_{1}(c)$. 
Since $\hat{\Psi}''(s;c)>0$ for $s\in(0,\hat{\gamma}^+_1(c))$, we see that 
for $s\in[\hat{\mu}^-_1(c)+(r_{\hat{\gamma}^+_1}/2)c^{-1/3},\hat{\gamma}^+_{1}(c)]$
\begin{equation}\label{Psi1_lower}
\begin{aligned}
\hat{\Psi}'(s;c)
\ge&\,\hat{\Psi}'\left(\hat{\mu}^-_1(c)+\frac{r_{\hat{\gamma}^+_1}}2c^{-1/3};c\right) \\
=&\,\int_{\hat{\mu}^-_1(c)}^{\hat{\mu}^-_1(c)+(r_{\hat{\gamma}^+_1}/2)c^{-1/3}}
\hat{\Psi}''(s;c)\,ds
\ge\frac{r_{\hat{\gamma}^+_1}m_{2,\hat{\mu}^-_1}}2c^{1/3}. 
\end{aligned}
\end{equation}
Using \eqref{int_eq_Psi2} with $s_1=\mu^-_1(c)$ and $s_2=\hat{\gamma}^+_{1}(c)$ and 
taking account of $\hat{\Psi}''(\hat{\gamma}^+_{1}(c);c)=0$, we have
\begin{align*}
&c\hat{\Psi}'(\hat{\gamma}^+_{1}(c);c)\sin\hat{\Psi}(\hat{\gamma}^+_{1}(c);c)
+\frac{(\hat{\Psi}''(\mu^-_1(c);c))^2}2 \\
&=c\int_{\mu^-_1(c)}^{\hat{\gamma}^+_{1}(c)}(\hat{\Psi}'(s;c))^2\cos\hat{\Psi}(s;c)\,ds \\
&\ge c\int_{\hat{\mu}^-_1(c)+(r_{\hat{\gamma}^+_1}/2)c^{-1/3}}^{\hat{\gamma}^+_{1}(c)}
(\hat{\Psi}'(s;c))^2\cos\hat{\Psi}(s;c)\,ds.
\end{align*}
From Corollary \ref{cor:bound_Psi0-2}, \eqref{c-rate_Psi2_m1-} and 
\eqref{Psi1_lower}, we are led to 
\[
M_{\hat{\alpha}}\sin\psi_-+\frac{M_{2,\hat{\mu}^-_1}^2}2
\ge c^{1/3}\left(\hat{\gamma}^+_{1}(c)-\hat{\mu}^-_1(c)-\frac{r_{\hat{\gamma}^+_1}}2c^{-1/3}\right)
\frac{r_{\hat{\gamma}^+_1}^2m_{2,\hat{\mu}^-_1}^2}4\cos\psi_-.
\]
This gives 
\begin{equation}\label{est_g1+}
\hat{\gamma}^+_{1}(c)
\le\hat{\mu}^-_1(c)+R_{\hat{\gamma}^+_1}c^{-1/3}
\le(c_{\hat{\mu}^-_1}^{1/3}+R_{\hat{\gamma}^+_1})c^{-1/3},
\end{equation}
where 
\[
R_{\hat{\gamma}^+_1}
:=\frac{4\left\{M_{\hat{\alpha}}\sin\psi_-+\left(M_{2,\hat{\mu}^-_1}^2/2\right)\right\}}
{r_{\hat{\gamma}^+_1}^2m_{2,\hat{\mu}^-_1}^2\cos\psi_-}+\frac{r_{\hat{\gamma}^+_1}}2.
\]
Note that $r_{\hat{\mu}^-_1} < R_{\hat{\mu}^-_1}$ due to
\eqref{est_Psi2} with $s=\hat{\gamma}^+_1(c)$ and the first inequality of 
\eqref{est_g1+}. 
Therefore, if we define $c_{\hat{\gamma}_1^+}$ as 
\[ 
c_{\hat{\gamma}_1^+} 
:= \max \left\{(c_{\hat{\mu}^-_1}^{1/3}+R_{\hat{\mu}^-_1})^3,\left(c_{\hat{\mu}^-_1}^{1/3} + \dfrac{m_{0,\hat{\mu}_1^-}}{4M_{\hat{\alpha}}} \right)^3 \right\}, 
\] 
\eqref{est_Psi2} with $s=\hat{\gamma}^+_1(c)$ and \eqref{est_g1+} imply that 
$\hat{\gamma}^+_1(c)<1$ and \eqref{c-rate_g1+m1-} hold for $c>c_{\hat{\gamma}_1^+}$.
\end{proof}

\begin{rem}\label{rem:order-est2}
Since the estimate \eqref{c-rate_Psi2_m1-} holds for $c \in J_{\hat{\delta}_1^-}$ 
and the existence of $\hat{\delta}_1^-(c)$ in the interval $(0,1)$ is not 
guaranteed in the beginning of the proof, we can not directly apply it. 
Therefore, we have to restrict $c>0$ to satisfying \eqref{exist_g1+-1}. 
This kind of restriction is required in the proof of the existence of the other zero points in $(0,1)$. 
\end{rem}

\begin{lemma}\label{lem:c-rate_Psi1_g1+}
There exist $M_{1,\hat{\gamma}^+_1},m_{1,\hat{\gamma}^+_1}>0$ 
such that 
\[
m_{1,\hat{\gamma}^+_1}c^{1/3}\le\hat{\Psi}'(\hat{\gamma}^+_1(c);c)\le M_{1,\hat{\gamma}^+_1}c^{1/3}
\,\,\ \mbox{for}\,\ c\in J_{\hat{\gamma}^+_1}. 
\] 
\end{lemma}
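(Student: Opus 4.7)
The plan is to estimate $\hat{\Psi}'(\hat{\gamma}^+_1(c);c)$ by integrating $\hat{\Psi}''$ on the interval $[\hat{\mu}^-_1(c),\hat{\gamma}^+_1(c)]$, where $\hat{\Psi}'$ vanishes at the left endpoint. Since $c\in J_{\hat{\gamma}^+_1}=J_{\hat{\delta}^-_1}$ (by \eqref{Jg1+d1-}), Proposition \ref{prop:com_zero-point} guarantees the order $\hat{\delta}^+_1(c)<\hat{\mu}^-_1(c)<\hat{\gamma}^+_1(c)<\hat{\delta}^-_1(c)\le 1$, so all of the preceding lemmas (Lemma \ref{lem:c-rate_d1+}--\ref{lem:exist_g1+}) apply for such $c$.

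For the lower bound, I would observe that on $(\hat{\mu}^-_1(c),\hat{\gamma}^+_1(c))$ one has $\hat{\Psi}''>0$ by the definition of $\hat{\gamma}^+_1(c)$, so $\hat{\Psi}'$ is strictly increasing from $0$ on this interval. This is exactly the step already carried out inside the proof of Lemma \ref{lem:exist_g1+}: combining $\hat{\Psi}''(\hat{\mu}^-_1(c);c)\ge m_{2,\hat{\mu}^-_1}c^{2/3}$ from Lemma \ref{lem:c-rate_Psi2_m1-} with $\hat{\Psi}^{(3)}=c\sin\hat{\Psi}$ bounded in absolute value by $c\sin\psi_-$, one shows $\hat{\Psi}''(s;c)\ge(m_{2,\hat{\mu}^-_1}/2)c^{2/3}$ on an interval of length $(r_{\hat{\gamma}^+_1}/2)c^{-1/3}$ to the right of $\hat{\mu}^-_1(c)$. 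Integrating then gives $\hat{\Psi}'(\hat{\gamma}^+_1(c);c)\ge(r_{\hat{\gamma}^+_1}m_{2,\hat{\mu}^-_1}/2)c^{1/3}$, as in \eqref{Psi1_lower}, and this is what I would take as $m_{1,\hat{\gamma}^+_1}$.

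For the upper bound, I would use the fact that on $[\hat{\mu}^-_1(c),\hat{\gamma}^+_1(c)]$ the value of $\hat{\Psi}$ is negative (Proposition \ref{prop:com_zero-point}), hence $\hat{\Psi}^{(3)}=c\sin\hat{\Psi}\le 0$ there, so $\hat{\Psi}''$ is nonincreasing on this interval. Combined with the upper bound $\hat{\Psi}''(\hat{\mu}^-_1(c);c)\le M_{2,\hat{\mu}^-_1}c^{2/3}$ from Lemma \ref{lem:c-rate_Psi2_m1-} and the length estimate $\hat{\gamma}^+_1(c)-\hat{\mu}^-_1(c)\le R_{\hat{\gamma}^+_1}c^{-1/3}$ from Lemma \ref{lem:exist_g1+}, a direct integration gives
\[
\hat{\Psi}'(\hat{\gamma}^+_1(c);c)=\int_{\hat{\mu}^-_1(c)}^{\hat{\gamma}^+_1(c)}\hat{\Psi}''(s;c)\,ds\le M_{2,\hat{\mu}^-_1}R_{\hat{\gamma}^+_1}c^{1/3},
\]
so I can take $M_{1,\hat{\gamma}^+_1}:=M_{2,\hat{\mu}^-_1}R_{\hat{\gamma}^+_1}$.

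There is no substantial obstacle here: both bounds are immediate consequences of the previously established estimates on $\hat{\Psi}''$ at $\hat{\mu}^-_1(c)$ and on the length $\hat{\gamma}^+_1(c)-\hat{\mu}^-_1(c)$, together with the sign information $\hat{\Psi}<0$ on $(\hat{\delta}^+_1(c),\hat{\delta}^-_1(c))$ which ensures $\hat{\Psi}''$ is monotone nonincreasing on $[\hat{\mu}^-_1(c),\hat{\gamma}^+_1(c)]$. The only minor point to check is that the constant $r_{\hat{\gamma}^+_1}/2$ used in the lower bound argument is consistent with $r_{\hat{\gamma}^+_1}\le R_{\hat{\gamma}^+_1}$, which holds by construction in Lemma \ref{lem:exist_g1+}.
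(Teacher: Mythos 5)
Your lower bound is exactly the paper's: re-derive \eqref{Psi1_lower} for $c \in J_{\hat{\gamma}^+_1}$ (noting that \eqref{c-rate_Psi2_m1-} is available by \eqref{Jg1+d1-}) and evaluate it at $s=\hat{\gamma}^+_1(c)$.

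Your upper bound takes a genuinely different route. The paper's upper bound is a one-liner: Corollary \ref{cor:bound_Psi0-2} gives $\hat{\Psi}'(\hat{\gamma}^+_1(c);c) \le |\hat{\Psi}'(\hat{\gamma}^+_1(c);c)| \le -\hat{\alpha}(c)$, and Lemma \ref{lem:c-rate_a} gives $-\hat{\alpha}(c) \le M_{\hat{\alpha}}c^{1/3}$, so one can take $M_{1,\hat{\gamma}^+_1} = M_{\hat{\alpha}}$. You instead write $\hat{\Psi}'(\hat{\gamma}^+_1(c);c) = \int_{\hat{\mu}^-_1(c)}^{\hat{\gamma}^+_1(c)} \hat{\Psi}''$, bound the integrand by $\hat{\Psi}''(\hat{\mu}^-_1(c);c) \le M_{2,\hat{\mu}^-_1}c^{2/3}$ using the sign of $\hat{\Psi}^{(3)}$, and bound the length by Lemma \ref{lem:exist_g1+}. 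This works, but it is heavier, and there is a small gap you should be explicit about: Lemma \ref{lem:exist_g1+} states $\hat{\gamma}^+_1(c)-\hat{\mu}^-_1(c)\le R_{\hat{\gamma}^+_1}c^{-1/3}$ only for $c>c_{\hat{\gamma}^+_1}$, not for all $c\in J_{\hat{\gamma}^+_1}$ (these sets need not coincide). Your opening remark that "all of the preceding lemmas apply for such $c$" glosses over this. To use the length estimate you must re-derive \eqref{est_g1+} for every $c\in J_{\hat{\gamma}^+_1}$: since $\hat{\gamma}^+_1(c)<1$ gives the existence issue for free, the argument via \eqref{est_Psi2}, \eqref{Psi1_lower} and \eqref{int_eq_Psi2} does carry through, but this requires a sentence, just as the paper explicitly re-derives \eqref{Psi1_lower} rather than citing it. In short, the paper's upper bound sidesteps all of this by invoking the global $L^\infty$ bound on $\hat{\Psi}'$; your version buys a constant $M_{2,\hat{\mu}^-_1}R_{\hat{\gamma}^+_1}$ tied more tightly to the local geometry near $\hat{\gamma}^+_1(c)$, at the cost of more bookkeeping and a hypothesis check you skipped.
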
 

\begin{proof} 
The upper bound follows from Corollary \ref{cor:bound_Psi0-2} and Lemma \ref{lem:c-rate_a}. 
For the lower bound, we first obtain the estimate \eqref{c-rate_Psi2_m1-} since 
$c\in J_{\hat{\delta}^-_1}$ by $c\in J_{\hat{\gamma}^+_1}$ and \eqref{Jg1+d1-}. 
Applying a similar argument to the proof of Lemma \ref{lem:exist_g1+},  we have 
the inequality \eqref{Psi1_lower}. Thus the lower bound follows from \eqref{Psi1_lower} 
with $s=\hat{\gamma}_1^+(c)$. 
\end{proof}

A similar argument to the proof of Lemma \ref{lem:c-rate_Psi2_m1-} leads us to 
the following lemma. The assumption $c\in J_{\hat{\mu}^+_1}$ is necessary 
for the same reason as the latter case in Remark \ref{rem:order-est}.

\begin{lemma}\label{lem:c-rate_Psi_g1+}
There exist $M_{0,\hat{\gamma}^+_1},m_{0,\hat{\gamma}^+_1}>0$ 
such that 
\[
-M_{0,\hat{\gamma}^+_1}\le\hat{\Psi}(\hat{\gamma}^+_1(c);c)\le-m_{0,\hat{\gamma}^+_1} 
\,\,\ \mbox{for}\,\ c\in J_{\hat{\mu}^+_1}.
\]
\end{lemma}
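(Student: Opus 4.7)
My plan is to adapt the two-case scheme used in the proof of Lemma \ref{lem:c-rate_Psi2_m1-}, transferring the lower bound on $\hat{\Psi}'(\hat{\gamma}^+_1(c);c)$ provided by Lemma \ref{lem:c-rate_Psi1_g1+} into a lower bound on $|\hat{\Psi}(\hat{\gamma}^+_1(c);c)|$. The upper bound $\hat{\Psi}(\hat{\gamma}^+_1(c);c)\ge-\psi_-$ is immediate from Corollary \ref{cor:bound_Psi0-2}, so I take $M_{0,\hat{\gamma}^+_1}:=\psi_-$; the heart of the matter is to show $\hat{\Psi}(\hat{\gamma}^+_1(c);c)\le-m_{0,\hat{\gamma}^+_1}$ for some positive constant $m_{0,\hat{\gamma}^+_1}$ uniformly in $c\in J_{\hat{\mu}^+_1}$.

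The first step is a local estimate on $\hat{\Psi}'$ around $\hat{\gamma}^+_1(c)$. Combining Lemma \ref{lem:c-rate_Psi1_g1+} (the bound $\hat{\Psi}'(\hat{\gamma}^+_1(c);c)\ge m_{1,\hat{\gamma}^+_1}c^{1/3}$) with the global bound $|\hat{\Psi}''(s;c)|\le M_{2,\hat{\delta}^+_1}c^{2/3}$ from Corollary \ref{cor:bound_Psi0-2} and Lemma \ref{lem:c-rate_Psi2_d1+}, integrating $\hat{\Psi}''$ from $\hat{\gamma}^+_1(c)$ yields
\[
\hat{\Psi}'(s;c)\ge\frac{m_{1,\hat{\gamma}^+_1}}{2}c^{1/3}\quad\mbox{for}\quad s\in\left[\hat{\gamma}^+_1(c),\,\hat{\gamma}^+_1(c)+\frac{m_{1,\hat{\gamma}^+_1}}{2M_{2,\hat{\delta}^+_1}}c^{-1/3}\right].
\]
Next I split into cases based on the distance $\hat{\delta}^-_1(c)-\hat{\gamma}^+_1(c)$. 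In Case (I), where $\hat{\delta}^-_1(c)\ge\hat{\gamma}^+_1(c)+(m_{1,\hat{\gamma}^+_1}/(4M_{2,\hat{\delta}^+_1}))c^{-1/3}$, using $\hat{\Psi}(\hat{\delta}^-_1(c);c)=0$ and integrating $\hat{\Psi}'$ over a subinterval where the above estimate holds gives $|\hat{\Psi}(\hat{\gamma}^+_1(c);c)|=\int_{\hat{\gamma}^+_1(c)}^{\hat{\delta}^-_1(c)}\hat{\Psi}'(s;c)\,ds\ge m_{1,\hat{\gamma}^+_1}^2/(8M_{2,\hat{\delta}^+_1})$. In Case (II), where $\hat{\delta}^-_1(c)<\hat{\gamma}^+_1(c)+(m_{1,\hat{\gamma}^+_1}/(4M_{2,\hat{\delta}^+_1}))c^{-1/3}$, the assumption $c\in J_{\hat{\mu}^+_1}$ guarantees $\hat{\mu}^+_1(c)\in(0,1)$; the local estimate continues to hold past $\hat{\delta}^-_1(c)$, so integrating $\hat{\Psi}'$ from $\hat{\delta}^-_1(c)$ to $\hat{\gamma}^+_1(c)+(m_{1,\hat{\gamma}^+_1}/(2M_{2,\hat{\delta}^+_1}))c^{-1/3}$ (which lies strictly before $\hat{\mu}^+_1(c)$ since $\hat{\Psi}'>0$ there) delivers $\hat{\Psi}(\hat{\mu}^+_1(c);c)\ge m_{1,\hat{\gamma}^+_1}^2/(8M_{2,\hat{\delta}^+_1})$. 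I then transfer this bound back to $\hat{\gamma}^+_1(c)$ via Lemma \ref{lem:com_Psi1-2}(i), with $s_1=\hat{\gamma}^+_1(c)$ (where $\hat{\Psi}''=0$) and $s_2=\hat{\mu}^+_1(c)$ (where $\hat{\Psi}'=0$), obtaining $|\hat{\Psi}(\hat{\gamma}^+_1(c);c)|\ge|\hat{\Psi}(\hat{\mu}^+_1(c);c)|$.

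The main technical delicacy, and the reason the assumption $c\in J_{\hat{\mu}^+_1}$ cannot be relaxed (compare the latter case in Remark \ref{rem:order-est}), is that Case (II) crucially relies on being able to extend the lower bound on $\hat{\Psi}'$ past the zero $\hat{\delta}^-_1(c)$ up to a point strictly before $\hat{\mu}^+_1(c)$; without $\hat{\mu}^+_1(c)\in(0,1)$ one cannot invoke Lemma \ref{lem:com_Psi1-2}(i) with a valid $s_2$ on $[0,1]$. Apart from this point, the argument is a direct parallel of the lower bound proof of Lemma \ref{lem:c-rate_Psi2_m1-}, and the final constant may be chosen as $m_{0,\hat{\gamma}^+_1}:=m_{1,\hat{\gamma}^+_1}^2/(8M_{2,\hat{\delta}^+_1})$.
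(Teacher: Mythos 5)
Your proof is correct and takes essentially the approach the paper intends: the paper only remarks that ``a similar argument to the proof of Lemma~\ref{lem:c-rate_Psi2_m1-} leads us to the following lemma,'' and your two-case argument, built on the local positivity of $\hat{\Psi}'$ near $\hat{\gamma}_1^+(c)$ from Lemma~\ref{lem:c-rate_Psi1_g1+} together with the uniform bound $|\hat{\Psi}''|\le M_{2,\hat{\delta}_1^+}c^{2/3}$ and a transfer via Lemma~\ref{lem:com_Psi1-2}(i) with $s_1=\hat{\gamma}_1^+(c)$, $s_2=\hat{\mu}_1^+(c)$, is precisely that analogue. You also correctly identify why $c\in J_{\hat{\mu}_1^+}$ is needed (the $s_2$ in the transfer must lie in $[0,1]$), matching the role of Remark~\ref{rem:order-est}; the only spot worth phrasing more carefully is that integrating to $\hat{\gamma}_1^+(c)+(m_{1,\hat{\gamma}_1^+}/(2M_{2,\hat{\delta}_1^+}))c^{-1/3}$ bounds $\hat{\Psi}$ at that point, and one then passes to $\hat{\Psi}(\hat{\mu}_1^+(c);c)$ by the monotonicity $\hat{\Psi}'>0$ on $(\hat{\delta}_1^-(c),\hat{\mu}_1^+(c))$.
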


Recalling \eqref{Jg1+d1-}, we see that $\hat{\delta}_1^-(c)$ exists in $(0,1)$ for 
$c > c_{\hat{\gamma}_1^+}$, where $c_{\hat{\gamma}_1^+}$ is as in Lemma \ref{lem:exist_g1+}. 
However, in order to obtain the estimate of the distance between $\hat{\gamma}_1^+(c)$ 
and $\hat{\delta}_1^-(c)$, we have to restrict $c$ to being large enough as we mentioned 
in Remark \ref{rem:order-est2}. 

\begin{lemma}\label{lem:exist_d1-} 
There exist $c_{\hat{\delta}^-_1}>c_{\hat{\gamma}^+_1}$ and 
$R_{\hat{\delta}^-_1},r_{\hat{\delta}^-_1}>0$ such that $\hat{\delta}^-_1(c)$ exists in $(0,1)$ 
for any $c>c_{\hat{\delta}^-_1}$ and fulfills 
\[
r_{\hat{\delta}^-_1}c^{-1/3}\le\hat{\delta}^-_1(c)-\hat{\gamma}^+_1(c)\le R_{\delta^+_1}c^{-1/3}.
\]
\end{lemma}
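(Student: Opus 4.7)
The plan is to mirror the proof of Lemma \ref{lem:exist_g1+}, with $\hat{\gamma}_1^+(c)$ playing the role previously played by $\hat{\mu}_1^-(c)$ and with the pointwise estimate of Lemma \ref{lem:c-rate_Psi_g1+} replacing the second-derivative estimate of Lemma \ref{lem:c-rate_Psi2_m1-}. By \eqref{Jg1+d1-}, once $c>c_{\hat{\gamma}_1^+}$ we already have $\hat{\delta}_1^-(c)\in(0,1)$, so the real task is to produce the two-sided quantitative bound, together with upgrading Lemma \ref{lem:c-rate_Psi_g1+} so that the inequality $\hat{\Psi}(\hat{\gamma}_1^+(c);c) \le -m_{0,\hat{\gamma}_1^+}$ is at our disposal for all sufficiently large $c$ rather than only for $c\in J_{\hat{\mu}_1^+}$.

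Once that bound is available, the lower bound follows from
\[
\hat{\Psi}(s;c) = \hat{\Psi}(\hat{\gamma}_1^+(c);c) + \int_{\hat{\gamma}_1^+(c)}^{s}\hat{\Psi}'(\tilde{s};c)\,d\tilde{s}
\le -m_{0,\hat{\gamma}_1^+} + (s-\hat{\gamma}_1^+(c))\,M_{\hat{\alpha}}c^{1/3},
\]
where the inequality uses Corollary \ref{cor:bound_Psi0-2} and Lemma \ref{lem:c-rate_a}. Setting $r_{\hat{\delta}_1^-}:=m_{0,\hat{\gamma}_1^+}/(2M_{\hat{\alpha}})$, this gives $\hat{\Psi}(s;c)\le -m_{0,\hat{\gamma}_1^+}/2$ throughout $[\hat{\gamma}_1^+(c),\hat{\gamma}_1^+(c)+r_{\hat{\delta}_1^-}c^{-1/3}]$, so $\hat{\Psi}$ has no zero in that sub-interval and the desired lower bound on $\hat{\delta}_1^-(c)-\hat{\gamma}_1^+(c)$ follows provided $c$ is large enough that the sub-interval lies inside $(0,1)$.

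For the upper bound I would apply the energy identity \eqref{int_eq_Psi} with $s_1=\hat{\gamma}_1^+(c)$ and $s_2=\hat{\delta}_1^-(c)$. Both boundary terms collapse because $\hat{\Psi}''(\hat{\gamma}_1^+(c);c)=0$ and $\hat{\Psi}(\hat{\delta}_1^-(c);c)=0$, leaving
\[
\tfrac{1}{2}\bigl(\hat{\Psi}'(\hat{\gamma}_1^+(c);c)\bigr)^2
\;\ge\; c\int_{\hat{\gamma}_1^+(c)}^{\hat{\delta}_1^-(c)}\hat{\Psi}(s;c)\sin\hat{\Psi}(s;c)\,ds.
\]
Corollary \ref{cor:bound_Psi0-2} bounds the left-hand side by $M_{\hat{\alpha}}^2c^{2/3}/2$, while on the sub-interval from the previous step the integrand on the right is bounded below by $(m_{0,\hat{\gamma}_1^+}/2)\sin(m_{0,\hat{\gamma}_1^+}/2)>0$. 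Restricting the integral to that sub-interval and rearranging produces $\hat{\delta}_1^-(c)-\hat{\gamma}_1^+(c)\le R_{\hat{\delta}_1^-}c^{-1/3}$ with an explicit constant $R_{\hat{\delta}_1^-}$ built from $r_{\hat{\delta}_1^-}$, $M_{\hat{\alpha}}$ and $m_{0,\hat{\gamma}_1^+}$.

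The principal obstacle, flagged in Remark \ref{rem:order-est2}, is precisely the extension of Lemma \ref{lem:c-rate_Psi_g1+} to sufficiently large $c$ without the a~priori hypothesis $c\in J_{\hat{\mu}_1^+}$. I would handle it exactly as in the opening of the proof of Lemma \ref{lem:exist_g1+}: by a case analysis in the spirit of Lemma \ref{lem:c-rate_Psi2_m1-}, split according to whether $\hat{\mu}_1^+(c)$ lies beyond or within $\hat{\gamma}_1^+(c)+\varepsilon c^{-1/3}$ for a suitable $\varepsilon>0$. In the former case the original argument of Lemma \ref{lem:c-rate_Psi_g1+} can be run on the sub-interval $[\hat{\gamma}_1^+(c),\hat{\gamma}_1^+(c)+\varepsilon c^{-1/3}]$ alone; in the latter case, once $c$ is large enough that $\hat{\gamma}_1^+(c)+\varepsilon c^{-1/3}<1$ (guaranteed by the bound $\hat{\gamma}_1^+(c)\le(c_{\hat{\mu}_1^-}^{1/3}+R_{\hat{\gamma}_1^+})c^{-1/3}$ from Lemma \ref{lem:exist_g1+}), one automatically has $c\in J_{\hat{\mu}_1^+}$ and Lemma \ref{lem:c-rate_Psi_g1+} applies verbatim. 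Taking $c_{\hat{\delta}_1^-}$ larger than both $c_{\hat{\gamma}_1^+}$ and the threshold produced by this extension then closes the argument.
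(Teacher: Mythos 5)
Your lower bound is fine, and your plan for upgrading Lemma~\ref{lem:c-rate_Psi_g1+} to hold for all large $c$ via a case split in the spirit of Lemma~\ref{lem:c-rate_Psi2_m1-} is exactly the mechanism the paper indicates in Remark~\ref{rem:order-est2}. However, your upper bound step has a genuine gap: you are using the wrong energy identity, and the argument as written produces nothing about $\hat{\delta}_1^-(c)$.

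Concretely, after invoking \eqref{int_eq_Psi} you restrict the right-hand side to the fixed sub-interval $[\hat{\gamma}_1^+(c),\,\hat{\gamma}_1^+(c)+r_{\hat{\delta}_1^-}c^{-1/3}]$ and bound the integrand below there. This yields
\[
\frac{M_{\hat{\alpha}}^2 c^{2/3}}{2}
\;\ge\;
c\cdot r_{\hat{\delta}_1^-}c^{-1/3}\cdot\frac{m_{0,\hat{\gamma}_1^+}}{2}\sin\frac{m_{0,\hat{\gamma}_1^+}}{2},
\]
which is a relation among constants with $\hat{\delta}_1^-(c)$ appearing nowhere; it cannot be ``rearranged'' into an upper bound for $\hat{\delta}_1^-(c)-\hat{\gamma}_1^+(c)$. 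The reason \eqref{int_eq_Psi} worked in Lemma~\ref{lem:exist_m1-} is that there the integrand $\hat{\Psi}\sin\hat{\Psi}$ is bounded below on a sub-interval \emph{ending at} the new zero point $\hat{\mu}^-_{\rm min}(c)$, because $\hat{\Psi}$ is strictly decreasing there (moving away from zero), so the bound on the sub-interval length is a bound on $\hat{\mu}^-_{\rm min}(c)$ itself. On $(\hat{\gamma}_1^+(c),\hat{\delta}_1^-(c))$ the geometry is opposite: $\hat{\Psi}'>0$, so $\hat{\Psi}$ increases monotonically toward the zero at $\hat{\delta}_1^-(c)$, and the integrand $\hat{\Psi}\sin\hat{\Psi}$ degenerates precisely at the right endpoint you want to locate. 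No choice of sub-interval with right endpoint $\hat{\delta}_1^-(c)$ lets you bound $\hat{\Psi}\sin\hat{\Psi}$ below by a positive constant.

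The correct analogue, matching the paper's pattern (energy identity \eqref{int_eq_Psi} on $(\hat{\delta}_1^+,\hat{\mu}_1^-)$ where $\hat{\Psi}$ is monotone, \eqref{int_eq_Psi2} on $(\hat{\mu}_1^-,\hat{\gamma}_1^+)$ where $\hat{\Psi}'$ is monotone), is to use \eqref{int_eq} with $s_1=\hat{\gamma}_1^+(c)$ and $s_2=\hat{\delta}_1^-(c)$, because on $(\hat{\gamma}_1^+(c),\hat{\delta}_1^-(c))$ it is $\hat{\Psi}''$ that is monotone and stays away from zero toward the right endpoint. Indeed $\hat{\Psi}''(\hat{\gamma}_1^+(c);c)=0$ and $\hat{\Psi}^{(3)}=c\sin\hat{\Psi}<0$ on this interval, so $\hat{\Psi}''$ decreases. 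From $\hat{\Psi}(s;c)\le-m_{0,\hat{\gamma}_1^+}/2$ on $[\hat{\gamma}_1^+(c),\,\hat{\gamma}_1^+(c)+(r_{\hat{\delta}_1^-}/2)c^{-1/3}]$ one gets
\[
\hat{\Psi}''(s;c)\le -\,\frac{r_{\hat{\delta}_1^-}}{2}\sin\!\left(\frac{m_{0,\hat{\gamma}_1^+}}{2}\right)c^{2/3}
\quad\text{for } s\in\Bigl[\hat{\gamma}_1^+(c)+\tfrac{r_{\hat{\delta}_1^-}}{2}c^{-1/3},\,\hat{\delta}_1^-(c)\Bigr],
\]
and then \eqref{int_eq} gives (using $\hat{\Psi}''(\hat{\delta}_1^-)\hat{\Psi}'(\hat{\delta}_1^-)\le0$ and $\cos\hat{\Psi}(\hat{\delta}_1^-)=1$)
\[
c\bigl(1-\cos\psi_-\bigr)
\;\ge\;
\int_{\hat{\gamma}_1^+(c)}^{\hat{\delta}_1^-(c)}\bigl(\hat{\Psi}''(s;c)\bigr)^2\,ds
\;\ge\;
\Bigl(\hat{\delta}_1^-(c)-\hat{\gamma}_1^+(c)-\tfrac{r_{\hat{\delta}_1^-}}{2}c^{-1/3}\Bigr)
\frac{r_{\hat{\delta}_1^-}^2}{4}\sin^2\!\left(\frac{m_{0,\hat{\gamma}_1^+}}{2}\right)c^{4/3}.
\]
The scalings now balance: dividing by $c^{4/3}$ yields $\hat{\delta}_1^-(c)-\hat{\gamma}_1^+(c)\le R_{\hat{\delta}_1^-}c^{-1/3}$. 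With your \eqref{int_eq_Psi}-based attempt, the left side scales like $c^{2/3}$ while any usable right-hand side also scales like $c^{2/3}$, leaving no room for information about the unknown length.

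Secondary point: your case split is phrased in terms of $\hat{\mu}_1^+(c)$, whereas the direct analogue of the case split in the proof of Lemma~\ref{lem:c-rate_Psi2_m1-} (for the purposes of bounding $\hat{\Psi}(\hat{\gamma}_1^+(c);c)$) is in terms of the position of $\hat{\delta}_1^-(c)$; in the ``close'' case one then deduces $c\in J_{\hat{\mu}_1^+}$ via the analogue of \eqref{m1-d1-}. This is a minor reorganization and I do not think it is an error, but you should spell it out to make sure the energy comparison (Lemma~\ref{lem:com_Psi1-2}(i) with $s_1=\hat{\gamma}_1^+(c)$, $s_2\in\{\hat{\gamma}_1^-(c),\hat{\mu}_1^+(c)\}$) is actually available in that case.
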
 

Let us state the estimates of $\hat{\Psi}'(\hat{\delta}_1^-(c),c)$ and 
$\hat{\Psi}''(\hat{\delta}_1^-(c),c)$. According to Remark \ref{rem:order-est}, 
the existence of $\hat{\gamma}_1^-(c)$ in the interval $(0,1]$, which is required in the proof, 
is guaranteed for $c\in J_{\hat{\delta}^-_1}$. 
However, since we use the estimate of $\hat{\Psi}(\hat{\gamma}^+_1(c),c)$ in 
Lemma \ref{lem:c-rate_Psi_g1+}, the following estimates can be proved only for 
$c \in J_{\hat{\mu}_1^+}$.  

\begin{lemma}\label{lem:c-rate_Psi2_d1-}
There exist $M_{2,\hat{\delta}^-_1},m_{2,\hat{\delta}^-_1}>0$ 
such that for $c\in J_{\hat{\mu}^+_1}$
\[
\left\{\begin{array}{l}
-M_{2,\hat{\delta}^-_1}c^{2/3}\le\hat{\Psi}''(\hat{\delta}^-_1(c);c)\le-m_{2,\hat{\delta}^+_1}c^{2/3}, \\
m_{1,\hat{\delta}^-_1}c^{1/3}\le\hat{\Psi}'(\hat{\delta}^-_1(c);c)
\le M_{1,\hat{\delta}^-_1}c^{1/3}.
\end{array}\right.
\]
\end{lemma}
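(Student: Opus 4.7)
The strategy mirrors the pattern used for Lemma \ref{lem:c-rate_Psi2_m1-} and Lemma \ref{lem:c-rate_Psi1_d1+}: first fix the sign, then derive the $c^{2/3}$ estimate for $\hat{\Psi}''$, then propagate it to $\hat{\Psi}'$ by integration. The hypothesis $c\in J_{\hat{\mu}^+_1}$ together with Proposition \ref{prop:com_zero-point} gives
\[
\hat{\delta}^+_1(c)<\hat{\mu}^-_1(c)<\hat{\gamma}^+_1(c)<\hat{\delta}^-_1(c)<\hat{\mu}^+_1(c)<\hat{\gamma}^-_1(c)\le 1
\]
and the alternating sign pattern, so $\hat{\Psi}''(\hat{\delta}^-_1(c);c)<0$ and $\hat{\Psi}'(\hat{\delta}^-_1(c);c)>0$ come for free; only the orders in $c$ remain.

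For the lower bound on $|\hat{\Psi}''(\hat{\delta}^-_1(c);c)|$ I use \eqref{Psi2} with $\hat{\Psi}''(\hat{\gamma}^+_1(c);c)=0$ to write
\[
\hat{\Psi}''(\hat{\delta}^-_1(c);c)=c\int_{\hat{\gamma}^+_1(c)}^{\hat{\delta}^-_1(c)}\sin\hat{\Psi}(s;c)\,ds.
\]
Lemma \ref{lem:c-rate_Psi_g1+} gives $\hat{\Psi}(\hat{\gamma}^+_1(c);c)\le -m_{0,\hat{\gamma}^+_1}$, and since $\hat{\Psi}''\le 0$ on $[\hat{\gamma}^+_1(c),\hat{\gamma}^-_1(c)]$ the bound $\hat{\Psi}'\le M_{1,\hat{\gamma}^+_1}c^{1/3}$ of Lemma \ref{lem:c-rate_Psi1_g1+} persists across that whole interval. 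Integrating yields a subinterval of length $\sim c^{-1/3}$ on which $\hat{\Psi}(s;c)\le -m_{0,\hat{\gamma}^+_1}/2$; splitting cases on whether $\hat{\delta}^-_1(c)-\hat{\gamma}^+_1(c)$ exceeds this length, and invoking the gap estimate of Lemma \ref{lem:exist_d1-} in the short case, produces the desired $-m_{2,\hat{\delta}^-_1}c^{2/3}$ upper bound. The companion inequality $\hat{\Psi}''(\hat{\delta}^-_1(c);c)\ge -M_{2,\hat{\delta}^-_1}c^{2/3}$ is immediate from Lemma \ref{lem:com_Psi1-2}(iii) applied to $s_1=\hat{\mu}^-_1(c)$, $s_2=\hat{\delta}^-_1(c)$, together with Lemma \ref{lem:c-rate_Psi2_m1-}, which is available because $c\in J_{\hat{\mu}^+_1}\subset J_{\hat{\delta}^-_1}$ by Proposition \ref{prop:com_zero-point}(i).

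The upper bound on $\hat{\Psi}'(\hat{\delta}^-_1(c);c)$ is immediate from Corollary \ref{cor:bound_Psi0-2} and Lemma \ref{lem:c-rate_a}. For the lower bound I follow Lemma \ref{lem:c-rate_Psi1_d1+}: from $\hat{\Psi}''(\hat{\delta}^-_1(c);c)\le -m_{2,\hat{\delta}^-_1}c^{2/3}$ and $|\sin\hat{\Psi}|\le\sin\psi_-$, \eqref{Psi2} yields $\hat{\Psi}''(s;c)\le -(m_{2,\hat{\delta}^-_1}/2)c^{2/3}$ on $[\hat{\delta}^-_1(c),\hat{\delta}^-_1(c)+\ell c^{-1/3}]$ with $\ell:=m_{2,\hat{\delta}^-_1}/(2\sin\psi_-)$. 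If $\hat{\mu}^+_1(c)$ lies past $\hat{\delta}^-_1(c)+(\ell/2)c^{-1/3}$, I integrate $-\hat{\Psi}''$ from $\hat{\delta}^-_1(c)$ to that interior point and use $\hat{\Psi}'(\hat{\mu}^+_1(c);c)=0$ to conclude directly; otherwise $[\hat{\delta}^-_1(c)+(\ell/2)c^{-1/3},\hat{\delta}^-_1(c)+\ell c^{-1/3}]\subset[\hat{\mu}^+_1(c),\hat{\gamma}^-_1(c)]$, so the same integration produces the required $c^{1/3}$ lower bound on $|\hat{\Psi}'(\hat{\gamma}^-_1(c);c)|$, and Lemma \ref{lem:com_Psi1-2}(ii) with $s_1=\hat{\delta}^-_1(c)$, $s_2=\hat{\gamma}^-_1(c)$ transports it back.

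The main technical obstacle, as cautioned in Remarks \ref{rem:order-est} and \ref{rem:order-est2}, is the bookkeeping in the auxiliary case of each split: one must verify that every test subinterval lies inside $(0,1)$ and within the correct sign-definite region of Proposition \ref{prop:com_zero-point}. These inclusions follow from $c\in J_{\hat{\mu}^+_1}\subset J_{\hat{\delta}^-_1}$ and the gap estimate of Lemma \ref{lem:exist_d1-}, but they must be checked branch by branch, which is the one genuinely delicate part of the argument.
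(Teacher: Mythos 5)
Your sign determination, the upper bounds (via Lemma \ref{lem:com_Psi1-2}(iii) combined with Lemma \ref{lem:c-rate_Psi2_m1-}, and via Corollary \ref{cor:bound_Psi0-2}), and the transfer argument for $\hat{\Psi}'$ via Lemma \ref{lem:com_Psi1-2}(ii) are all sound and consistent with the induction pattern the paper intends. However, there is a genuine gap in your treatment of the ``short case'' for the lower bound on $|\hat{\Psi}''(\hat{\delta}^-_1(c);c)|$: you invoke the gap estimate of Lemma \ref{lem:exist_d1-}, but that lemma only provides $\hat{\delta}^-_1(c)-\hat{\gamma}^+_1(c)\ge r_{\hat{\delta}^-_1}c^{-1/3}$ for $c>c_{\hat{\delta}^-_1}$, while the statement must hold for every $c\in J_{\hat{\mu}^+_1}$, and nothing in the paper guarantees $J_{\hat{\mu}^+_1}\subset(c_{\hat{\delta}^-_1},\infty)$ (these $c_{\ast}$-thresholds are constructed only for the existence argument; the estimate lemmas are parameterized by membership in the $J$-sets, which the remarks warn can contain smaller $c$).

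The fix is that your short case is in fact vacuous, so no case splitting (and no appeal to Lemma \ref{lem:exist_d1-}) is needed. From $\hat{\Psi}(\hat{\gamma}^+_1(c);c)\le -m_{0,\hat{\gamma}^+_1}$ (Lemma \ref{lem:c-rate_Psi_g1+}, valid for $c\in J_{\hat{\mu}^+_1}$) and the uniform bound $\hat{\Psi}'(s;c)\le M_{\hat{\alpha}}c^{1/3}$ (Corollary \ref{cor:bound_Psi0-2} and Lemma \ref{lem:c-rate_a}), one gets $\hat{\Psi}(s;c)\le -m_{0,\hat{\gamma}^+_1}/2$ on $[\hat{\gamma}^+_1(c),\min\{1,\hat{\gamma}^+_1(c)+\ell c^{-1/3}\}]$ with $\ell:=m_{0,\hat{\gamma}^+_1}/(2M_{\hat{\alpha}})$. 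Since $c\in J_{\hat{\mu}^+_1}\subset J_{\hat{\delta}^-_1}$ gives $\hat{\delta}^-_1(c)\in(0,1)$ with $\hat{\Psi}(\hat{\delta}^-_1(c);c)=0$, this forces $\hat{\gamma}^+_1(c)+\ell c^{-1/3}<\hat{\delta}^-_1(c)$ --- exactly the mechanism behind \eqref{m1-d1-} in the proof of Lemma \ref{lem:c-rate_Psi2_m1-}. With that gap in hand, the single estimate
\[
\hat{\Psi}''(\hat{\delta}^-_1(c);c)=c\int_{\hat{\gamma}^+_1(c)}^{\hat{\delta}^-_1(c)}\sin\hat{\Psi}(s;c)\,ds
\le c\int_{\hat{\gamma}^+_1(c)}^{\hat{\gamma}^+_1(c)+\ell c^{-1/3}}\sin\hat{\Psi}(s;c)\,ds
\le -\ell\sin\!\bigl(m_{0,\hat{\gamma}^+_1}/2\bigr)\,c^{2/3}
\]
gives the required upper bound, where the first inequality is legitimate because $\sin\hat{\Psi}<0$ on the discarded piece. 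Note this quantity is integrated from the $\hat{\gamma}^+_1$-side where $\hat{\Psi}''$ vanishes, which is why the dichotomy of Lemma \ref{lem:c-rate_Psi2_m1-} --- needed there because one did not control $\hat{\gamma}^+_1(c)-\hat{\mu}^-_1(c)$ --- disappears here.
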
 

By the induction and Proposition \ref{prop:com_zero-point}, we obtain the following proposition. 
The estimates of $\hat{\delta}_1^+(c)$, $\hat{\Psi}'(\hat{\delta}_1^+(c),c)$ and 
$\hat{\Psi}''(\hat{\delta}_1^+(c),c)$ are excluded from this proposition 
since the restriction on $c$ in their estimates is especial as we mentioned 
in Remark \ref{rem:order-est3}. As to their estimates, see 
Lemma \ref{lem:c-rate_d1+}--\ref{lem:c-rate_Psi1_d1+}.

\begin{prop}\label{prop:est_order}
For any $j \in \mathbb{N}$, the following properties hold. 
\begin{list}{}{\topsep=0cm\itemsep=0cm\leftmargin=0cm\itemindent=0.25cm} 
\item[(I-i)]
There exists $c_{\hat{\delta}^+_{j+1}}>0$ and 
$R_{\hat{\delta}^+_{j+1}},r_{\hat{\delta}^+_{j+1}}>0$ such that 
$\hat{\delta}^+_{j+1}(c)$ exists in $(0,1)$ for any $c>c_{\hat{\delta}^+_{j+1}}$ and fulfills
\[
r_{\hat{\delta}^+_{j+1}}c^{-1/3}\le\hat{\delta}^+_{j+1}(c)-\hat{\gamma}^-_j(c)
\le R_{\hat{\delta}^+_{j+1}}c^{-1/3}, 
\]
and there exists $c_{\hat{\delta}^-_j}>0$ and 
$R_{\hat{\delta}^-_j},r_{\hat{\delta}^-_j}>0$ such that $\hat{\delta}^-_j(c)$ exists 
in $(0,1)$ for any $c>c_{\hat{\delta}^-_j}$ and fulfills 
\[
r_{\hat{\delta}^-_j}c^{-1/3}\le\hat{\delta}^-_j(c)-\hat{\gamma}^+_j(c)
\le R_{\hat{\delta}^-_j}c^{-1/3}.
\]
\item[(I-ii)] 
There exist $c_{\hat{\mu}^\mp_j}>0$ and $R_{\hat{\mu}^\mp_j},r_{\hat{\mu}^\mp_j}>0$ such that 
$\hat{\mu}^\mp_j(c)$ exist in $(0,1)$ for any $c>c_{\hat{\mu}^\mp_j}$ and fulfill 
\[
r_{\hat{\mu}^\mp_j}c^{-1/3}\le\hat{\mu}^\mp_j(c)-\hat{\delta}^\pm_j(c)
\le R_{\hat{\mu}^\mp_j}c^{-1/3}.
\]
\item[(I-iii)] 
There exist $c_{\hat{\gamma}^\pm_j}>0$ and 
$R_{\hat{\gamma}^\pm_j},r_{\hat{\gamma}^\pm_j}>0$ such that $\hat{\gamma}^\pm_j(c)$ exist 
in $(0,1)$ for any $c>c_{\hat{\gamma}^\pm_j}$ and fulfill
\[
r_{\hat{\gamma}^\pm_j}c^{-1/3}\le\hat{\gamma}^\pm_j(c)-\hat{\mu}^\mp_j(c)
\le R_{\hat{\gamma}^\pm_j}c^{-1/3}. 
\]
\item[(II-i)]
There exists $M_{2,\hat{\delta}^+_{j+1}},m_{2,\hat{\delta}^+_{j+1}},
M_{1,\hat{\delta}^+_{j+1}},m_{1,\hat{\delta}^+_{j+1}}>0$ such that for 
$c\in J_{\hat{\mu}^-_{j+1}}$
\[
\left\{\begin{array}{l}
m_{2,\hat{\delta}^+_{j+1}}c^{2/3}\le\hat{\Psi}''(\hat{\delta}^+_{j+1}(c);c)
\le M_{2,\hat{\delta}^+_{j+1}}c^{2/3}, \\
-M_{1,\hat{\delta}^+_{j+1}}c^{1/3}\le\hat{\Psi}'(\hat{\delta}^+_{j+1}(c);c)
\le -m_{1,\hat{\delta}^+_{j+1}}c^{1/3},
\end{array}\right.
\]
and there exists $M_{2,\hat{\delta}^-_j},m_{2,\hat{\delta}^-_j}
M_{1,\hat{\delta}^-_j},m_{1,\hat{\delta}^-_j}>0$ such that for $c\in J_{\hat{\mu}^+_j}$
\[
\left\{\begin{array}{l}
-M_{2,\hat{\delta}^-_j}c^{2/3}\le\hat{\Psi}''(\hat{\delta}^-_j(c);c)
\le -m_{2,\hat{\delta}^-_j}c^{2/3}, \\
m_{1,\hat{\delta}^-_j}c^{1/3}\le\hat{\Psi}'(\hat{\delta}^-_j(c);c)
\le M_{1,\hat{\delta}^-_j}c^{1/3}.
\end{array}\right. 
\]
\item[(II-ii)] 
There exist $M_{0,\hat{\mu}^\pm_j},M_{2,\hat{\mu}^\pm_j},m_{0,\hat{\mu}^\pm_j},
m_{2,\hat{\mu}^\pm_j}>0$ such that 
\[
\left\{\begin{array}{l}
-M_{0,\hat{\mu}^-_j}\le\hat{\Psi}(\hat{\mu}^-_j(c);c)\le -m_{0,\hat{\mu}^-_j}
\,\,\ \mbox{for}\,\ c\in J_{\hat{\mu}^-_j}, \\
m_{2,\hat{\mu}^-_j}c^{2/3}\le\hat{\Psi}''(\hat{\mu}^-_j(c);c)\le M_{2,\hat{\mu}^-_j}c^{2/3}
\,\,\ \mbox{for}\,\ c\in J_{\hat{\gamma}^+_j}, 
\end{array}\right. 
\]
and 
\[
\left\{\begin{array}{l}
m_{0,\hat{\mu}^+_j}\le\hat{\Psi}(\hat{\mu}^+_j(c);c)\le M_{0,\hat{\mu}^+_j}
\,\,\ \mbox{for}\,\ c\in J_{\hat{\mu}^+_j}, \\
-M_{2,\hat{\mu}^+_j}c^{2/3}\le\hat{\Psi}''(\hat{\mu}^+_j(c);c)\le -m_{2,\hat{\mu}^+_j}c^{2/3} 
\,\,\ \mbox{for}\,\ c\in J_{\hat{\gamma}^-_j}. 
\end{array}\right.
\]
\item[(II-iii)] 
There exists $M_{0,\hat{\gamma}^\pm_j}, M_{1,\hat{\gamma}^\pm_j}, m_{0,\hat{\gamma}^\pm_j}, 
m_{1,\hat{\gamma}^\pm_j}>0$ such that 
\[
\left\{\begin{array}{l}
m_{1,\hat{\gamma}^+_j}c^{1/3}\le\hat{\Psi}'(\hat{\gamma}^+_j(c);c)\le M_{1,\hat{\gamma}^+_j}c^{1/3} 
\,\,\ \mbox{for}\,\ c \in J_{\hat{\gamma}_j^+}, \\
-M_{0,\hat{\gamma}^+_j}\le\hat{\Psi}(\hat{\gamma}^+_j(c);c)\le -m_{0,\hat{\gamma}^+_j} 
\,\,\ \mbox{for}\,\ c\in J_{\hat{\mu}^+_j},
\end{array}\right.
\]
and 
\[
\left\{\begin{array}{l}
-M_{1,\hat{\gamma}^-_j}c^{1/3}\le\hat{\Psi}'(\hat{\gamma}^-_j(c);c)\le -m_{1,\hat{\gamma}^+_j}c^{1/3} 
\,\,\ \mbox{for}\,\ c\in J_{\hat{\gamma}^-_j}, \\
m_{0,\hat{\gamma}^-_j}\le\hat{\Psi}(\hat{\gamma}^-_j(c);c)\le M_{0,\hat{\gamma}^-_j} 
\,\,\ \mbox{for}\,\ c\in J_{\hat{\mu}^-_{j+1}}. 
\end{array}\right.
\]
\end{list}
\end{prop}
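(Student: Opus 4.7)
The plan is to prove this proposition by induction on $j\in\mathbb{N}$, following the eight-step pattern of Section~\ref{sec:zero-point} together with the arguments already carried out for $j=1$ in Lemmas~\ref{lem:c-rate_a}--\ref{lem:c-rate_Psi2_d1-}. The estimates at $\hat{\delta}_1^+(c)$ itself are deliberately excluded from the statement because their proofs make essential use of the initial data $\hat{\Psi}(0;c)=\psi_-$ and the bounds on $\hat{\alpha}(c)$ from Lemma~\ref{lem:c-rate_a}, which do not recur at interior zero points. Beyond $\hat{\delta}_1^+(c)$, Proposition~\ref{prop:com_zero-point} gives the cyclic order
\[
\hat{\delta}_j^+(c) < \hat{\mu}_j^-(c) < \hat{\gamma}_j^+(c) < \hat{\delta}_j^-(c) < \hat{\mu}_j^+(c) < \hat{\gamma}_j^-(c) < \hat{\delta}_{j+1}^+(c),
\]
and at each such point one of three types of estimates is sought: a value-of-$\hat{\Psi}$ bound of order $1$ (at $\hat{\mu}_j^{\pm}$ and $\hat{\gamma}_j^{\pm}$), a first-derivative bound of order $c^{1/3}$ (for $\hat{\Psi}'$ at $\hat{\delta}_j^{\pm}$ and $\hat{\gamma}_j^{\pm}$), or a second-derivative bound of order $c^{2/3}$ (for $\hat{\Psi}''$ at $\hat{\delta}_j^{\pm}$ and $\hat{\mu}_j^{\pm}$).

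For the inductive step, I would assume all estimates hold up to $\hat{\gamma}_{j-1}^-(c)$ and then successively treat the next six zero points. First, the existence of $\hat{\delta}_j^+(c)$ and the two-sided $c^{-1/3}$ bound on $\hat{\delta}_j^+(c)-\hat{\gamma}_{j-1}^-(c)$ in (I-i) are obtained by copying the argument of Lemma~\ref{lem:exist_d1-}: the assumed order-$1$ bound on $|\hat{\Psi}(\hat{\gamma}_{j-1}^-(c);c)|$ and order-$c^{1/3}$ bound on $\hat{\Psi}'(\hat{\gamma}_{j-1}^-(c);c)$ together with the energy identity \eqref{int_eq_Psi} on $[\hat{\gamma}_{j-1}^-(c),\hat{\delta}_j^+(c)]$ pin down the interval length. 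Next, the estimates in (II-i) at $\hat{\delta}_j^+(c)$ follow the scheme of Lemmas~\ref{lem:c-rate_Psi2_d1+}--\ref{lem:c-rate_Psi1_d1+}: a bisection-type argument produces an intermediate point at distance $\asymp c^{-1/3}$ where $|\hat{\Psi}|\asymp 1$; integrating $\hat{\Psi}^{(3)}=c\sin\hat{\Psi}$ via \eqref{Psi2} gives the $c^{2/3}$-lower bound on $|\hat{\Psi}''(\hat{\delta}_j^+(c);c)|$, while the matching upper bound comes directly from \eqref{Psi2} and the length estimate. The two-case split of Lemma~\ref{lem:c-rate_Psi1_d1+} then yields the bounds on $\hat{\Psi}'(\hat{\delta}_j^+(c);c)$, transferring information to the neighboring $\hat{\gamma}_j^+$ extremum through Lemma~\ref{lem:com_Psi1-2}(i) when necessary. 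The remaining items at $\hat{\mu}_j^{\pm}(c)$, $\hat{\gamma}_j^{\pm}(c)$ and $\hat{\delta}_j^-(c)$ are obtained by repeating the arguments of Lemmas~\ref{lem:exist_m1-}--\ref{lem:c-rate_Psi2_d1-} with the obvious sign changes, using the remaining two energy identities \eqref{int_eq_Psi} and \eqref{int_eq_Psi2} together with the bounds from Corollary~\ref{cor:bound_Psi0-2}.

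The main obstacle is not any new analytical idea but the careful bookkeeping of the restrictions on $c$, that is, identifying for each estimate the minimal $J$-set on which it is valid. As pointed out in Remarks~\ref{rem:order-est3}, \ref{rem:order-est} and \ref{rem:order-est2}, some bounds at a zero point $z$ become available only once the next zero point in the chain is known to lie in $(0,1]$, so that Lemma~\ref{lem:com_Psi1-2} can be activated. Proposition~\ref{prop:com_zero-point}(i), combined with the boundary condition $\hat{\Psi}''(1;c)=0$ which forces $J_{\hat{\gamma}_j^\pm}\cup\{c>0\mid\hat{\gamma}_j^\pm(c)=1\}=J_{\hat{\delta}_j^\pm}$, is exactly what allows the ``$\hat{\gamma}$''-type estimates to hold on $J_{\hat{\delta}_j^\pm}$ without further restriction, whereas the ``$\hat{\mu}$''-type estimates must be restricted to $J_{\hat{\mu}_j^\pm}$ and the ``$\hat{\delta}$''-type estimates at $\hat{\delta}_j^\pm$ require the stricter $J_{\hat{\mu}_{j+1}^-}$ or $J_{\hat{\mu}_j^+}$. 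Once the correct $J$-set is fixed for each inequality, the signs of $\hat{\Psi}\sin\hat{\Psi}$ and $\cos\hat{\Psi}$ on $(-\pi/2,\pi/2)$ together with the three energy identities of Lemma~\ref{lem:com_Psi1-2} drive the induction to completion.
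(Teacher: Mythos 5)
Your proposal is correct and follows essentially the same route as the paper. The paper itself gives no separate proof of Proposition \ref{prop:est_order}: it works out the $j=1$ case in detail through Lemmas \ref{lem:c-rate_a}--\ref{lem:c-rate_Psi2_d1-} and then declares the general case to follow ``by the induction and Proposition \ref{prop:com_zero-point}.'' Your write-up makes that induction explicit and correctly identifies the three ingredients the induction depends on: the three energy identities from Lemma \ref{lem:com_Psi1-2} (equations \eqref{int_eq}, \eqref{int_eq_Psi}, \eqref{int_eq_Psi2}), the bisection-style intermediate-point argument that converts a fixed-size drop in $|\hat{\Psi}|$ into a $c^{-1/3}$ length scale, and the $J$-set bookkeeping governed by Remarks \ref{rem:order-est3}, \ref{rem:order-est}, \ref{rem:order-est2} (in particular $J_{\hat{\gamma}_j^\pm}\cup\{c\,|\,\hat{\gamma}_j^\pm(c)=1\}=J_{\hat{\delta}_j^\pm}$, which is exactly what lets the $\hat{\gamma}$-type estimates survive without extra restriction). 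You also correctly explain why the estimates at $\hat{\delta}_1^+$ are kept separate: their proofs use the data $\hat{\Psi}(0;c)=\psi_-$ and Lemma \ref{lem:c-rate_a} rather than an inductive hypothesis at the previous zero point, so they hold on the special range $c>c_\ast$ rather than a $J$-set.
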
 

\section{Proof of the main results}\label{sec:main}
Let us prove the main results, namely, Theorem \ref{thm:main}(ii) and (iv). 
Note that Theorem \ref{thm:main}(i) and (iii) have been already proved in Section \ref{sec:profile}. 
The rest of the proofs of the main results is to find $c>0$ such that $\hat{\Psi}(1;c) = -\psi_+$ 
with the help of the properties of  $\hat{\Psi}$ obtained in 
Section \ref{sec:initial}\,--\,\ref{sec:order-est}. 
%
%
If we find such $c>0$, the pair $(\hat{\Psi}(\,\cdot\,;c),c)$ 
is the solution of \eqref{SD_TW_3rdODE} and we can construct a traveling wave 
$\mathcal{W}(t)$ as in \eqref{para-tra} from its pair. Note that 
for the representaition $\mathcal{W}(0)=\{(x,w(x))\,|\,l^-<x<l^+\}$, 
the sign of $\hat{\Psi}, \hat{\Psi}'$ and $\hat{\Psi}''$ are same as that of $w_x, w_{xx}$ 
and $w$, respectively. 

Therefore, the rest of this paper is devoted to proving the existence of $c>0$ such that 
$\hat{\Psi}(1;c) = -\psi_+$.  
In this proof, the estimates of $\hat{\Psi}(\hat{\mu}_j^-(c)\re{;}c)$ for $j \in \mathbb{N}$ in 
Proposition \ref{prop:est_order}(II-ii) play a key role. 

Let the constants $m_{0,\hat{\mu}_j^-} > 0$ in Proposition \ref{prop:est_order}(II-ii) satisfy 
\[ 
\psi_- \ge m_{0,\hat{\mu}_1^-} > m_{0,\hat{\mu}_2^-} > m_{0,\hat{\mu}_3^-} > \cdots,  
\]
if necessary. Theorem \ref{thm:main}(ii) and (iv) can be derived from the following 
theorem.

\begin{thm}\label{thm:main2} 
Assume that $\psi_- \in (0,\pi/2)$. 
Let $\hat{\Psi}(\,\cdot\,;c) = \Psi(\,\cdot\,;\hat{\alpha}(c),c)$ be a solution of \eqref{IVP2}, where $\Psi(\,\cdot\,;\alpha,c)$ is the solution of \eqref{IVP} 
and $\hat{\alpha}(c)$ is the constant obtained in Proposition \ref{thm:exit_a}.
For any $j \in \mathbb{N}$, define $J_{\hat{\delta}_j^\pm}$ and $J_{\hat{\mu}_j^-}$ 
as \eqref{def-dipm} and \eqref{def-mipm}, respectively, and let $m_{0,\hat{\mu}_j^-} > 0$ 
be the constants in Proposition \ref{prop:est_order}. 
\begin{list}{}{\leftmargin=0cm\itemindent=0.2cm\topsep=0cm\itemsep=0cm} 
\item[(i)]
For each $\psi_+\in(0,\psi_-)$, there exists $c >0$ such that $\hat{\Psi}(1;c) = -\psi_+$ 
and $c \not\in J_{\hat{\delta}_1^-}$. 
\item[(ii)]
If $\psi_+\in[m_{0,\hat{\mu}_{j+1}^-},m_{0,\hat{\mu}_j^-})$, then there exist constants 
\[ 0 < c_1 < c_2 < \cdots < c_{2j-1} \]
such that the following statements hold: 
\begin{list}{}{\leftmargin=1cm\itemindent=0.2cm\topsep=0cm\itemsep=0cm}
\item[(a)] 
$\hat{\Psi}(1;c_k) = -\psi_+$ for any $1 \le k \le 2j-1$. 
\item[(b)] 
$c_{2l-1} \in J_{\hat{\mu}_l^-} \setminus J_{\hat{\delta}_l^-}$ for $1 \le l \le j$. 
\item[(c)] 
$c_{2l-2} \in J_{\hat{\delta}_l^+} \setminus J_{\hat{\mu}_l^-}$ for $2 \le l \le j$ if $j \ge 2$. 
\end{list}
\end{list} 
\end{thm}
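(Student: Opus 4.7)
The plan is to apply the intermediate value theorem to the continuous function $c \mapsto \hat{\Psi}(1;c)$ on $(0,\infty)$; continuity follows from Proposition \ref{thm:exit_a}(ii) together with Lemma \ref{lem:conti_ineq}. The baseline is that $\hat{\Psi}(1;c) \to -\psi_-$ as $c \to 0^+$ by Proposition \ref{thm:exit_a}(iv). The guiding observation is that at each threshold $c_* := \inf J_{\hat{\delta}_l^\pm}$, a limit-extraction argument analogous to the one in Lemma \ref{lem:conti_bdy_a} forces the corresponding zero of $\hat{\Psi}(\,\cdot\,;c_*)$ to sit exactly at $s=1$, so $\hat{\Psi}(1;c_*) = 0$; similarly at $c_* := \inf J_{\hat{\mu}_l^-}$ one has $\hat{\mu}_l^-(c_*) = 1$, hence $\hat{\Psi}(1;c_*) = \hat{\Psi}(\hat{\mu}_l^-(c_*);c_*)$, which by Proposition \ref{prop:est_order}(II-ii) is pinned in $[-M_{0,\hat{\mu}_l^-},\, -m_{0,\hat{\mu}_l^-}]$.

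For part (i), set $\hat{c}^* := \inf J_{\hat{\delta}_1^-}$. For every $c \in (0,\hat{c}^*)$ one has $c \notin J_{\hat{\delta}_1^-}$; moreover $\hat{\Psi}(1;c) \to -\psi_- < -\psi_+$ as $c \to 0^+$, while $\hat{\Psi}(1;\hat{c}^*) = 0 > -\psi_+$. The intermediate value theorem applied on $(0,\hat{c}^*)$ produces the desired $c$ with $\hat{\Psi}(1;c) = -\psi_+$ and $c \notin J_{\hat{\delta}_1^-}$.

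For part (ii), for $1 \le l \le j$ define $a_l := \inf J_{\hat{\mu}_l^-}$ and $b_l := \inf J_{\hat{\delta}_l^-}$, and for $2 \le l \le j$ define $a_l' := \inf J_{\hat{\delta}_l^+}$. The nesting chain from Proposition \ref{prop:com_zero-point}(i), combined with the boundary-value dichotomy above (distinct values $0$ and $\le -m_{0,\hat{\mu}_l^-}$), yields the strict ordering $a_1 < b_1 < a_2' < a_2 < b_2 < a_3' < \cdots < b_j$. On each odd window $(a_l, b_l) \subset J_{\hat{\mu}_l^-} \setminus J_{\hat{\delta}_l^-}$, the hypothesis $\psi_+ < m_{0,\hat{\mu}_j^-} \le m_{0,\hat{\mu}_l^-}$ (using monotonicity of the sequence) gives $\hat{\Psi}(1;a_l) \le -m_{0,\hat{\mu}_l^-} < -\psi_+ < 0 = \hat{\Psi}(1;b_l)$, and IVT produces $c_{2l-1} \in (a_l,b_l)$; on each even window $(a_l', a_l) \subset J_{\hat{\delta}_l^+} \setminus J_{\hat{\mu}_l^-}$, symmetrically $\hat{\Psi}(1;a_l') = 0 > -\psi_+ > \hat{\Psi}(1;a_l)$, and IVT produces $c_{2l-2} \in (a_l', a_l)$. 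Monotonicity of the thresholds then gives the required ordering $c_1 < c_2 < \cdots < c_{2j-1}$.

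The hard part will be the precise boundary analysis at these thresholds: one must show that each infimum $\inf J_{\hat{\mu}_l^-}$, $\inf J_{\hat{\delta}_l^\pm}$ is realized with the corresponding critical/zero point of $\hat{\Psi}$ sitting exactly at $s=1$. This requires extracting a convergent subsequence of the relevant critical points along $c_n$ approaching the threshold from outside the set, and using the uniform estimates of Proposition \ref{prop:est_order} to prevent that point from escaping $[0,1]$ in the limit; the spirit of the argument is the one used in Lemmas \ref{lem:conti_bdy_a} and \ref{lem:non-empty}. Once this boundary analysis is in place, the intermediate value theorem applications above close out both parts of the theorem.
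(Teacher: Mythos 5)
The plan for part (i) is exactly the paper's (setting $\hat{c}^* = \inf J_{\hat{\delta}_1^-}$, checking $\hat{\Psi}(1;\hat{c}^*)=0$ via the boundary lemma, and applying the intermediate value theorem against the $c\to 0^+$ limit), and part (i) is fine. For part (ii), however, your central claim that the ``odd window'' $(a_l,b_l)=(\inf J_{\hat{\mu}_l^-},\,\inf J_{\hat{\delta}_l^-})$ is contained in $J_{\hat{\mu}_l^-}\setminus J_{\hat{\delta}_l^-}$, and similarly that $(a_l',a_l)\subset J_{\hat{\delta}_l^+}\setminus J_{\hat{\mu}_l^-}$, is not justified: the sets $J_{\hat{\mu}_l^-}$ and $J_{\hat{\delta}_l^\pm}$ are shown to be open (Lemma \ref{lem:limit-deltamu}), but nothing establishes that they are connected, and the paper explicitly remarks that connectedness of $J_{\hat{\mu}^-_l}$ is unknown. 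Because of this, the $c$ that the intermediate value theorem hands you inside $(a_l,b_l)$ may land in a ``hole'' of $J_{\hat{\mu}_l^-}$, and then conclusion (b), which requires $c_{2l-1}\in J_{\hat{\mu}_l^-}$, would fail. The value at $s=1$ alone does not rescue this: $\hat{\Psi}(1;c)\in(-m_{0,\hat{\mu}_l^-},0)$ does not imply $c\in J_{\hat{\mu}_l^-}$.

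The paper's proof dodges this by replacing each global infimum $\inf J_{\hat{\mu}_l^-}$ (resp.\ $\inf J_{\hat{\delta}_l^+}$) with the infimum of the connected component of $J_{\hat{\mu}_l^-}$ (resp.\ $J_{\hat{\delta}_l^+}$) that is adjacent on the left to the previously found threshold. Concretely, after verifying via Proposition \ref{prop:com_zero-point}(v) and Lemma \ref{lem:limit-deltamu} that $\inf J_{\hat{\delta}_l^-}$ is an interior point of $J_{\hat{\mu}_l^-}$, one sets $\tilde{c}_{\hat{\mu}_l^-}:=\inf\{\tilde c\in(0,\inf J_{\hat{\delta}_l^-})\ |\ \hat{\mu}_l^-(c)\in(0,1)\ \mbox{for all}\ c\in(\tilde c,\inf J_{\hat{\delta}_l^-})\}$ and applies the intermediate value theorem on $(\tilde{c}_{\hat{\mu}_l^-},\,\inf J_{\hat{\delta}_l^-})$, which by construction does lie in $J_{\hat{\mu}_l^-}\setminus J_{\hat{\delta}_l^-}$; since $\hat{\mu}_l^-(\tilde{c}_{\hat{\mu}_l^-})=1$, the boundary estimate $\hat{\Psi}(1;\tilde{c}_{\hat{\mu}_l^-})\le -m_{0,\hat{\mu}_l^-}$ still holds in the limit, so the sign change you want is preserved. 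The ``even'' windows require the analogous component-infimum $\tilde{c}_{\hat{\delta}_l^+}$. Your approach is the right shape and your boundary-limit extraction at thresholds is correct, but without the component-infima the membership assertions (b) and (c) are not proved. Also note that the strict ordering $b_l<a'_{l+1}$ needs a short argument (both endpoints give the value $0$, so the ``distinct values'' dichotomy you invoke does not settle it); it follows because $a'_{l+1}\in J_{\hat{\delta}_l^-}$ while $b_l=\inf J_{\hat{\delta}_l^-}\notin J_{\hat{\delta}_l^-}$.
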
 

\begin{rem}
For $c>0$ as in Theorem \ref{thm:main2}(i), it follows from 
$c \not\in J_{\hat{\delta}_1^-}$, Proposition \ref{prop:com_zero-point}(i) and 
the boundary condition $\hat{\Psi}''(1;c)=0$ that 
the zero points of $\hat{\Psi}(\,\cdot\,;c)$ and its derivatives satisfy 
\begin{equation}\label{rem_main_1}
 0 < \hat{\delta}_1^+(c) < \hat{\gamma}_1^+(c) = 1 \quad \mbox{or} \quad 
 0 < \hat{\delta}_1^+(c) < \hat{\mu}_1^-(c) < \hat{\gamma}_1^+(c) = 1. 
\end{equation}
In the former case of \eqref{rem_main_1}, the profile curve $\mathcal{W}(0)$ 
of the traveling wave $\mathcal{W}(t)$ as in \eqref{para-tra} is convex. 
On the other hand, in the latter case, $\mathcal{W}(0)$ is no longer convex since the sign change 
of the curvature, which is given by $\hat{\Psi}'(\,\cdot\,;c)$, occurs. We also remark that 
in both cases, $\mathcal{W}(0)$ is a plane curve in the upper half plane 
since $\hat{\Psi}''(s;c)>0$ for $s\in(0,1)$. 
 


Theorem \ref{thm:main2}(ii) implies that $\hat{\Psi}$ and its derivatives ``oscillates'' 
for $c>0$ as in (b) or (c) if $\psi_+>0$ is sufficiently small. 
From this ``oscillation'' property, we obtain Theorem \ref{thm:main} by setting 
$m_j := m_{0,\hat{\mu}_j^-}$ for any $j \in \mathbb{N}$, where $m_j$ and $m_{0,\hat{\mu}_j^-}$ 
are the constants in Theorem \ref{thm:main}(iv) and Theorem \ref{thm:main2}(ii), 
respectively. 
\end{rem}

Let us discuss the shooting with respect to $c$.  
In order to prove Theorem \ref{thm:main2}(i), we apply the intermediate value theorem 
to $\hat{\Psi}(1;c)$ with respect to $c \in (0,\infty) \setminus J_{\hat{\delta}_1^-}$. 
Also, in order to prove Theorem \ref{thm:main2}(ii), we apply its theorem to 
$\hat{\Psi}(1;c)$ with respect to $c \in J_{\hat{\mu}_j^-} \setminus J_{\hat{\delta}_j^-}$ and 
$c \in J_{\hat{\delta}_j^+} \setminus J_{\hat{\mu}_j^-}$. 
Thus, we have to study the values $\hat{\Psi}(1;c)$ for $c \in \partial J_{\hat{\mu}_j^-}$ 
and $c \in \partial J_{\hat{\delta}_j^\pm}$. 
The following lemma is a key tool to obtain their values. 

\begin{lemma}\label{lem:limit-deltamu} 
(i) For any $j\in\mathbb{N}$, $J_{\hat{\delta}^+_j}$ is an open set and $\hat{\delta}^+_j(c)$ 
is continuous in $J_{\hat{\delta}^+_j}$.  
Moreover, $\hat{\delta}^+_j(c)=1$ for $c\in\partial J_{\hat{\delta}^+_j}$ if $j \ge 2$. \\
(ii) For any $j\in\mathbb{N}$, $J_{\hat{\delta}^-_j}$ is an open sets and $\hat{\delta}^-_j(c)$ 
is continuous in $J_{\hat{\delta}^-_j}$. 
Moreover, $\hat{\delta}^-_j(c)=1$ for $c\in\partial J_{\hat{\delta}^-_j}$. \\
(iii) For any $j\in\mathbb{N}$, $J_{\hat{\mu}^\pm_j}$ are open sets and $\hat{\mu}^\pm_j(c)$ 
are continuous in $J_{\hat{\mu}^\pm_j}$, respectively.
Moreover, $\hat{\mu}^\pm_j(c)=1$ for $c\in\partial J_{\hat{\mu}^\pm_j}$. 
\end{lemma}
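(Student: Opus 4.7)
The plan is to establish the openness of each set and the continuity of each function in (i)--(iii) simultaneously via the implicit function theorem, and then deduce the boundary value $\hat{\bullet}(c)=1$ by contradiction. The argument proceeds inductively in $j$, in parallel with the induction used in Section~\ref{sec:zero-point}, so that whenever we analyze a given zero point the corresponding statements for all earlier zero points are already at our disposal.

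\textbf{Transversality and local continuity.} Fix $c_0$ in any of the sets in (i)--(iii) and let $s_0\in(0,1)$ denote the corresponding zero point at $c_0$. By Lemma~\ref{lem:conti_ineq} combined with Proposition~\ref{thm:exit_a}(ii), the map $(s,c)\mapsto(\hat{\Psi}(s;c),\hat{\Psi}'(s;c),\hat{\Psi}''(s;c))$ is jointly continuous. The key observation is transversality. At $s_0=\hat{\delta}_j^{\pm}(c_0)$, the ordering in Proposition~\ref{prop:com_zero-point} places $s_0$ strictly between two consecutive zeros of $\hat{\Psi}'$ (with the endpoints $s=0$ or $s=1$ allowed), so $\hat{\Psi}'$ has constant nonzero sign at $s_0$. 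At $s_0=\hat{\mu}_j^{\mp}(c_0)$, $s_0$ belongs to an open interval bounded by consecutive zeros of $\hat{\Psi}''$ on which $\hat{\Psi}''$ has constant nonzero sign, so $\hat{\Psi}''(s_0;c_0)\neq 0$. In each case the implicit function theorem, applied to the defining equation $\hat{\Psi}(s;c)=0$ or $\hat{\Psi}'(s;c)=0$, produces a $C^1$ solution $s(c)$ on a neighborhood of $c_0$ with $s(c_0)=s_0$. By the inductive hypothesis, the preceding zero points depend continuously on $c$, hence $s(c)$ remains in the ``slot'' in which Proposition~\ref{prop:com_zero-point} guarantees uniqueness of the zero, and therefore $s(c)=\hat{\bullet}(c)$ for $c$ near $c_0$. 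This proves openness of $J_{\hat{\bullet}}$ together with continuity of $\hat{\bullet}$ on it.

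\textbf{Boundary behavior.} Let $c_0\in\partial J_{\hat{\bullet}}$ (with $j\ge 2$ in the $\hat{\delta}_j^+$ case) and choose $c_n\in J_{\hat{\bullet}}$ with $c_n\to c_0$. By compactness we may pass to a subsequence with $\hat{\bullet}(c_n)\to s^{\ast}\in[0,1]$. Joint continuity shows that $s^{\ast}$ is a zero of the appropriate derivative of $\hat{\Psi}(\,\cdot\,;c_0)$, and the inductive continuity of the earlier zero points places $s^{\ast}$ to the right of the preceding one. The case $s^{\ast}=0$ is excluded since the leftmost zero $\hat{\delta}_1^+(c_0)>0$ is bounded below (it equals $s^\ast$ or precedes it). If $s^{\ast}\in(0,1)$, the transversality argument applied at $(s^{\ast},c_0)$ shows that the relevant derivative is nonzero there; combined with the inductive ordering, this identifies $s^{\ast}$ with $\hat{\bullet}(c_0)$, and the openness statement above forces $c_0\in J_{\hat{\bullet}}$, contradicting $c_0\in\partial J_{\hat{\bullet}}$. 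Hence $s^{\ast}=1$, and because the limit is independent of the chosen subsequence we conclude $\hat{\bullet}(c_n)\to 1$, i.e., $\hat{\bullet}(c_0)=1$.

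\textbf{Main obstacle.} The delicate point is ruling out that two zero points of different types merge in the limit $c_n\to c_0$, for instance that $\hat{\delta}_j^{+}(c_n)$ approaches $\hat{\gamma}_{j-1}^{-}(c_0)$. This cannot happen because the two types involve different functions among $\hat{\Psi},\hat{\Psi}',\hat{\Psi}''$, and Proposition~\ref{prop:com_zero-point} combined with the equation $\hat{\Psi}^{(3)}=c\sin\hat{\Psi}$ ensures that at a zero of one of these three functions the neighboring one in the ordering does not vanish; continuity therefore preserves the separation. Once this bookkeeping is done cleanly in the inductive setup, the remainder of the proof is a direct application of the implicit function theorem.
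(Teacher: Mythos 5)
Your approach is essentially the one taken in the paper: transversality of the relevant derivative at the zero point (supplied by the ordering in Proposition~\ref{prop:com_zero-point}), an induction from the left in tandem with the ordering, and a compactness/subsequence argument to force $\hat{\bullet}(c)=1$ on the boundary. One small correction: the version of the implicit function theorem that delivers a $C^1$ solution $s(c)$ is not available here, because Lemma~\ref{lem:conti_ineq} and Proposition~\ref{thm:exit_a}(ii) only give \emph{continuous} dependence of $\hat{\Psi}(\cdot;c)=\Psi(\cdot;\hat{\alpha}(c),c)$ on $c$ (through the merely continuous $\hat{\alpha}(c)$), not $C^1$ dependence; what one actually gets from the transversality condition $\partial_s\hat{\Psi}\neq0$ (or $\partial_s\hat{\Psi}'\neq0$) plus joint continuity is a \emph{continuous} solution branch, via the sign/monotonicity argument the paper carries out directly. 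Since the lemma asserts only continuity, this weaker conclusion suffices and your proof goes through once ``$C^1$'' is replaced by ``continuous.''
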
 

\begin{proof} 
Let us prove only (i) with $j \ge 2$. For the other cases, the same argument is applicable. 

First, we show that $J_{\hat{\delta}^+_j}$ is an open set and $\hat{\delta}^+_j(c)$ is continuous in $J_{\hat{\delta}^+_j}$. 
The proof can be given by the induction from left end points in the sense of the order 
\[ 
\hat{\delta}_1^+(c) < \hat{\mu}_1^-(c) < \hat{\gamma}_1^+(c) < \cdots < \hat{\delta}_{j-1}^-(c) 
< \hat{\mu}_{j-1}^+(c) < \hat{\gamma}_{j-1}^-(c) < \hat{\delta}_j^+(c) < \cdots. 
\]
Thus we assume that $J_{\hat{\delta}_k^\pm}, J_{\hat{\mu}_k^\pm}$ and 
$J_{\hat{\gamma}_k^\pm}$ are open sets and $\hat{\delta}_k^\pm(c), \hat{\mu}_k^\pm(c)$ 
and $\hat{\gamma}_k^\pm(c)$ are continuous with respect $c$ for $1 \le k \le j-1$. 
Let us fix $c_0\in J_{\hat{\delta}^+_j}$. We prove that for any $\e>0$ there is $\tau>0$ 
such that if $|c-c_0|<\tau$, $\hat{\delta}^+_j(c)$ exists in $(0,1)$ and satisfies 
$|\hat{\delta}^+_j(c)-\hat{\delta}^+_j(c_0)|<\e$. 
Since $\hat{\Psi}'(\hat{\delta}^+_j(c_0);c_0)<0$ and $\hat{\gamma}_{j-1}^-(c_0) < \hat{\delta}_j^+(c_0)$ by Proposition \ref{prop:com_zero-point}, 
it follows from the continuity of $\hat{\Psi}$ with respect to $s$ that there is $\e_0 > 0$ such that 
for any $\e \in (0,\e_0)$
\begin{equation}\label{Psi1_neg_e0-nbd}
\begin{aligned}
&\hat{\Psi}'(s;c_0)<\frac{\hat{\Psi}'(\hat{\delta}^+_j(c_0);c_0)}2<0\,\,\ 
\mbox{for}\,\ s\in[\hat{\delta}^+_j(c_0)-\e,\hat{\delta}^+_j(c_0)+\e], \\
&\hat{\gamma}_{j-1}^-(c_0) < \hat{\delta}_j^+(c_0) - \e.
\end{aligned}
\end{equation}
This fact and Proposition \ref{prop:com_zero-point} imply that 
\begin{equation}\label{sign_Psi_e-nbd}
\hat{\Psi}(s;c_0)>0\,\,\ \mbox{for}\,\ s\in[\hat{\mu}_{j-1}^+(c_0),\hat{\delta}^+_j(c_0)-\e], 
\quad \hat{\Psi}(\hat{\delta}^+_j(c_0)+\e;c_0)<0.  
\end{equation}
From \eqref{Psi1_neg_e0-nbd}, \eqref{sign_Psi_e-nbd}, Lemma \ref{lem:conti_ineq} 
and the continuity of $\hat{\delta}_k^\pm(c), \hat{\mu}_k^\pm(c), \hat{\gamma}_k^\pm(c)$ 
with respect to $c$, we see that there is $\tau>0$ such that if $|c-c_0|<\tau$, 
\begin{equation}\label{Psi1_neg_e-nbd}
\begin{aligned}
&\hat{\Psi}'(s;c)<0\,\,\ 
\mbox{for}\,\ s\in[\hat{\delta}^+_j(c_0)-\e,\hat{\delta}^+_j(c_0)+\e] \\
&\hat{\Psi}(s;c)>0,\,\,\ \mbox{for}\,\ s\in[\hat{\mu}_{j-1}^+(c),\hat{\delta}^+_j(c_0)-\e], \\
&\hat{\Psi}(\hat{\delta}^+_j(c_0)+\e;c)<0,  
\end{aligned}
\end{equation}
and $\hat{\delta}_k^\pm(c), \hat{\mu}_k^\pm(c)$ and $\hat{\gamma}_k^\pm(c)$ exist 
in the interval $(0,1)$ and satisfy
\begin{equation}\label{order-nbd} 
\hat{\delta}_1^+(c) < \hat{\mu}_1^-(c) < \hat{\gamma}_1^+(c) < \cdots < \hat{\delta}_{j-1}^-(c) 
< \hat{\mu}_{j-1}^+(c) < \hat{\gamma}_{j-1}^-(c) < \hat{\delta}^+_j(c_0)-\e. 
\end{equation}
Due to \eqref{Psi1_neg_e-nbd}, there exists the unique zero point of $\hat{\Psi}(\cdot,c)$ in 
$(\hat{\delta}^+_j(c_0)-\e,\hat{\delta}^+_j(c_0)+\e)$ for any $c \in (c_0-\tau, c_0+\tau)$. 
Define its zero point as $\hat{s}^+(c)$. 
On the other hand, by the second property of \eqref{Psi1_neg_e-nbd}, the order \eqref{order-nbd} 
and Proposition \ref{prop:com_zero-point}, we see that there is no zero point of $\hat{\Psi}(\cdot,c)$ 
in $(\hat{\delta}_{j-1}^+(c), \hat{s}^+(c))$ for $c \in (c_0-\tau, c_0+\tau)$.
As a result, we obtain the fact that $\hat{s}^+(c) = \hat{\delta}_j^+(c) \in (0,1)$ and 
$\hat{\delta}_j^+(c)$ satisfies $|\hat{\delta}_j^+(c) - \hat{\delta}_j^+(c_0)| < \e$ 
for $c \in (c_0-\tau, c_0+\tau)$ by the definition of $\hat{s}^+(c)$. 

Let us show that $\hat{\delta}^+_j(c)=1$ for $c\in\partial J_{\hat{\delta}^+_j}$. 
To do it, we prove that for any sequences $\{c_n\}_{n\in\mathbb{N}}\subset J_{\hat{\delta}^+_j}$ 
which converges to $c\in\partial J_{\hat{\delta}^+_j}$, the limit of the sequence 
$\{\hat{\delta}^+_j(c_n)\}_{n\in\mathbb{N}}$ exists and is equal to $1$. 
Since $\hat{\delta}^+_j(c_n)\in(0,1)$ for $n\in\mathbb{N}$ by the definition of 
$J_{\hat{\delta}^+_j}$, $\{\hat{\delta}^+_j(c_n)\}_{n\in\mathbb{N}}$ includes a convergent 
subsequences $\{\hat{\delta}^+_j(c_{n_k})\}_{k\in\mathbb{N}}$. Let $\hat{\delta}^\ast$ be 
a limit of its subsequence, that is, 
\[
\lim_{k\to\infty}\hat{\delta}^+_j(c_{n_k})=\hat{\delta}^\ast.
\]
Note that $\hat{\delta}^\ast\in[0,1]$. By means of the continuity of $\hat{\Psi}$ with respect to $(s,c)$, 
we obtain
\[
0=\lim_{k\to\infty}\hat{\Psi}(\hat{\delta}^+_j(c_{n_k});c_{n_k})=\hat{\Psi}(\hat{\delta}^\ast;c).
\]
Then it follows from the definition of $\hat{\delta}^+_j(c)$ and the continuity of $\hat{\delta}_k^\pm(c), \hat{\mu}_k^\pm(c), \hat{\gamma}_k^\pm(c)$ with respect to $c$ for $1 \le k \le j-1$ that $\hat{\delta}^+_j(c)\le\hat{\delta}^\ast$. 
Since $J_{\hat{\delta}^+_j}$ is open, we have $c\not\in J_{\hat{\delta}^+_j}$, so that  
we are led to $\hat{\delta}^+_j(c)=\hat{\delta}^\ast=1$.  
This means that a limit of any convergent subsequences is unique and its value is equal to $1$, 
which completes the proof. 
\end{proof} 

Now, we prove Theorem \ref{thm:main2}(i). 

\begin{proof}[Proof of Theorem \ref{thm:main2}(i)]
Let $\psi_+ \in (0,\psi_-)$ and define $c_{\rm min}$ as 
\[ c_{\rm min} := \inf J_{\hat{\delta}_1^-}. \]
$J_{\hat{\delta}_1^-}$ is not empty by Lemma \ref{lem:exist_d1-}, 
thus $c_{\rm min}$ is well-defined. 
Moreover, we see that $c_{\rm min} \ge c_\ast > 0$ from \eqref{def_c_ast} and 
Proposition \ref{prop:com_zero-point}. 
Since $c_{\rm min} \in \partial J_{\hat{\delta}_1^-}$, it follows from  
Lemma \ref{lem:limit-deltamu} that
\begin{equation}\label{main2-1-1} 
\hat{\Psi}(1;c_{\rm min}) = \hat{\Psi}(\hat{\delta}_1^-(c_{\rm min}); c_{\rm min}) = 0 > -\psi_+. 
\end{equation}
On the other hand, from Proposition \ref{thm:exit_a}(iv) and $\psi_+ \in (0,\psi_-)$, 
\begin{equation}\label{main2-1-2} 
\lim_{c \downarrow 0} \hat{\Psi}(1;c) = - \psi_- < -\psi_+. 
\end{equation}
Therefore, \eqref{main2-1-1}, \eqref{main2-1-2} and the intermediate value theorem lead us to 
the existence of $c \in (0,\infty) \setminus J_{\hat{\delta}_1^-}$ such that $\hat{\Psi}(1;c) = -\psi_+$. 
\end{proof}

Finally, let us prove Theorem \ref{thm:main2}(ii). 

\begin{proof}[Proof of Theorem \ref{thm:main2}(ii)]
Let $\psi_+\in[m_{0,\hat{\mu}^-_{j+1}},m_{0,\hat{\mu}^-_j})$ for some $j\in\mathbb{N}$. 
First, we prove the existence of $\tilde{c}_l \in J_{\hat{\mu}_l^-} \setminus J_{\hat{\delta}_l^-}$ 
such that $\hat{\Psi}(1;\tilde{c}_l) = 1$ for $1\le l\le j$. 
For $1\le l\le j$, set 
\[
c_{\hat{\delta}_l^-}:= \inf J_{\hat{\delta}^-_l}.
\]
By means of Lemma \ref{lem:limit-deltamu}, we obtain $\hat{\delta}^-_l(c_{\hat{\delta}_l^-})=1$. 
Then
\begin{equation}\label{main2-1} 
\hat{\Psi}(1;c_{\hat{\delta}_l^-})=0>-\psi_+ 
\end{equation} 
and it follows from Proposition \ref{prop:com_zero-point}(v) that $\hat{\mu}^-_l(c_{\hat{\delta}_l^-})$ 
exists in $(0,1)$. 
Therefore, $c_{\hat{\delta}_l^-}$ is a interior point of $J_{\hat{\mu}^-_l}$ by 
Lemma \ref{lem:limit-deltamu}(iii) and we can define 
\[
\tilde{c}_{\hat{\mu}_l^-} 
:=\inf\{\tilde{c}\in(0,c_{\hat{\delta}_l^-})\,|\,\mbox{$\hat{\mu}_l^-(c)$ exists in $(0,1)$ 
for $c\in(\tilde{c}, c_{\hat{\delta}_l^-})$}\}.
\]
By the definition of $\tilde{c}_{\hat{\mu}_l^-}$, we see that 
$\tilde{c}_{\hat{\mu}_l^-}\in\partial J_{\hat{\mu}_l^-}$. 
Then Lemma \ref{lem:limit-deltamu}(iii) implies $\hat{\mu}_l^-(\tilde{c}_{\hat{\mu}_l^-})=1$. 
Using the continuity of $\hat{\Psi}$ with respect to $(s,c)$ and 
Proposition \ref{prop:est_order} (II-ii), 
\begin{equation}\label{main2-2} 
\hat{\Psi}(1;\tilde{c}_{\hat{\mu}_l^-})
=\lim_{c\downarrow\tilde{c}_{\hat{\mu}_l^-}}\hat{\Psi}(\hat{\mu}_l^-(c);c) 
\le -m_{0,\hat{\mu}^-_l} \le -m_{0,\hat{\mu}^-_j} < -\psi_+. 
\end{equation} 
Consequently, it follows from \eqref{main2-1}, \eqref{main2-2}, and the continuity of $\hat{\Psi}$ 
with respect to $c$ that there exists $\tilde{c}_l\in [\tilde{c}_{\hat{\mu}_l^-},c_{\hat{\delta}_l^-})$ 
such that 
\[
\hat{\Psi}(1;\tilde{c}_l)=-\psi_+.
\]
Recalling the definition of $c_{\hat{\delta}_l^-}$ and $\tilde{c}_{\hat{\mu}_l^-}$, we see 
that $\tilde{c}_l \in J_{\hat{\mu}_l^-} \setminus J_{\hat{\delta}_l^-}$. 

Next, we prove the existence of $\hat{c}_l \in J_{\hat{\delta}_l^+} \setminus J_{\hat{\mu}_l^-}$ 
such that $\hat{\Psi}(1;\hat{c}_l) = 1$ for $2\le l\le j$.
For $2\le l\le j$, set 
\[
c_{\hat{\mu}_l^-}:= \inf J_{\hat{\mu}^-_l}.
\]
Note that we do not know whether $J_{\hat{\mu}^-_l}$ is connected or not. Thus 
it is possible that $c_{\hat{\mu}_l^-} \neq \tilde{c}_{\hat{\mu}_l^-}$. 
Then we see $c_{\hat{\mu}_l^-} \le \tilde{c}_{\hat{\mu}_l^-}$ by the definition of them. 
From a similar argument as above, Proposition \ref{prop:com_zero-point}(v), 
Lemma \ref{lem:limit-deltamu} and Proposition \ref{prop:est_order}(II-ii) imply that
\begin{align}
&\hat{\mu}_l^-(c_{\hat{\mu}_l^-}) = 1, \label{main2-3}\\
&\hat{\Psi}(1;c_{\hat{\mu}_l^-})=\lim_{c\downarrow c_{\hat{\mu}_l^-}}\hat{\Psi}(\hat{\mu}_l^-(c);c) 
\le -m_{0,\hat{\mu}^-_l} \le -m_{0,\hat{\mu}^-_j} < -\psi_+. \label{main2-4}
\end{align}
By \eqref{main2-3} and Proposition \ref{prop:com_zero-point}, we see that 
$\hat{\mu}_{l-1}^+(c_{\hat{\mu}_l^-})$ exists in the interval $(0,1)$ and $\hat{\Psi}(\hat{\mu}_{l-1}^+(c_{\hat{\mu}_l^-});c_{\hat{\mu}_l^-}) >0$. 
Therefore, it follows from \eqref{main2-4}, the intermediate value theorem and 
Proposition \ref{prop:com_zero-point} that $\hat{\delta}_l^+(c_{\hat{\mu}_l^-})$ exists in the interval 
$(0,1)$. Hence $c_{\hat{\mu}_l^-}$ is a interior point of $J_{\hat{\delta}^-_+}$ by 
Lemma \ref{lem:limit-deltamu}(i) and we can define 
\[ 
\tilde{c}_{\hat{\delta}_l^+} 
:=\inf\{\tilde{c}\in(0,c_{\hat{\mu}_l^-})\,|\,\mbox{$\hat{\delta}_l^+(c)$ exists in $(0,1)$ for $c\in(\tilde{c}, c_{\hat{\mu}_l^-})$}\}.
\]
By $\tilde{c}_{\hat{\delta}_l^+} \in \partial J_{\hat{\delta}_l^+}$ and $l \ge 2$ and 
Lemma \ref{lem:limit-deltamu}(i), we obtain
\begin{equation}\label{main2-5}
\hat{\Psi}(1;\tilde{c}_{\hat{\delta}_l^+}) 
= \hat{\Psi}(\hat{\delta}_l^+(\tilde{c}_{\hat{\delta}_l^+});\tilde{c}_{\hat{\delta}_l^+}) = 0 > -\psi_+. 
\end{equation} 
Thus it follows from \eqref{main2-4}, \eqref{main2-5}, the intermediate value theorem and 
the definition of $\tilde{c}_{\hat{\delta}_l^+}$ and $c_{\hat{\mu}_l^-}$ that 
for $2\le l\le j$, there exists $\hat{c}_l \in J_{\hat{\delta}_l^+} \setminus J_{\hat{\mu}_l^-}$ 
such that $\hat{\Psi}(1;\hat{c}_l) = -\psi_+$. 

For the order of $\tilde{c}_l$ and $\hat{c}_l$, we see 
\[ 
\tilde{c}_{\hat{\mu}_1^-} < \tilde{c}_1 < c_{\hat{\delta}_1^-} < \tilde{c}_{\hat{\delta}_2^+} < \hat{c}_2 
< c_{\hat{\mu}_2^-} \le \tilde{c}_{\hat{\mu}_2^-} < \cdots < \tilde{c}_{\hat{\delta}_l^+} < \hat{c}_l 
< c_{\hat{\mu}_l^-} \le \tilde{c}_{\hat{\mu}_l^-} < \tilde{c}_l < c_{\hat{\delta}_l^-}. 
\]
Thus, if we set $c_k$ for $k=1,2, \cdots, 2j-1$ as 
\[
c_k := 
\begin{cases}
\tilde{c}_l & \mbox{if} \; \; k = 2l-1 \; \; \mbox{for} \; \; l=1,2, \cdots, j, \\
\hat{c}_l & \mbox{if} \; \; k=2l-2 \; \; \mbox{for} \; \; l=2,3, \cdots, j, 
\end{cases}
\]
we obtain the conclusion of Theorem \ref{thm:main2}(ii).  
\end{proof} 

\begin{rem}\label{rem:last}
In the case $\psi_+ < 0$, by applying a similar argument to the above proof, we can obtain 
a similar result as in Theorem \ref{thm:main2}(ii). 
In this argument, we have to apply the intermediate value theorem to $\hat{\Psi}(1;c)$ 
with respect to $c \in J_{\hat{\mu}_l^+} \setminus J_{\hat{\delta}_{l+1}^+}$ and 
$c \in J_{\hat{\delta}_l^-} \setminus J_{\hat{\mu}_l^+}$. 
As a result, we obtain the following conclusion:

For any $j \in \mathbb{N}$, if $\psi_+ \in (-m_{0,\hat{\mu}_j^+}, -m_{0,\hat{\mu}_{j+1}^-}]$, 
where $m_{0,\hat{\mu}_j^+}$ are the constants in Proposition \ref{prop:est_order}, 
then there exist constants 
\[ 0 < c_1 < c_2 < \cdots < c_{2j} \]
such that the following statements hold: 
\begin{list}{}{\leftmargin=1cm\itemindent=0.2cm\topsep=0.15cm\itemsep=0cm}
\item[(a)]
$\hat{\Psi}(1;c_k) = -\psi_+$ for any $1 \le k \le 2j$. 
\item[(b)] 
$c_{2l-1} \in J_{\hat{\delta}_l^-} \setminus J_{\hat{\mu}_l^+}$ for $1 \le l \le j$. 
\item[(c)] 
$c_{2l} \in J_{\hat{\mu}_l^+} \setminus J_{\hat{\delta}_{l+1}^-}$ for $1 \le l \le j$. 
\end{list} 

On the other hand, we can not prove the existence of the traveling wave for \eqref{SDB} 
if $\psi_+ < -m_{0,\hat{\mu}_1^+}$. 
In order to prove the existence of it in the case that $\psi_+$ is close to $-\psi_-$, 
we have to find $c\ge0$ such that $\hat{\Psi}(1;c) > -\psi_+$. 
However, this $c$ should not exist since $\hat{\Psi}(1;0) = -\psi_- < -\psi_+$ and 
the effect of ``energy loss'' as in Lemma \ref{lem:com_Psi1-2}(i) becomes stronger as 
$c$ increase. 
Therefore, if $\psi_+ < 0$, we expect the necessity of the smallness of $|\psi_+|$ 
to obtain the existence of the traveling wave for \eqref{SDB}. 
\end{rem}

 

\end{document}